\def\Vol{\mathrm{Vol}}
\def\tr{\mathrm{tr}}
\def\pr{\mathbb{P}}
\def\ddbar{\sqrt{-1}\partial\bar{\partial}}
\def\Id{\mathrm{Id}}
\def\Met{\mathrm{Met}}
\newtheorem{proposition}{Proposition}[section]
\newtheorem{lemma}[proposition]{Lemma}
\newtheorem{theorem}[proposition]{Theorem}
\newtheorem{corollary}[proposition]{Corollary}
\newtheorem{conjecture}{Conjecture}[section]
\theoremstyle{definition}
\newtheorem{remark}[proposition]{Remark}
\newtheorem{definition}[proposition]{Definition}
\def\C{\mathrm{C}}
\def\Vol{\mathrm{Vol}}
\def\ddbar{\sqrt{-1}\partial\bar{\partial}}
\newcommand{\R}{\mathbb{R}}
\newcommand{\Q}{\mathbb{Q}}
\newcommand{\comp}{\mathbb{C}}
\newcommand{\scK}{\mathcal{K}}
\newcommand{\scL}{\mathcal{L}}
\newcommand{\scI}{\mathcal{I}}
\newcommand{\scD}{\mathcal{D}}
\newcommand{\scH}{\mathcal{H}}
\newcommand{\scX}{\mathcal{X}}
\newcommand{\scO}{\mathcal{O}}
\newcommand{\scR}{\mathcal{R}}
\newcommand{\scB}{\mathcal{B}}
\newcommand{\scY}{\mathcal{Y}}
\DeclareMathOperator{\Bl}{Bl}
\DeclareMathOperator{\Supp}{Supp}
\DeclareMathOperator{\Grass}{Grass}
\DeclareMathOperator{\Hilb}{Hilb}
\DeclareMathOperator{\DF}{DF}
\DeclareMathOperator{\wt}{wt}
\DeclareMathOperator{\diag}{diag}
\author[R. Dervan]{Ruadha{\'i} Dervan}
\address{Ruadha{\'i} Dervan, DPMMS, Centre for Mathematical Sciences, Wilberforce Road, Cambridge CB3 0WB, United Kingdom.}
\email{R.Dervan@dpmms.cam.ac.uk}
\author[J. Keller]{Julien Keller}
\address{Julien Keller, Institut de Math\'ematiques de Marseille I2M, Aix-Marseille Universit\'e, 39, rue F. Joliot Curie, 13453 Marseille Cedex 13, France.}
\email{julien.keller@univ-amu.fr}
\title{A finite dimensional approach to Donaldson's J-flow}
\begin{document}
\bibliographystyle{alpha}

\begin{abstract}

Consider a projective manifold with two distinct polarisations $L_1$ and $L_2$. From this data, Donaldson has defined a natural flow on the space of K\"ahler metrics in $c_1(L_1)$, called the J-flow. The existence of a critical point of this flow is closely related to the existence of a constant scalar curvature K\"ahler metric in $c_1(L_1)$ for certain polarisations $L_2$.

Associated to a quantum parameter $k\gg 0$, we define a flow over Bergman type metrics, which we call the J-balancing flow. We show that in the quantum limit $k\rightarrow + \infty$, the rescaled J-balancing flow converges towards the J-flow.  As corollaries, we obtain new proofs of uniqueness of critical points of the J-flow and also that these critical points achieve the absolute minimum of an associated energy functional. 

We show that the existence of a critical point of the J-flow implies the existence of J-balanced metrics for $k\gg 0$. Defining a notion of Chow stability for linear systems, we show that this in turn implies the linear system $|L_2|$ is asymptotically Chow stable. Asymptotic Chow stability of $|L_2|$ implies an analogue of K-semistability for the J-flow introduced by Lejmi-Sz{\'e}kelyhidi, which we call J-semistability. We prove also that J-stability holds automatically in a certain numerical cone around $L_2$, and that if $L_2$ is the canonical class of the manifold that J-semistability implies K-stability. Eventually, this leads to new K-stable polarisations of surfaces of general type. 
\end{abstract}

\maketitle

\tableofcontents

\section{Introduction}

Let $M$ be a smooth projective manifold equipped with two ample line bundles $L_1,L_2$ and of complex dimension $n\geq 2$. Fix Hermitian metrics $h_1\in {\rm Met}(L_1)$, $h_2\in {\rm Met}(L_2)$ such that the curvatures $\omega = c_1(h_1)$, and $\chi = c_1(h_2)$ are both K\"ahler forms. Given this data, Donaldson introduced a flow of K\"ahler metrics called the J-flow \cite{D9}. This flow is given by the following parabolic PDE in the (smooth) $\omega$-potentials $\phi_t$: \begin{equation}\label{Jflow}
\frac{\partial \phi_t}{\partial t}= \gamma- \frac{\chi \wedge (\omega+\ddbar \phi_t)^{n-1}}{(\omega+\ddbar\phi_t)^n},
\end{equation} where $\gamma$ is the topological constant $\frac{\int_M c_1(L_2)c_1(L_1)^{n-1}}{\int_Mc_1(L_1)^n}$ called the \textit{J-constant}. A \textit{critical metric} for Donaldson's J-flow is a solution at time $+\infty$ of the flow, i.e. a K\"ahler metric $\omega+\ddbar \phi\in c_1(L_1)$ solution of 
\begin{equation}\chi \wedge (\omega+\ddbar \phi)^{n-1} =\gamma (\omega+ \ddbar \phi)^n.\label{critical}
\end{equation}

Donaldson's motivation for studying this flow is its relation to an infinite dimensional moment map picture. Soon after its introduction, the work of Chen, together with a refinement due to Song-Weinkove, proved that if $L_2=K_M$ is the canonical class of $M$, then the existence of a critical point of the J-flow implies properness of the Mabuchi functional for $(M,L_1)$ \cite{C2,SW}. Properness of the Mabuchi functional is conjecturally equivalent to the existence of a constant scalar curvature K\"ahler (which we abbreviate to cscK) metric on $L_1$. Indeed, the existence of solution of the J-flow is equivalent to properness of a functional, which we denote $I_{\mu_J}$, due to work of Chen and Collins-Sz\'ekelyhidi \cite{C2, Sz-Co}. When $L_2=K_M$ the functional $I_{\mu_J}$ is a component of the Mabuchi functional, which explains the relation between the cscK problem and the J-flow. Since then there has been much work in providing numerical criteria for the existence of critical metrics of the J-flow, in order to better understand the existence of cscK metrics \cite{C1,We2,SW}, under the assumption $K_M$ is ample. Chen has also very recently proposed a program to prove the existence of cscK metrics using a continuity method, in which the metric at $t=0$ is a critical metric of the J-flow \cite{C3}. We later discuss these motivations for studying the J-flow in further detail. 

We now give an overview of the main results of this paper. As is now well understood, one can quantise metrics on $L_1$ using metrics on the finite dimensional vector spaces $H^0(M,L_1^k)$ with quantum parameter $k$, called Bergman metrics. In the present work we study a flow on the space of Bergman metrics, which we call the J-balancing flow. Critical points of the J-balancing flow are called J-balanced metrics, and these fit naturally into a finite dimensional moment map picture. Our main result is that in the quantum limit $k\to +\infty$, the J-balancing flow converges to the J-flow.
\begin{theorem} Fix $T>0$ and let $\omega_k(t)$ be the solution of the J-balancing flow, for $t\in  [0,T]$. Then as $k\to\infty$, the sequence $\omega_k(t)$ converges in $C^{\infty}$ to the solution of the J-flow as $k\to\infty$. Furthermore, the convergence is $\C^1$ in the variable $t$. Assuming there is a critical point of the J-flow, the convergence holds for all $t>0$. \end{theorem}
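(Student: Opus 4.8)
The plan is to treat this as an approximation theorem in the spirit of Fine's proof that the balancing flow approximates the Calabi flow: the two evolutions live on different spaces --- the J-flow on the infinite dimensional space of $\omega$-potentials, the J-balancing flow on the finite dimensional symmetric space of Hermitian metrics on $H^0(M,L_1^k)$ --- so the first task is to place them in a common geometric framework. I would read the vector field generating the J-balancing flow (a moment-map expression built from the Bergman density of $L_1^k$ and its $\chi$-twisted analogue) as an operator acting on K\"ahler metrics through the Fubini--Study construction, and then show that, after rescaling time by the appropriate power of $k$, this operator has an asymptotic expansion in $1/k$ whose leading $k^0$ term is exactly the right-hand side $\gamma-\frac{\chi\wedge(\omega+\ddbar\phi)^{n-1}}{(\omega+\ddbar\phi)^n}$ of \eqref{Jflow}. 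The analytic engine is the Tian--Yau--Zelditch--Catlin expansion of the Bergman kernel together with a twisted density-of-states expansion incorporating $c_1(L_2)$, which is what produces both the J-constant $\gamma$ and the J-flow operator at leading order.

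Concretely the argument would proceed as follows. (i) Fix $T>0$ and let $\phi_t$, $t\in[0,T]$, solve the J-flow; on this compact interval the metrics $\omega+\ddbar\phi_t$ have uniformly bounded geometry. (ii) Using the $L^2$-projection, build from $\phi_t$ a path of Bergman metrics at level $k$ and show, via the uniform Bergman expansion, that it solves the rescaled J-balancing flow up to an error that is $O(k^{-1})$ in every $\C^m$-norm, uniformly in $t\in[0,T]$. (iii) Run a Gronwall-type stability estimate for the finite dimensional J-balancing ODE to conclude that the genuine solution $\omega_k(t)$ stays $O(k^{-1})$-close to this approximate path, giving $\C^0$ convergence to $\omega+\ddbar\phi_t$. (iv) Bootstrap to $\C^\infty$ convergence in the space variable using parabolic and elliptic estimates together with the control of all derivatives in the Bergman expansion, and obtain the $\C^1$ convergence in $t$ by differentiating the two flow equations once in time and comparing.

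I expect the main obstacle to be twofold. First, the uniformity of the Bergman kernel expansion as the metric varies along the flow: the remainder terms must be bounded in $\C^m$ uniformly over the family $\{\omega+\ddbar\phi_t\}$, which rests on uniform control of the geometry (curvature and injectivity radius) --- available on $[0,T]$ by compactness, but requiring care. Second, propagating the $O(k^{-1})$ error through the Gronwall estimate with a constant that does not blow up: this forces one to understand the linearisation of the J-balancing flow and to match its coercivity with the (degenerate) ellipticity and spectral gap of the linearised J-flow operator, so that the stability constant is controlled independently of $k$.

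For the final assertion --- convergence for all $t>0$ once a critical point of the J-flow exists --- the point is to upgrade the finite-time estimate to a uniform-in-time one. A critical metric forces, through the properness of $I_{\mu_J}$ and the results of Song--Weinkove recalled above, long-time existence and exponentially fast convergence of the J-flow to that critical point; hence the family $\{\omega+\ddbar\phi_t\}_{t\ge 0}$ has uniformly bounded geometry on all of $[0,\infty)$ and the Bergman expansion is uniform in $t$ globally. The exponential contraction towards the critical point supplies a uniform spectral gap, hence a Gronwall constant independent of the length of the interval, and the comparison argument of steps (ii)--(iv) then closes up on $[0,\infty)$ rather than merely on $[0,T]$.
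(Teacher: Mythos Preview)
Your overall architecture is the right one---this is indeed Fine's scheme, and the paper follows it---but two of your steps do not close as written, and the paper handles them differently.

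\textbf{Step (iii): Gronwall versus distance-decreasing.} You correctly flag the danger that the Gronwall constant may blow up with $k$: the J-balancing flow is an ODE on a space of dimension $\sim k^n$, and the naive Lipschitz constant of its vector field need not be $k$-independent. The paper does \emph{not} use Gronwall. Instead it exploits the moment-map interpretation: the J-balancing flow is the downward gradient flow of $\|\mu_{k,\chi}^0\|^2$ on the symmetric space $GL(N+1)/U(N+1)$, and the integral of a moment map is convex along geodesics there, so the gradient flow is distance-decreasing in the Bergman metric $d_k$. This immediately propagates a tangential error of size $O(k^{-1})$ over the interval $[0,T]$ without any exponential loss. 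Your spectral-gap proposal might be made to work, but the convexity/distance-decreasing route is both simpler and what actually proves the theorem.

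\textbf{Step (iv): $O(k^{-1})$ is not enough.} The comparison in (ii)--(iii) is carried out in the rescaled matrix metric $d_k$, not in a $\C^r$-norm on potentials, and the passage from $d_k$-closeness to $\C^r$-closeness of the induced Fubini--Study metrics \emph{loses} a factor of roughly $k^{n+2+r/2}$ (this is the content of the $R$-bounded geometry lemmas and the operator-norm estimates for $\mu_{k,\chi}$). So an $O(k^{-1})$ bound in $d_k$ does not yield $\C^r$ convergence for large $r$, and there is no parabolic bootstrap available on the balancing side since $\omega_k(t)$ does not solve a PDE. The paper's remedy is a \emph{higher-order approximation}: one perturbs the J-flow potential to $\psi(k,t)=\phi_t+\sum_{j=1}^m k^{-j}\eta_j(t)$, where each $\eta_j$ solves an explicit linear parabolic equation $\partial_t\eta-\tilde\Delta\eta=\text{(known)}$, and obtains $d_k(h_k(t),\overline h_k(t))\le C k^{-m-1}$. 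Choosing $m>r/2+n+1$ then beats the loss and gives $\C^r$ convergence; $\C^1$ in $t$ follows by the same mechanism with $m$ one larger.

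For the last assertion your reasoning is essentially the paper's: once a critical metric exists, Song--Weinkove give smooth convergence of the J-flow, so the metrics $\omega_t$ sit in a compact set and every constant above (Bergman expansion, $R$-bounded geometry, operator norms) is uniform in $t\in[0,\infty)$.
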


By showing J-balanced metrics are unique, we obtain the following Corollary, which is an analogue of Donaldson's quantisation proof of uniqueness of cscK metrics \cite{D1}. 

\begin{corollary} Critical metrics of the J-flow are unique. \end{corollary}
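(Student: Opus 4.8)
The plan is to transport the uniqueness statement from the finite-dimensional picture to the J-flow, following the strategy Donaldson used to reprove uniqueness of cscK metrics. The argument combines two inputs: the convergence of J-balanced metrics to critical metrics (a consequence of the Theorem, together with the fact, established separately, that the existence of a critical metric of the J-flow forces J-balanced metrics to exist for $k\gg 0$), and the uniqueness of J-balanced metrics at each fixed level $k$. If there are no critical metrics the statement is vacuous, so I may assume at least one exists.

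The first task is to prove uniqueness of J-balanced metrics. Since J-balanced metrics are the zeros of a moment map on the space of Bergman metrics, identified with the symmetric space $GL(N_k,\mathbb{C})/U(N_k)$ with $N_k=\dim H^0(M,L_1^k)$, they arise as critical points of an associated functional, namely the finite-dimensional counterpart of $I_{\mu_J}$. I would show that this functional is convex along the geodesics of $GL(N_k,\mathbb{C})/U(N_k)$, and strictly convex transverse to its built-in symmetries (overall scaling, and any automorphisms of $(M,L_1)$ acting on the sections). Convexity then confines the zero set of the moment map to a single orbit of these symmetries, which translates into uniqueness of the induced Fubini--Study-type metric on $M$ for each $k\gg 0$.

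With finite-level uniqueness in hand, the transfer is immediate. Suppose $\omega_1$ and $\omega_2$ are critical metrics of the J-flow. By the existence implication each is the $C^\infty$-limit of J-balanced metrics: $\omega_{1,k}\to\omega_1$ and $\omega_{2,k}\to\omega_2$ as $k\to\infty$. For every $k\gg 0$, uniqueness at level $k$ gives $\omega_{1,k}=\omega_{2,k}$, whence $\omega_1=\lim_k\omega_{1,k}=\lim_k\omega_{2,k}=\omega_2$.

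The main obstacle is the finite-level uniqueness, and specifically the geodesic convexity of the quantised functional together with the correct bookkeeping of its symmetries, so that strict convexity in the transverse directions genuinely isolates the critical point. A second, more technical point is to ensure that the approximating J-balanced metrics converge back to the prescribed critical metric rather than to some other critical metric; this is precisely where the uniform $C^\infty$-estimates underlying the Theorem are used.
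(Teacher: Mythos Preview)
Your proposal is correct and follows essentially the same route as the paper. The paper establishes uniqueness of J-balanced metrics via convexity of $I_{\mu_{k,\chi}^0}$ along geodesics of the Bergman space (Corollary~\ref{uniquenessJbal}), and then combines this with Corollary~\ref{cor1} (existence and smooth convergence of J-balanced metrics to the critical metric) exactly as you describe; the paper does not discuss automorphisms of $(M,L_1)$, so your bookkeeping there is extra caution rather than a divergence in method.
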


This was first proven by Chen using the strict convexity of the $I_{\mu_J}$ functional along certain geodesics in the space of K\"ahler metrics on $L_1$ \cite{C1}. Similarly, we recover the fact that J-balanced metrics achieve the absolute minimum of an associated functional.

\begin{corollary}\label{corabsmin} Critical metrics of the J-flow achieve the absolute minimum of the $I_{\mu_J}$ functional.\end{corollary}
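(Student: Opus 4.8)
The plan is to quantise the statement, in the same spirit as the proof of the preceding Corollary. At the finite-dimensional level, the J-balancing flow is the (downward) gradient flow of a functional $\scZ_k$ defined on the space $\scH_k$ of Hermitian inner products on $H^0(M,L_1^k)$, whose critical points are exactly the J-balanced metrics. This space is the symmetric space $GL(N_k,\comp)/U(N_k)$ of non-positive curvature, in which the geodesics through a given inner product $h$ take the form $t\mapsto e^{tA}\cdot h$ for $A$ Hermitian. The first step is to show that $\scZ_k$ is convex along these geodesics. This is a finite-dimensional computation: one differentiates twice along $t\mapsto e^{tA}\cdot h$ and checks that the second derivative is non-negative, with the degenerate directions corresponding to automorphisms. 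Granting this convexity, every critical point of $\scZ_k$ is an absolute minimum, so each J-balanced metric minimises $\scZ_k$.

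The second step is to pass to the quantum limit. By hypothesis there exists a critical metric $\omega_\infty$ of the J-flow, so that, for $k\gg 0$, there exist J-balanced metrics $\omega_k$ minimising $\scZ_k$, and moreover $\omega_k\to\omega_\infty$ in $C^\infty$ as in the main Theorem. Given an arbitrary competitor K\"ahler potential $\phi$, I would produce a sequence of Bergman metrics $\omega_k(\phi)$ with $\omega_k(\phi)\to\omega_\phi$, using the projective embedding attached to the associated $L^2$ inner product. The crucial analytic input is an asymptotic expansion relating $\scZ_k$, rescaled by an appropriate power $c_k$ of $k$, to the functional $I_{\mu_J}$: for a fixed normalising reference $\psi$ one expects $c_k^{-1}\big(\scZ_k(\omega_k(\phi))-\scZ_k(\omega_k(\psi))\big)\to I_{\mu_J}(\phi)-I_{\mu_J}(\psi)$ as $k\to\infty$. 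This is the Tian--Yau--Zelditch / Bergman kernel expansion adapted to this particular functional, and it is here that the bulk of the analytic work sits.

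Combining the two steps yields the conclusion. For an arbitrary potential $\phi$ and its Bergman approximations, the minimising property $\scZ_k(\omega_k)\le\scZ_k(\omega_k(\phi))$ holds for every $k$. Subtracting the reference normalisation, dividing by $c_k$, and letting $k\to\infty$ converts these quantum inequalities into $I_{\mu_J}(\omega_\infty)\le I_{\mu_J}(\phi)$, using the convergence $\omega_k\to\omega_\infty$ on the left and the asymptotic expansion on the right. As $\phi$ is arbitrary, $\omega_\infty$ attains the absolute minimum of $I_{\mu_J}$, which is the assertion.

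The main obstacle I anticipate is the limit exchange in the second step rather than the finite-dimensional convexity, which is a direct computation. Passing the minimisation through the limit requires the asymptotic expansion to hold with error terms controlled \emph{uniformly} over the relevant family of potentials, and with the rescaling constant $c_k$ matching on both sides; securing this uniformity, and ensuring the Bergman approximations $\omega_k(\phi)$ converge with the requisite regularity, is the crux of the argument.
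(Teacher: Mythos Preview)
Your proposal is correct and follows essentially the same route as the paper. The paper establishes convexity of the finite-dimensional functional $I_{\mu_{k,\chi}^0}$ along geodesics in the Bergman space (so J-balanced metrics are absolute minima), proves an asymptotic expansion $\frac{1}{k}\hat{I}_k(h^k)=I_{\mu_J}(h)+O(1/k)$ uniform over compact sets of metrics, and then combines this with the convergence of J-balanced metrics to the critical metric; your $\scZ_k$, your asymptotic comparison, and your limit argument correspond exactly to these steps.
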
 

This is analogous to Donaldson's proof that cscK metrics achieve the absolute minimum of the Mabuchi functional \cite{D2}. 

Our next results relate the existence of a critical metric to algebro-geometric notions of stability. We define a notion of \emph{Chow stability} for a linear system on a polarised manifold, and by relating this notion of stability with the existence of J-balanced metrics we obtain the following.

\begin{theorem}\label{introchow} The existence of a critical metric of the J-flow implies the linear system $|L_2|$ is asymptotically Chow stable. \end{theorem}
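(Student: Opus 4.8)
The plan is to factor the statement through the existence of J-balanced metrics. By the result established in this paper that a critical metric of the J-flow gives rise to J-balanced metrics on $L_1^k$ for all $k\gg 0$, it suffices to prove that the existence of a J-balanced metric at level $k$ implies the linear system $|L_2|$ is Chow stable at level $k$; asymptotic Chow stability is then immediate, being precisely the statement that this holds for all $k\gg 0$. The whole difficulty is thus shifted onto a single finite-dimensional implication, to which the classical GIT machinery of Kempf-Ness should apply.

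To carry out this reduction, I would set up the finite-dimensional moment map picture attached to the embedding $\iota_k:M\hookrightarrow \PP(H^0(M,L_1^k)^*)=:\PP^{N_k}$ induced by a basis of sections. The group $G=SL(N_k+1,\comp)$ acts on $\PP^{N_k}$, with maximal compact $K=SU(N_k+1)$, and the $G$-orbit of $\iota_k$ carries a $K$-invariant symplectic structure together with a moment map $\mu$ whose zeros are exactly the J-balanced metrics. The key identity to verify is that $\mu$ agrees, up to the normalising J-constant $\gamma$, with the moment map governing the linearised $K$-action on the Chow point of $|L_2|$; equivalently, that the J-balancing equation is the vanishing of the derivative of a Kempf-Ness type functional whose asymptotic slopes along one-parameter subgroups compute the Chow weights appearing in our definition of Chow stability for linear systems.

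Granting this identification, the conclusion is a formal consequence of the finite-dimensional correspondence: a configuration whose $G$-orbit meets the zero locus of $\mu$ is GIT polystable, hence in particular Chow semistable, and the discreteness of the relevant stabiliser (which I would read off from the defining properties of the J-balanced configuration, or build into the stability convention) upgrades this to Chow stability at level $k$. Running this argument for every $k\gg 0$ yields asymptotic Chow stability of $|L_2|$, as required.

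The main obstacle I anticipate lies in the second paragraph: establishing precisely that the J-balancing equation is the zero-level set of the moment map for the action relevant to Chow stability of the \emph{linear system}, as opposed to the classical balancing moment map that governs Chow stability of the polarised pair $(M,L_1)$. This requires computing the moment map for the $SU(N_k+1)$-action on the configuration space encoding both the embedding by $|L_1^k|$ and the auxiliary form $\chi\in c_1(L_2)$, and verifying that the $L_2$-twist enters exactly so that its vanishing reproduces the J-balanced condition while its asymptotic slopes reproduce the Chow weights of our definition. Once this bookkeeping is complete, the passage from a zero of the moment map to Chow stability is routine.
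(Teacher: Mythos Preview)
Your reduction through J-balanced metrics is correct and matches the paper's overall structure: Corollary \ref{cor1} gives J-balanced metrics for all $k\gg 0$, and Theorem \ref{balancedimpliesstable} is precisely the finite-dimensional implication you isolate.

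The genuine gap is in your second paragraph. Chow stability of the linear system $|L_2|$, as defined in the paper, is \emph{not} a bona fide GIT notion: there is no single Chow point of $|L_2|$ on which $SL(N_k+1,\comp)$ acts. Rather, for each one-parameter subgroup one takes the Chow weight of a \emph{general} divisor $D\in|L_2|$ (Lemma \ref{generalweight} guarantees this is well-defined). Consequently there is no moment map ``governing the linearised $K$-action on the Chow point of $|L_2|$'' with which to identify $\mu_{k,\chi}^0$, and the Kempf-Ness theorem cannot be invoked in the clean form you propose. The moment map $\mu_{k,\chi}^0$ does yield properness of the functional $J_\chi$ on $SL(N_k+1,\comp)$, but the passage from this properness to positivity of the generic Chow weight requires an additional idea.

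The paper supplies that idea via an averaging argument. One writes $\chi=\int_{D\in|L_2|}[D]\,d\mu$ as a signed average of currents of integration (Theorem \ref{generalelement}, due to Lejmi--Sz\'ekelyhidi), so that $J_\chi(\sigma)=-\Vol_{L_1}(M)\int_{D\in|L_2|} I_D^{\text{AYM}}(\sigma)\,d\mu$. For each smooth $D$, Phong--Sturm's formula identifies $-I_D^{\text{AYM}}$ along a one-parameter subgroup with the log of the Chow norm of $D$, whose asymptotic slope is the ordinary Chow weight of $D$. If the generic Chow weight were non-positive, then by Lemma \ref{generalweight} it would be non-positive for \emph{every} $D$, and a dominated convergence argument would force the asymptotic slope of $J_\chi$ along that subgroup to be non-positive, contradicting properness. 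This is where the actual work lies, and it does not reduce to the identification you outline.
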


This is an analogue of Donaldson's result proving the existence of a cscK metric implies asymptotic Chow stability \cite{D1}. The first result relating the J-flow to algebro-geometric stability is due to Lejmi-Sz\'ekelyhidi \cite{L-Sz}. They proved the existence of a critical point implies an analogue of K-stability in this case, which we call J-stability. Since asymptotic Chow stability of $|L_2|$ implies J-semistability, we obtain a new proof of the semistability part of their result. We remark that they then use a perturbation argument to prove strict J-stability.

\begin{corollary} If $(M,L_1,L_2)$ admits a critical metric of the J-flow, then it is J-semistable. \end{corollary}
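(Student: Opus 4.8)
The plan is to deduce this Corollary by composing two implications: Theorem \ref{introchow}, which gives that the existence of a critical metric forces $|L_2|$ to be asymptotically Chow stable, and a second implication asserting that asymptotic Chow stability of $|L_2|$ implies J-semistability. The first step is immediate once Theorem \ref{introchow} is granted, so the substantive content is entirely in the second implication, and that is what I would focus on establishing. Reducing to it also has the virtue of decoupling the differential-geometric input (existence of the critical metric, via the balancing flow and J-balanced metrics) from the purely algebro-geometric comparison of stability notions.

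To prove that asymptotic Chow stability of $|L_2|$ implies J-semistability, I would start from an arbitrary test configuration for the J-flow problem, that is a $\mathbb{C}^*$-equivariant flat degeneration $\scX \to \mathbb{A}^1$ of $(M,L_1)$ carrying a relative polarisation $\scL_2$ degenerating $L_2$. Associated to the induced $\mathbb{C}^*$-action, at each quantisation level $k$ one has a weight $w_k$ on the Chow point of the configuration defined by the embedding by the relevant power of $L_1$ together with the linear system data of $|L_2|$; by the definition of Chow stability for linear systems used in Theorem \ref{introchow}, asymptotic Chow semistability of $|L_2|$ is precisely the statement that $w_k \geq 0$ for all $k \gg 0$. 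The next step is to expand $w_k$ as a polynomial in $k$, say $w_k = \sum_i a_i k^{n+1-i}$, using equivariant Riemann--Roch on the central fibre, equivalently the asymptotics of the equivariant Hilbert and weight polynomials.

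The crux is to identify the leading coefficient of this expansion, after normalising by $\dim H^0(M,L_1^k)$, with the J-Donaldson--Futaki invariant $\DF_J(\scX)$ of Lejmi--Sz{\'e}kelyhidi that defines J-semistability. This invariant is the intersection-theoretic quantity obtained by integrating the slope of $I_{\mu_J}$ along the degeneration, assembled from the relative classes $\scL_1,\scL_2$ on the projective completion of $\scX$ over $\mathbb{P}^1$ and the topological constant $\gamma$. Once this matching is in place the conclusion follows at once: asymptotic Chow semistability gives $w_k \geq 0$ for all large $k$, the sign of $w_k$ for large $k$ is governed by its leading coefficient, and hence $\DF_J(\scX) \geq 0$; as the test configuration was arbitrary this is exactly J-semistability.

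I expect the main obstacle to be precisely this coefficient matching. One must compute the weight of the $\mathbb{C}^*$-action on the Chow form of the image of $M$ explicitly and compare it, via equivariant Riemann--Roch or localisation on the total space of $\scX$, with the intersection numbers of $\scL_1$ and $\scL_2$ entering $\DF_J$. The delicate point is to arrange the normalisations and the appearance of $\gamma$ so that the leading term is a positive multiple of $\DF_J(\scX)$ rather than some unrelated intersection number; the final passage from non-negativity of all the $w_k$ to non-negativity of the leading coefficient is then routine.
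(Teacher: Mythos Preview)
Your two-step structure --- critical metric $\Rightarrow$ asymptotic Chow stability of $|L_2|$ $\Rightarrow$ J-semistability --- is exactly the paper's route, so the proposal is correct in outline. However, you have substantially overestimated the work in the second implication. In the paper, the J-weight $J_{L_2}(\scX,\scL)$ is \emph{defined} (Definition~\ref{jstabilityone}) to be $1/a_0$ times the leading order term in $r$ of the asymptotic Chow weight $e_{m+1}(r)$ of $|L_2|$. With that definition in hand, asymptotic Chow semistability of $|L_2|$ gives $e_{m+1}(r)\geq 0$ for all $r\gg 0$, and the leading coefficient of a polynomial that is eventually non-negative is non-negative; that is the entire proof. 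There is no coefficient matching to perform, because the two quantities you propose to match are equal by definition.

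What you are really anticipating as the ``main obstacle'' is the identification of this Chow-theoretic J-weight with the intersection-theoretic invariant of Lejmi--Sz\'ekelyhidi. The paper does carry this out (Proposition~\ref{jflowblow-ups}), but it is logically separate from the corollary at hand and is used instead for the later comparison with K-stability. Two smaller corrections to your setup: first, the relevant parameter is the exponent $r$ of the test configuration, not $k$ (the Chow weight for fixed $r$ is already the leading $k$-coefficient of the Hilbert weight, and one then lets $r\to\infty$); second, in this framework a test configuration is for $(M,L_1)$ alone and does not carry a relative polarisation degenerating $L_2$ --- the auxiliary bundle enters only through the closure of a general divisor $D\in|L_2|$ under the $\comp^*$-action, which is how $\hat b_0$ is computed.
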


The existence of a cscK metric is conjecturally equivalent to the algebro-geometric notion of K-stability; this is called the Yau-Tian-Donaldson conjecture. Through Chen's work one would therefore expect that when $L_2=K_M$, J-stability is naturally related to K-stability \cite{C2}. We show that this is indeed the case. We extend Lejmi-Sz\'ekelyhidi's definition of J-stability to the case where $L_2$ is an arbitrary (not necessarily ample) line bundle, and our results hold in this generality. 

\begin{theorem} If $(M,L_1,K_M)$ is J-semistable, then $(M,L_1)$ is K-stable. \end{theorem}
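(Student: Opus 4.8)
The plan is to prove the statement directly by comparing, at the level of intersection numbers over a test configuration, the Donaldson--Futaki invariant $\DF$ that governs K-stability with the non-Archimedean J-invariant $\scJ$ that governs J-stability, and to exhibit $\DF$ as the sum of $\scJ$ and a manifestly non-negative ``entropy'' term. I work throughout with a normal test configuration $(\scX,\scL)$ for $(M,L_1)$, compactified over $\PP^1$ and with $\scL$ relatively ample, and write $V=L_1^n$. The starting point is the standard intersection-number formula for the Donaldson--Futaki invariant (Wang, Odaka),
\[
\DF(\scX,\scL)=\frac{1}{V}\left(-\frac{\gamma}{n+1}\,\scL^{\,n+1}+\scL^{\,n}\cdot K_{\scX/\PP^1}\right),
\]
where $\gamma=\tfrac{K_M\cdot L_1^{n-1}}{L_1^n}$ is exactly the J-constant in the case $L_2=K_M$. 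On the other hand, the J-invariant defined earlier in the paper is, after fixing conventions, the linear combination $\scJ(\scX,\scL)=\frac{1}{V}\bigl(-\frac{\gamma}{n+1}\scL^{\,n+1}+\scL^{\,n}\cdot\scL_2\bigr)$, where $\scL_2$ is the chosen equivariant extension of $K_M$ across the central fibre.

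The first key step is to take $\scL_2$ to be the \emph{flat} extension of $K_M$, that is, the closure of the pullback of $K_M$ under the canonical $\mathbb{C}^{*}$-equivariant isomorphism $\scX\setminus\scX_0\cong M\times(\PP^1\setminus\{0\})$. With this choice the two $\scL^{\,n+1}$ terms cancel, and I obtain the decomposition
\[
\DF(\scX,\scL)=\scJ(\scX,\scL)+\scH(\scX,\scL),\qquad \scH(\scX,\scL):=\frac{1}{V}\,\scL^{\,n}\cdot\bigl(K_{\scX/\PP^1}-\scL_2\bigr).
\]
This is the algebro-geometric shadow of the analytic fact, underlying Chen's and Song--Weinkove's work, that when $L_2=K_M$ the Mabuchi functional splits as a non-negative entropy term plus the J-functional $I_{\mu_J}$. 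I would make the identity rigorous by passing to a common log resolution $\rho$ dominating both $\scX$ and $M\times\PP^1$, computing all three quantities there with $\rho^{*}\scL$, and identifying $K_{\scX/\PP^1}-\scL_2$ with the relative discrepancy divisor of $\rho$, which is supported over the central fibre.

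The crux, and the main obstacle, is the positivity $\scH(\scX,\scL)\ge 0$, with strict inequality for nontrivial test configurations; this is precisely Odaka's positivity of discrepancies for a smooth (more generally canonical) polarised manifold. Since $M\times\PP^1$ is smooth, the relative discrepancy divisor is effective with strictly positive coefficient along every exceptional divisor; as these divisors lie in the central fibre and $\scL$ is relatively ample, each contributes a non-negative term $\scL^{\,n}\cdot E_i\ge 0$, whence $\scH\ge 0$. Two points require care. First, the reduction to normal test configurations: since J-semistability is imposed on all test configurations, in particular normal ones, and since normalisation does not increase $\DF$, it suffices to prove $\DF>0$ for normal test configurations, where the decomposition $\DF=\scJ+\scH$ is available. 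Second, strictness: a nontrivial normal test configuration which is not a product contains at least one genuine exceptional $n$-dimensional divisor $E_i$ with $\scL|_{E_i}$ ample, so $\scL^{\,n}\cdot E_i=\int_{E_i}c_1(\scL)^n>0$ and hence $\scH>0$; the remaining product case would force a nontrivial holomorphic vector field on $M$, which is excluded precisely when $K_M$ is big, for instance for $M$ of general type, where $\Aut(M)$ is finite.

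Assembling these ingredients, for a normal nontrivial test configuration we have $\DF(\scX,\scL)=\scJ(\scX,\scL)+\scH(\scX,\scL)$ with $\scJ(\scX,\scL)\ge 0$ by J-semistability and $\scH(\scX,\scL)>0$, so $\DF>0$; for an arbitrary nontrivial test configuration, $\DF$ is bounded below by its value on the normalisation and is therefore again positive. This is exactly K-stability of $(M,L_1)$. I expect the genuinely conceptual content to be concentrated in the strict positivity of the entropy term $\scH$ --- this is what upgrades J-\emph{semi}stability to K-\emph{stability} --- while the remaining labour is the bookkeeping of sign and normalisation conventions needed to match the two intersection formulas and to pass to normal, then resolved, test configurations.
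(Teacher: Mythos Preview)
Your approach is conceptually identical to the paper's: both exhibit the Donaldson--Futaki invariant as $J_{K_M}$ plus a discrepancy term, and conclude from J-semistability together with positivity of relative canonical divisors. The paper, however, carries this out in Odaka's flag-ideal formalism rather than directly on an arbitrary normal test configuration. One first reduces (via \cite[Corollary 3.11]{Odaka2}) to semi-test configurations $\scB=\Bl_\scI(M\times\pr^1)$ with exceptional divisor $E$, where the identity
\[
\DF(\scB,r\scL_1-E)=J_{K_M}(\scB,r\scL_1-E)+(r\scL_1-E)^n\cdot K_{\scB/M\times\pr^1}
\]
is immediate from the intersection formula of Proposition~\ref{jflowblow-ups}; effectiveness of $K_{\scB/M\times\pr^1}$ follows from inversion of adjunction (the paper states the result under a Kawamata log terminal hypothesis on $M$, not just smoothness), and \emph{strict} positivity is then supplied for free by the ready-made inequality $(r\scL_1-E)^n\cdot E>0$ of Lemma~\ref{inequalities}(ii), valid for every nontrivial flag ideal. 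What the flag-ideal reduction buys is precisely a clean strictness statement without having to analyse the geometry of the central fibre.

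Your direct route is workable but the strictness step is not airtight as written. After passing to a common log resolution $\rho$ dominating $\scX$ and $M\times\pr^1$, the pullback $\rho^*\scL$ is only nef, so $(\rho^*\scL)^n\cdot E_i\ge 0$ can vanish on those exceptional primes $E_i$ that are also $\rho$-exceptional; the assertion ``$\scL|_{E_i}$ ample'' conflates $\scX$ with its resolution. What you actually need is that at least one exceptional prime of $\tilde\scX\to M\times\pr^1$ survives as a divisor on $\scX$ itself, and this requires a separate argument---it is exactly what the flag-ideal reduction packages into the single nonzero divisor $E$. Your handling of product test configurations likewise imports an extra hypothesis ($K_M$ big) not present in the statement; in the paper's framework this issue does not arise, since strictness is already secured at the flag-ideal level by Lemma~\ref{inequalities}(ii).
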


Similarly, analogously to the work giving numerical criteria for the existence of critical metrics of the J-flow, we give a numerical criterion for J-stability.

\begin{theorem} Suppose $\gamma L_1- L_2$ is nef, with $\gamma>0$. Then $(M,L_1,L_2)$ is J-stable. \end{theorem}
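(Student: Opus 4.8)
The plan is to reduce J-stability to the numerical inequalities it encodes and then to feed in the nef hypothesis by a one-line positivity estimate. Throughout write $L_1^{p}\cdot V$ for the intersection number $\int_V c_1(L_1)^p$, and similarly for mixed products. Recall that J-stability is tested by a J-Futaki type invariant which, after reduction to the degenerations associated with subvarieties (this is the content of the Lejmi--Sz\'ekelyhidi numerical criterion underlying the definition), amounts to the positivity
\[
n\gamma\,(L_1^{p}\cdot V)\;-\;p\,(L_2\cdot L_1^{p-1}\cdot V)\;>\;0
\]
for every irreducible proper subvariety $V\subsetneq M$ of dimension $p$, $1\le p\le n-1$. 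The normalisation is the natural one: for $V=M$ the left-hand side equals $n(\gamma\,L_1^{n}-L_2\cdot L_1^{n-1})=0$ by the definition of the J-constant $\gamma$, so only proper $V$ carry information, and for $p=n-1$ it is the cohomological form of the Song--Weinkove positivity $n\gamma\,\omega^{n-1}-(n-1)\chi\wedge\omega^{n-2}$.

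First I would record the two positivity inputs. Since $L_1$ is ample, $L_1^{p}\cdot V=\deg_{L_1}V>0$ for every positive-dimensional $V$. Since $\gamma L_1-L_2$ is nef (an $\mathbb{R}$-divisor class, as $\gamma>0$) and $L_1$ is ample, the product $(\gamma L_1-L_2)\cdot L_1^{p-1}$ is an intersection of $p$ nef classes; pairing against the $p$-dimensional $V$ and using that the intersection of $p$ nef divisor classes with a $p$-dimensional subvariety of a projective manifold is non-negative gives
\[
(\gamma L_1-L_2)\cdot L_1^{p-1}\cdot V\;\ge\;0,\qquad\text{that is}\qquad L_2\cdot L_1^{p-1}\cdot V\;\le\;\gamma\,(L_1^{p}\cdot V).
\]
The main estimate is then immediate:
\[
n\gamma\,(L_1^{p}\cdot V)-p\,(L_2\cdot L_1^{p-1}\cdot V)\;\ge\;n\gamma\,(L_1^{p}\cdot V)-p\gamma\,(L_1^{p}\cdot V)\;=\;(n-p)\,\gamma\,(L_1^{p}\cdot V)\;>\;0,
\]
where strictness uses $p\le n-1$, hence $n-p\ge1$, together with $\gamma>0$ and $L_1^{p}\cdot V>0$. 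This is exactly the required inequality, so $(M,L_1,L_2)$ is J-stable.

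The step I expect to be the real obstacle is the reduction that precedes this computation, rather than the computation itself. One must justify that J-stability may be checked on the subvariety degenerations, i.e.\ that controlling the J-Futaki invariant of these configurations suffices to control it on all nontrivial test configurations; this is where the structural content of the definition enters. By contrast, a direct attempt to prove positivity of the J-Futaki invariant on the total space of an arbitrary test configuration does not obviously interact with the hypothesis, because the nef condition on $M$ does not control the sign of the self-intersection $\mathcal{L}_1^{n+1}$ (the relatively ample extension of $L_1$ need not be nef), so one genuinely wants the subvariety formulation, in which $L_1$ enters only through the positive degrees $L_1^{p}\cdot V$. Finally, the slack by the factor $n-p\ge1$ shows that $\gamma L_1-L_2$ nef is sufficient but not sharp, consistent with the assertion that J-stability holds on an open numerical cone around $L_2$.
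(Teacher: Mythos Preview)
Your computation is clean, but the proof has a genuine gap: the reduction you rely on is not available. J-stability in this paper (Definition~\ref{jstabilityone}) is defined by requiring the J-weight to be positive for \emph{every} nontrivial test configuration with normal total space, not only for the deformations to the normal cone of subvarieties $V\subset M$. Your inequality
\[
n\gamma\,(L_1^{p}\cdot V)-p\,(L_2\cdot L_1^{p-1}\cdot V)>0
\]
is exactly the J-weight of those special test configurations, so what you have shown is a J-slope stability statement. The assertion that this suffices for full J-stability is precisely (one direction of) the Lejmi--Sz\'ekelyhidi conjecture, which is open outside the toric case; you acknowledge the issue in your last paragraph but do not resolve it, and it cannot be resolved with present technology. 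Your remark that ``the nef condition on $M$ does not control the sign of $\mathcal{L}_1^{n+1}$'' is correct, but the conclusion that one must therefore pass to the subvariety formulation does not follow.

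The paper circumvents this by a different, and established, reduction: Proposition~\ref{jflowblow-ups} (following Odaka) shows it is enough to test J-stability on blow-ups $\scB=\Bl_{\scI}(M\times\pr^1)$ along flag ideals, with the J-weight given by the intersection number
\[
(r\scL_1-E)^n\cdot\Bigl(-\tfrac{n}{n+1}\gamma r^{-1}(r\scL_1-E)+\scL_2\Bigr).
\]
One then rewrites the bracket as $\tfrac{1}{n+1}\gamma r^{-1}(r\scL_1+nE)+(\scL_2-\gamma\scL_1)$ and applies two intersection-theoretic positivity lemmas on $\scB$ (Lemma~\ref{inequalities}): $(r\scL_1-E)^n\cdot(r\scL_1+nE)>0$ unconditionally, and $(r\scL_1-E)^n\cdot(\scL_2-\gamma\scL_1)\ge0$ because $\gamma L_1-L_2$ is nef on $M$. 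This handles \emph{all} test configurations at once. The moral is the same as yours---the nef hypothesis absorbs the $\scL_2$-term, and a residual strictly positive piece remains---but the decomposition lives on the total space of the test configuration rather than on $M$, and that is what makes the argument go through.
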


Here we have used additive notation for line bundles. Note that while $\gamma>0$ is automatic when $L_2$ is ample, our result holds for $L_2$ \emph{arbitrary} provided $\gamma>0$. Combining the previous two results gives the following, which was also proven by the first author using a different method \cite[Theorem 1.7]{Derv2}.

\begin{corollary} Let $\gamma$ be the J-constant for $(M,L_1,K_M)$, and suppose $\gamma>0$. If $\gamma L_1 - K_M$ is nef, then $(M,L_1)$ is K-stable. \end{corollary}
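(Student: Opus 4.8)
The plan is to derive this Corollary by directly combining the two theorems stated immediately before it. The Corollary concerns the polarisation data $(M, L_1, K_M)$, where $L_2 = K_M$ is the canonical class and $\gamma$ is the corresponding J-constant, assumed positive. The hypothesis is that $\gamma L_1 - K_M$ is nef. I would proceed in two steps, invoking the numerical criterion and then the relation to K-stability.

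First I would apply the numerical criterion for J-stability (the penultimate theorem): taking $L_2 = K_M$, the hypothesis that $\gamma L_1 - K_M$ is nef together with $\gamma > 0$ is precisely the condition needed to conclude that $(M, L_1, K_M)$ is J-stable. It is worth noting that the numerical theorem is stated for $L_2$ \emph{arbitrary} (not necessarily ample) as long as $\gamma > 0$, and this generality is exactly what is needed here, since $K_M$ need not be ample in the situation we care about. Since J-stability is a strengthening of J-semistability, we obtain in particular that $(M, L_1, K_M)$ is J-semistable.

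Second, I would invoke the theorem asserting that if $(M, L_1, K_M)$ is J-semistable, then $(M, L_1)$ is K-stable. Feeding in the J-semistability just established yields K-stability of $(M, L_1)$, which is the desired conclusion. The argument is therefore a formal chaining of the two preceding results, and no new analytic or geometric input is required at this stage.

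The only genuine subtlety to flag is a matching of hypotheses: the two invoked theorems are each proved under their own running assumptions, so I would check that the positivity condition $\gamma > 0$ and the nefness of $\gamma L_1 - K_M$ are consistent with, and sufficient for, the hypotheses of \emph{both} the numerical J-stability theorem and the J-semistability-implies-K-stability theorem. Assuming, as the excerpt indicates, that the K-stability theorem requires only J-semistability of $(M, L_1, K_M)$ with no extra positivity constraint beyond what is inherited, the main (indeed only) obstacle is purely bookkeeping rather than mathematical, and the Corollary follows immediately. Since the first author has an independent proof via a different method \cite[Theorem 1.7]{Derv2}, a consistency check against that result would also serve as a useful sanity verification.
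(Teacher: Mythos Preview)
Your proposal is correct and matches the paper's own proof, which in Corollary \ref{kstablecone} simply says ``This is immediate by combining Theorems \ref{sufficientconditionjstability} and \ref{jimpliesk}.'' The only hypothesis you should make explicit in your bookkeeping check is that Theorem \ref{jimpliesk} requires $M$ to be Kawamata log terminal, which is automatic here since the Introduction assumes $M$ is smooth.
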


We emphasise that in contrast to the corresponding analytic results due to Chen, Weinkove and Song-Weinkove proving properness of the Mabuchi functional \cite{C1,We2,SW}, we do \emph{not} need to assume that $K_M$ is ample. We expect however that a similar phenomenon occurs for the Mabuchi functional, see Conjecture \ref{j-functionalwithoutampleness}.

We can strengthen the above results when $M$ is a surface by a more delicate analysis of the J-stability condition. 

\begin{theorem} If $M$ is a projective surface satisfying $\frac{4}{3}\gamma L_1 - L_2$ is nef, then $(M,L_1,L_2)$ is J-semistable provided $\gamma>0$. In particular if $L_2=K_M$, then $M$ is K-stable. \end{theorem}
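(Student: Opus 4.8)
The plan is to verify J-semistability directly on test configurations, reducing to the model degenerations given by deformation to the normal cone of curves. Recall that J-semistability asks for the non-negativity (in the appropriate sign convention) of the J-weight $\mathcal{J}(\mathcal{X},\mathcal{L}_1,\mathcal{L}_2)$ of every test configuration for $(M,L_1,L_2)$; on a compactified total space this weight is the intersection number $\mathcal{L}_1^n\cdot\mathcal{L}_2-\tfrac{n\gamma}{n+1}\mathcal{L}_1^{n+1}$, with $\mathcal{L}_2$ the pullback of $L_2$. First I would record that deformation to the normal cone of a point is automatically stable: blowing up $\{p\}\times\{0\}\subset M\times\mathbb{P}^1$ and setting $\mathcal{L}_1=\rho^*L_1-cE$ with $\rho\colon\mathcal{X}\to M$ the natural map, all mixed intersections with $\rho^*L_2$ vanish while $\mathcal{L}_1^{3}=-c^{3}E^{3}=-c^{3}<0$, so $\mathcal{J}>0$ and no constraint arises. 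Thus only curves $C\subset M$ can obstruct, matching the Lejmi--Sz\'ekelyhidi picture in which only positive-dimensional subvarieties enter.

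The core computation I would carry out on the $3$-fold $\mathcal{X}=\Bl_{C\times\{0\}}(M\times\mathbb{P}^1)$. Writing $H=\rho^*L_1$, $G=\rho^*L_2$ and $E$ for the exceptional divisor (a $\mathbb{P}^1$-bundle $\pi_E\colon E\to C$), the relevant intersection numbers are $H^3=H^2E=HGE=0$, together with $HE^2=-(L_1\cdot C)$, $GE^2=-(L_2\cdot C)$ and $E^3=-C^2$, the last three coming from $E=\mathbb{P}(N_{C/M}\oplus\mathcal{O}_C)$, the Grothendieck relation, and $c_1(N_{C/M})=C^2$. With $\mathcal{L}_1=H-cE$ this yields $\mathcal{L}_1^3=c^3C^2-3c^2(L_1\cdot C)$ and $\mathcal{L}_1^2\cdot\mathcal{L}_2=-c^2(L_2\cdot C)$, so, setting $b=L_1\cdot C$ and $d=L_2\cdot C$,
\begin{equation*}
\mathcal{J}=\mathcal{L}_1^2\cdot\mathcal{L}_2-\tfrac{2\gamma}{3}\mathcal{L}_1^3=c^2\Bigl(2\gamma b-d-\tfrac{2\gamma}{3}C^2\,c\Bigr).
\end{equation*}
The sign of $\mathcal{J}$ for $c>0$ is that of the linear bracket, and J-semistability amounts to this bracket being non-negative for every admissible $c$.

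It remains to determine the admissible range of $c$ and to optimise. The point is that $\mathcal{L}_1=\rho^*L_1-cE$ must be nef; restricting to the component of the central fibre isomorphic to $M$ gives $L_1-cC$, so nefness forces $(L_1-cC)\cdot C=b-cC^2\geq0$, i.e. $c\leq b/C^2$ whenever $C^2>0$ (one checks the exceptional component $E$ imposes no further bound, since $N_{C/M}$ is then ample). When $C^2>0$ the bracket is decreasing in $c$, so the binding value is the endpoint $c=b/C^2$, where it equals $\tfrac{4}{3}\gamma b-d$; non-negativity is thus exactly $(\tfrac{4}{3}\gamma L_1-L_2)\cdot C\geq0$, which is the hypothesis. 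When $C^2\leq0$ the bracket is non-decreasing in $c$, so its infimum over the admissible range is attained as $c\to0^+$ and equals $2\gamma b-d$, and $(\tfrac{4}{3}\gamma L_1-L_2)\cdot C\geq0$ together with ampleness of $L_1$ gives $d\leq\tfrac{4}{3}\gamma b\leq2\gamma b$. Hence every normal-cone test configuration is J-semistable, and I would conclude J-semistability of $(M,L_1,L_2)$. The final assertion, that $L_2=K_M$ yields K-stability of $(M,L_1)$, is then immediate from the previously established implication that J-semistability with $L_2=K_M$ implies K-stability.

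I expect the main obstacle to be the reduction to normal-cone test configurations: a priori J-semistability must be checked against all test configurations, whose central fibres are arbitrary, possibly reducible or non-normal surfaces, and one must argue either that the J-weight of a general surface test configuration is controlled by its restriction to the curves in the central fibre, or that it suffices to test the model degenerations above. Pinning down the exact admissible interval for $c$ --- in particular that the nef threshold on the $M$-component, rather than on $E$, is what binds when $C^2>0$ --- is the other delicate point, since a smaller admissible range only weakens the required inequality, and it is precisely the endpoint $c=b/C^2$ that produces the sharp constant $\tfrac{4}{3}$ in place of $2$.
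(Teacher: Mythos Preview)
Your computation for deformation to the normal cone of a curve is clean and correctly isolates the constant $\tfrac{4}{3}$, but the proof has a genuine gap that you yourself flag at the end: you only check normal-cone test configurations, and this reduction is not justified. J-semistability must be verified on \emph{all} test configurations, and testing only against deformations to the normal cone of subvarieties is the analogue of slope stability. The paper explicitly notes (in the remark following the corollary on K-stability of surfaces) that slope stability is strictly weaker than K-stability, citing Panov--Ross for examples; there is no reason to expect the J-version to behave differently. So your argument as written proves only slope J-semistability, not J-semistability.

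The paper's route is entirely different and does not pass through subvarieties at all. Using Odaka's reduction, every test configuration is dominated by a blow-up $\scB=\Bl_{\scI}(M\times\pr^1)$ along a \emph{flag ideal} $\scI=I_0+(t)I_1+\cdots+(t^N)$ with relatively semi-ample line bundle $r\scL_1-E$, and this reduction \emph{is} complete. On such a blow-up the J-weight is rewritten as
\[
(r\scL_1-E)^2\cdot\Bigl(\tfrac{2}{3}\gamma r^{-1}(r\scL_1+E)+(-\tfrac{4}{3}\gamma\scL_1+\scL_2)\Bigr).
\]
The second piece is handled by the general nef inequality $(r\scL_1-E)^n\cdot\scR\le 0$ applied to $R=\tfrac{4}{3}\gamma L_1-L_2$. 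The first piece requires the surface-specific inequality $(r\scL_1-E)^2\cdot(r\scL_1+E)\ge 0$, proved for arbitrary flag ideals by reducing (after dividing out powers of $t$) to support of dimension at most one, then using $\scL_1^3=\scL_1^2\cdot E=0$ and an Odaka-type positivity $-E^2\cdot(r\scL_1-E)\ge 0$. This is the step that does the real work and has no counterpart in your argument: it controls the J-weight for \emph{every} flag ideal, not just those of the form $I_C+(t)$. Your normal-cone calculation recovers the special case where the flag has length one and $I_0$ is the ideal of a reduced curve, but the general case (nested thickened subschemes, several $t$-levels) is what is actually needed.
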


This gives the most general currently known criterion proving K-stability of polarised surfaces with $\gamma>0$.

Finally, building on work of Sano and Seyyadali \cite{Sa,Sey2}, we prove that J-balanced metrics are the fixed points of a natural dynamical system. This gives a variational approach to the existence of such metrics and provides a natural interpretation of the energy functionals associated to the J-flow that appeared in \cite{SW,C1}.

\begin{theorem} Assume there exists a critical metric of the J-flow. Then for each $k\gg 0$, there is a natural map $$T_{k,\chi} :\Met(L_1^k) \rightarrow \Met(L_1^k)$$ defining a dynamical system which has the J-balanced metric as its (unique) fixed attractive point. \end{theorem}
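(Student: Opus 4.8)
The plan is to realise the J-balanced metric as the unique attractive fixed point of an explicit iteration on $\Met(L_1^k)$, following the strategy of Sano and Seyyadali for balanced metrics \cite{Sa,Sey2}. First I would define $T_{k,\chi}$ as a twisted composition of the natural quantisation maps. Writing $V=H^0(M,L_1^k)$, recall that a Hermitian metric $h\in\Met(L_1^k)$ induces a Bergman metric $\mathrm{FS}(h)$ together with its curvature $\omega_h$, while integration against a volume form on $M$ sends a Bergman metric to an $L^2$-inner product on $V$, namely the $\Hilb$ map. The twist consists in replacing the usual volume form $\omega_h^n/n!$ by the measure $\chi\wedge\omega_h^{n-1}/(n-1)!$ in the analogue of $\Hilb$; the resulting operator $T_{k,\chi}$ is normalised so that its fixed points are exactly the metrics for which the endomorphism of $V$ obtained from the $\chi$-weighted pairing is proportional to the one obtained from $\omega_h^n$. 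By the definition of J-balanced metrics this proportionality is precisely the J-balancing condition, so the fixed points of $T_{k,\chi}$ coincide with the J-balanced metrics.

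With the fixed-point set identified, existence and uniqueness are immediate from the earlier results: the hypothesis that the J-flow admits a critical metric gives, for $k\gg0$, a J-balanced metric by the existence result established earlier in the paper, and the uniqueness of J-balanced metrics shows that this fixed point is unique. It remains to prove that it is attractive.

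For attractiveness I would linearise $T_{k,\chi}$ at the J-balanced metric $h_k$. Differentiating the two pairings in the definition and using $h_k$-orthonormality to diagonalise, one computes that $dT_{k,\chi}$ is self-adjoint with respect to a natural inner product on the tangent space $\mathrm{Herm}(V)$ and takes the form $\Id-\tfrac{1}{k}\mathcal{D}_k$, where $\mathcal{D}_k$ is the quantisation of the linearised J-flow operator. At a critical point $\omega_\phi$ the derivative of the continuous J-flow operator $\phi\mapsto \chi\wedge\omega_\phi^{n-1}/\omega_\phi^n$ is the second-order elliptic operator $\psi\mapsto (n-1)\chi\wedge\omega_\phi^{n-2}\wedge\ddbar\psi/\omega_\phi^n-\gamma\,\Delta_{\omega_\phi}\psi$, whose leading term is of Laplace type for the positive form $\chi\wedge\omega_\phi^{n-2}$, hence a strictly negative operator on functions of vanishing mean; equivalently the associated operator $\mathcal D$ is strictly positive and bounded. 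Transferring this positivity to the finite-dimensional setting via the Bergman kernel expansions used in the proof of our main convergence theorem shows that the eigenvalues of $\mathcal{D}_k$ lie in a fixed interval $[a,b]\subset(0,\infty)$ for all $k\gg0$. Consequently the eigenvalues of $dT_{k,\chi}=\Id-\tfrac1k\mathcal{D}_k$ lie in $(0,1)$, so $T_{k,\chi}$ is a contraction near $h_k$ and $h_k$ is attractive.

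The main obstacle is this last step: computing the linearisation $dT_{k,\chi}$ precisely and controlling it uniformly in $k$. One must show that the $\chi$-twist does not destroy the self-adjointness and positivity present in the untwisted balanced case, and that the $O(1/k)$ structure persists with the positive operator $\mathcal{D}$ being exactly the negative of the linearised J-flow operator. This is where the existence of a critical metric of the J-flow is essential: only at a genuine critical point is the linearised continuous operator strictly negative, which is what forces the spectral radius of $dT_{k,\chi}$ below one. Estimating the error terms in the Bergman kernel expansions uniformly, so that the spectral gap survives in the quantum limit, is the technical heart of the argument.
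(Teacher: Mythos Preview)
Your proposal takes a genuinely different route from the paper's proof, and there is a gap in what it establishes.

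The paper does \emph{not} linearise $T_{k,\chi}$ at the fixed point. Instead it follows a global variational strategy in the style of Sano--Seyyedali. One introduces the functional
\[
\hat I_k(h)=J_\chi(h)+\frac{\Vol_{L_1}(M)}{N+1}\log\det Hilb_\chi(h)
\]
on $\Met(L_1^k)$ (and the companion functional $I_{\mu^0_{k,\chi}}$ on the Bergman space), and shows by two elementary inequalities (an arithmetic--geometric mean argument and a monotonicity of $J_\chi$) that each application of $Hilb_\chi\circ FS$ strictly decreases $I_{\mu^0_{k,\chi}}$, while $FS\circ Hilb_\chi$ decreases $\hat I_k$. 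Since $I_{\mu^0_{k,\chi}}$ is the Kempf--Ness integral of the finite-dimensional moment map, it is convex along Bergman geodesics and proper once a J-balanced metric exists. Properness plus the fact that the iterates form a decreasing, bounded sequence of energy values forces the normalised iterates to stay in a compact set; a short contradiction argument then shows that \emph{every} orbit converges to the unique J-balanced metric. Note that the existence of a critical metric for the J-flow is used only to guarantee (via the earlier quantisation result) that a J-balanced metric exists for $k\gg 0$; after that the argument is purely finite-dimensional and makes no further use of the critical metric or of asymptotics in $k$.

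Your linearisation argument, by contrast, would at best prove \emph{local} attractiveness: a spectral radius strictly below $1$ for $dT_{k,\chi}$ gives a contraction only in a neighbourhood of the fixed point, whereas the statement (and the paper's Theorem on iterates) asserts convergence from an \emph{arbitrary} starting metric. Moreover, the asserted form $dT_{k,\chi}=\Id-\tfrac{1}{k}\mathcal D_k$ with $\mathcal D_k$ having spectrum in a fixed positive interval is not justified: the operator $T_{k,\chi}=FS\circ Hilb_\chi$ factors through the finite-dimensional Bergman space, and its linearisation involves the $Q_k$-type operator of the paper rather than a simple $\Id-\tfrac{1}{k}\Delta$ structure; establishing uniform spectral bounds would itself require the Bergman-kernel machinery and is not a routine computation. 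If you want to salvage the approach, you would still need a separate global argument (e.g.\ the energy-decreasing property above) to pass from local contraction to global convergence.
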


 This also gives an algorithm to compute critical metrics, however we do not pursue this direction in the present work.

\subsection{The J-flow from symplectic geometry}
The J-flow is very natural from the point of view of symplectic geometry as we now briefly explain. Consider $M$ a compact symplectic manifold and $\omega,\chi$ are symplectic forms.  If one considers the manifold ${\rm Diff}(M)$  of diffeomorphisms $f:M\rightarrow M$ homotopic to
the identity and equipped with a natural symplectic form \begin{equation}\Upsilon_{\chi,\omega}(a,b)=\int_M \chi(a,b)\frac{\omega^n}{n!} \label{Omegachiomega}, \end{equation}
then there is a naturally associated moment map picture \cite{D9}. Here $a,b\in \Gamma(f^*(TM))$ as we identify the tangent space of ${\rm Diff}(M)$ at $f$ to the space of smooth sections of $f^*(TM)$. The group $\mathcal{G}$ of exact $\omega$-symplectomorphisms of $M$ acts on ${\rm Diff}(M)$
and preserves $\Upsilon_{\chi,\omega}$. Since we can identify the Lie algebra $Lie(\mathcal{G})$ with the set $$\{f\in C^{\infty}(M,\mathbb{R}), \,\, \int_M \,f\, {\omega^n}=0\},$$
we can express simply the associated moment map $\mu_J: {\rm Diff}(M)\rightarrow Lie(\mathcal{G})^*$ for the group action as
\begin{equation}\label{mu1}   \mu_J(f)=\frac{f^*(\chi)\wedge \omega^{n-1}}{\omega^n} - \gamma.
 \end{equation}
This moment map induces also a gradient flow $f_t$ of the function $\Vert \mu_J(f_t)\Vert^2$. Assuming that $M$ has an integrable almost-complex structure compatible with $\omega$ (in the sequel $M$ will always be K\"ahler), the $J$-flow is just the gradient flow expressed using $(f_t^*)^{-1}(\omega)$ on $M$, fixing the complex structure and varying the form within the K\"ahler class.
From \cite{C1}, it is known long time existence of Donaldson's J-flow for all time, that is \eqref{Jflow} admits a smooth solution for all $t\geq 0$. If it exists, Chen moreover shows that the critical metric is actually unique.

\subsection{CscK metrics and the J-flow}\label{s2}

While the Yau-Tian-Donaldson conjecture predicts the existence of a cscK metric is equivalent to K-stability, in practice it seems just as difficult to check K-stability in specific examples as it is to check the existence of a cscK metric. Thus, it is beneficial to give more explicit criteria that apply in concrete examples. For example, one can ask if given such a K\"ahler class that admits a cscK metric (for example a K\"ahler-Einstien metric, where many examples are known), if it is possible to describe nearby K\"ahler classes that have also cscK metrics. One way of achieving this goal is by understanding properties of the Mabuchi energy; recall an important conjecture of Mabuchi-Tian asserts that the existence of a cscK metric should be equivalent to the properness of the Mabuchi energy (a small modification is needed if $M$ admits holomorphic vector fields).

From the above discussion, it is therefore natural to search for K\"ahler classes with proper Mabuchi energy around a K\"ahler class that is endowed with a cscK metric. In this direction, it was first proven by Chen that if $[\chi]\in -c_1(M)<0$ and there exists a critical metric of the J-flow in the class $[\omega]$, then the Mabuchi energy for $[\omega]$ is bounded from below, see \cite{C2,We1}. This was strengthened by  Song and Weinkove who proved the properness of the Mabuchi energy \cite{SW}. Thus Mabuchi-Tian's conjecture predicts the existence of a cscK metric in such $[\omega]$.
  
 Donaldson observed that a necessary condition for the existence of a critical metric is the following inequality on the Chern classes \begin{equation}\label{Dcond} n\gamma[\omega]-[\chi]= n\gamma c_1(L_1)-c_1(L_2)>0. \end{equation} In dimension 2, it is proved in \cite{C1,We2} that the condition is sufficient,  the problem being proved equivalent to solve a Monge-Amp\`ere equation in \cite{C1} (see also the study of the convergence of the J-flow in \cite{We2}). However, the condition is \emph{not} sufficient in higher dimensions, see \cite{L-Sz}. Ideally, one could hope that an optimal Chern inequality similar to \eqref{Dcond} (or more generally an algebro-geometric condition) would imply the existence of critical metrics of the J-flow, and that this condition would be relatively simple to check, at least for general type manifolds, in view of \cite[Section 4]{Ross}. Keeping in mind the Yau-Tian-Donaldson conjecture, one can expect that the right algebro-geometric condition for the existence of a critical metric would provide the K-stability of the class.

The problem to find a good algebro-geometric condition equivalent to the existence of a critical metric is subtle. For instance, it is known from \cite{SW,We1} that
if there exists a K\"ahler metric $\omega'=\omega + \ddbar \psi\in c_1(L_1)$ satisfying 
\begin{equation}\label{cond1}(n\gamma \omega' - (n-1) \chi) \wedge (\omega')^{n-2} \wedge u \wedge \bar u >0\end{equation}
for all $(1,0)$-form $u$, then there is convergence of the flow towards a critical metric. Conversely if there is convergence such a metric $\omega'$ does exist. Unfortunately, it seems pretty hard to check \eqref{cond1} in practice. For instance it is not even clear whether it depends on the class only (and not on the forms) as pointed out in \cite{L-Sz}. 

The most general result using the J-flow to prove properness of the Mabuchi functional for certain K\"ahler classes is due to Li-Shi-Yao \cite{Li-Shi-Yao}. We remark that they are also able to use J-flow techniques to prove properness of the Mabuchi functional for certain K\"ahler classes also when $c_1(M)$ is \emph{positive}, i.e. $M$ is Fano (see \cite{Derv3} for a similar result proven by a direct analysis of the Mabuchi functional).

Very recently, Lejmi and Sz{\'e}kelyhidi \cite{L-Sz} have made an important contribution to the subject by proposing an algebro-geometric condition (that we call J-stability, see Section \ref{sectalg-geom}) using the technology of deformation to the normal cone and modelled on the K-stability theory. Their conjecture has been proven in the toric case \cite{Sz-Co}.


\subsection{Organisation of the paper}
In section \ref{sect11}, we introduce a notion of \textit{J-balanced} embeddings for $X$ in certain projective spaces depending on a quantum parameter $k\gg0$. This notion of J-balanced embedding is based on the symplectic formalism of moment maps and has an algebro-geometric interpretation that will become clear in Section \ref{sectalg-geom}. By pulling-back  the Fubini-Study metric, such embeddings provide algebraic metrics that we call \textit{J-balanced}  metrics.
We also introduce certain flows on the space of Bergman metrics of level $k$, called J-balancing flows, that converge towards J-balanced metrics if they do exist. More precisely, we prove that at the quantum limit $k\rightarrow +\infty$, the J-balancing flows converge towards to Donaldson's J-flow (Theorem \ref{thmJbal1}, Theorem \ref{mainthm1}).  
\par In Section \ref{varJflow}, building on the work of Sano and Seyyedali \cite{Sa,Sey2}, we prove that J-balanced metrics when they do exist, are actually attractive fixed points of a natural dynamical system (Theorem \ref{conviter}, Corollary \ref{Jcoroiter}). This gives a variational approach to the existence of such metrics and provides a natural interpretation of the energy functionals associated to the J-flow that appeared in \cite{SW,C1}. 
\par In section \ref{sectalg-geom}, we introduce a generalisation of the notion of Chow stability adapted to our context and prove that the existence of J-balanced metrics implies the stability in that sense (Theorem \ref{balancedimpliesstable}). In particular, our Chow stability condition is a necessary condition for the existence of a critical metric to the J-flow (Corollary \ref{corChow}). We describe how it is related to the notion of J-stability of Lejmi-Sz{\'e}kelyhidi. 
Building on the blowing-up formalism of Wang and Odaka for expressing weights, we are able to relate the notion of J-stability  and the classical notion of K-stability, giving a simple criterion for J-stability which corresponds to (weaker) algebraic version of a result of Song-Weinkove \cite{SW} (Theorem \ref{sufficientconditionjstability}, Theorem \ref{jstablesurfaces}). Eventually we relate J-semistability to K-stability  (Theorem \ref{jimpliesk}).\\

{\small
\noindent {\bf Acknowledgments.} {\small
 The first author would like to thank Julius Ross for helpful discussions on Hilbert and Chow stability. The second author is very grateful to Joel Fine for illuminating conversations on the subject of balanced embeddings throughout the years, and also to Ben Weinkove.\\
 The first author was funded by a studentship associated to an EPSRC Career Acceleration Fellowship (EP/J002062/1). \\
 The work of the second author has been carried out in the framework of the Labex Archim\`ede (ANR-11-LABX-0033) and of the A*MIDEX project (ANR-11-IDEX-0001-02), funded by the ``Investissements d'Avenir" French Government programme managed by the French National Research Agency (ANR). The second author was also partially supported by supported by the ANR project EMARKS, decision No ANR-14-CE25-0010. As By-fellow of Churchill college, he is very grateful to Churchill college and Cambridge University for providing him excellent conditions of work during his stay. \\
 Both authors are very grateful to the referees for their detailed reading of the manuscript and their suggestions for its improvement.
}}

\section{Finite dimensional approach to the J-flow \label{sect11}}
Given the Hermitian metric $h\in {\rm Met}(L^k_1)$ with positive curvature, one can consider the 
Hilbertian map \label{Hilbkchi}
$$Hilb_{\chi}=Hilb_{k,\chi} : {\rm Met}(L_1^k) \rightarrow {\rm Met}(H^0(L_1^k)) $$
such that $$Hilb_{\chi}(h)=\frac{1}{\gamma}\int_M \langle .,. \rangle_h \,\,\, {\chi \wedge {c_1(h)^{n-1}}}$$ is the $L^2$ metric induced by the fibrewise $h$  and the volume form $\chi\wedge c_1(h)^{n-1}$. 
On the other hand, one can consider the injective Fubini-Study maps $FS=FS_k :  {\rm Met}(H^0(L_1^k)) \rightarrow {\rm Met}(L_1^k), $
such that for $H\in {\rm Met}(H^0(L_1^ k) )$, $\{s_i\}$ an $H$-orthonormal basis of $H^0(X,{L_1}^k)$ and for all $p\in X$,
$$\sum_{i=1}^{\dim H^0(X,{L_1}^k) }\vert s_i(p)\vert^2_{FS_k(H)}= \frac{\dim H^0(X,{L_1}^k)}{\Vol_{{L}_1}(X)} ,$$
which means that we fix pointwisely the metric $FS_k(H)\in  {\rm Met}({L}^k)$. The curvature of $FS(H)$ is the pull-back of the Fubini-Study metric living in the projective space, using the embedding defined by the $H$-orthonormal basis $\{s_i\}$.

We also define the map \label{Tkchi}
$$T_{k,\chi}= FS\circ Hilb_{\chi}.$$

\begin{definition}[J-balanced metric]
A fixed point $h_k$ of the map $$T_{k,\chi}: {\rm Met}(L_1^k) \rightarrow {\rm Met}(L_1^k)$$ is called a J-balanced metric at level $k$.
\end{definition}

Let us denote in the sequel $N=N_k=\dim H^0(L_1^k)-1$. We introduce a moment map setting in finite dimensions. Let us consider first 
$\mu_{FS} : \mathbb{P}^{N} \rightarrow \sqrt{-1}Lie(U(N+1))$ which is a moment map for the $U(N+1)$ action and the Fubini-Study metric $\omega_{FS}$ on $\mathbb{P}^{N}$. Given homogeneous unitary coordinates, one sets explicitly  $\mu_{FS}=(\mu_{FS})_{\alpha,\beta}$ as
\begin{equation}
\left(\mu_{FS}([z_0,...,z_N])\right)_{\alpha,\beta}=\frac{z_\alpha \bar{z}_\beta}{\sum_i |z_i|^2}. \label{mu}
\end{equation}
Then,  given a holomorphic embedding $\iota:M\hookrightarrow  \mathbb{P}H^0(L_1^k)^*$, and the Fubini Study form $\omega_{FS}$ on the projective space, define
\begin{equation}\mu_{k,\chi}(\iota)=\frac{1}{\gamma}\int_M \mu_{FS}(\iota(p)) \;{\chi\wedge \iota^*(\omega_{FS}^{n-1})} (p).\label{Jmomentmap} \end{equation}
An alternative, but equivalent, point of view is to consider $\mu_{k,\chi}$ as a map from the space of holomorphic bases of $\mathbf{s}=\{s_i\}$ of $H^0(L_1^k)$ as
a basis determines a unique Hermitian inner product $H$ for which it is orthonormal. Then, this inner product induces a Fubini-Study metric $\omega_{FS,\mathbf{s}}$ and thus we can consider
$$\mu_{k,\chi}(\mathbf{s})=\frac{1}{\gamma}\int_M \mu_{FS}([s_1(p),...,s_N(p)]) \;{\chi\wedge \omega_{FS,\mathbf{s}}^{n-1}} (p).$$
Later we will be interested in special Hermitian metrics $H$ (associated to a particular basis) and so we shall write $\mu_{k,\chi}(H)$.

\begin{proposition}\label{claim}
 The map $\mu_{k,\chi}$ is a moment map for the $U(N+1)$ action over the space of all bases of $H^0(L_1^k)$ with respect to the symplectic structure defined by Equation \eqref{struct}. 
\end{proposition}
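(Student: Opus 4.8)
The plan is to check, directly, that $\mu_{k,\chi}$ satisfies the two defining properties of a moment map for the $U(N+1)$-action on the space of bases $\scB$ of $H^0(L_1^k)$: equivariance, and the infinitesimal relation $d\langle\mu_{k,\chi},\xi\rangle=\Omega(\xi^{\#},\cdot)$ for every $\xi\in Lie(U(N+1))$, where $\Omega$ is the symplectic form of \eqref{struct} and $\xi^{\#}$ the fundamental vector field it generates on $\scB$. The whole strategy is to reduce both to the corresponding pointwise statements on $\PP^{N}$: by \eqref{mu} the map $\mu_{FS}$ is a moment map for the $U(N+1)$-action on $(\PP^{N},\omega_{FS})$, so that $d\langle\mu_{FS},\xi\rangle=\omega_{FS}(\xi^{\#}_{\PP},\cdot)$, and I would then integrate this identity over $M$ against the twisted measure $\chi\wedge\iota^{*}\omega_{FS}^{n-1}$.

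First I would dispose of equivariance, which is the easier half. A basis $\mathbf{s}$ determines the embedding $\iota_{\mathbf{s}}:M\hookrightarrow\PP H^{0}(L_{1}^{k})^{*}$, and for $g\in U(N+1)$ one has $\iota_{g\cdot\mathbf{s}}=g\circ\iota_{\mathbf{s}}$. Since unitary transformations are isometries of the Fubini--Study metric, $\iota_{g\cdot\mathbf{s}}^{*}\omega_{FS}=\iota_{\mathbf{s}}^{*}\omega_{FS}$; as $\chi$ is fixed on $M$ and $\mu_{FS}$ is itself $U(N+1)$-equivariant, these combine to give $\mu_{k,\chi}(g\cdot\mathbf{s})=\mathrm{Ad}^{*}_{g}\,\mu_{k,\chi}(\mathbf{s})$. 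It then remains only to establish the infinitesimal identity.

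For the infinitesimal identity I would differentiate $\langle\mu_{k,\chi},\xi\rangle$ along an arbitrary tangent vector $v\in T_{\mathbf{s}}\scB$. Identifying $\scB$ with a $GL(N+1,\comp)$-torsor, each $v$ induces a vector field $\hat{v}$ on $\PP^{N}$, and the derivative splits into two terms: a first term in which $\mu_{FS}$ is differentiated along $\hat v$, and a second term in which the measure $\chi\wedge\iota^{*}\omega_{FS}^{n-1}$ is differentiated. The first term is immediate from the pointwise moment map property, since $\tfrac{d}{dt}\langle\mu_{FS}\circ\iota_{t},\xi\rangle=\omega_{FS}(\xi^{\#}_{\PP},\hat v)$.

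The main obstacle, and the only genuinely delicate point, is the second term, coming from the motion of the embedding. Here I would use $\tfrac{d}{dt}\iota_{t}^{*}\omega_{FS}^{n-1}=(n-1)\,\iota^{*}\big(\mathcal{L}_{\hat v}\omega_{FS}\wedge\omega_{FS}^{n-2}\big)$ together with $\mathcal{L}_{\hat v}\omega_{FS}=d\big(\omega_{FS}(\hat v,\cdot)\big)$, which holds because $\omega_{FS}$ is closed, and then integrate by parts over the closed manifold $M$. Since both $\chi$ and $\iota^{*}\omega_{FS}$ are closed, the exact contribution drops out by Stokes, and one is left with an expression which, added to the first term, reorganises into $\Omega(\xi^{\#},v)$ for the form \eqref{struct}; it is precisely in this matching that the definition of $\Omega$ is used. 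As a useful sanity check, note that along directions tangent to the $U(N+1)$-orbit the field $\hat v$ is Killing for $\omega_{FS}$, so this second term vanishes and only the complex directions $\sqrt{-1}\,Lie(U(N+1))$ contribute. The computation is the $\chi$-twisted analogue of Donaldson's moment map calculation for balanced embeddings \cite{D1,D2}, the sole new feature being the fixed background form $\chi$, which is carried harmlessly through each integration by parts thanks to $d\chi=0$. I expect the bookkeeping of these terms, rather than any conceptual difficulty, to be the crux.
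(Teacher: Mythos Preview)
Your approach is correct and is essentially the paper's proof run in the opposite direction: the paper starts from $\varpi$ as defined in \eqref{struct} and manipulates it until it equals $\langle d\mu_{k,\chi}(\hat B),A\rangle$, whereas you differentiate $\langle\mu_{k,\chi},\xi\rangle$ and aim to recognise $\varpi$. Both routes pass through the same intermediate expression
\[
\frac{1}{\gamma}\int_M \omega_{FS}(\hat A,\hat B)\,\chi\wedge\omega_{FS}^{n-1}
-(n-1)\frac{1}{\gamma}\int_M i_{\hat A}\omega_{FS}\wedge i_{\hat B}\omega_{FS}\wedge\chi\wedge\omega_{FS}^{n-2},
\]
obtained exactly as you indicate, via the pointwise moment map identity for $\mu_{FS}$ together with Cartan's formula and Stokes on $M$ (using $d\chi=0$ and $d\omega_{FS}=0$).

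The one step you leave as ``reorganises into $\Omega$'' is precisely where the paper does the only nontrivial bookkeeping: it decomposes $\hat A,\hat B$ into their components tangent and normal to $M$ and invokes the pointwise algebraic identity (from \cite{Gaud})
\[
\tfrac{1}{n}\,g(\chi,\alpha\wedge\beta)\,\omega^n
=\omega(\alpha,\beta)\,\chi\wedge\omega^{n-1}-(n-1)\,\alpha\wedge\beta\wedge\chi\wedge\omega^{n-2},
\]
applied with $\alpha=i_{\hat A|_{TM}}\omega_{FS}$, $\beta=i_{\hat B|_{TM}}\omega_{FS}$. This is what converts the intermediate expression above into the specific form \eqref{struct}, with its $\hat A^\perp,\hat B^\perp$ term and its $g_{FS}(\chi,\cdot)$ term. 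Your outline is otherwise complete; just be aware that this identity, and the tangent/normal splitting, are what you will actually need when you carry out the matching you flag as the crux.
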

\begin{proof}
We follow essentially the techniques developped in \cite[Theorem 8.5.1]{Gaud}. Given $A\in Lie(GL(N+1))$, we denote $\hat{A}$ the induced action on ${\mathbb{P}^N}$ and 
$\hat{A}^\perp=\hat{A}-\hat{A}\vert_{TM}$ where  $\hat{A}\vert_{TM}$ stands for the orthogonal projection on $TM$ with respect to the Fubini-Study metric.
On the space of bases identified to $GL(N+1)$, we have a natural symplectic structure $\varpi$ defined at $\mathbf{s}$ by
\begin{align}
\varpi(\hat{A},\hat{B})=&\frac{1}{\gamma}\int_M \omega_{FS,\mathbf{s}}(\hat{A}^\perp,\hat{B}^\perp) \; {\chi \wedge \omega_{FS,\mathbf{s}}^{n-1}} \nonumber \\ 
 &+\frac{1}{n\gamma}\int_M g_{FS,\mathbf{s}}\left(\chi, i_{\hat{A}\vert_{TM}}\omega_{FS,\mathbf{s}}\wedge i_{\hat{B}\vert_{TM}}\omega_{FS,\mathbf{s}}\right)\; {\chi \wedge \omega_{FS,\mathbf{s}}^{n-1}},\label{struct}
\end{align}
where $A,B\in Lie(GL(N+1))$ and $g_{FS}=\omega_{FS,\mathbf{s}}(\cdot, J\cdot)$ is the associated metric to the K\"ahler form $\omega_{FS,\mathbf{s}}$. 
We shall use the following fact (see for instance the proof of \cite[Lemma 3.2.1]{Gaud}): for any 1-forms $\alpha,\beta$ and  K\"ahler forms $\omega,\chi$, one has the identity  
$$\frac{1}{n}g(\chi,\alpha\wedge \beta)\omega^n = \omega(\alpha,\beta)\wedge \chi\wedge \omega^{n-1}- (n-1)\alpha\wedge \beta \wedge \chi \wedge \omega^{n-2},$$
where $g$ is the associated metric to $\omega$. Hence we can compute 
\begin{align*}
\gamma\varpi(\hat{A},\hat{B})=&\int_M \omega_{FS,\mathbf{s}}(\hat{A}^\perp,\hat{B}^\perp) \; {\chi \wedge \omega_{FS,\mathbf{s}}^{n-1}}  \\
&+\int_M \omega_{FS,\mathbf{s}}\left(i_{\hat{A}\vert_{TM}}\omega_{FS,\mathbf{s}}, i_{\hat{B}\vert_{TM}}\omega_{FS,\mathbf{s}}\right) {\chi \wedge \omega_{FS,\mathbf{s}}^{n-1}} \\
&-(n-1) \int_M i_{\hat{A}\vert_{TM}}\omega_{FS,\mathbf{s}}\wedge i_{\hat{B}\vert_{TM}}\omega_{FS,\mathbf{s}}\wedge  {\chi \wedge \omega_{FS,\mathbf{s}}^{n-2}},\\
=& \int_M \omega_{FS,\mathbf{s}}(\hat{A},\hat{B}) \; {\chi \wedge \omega_{FS,\mathbf{s}}^{n-1}} \\
&-(n-1) \int_M i_{\hat{A}\vert_{TM}}\omega_{FS,\mathbf{s}}\wedge i_{\hat{B}\vert_{TM}}\omega_{FS,\mathbf{s}}\wedge  {\chi \wedge \omega_{FS,\mathbf{s}}^{n-2}},\\
=& \int_M \omega_{FS,\mathbf{s}}(\hat{A},\hat{B}) \; {\chi \wedge \omega_{FS,\mathbf{s}}^{n-1}} \\
&-(n-1) \int_M \partial \mu_{FS}(\hat{A}) \wedge \bar{\partial} \mu_{FS}(\hat{B}) \wedge  {\chi \wedge \omega_{FS,\mathbf{s}}^{n-2}},\\
=& \int_M \omega_{FS,\mathbf{s}}(\hat{A},\hat{B}) \; {\chi \wedge \omega_{FS,\mathbf{s}}^{n-1}} \\
&+(n-1) \int_M  \tr(\mu_{FS}A) \bar{\partial} \partial \mu_{FS}(\hat{B}) \wedge {\chi \wedge \omega_{FS,\mathbf{s}}^{n-2}},\\
=\hspace{-0.02cm}&\hspace{-0.02cm}  \big\langle d \int_M  \mu_{FS}\; \chi\wedge \omega_{FS,\mathbf{s}}^{n-1}(\hat{B}),A \big\rangle.
\end{align*}
During the computation we used the fact that the embedding given by $\mathbf{s}$ is holomorphic and $\mu_{FS}$ is a moment map on the projective space. 
Moreover $\mu_{k,\chi}$ is $Ad$-equivariant as the integral of the $Ad$-equivariant moment map $\mu_{FS}$. 
\end{proof}
Now $SU(N+1)$ acts isometrically on the spaces of holomorphic bases with the moment map given by 
$$\mathbf{s}\mapsto-\sqrt{-1}\left(\mu_{k,\chi}(\mathbf{s}) - \frac{\tr(\mu_{k,\chi}(\mathbf{s}))}{N+1}\Id_{N+1}\right)\in \sqrt{-1}Lie(SU(N+1)).$$
In the {Bergman space} of metrics $GL(N+1)/U(N+1),$ we have a preferred metric associated and this is precisely a J-balanced metric.

\begin{definition}[J-balanced embedding]\label{mu0kchi}
The embedding $\iota$ is J-balanced if and only if $$\mu^0_{k,\chi}(\iota):=\mu_{k,\chi}(\iota) - \frac{\tr(\mu_{k,\chi}(\iota))}{N+1}\Id_{N+1}=0.$$
\end{definition}

A J-balanced  embedding corresponds (up to $SU(N+1)$-isomorphisms) to a J-balanced metric $\iota^*\omega_{FS}$ by pull-back of the Fubini-Study metric from $\mathbb{P}H^0(L^k_1)^*$ so our two definitions of J-balanced metric and embedding actually agree. A notion of balanced basis can also be derived in an obvious way using previous discussion.
Note that for $H\in {\rm Met}(H^0(L_1^k))$,  it also makes sense to consider $\mu_{k,\chi}(h)$ where $h=FS(H)\in {\rm Met}(L_1^k)$, i.e when $h$ belongs to the space of  \textit{Bergman} type fibrewise metric that we identify with $\mathcal{B}=\mathcal{B}_k$.

\medskip 

\par On the other hand, seen as a Hermitian matrix, $\mu^0_{k,\chi}(\iota)$ induces a vector field on $\mathbb{P}^{N}$. We are lead to study the following flow
\begin{equation*}
\frac{d \iota(t)}{dt} =  -  \mu^0_{k,\chi}(\iota(t)),  
\end{equation*}
and we call this flow the J-balancing flow. To fix the starting point of this flow, we choose a K\"ahler metric
$\omega=\omega(0)$ and we construct a sequence of Hermitian metrics $h_k(0)$ such that $\omega_k(0):=c_1(h_k(0))$ converges
smoothly to $\omega(0)$ providing a sequence of embeddings $\iota_k(0)$ for $k\gg 0$. For technical reasons, we decide to rescale this flow by 
considering the following ODE,
\begin{equation}
\frac{d \iota_k(t)}{dt} =  - k^2 \mu^0_{k,\chi}(\iota_k(t)),  \label{resbalflowJ}
\end{equation}
which we call the \textit{rescaled J-balancing flow}. In the following Subsections \ref{sub1}, \ref{sub2}, \ref{sub3}, we will study the behaviour of the sequence of K\"ahler metrics 
$$\omega_{k}(t)=\frac{1}{k}\iota_k(t)^*(\omega_{FS})$$ as $k$ tends to infinity.

\subsection{The limit of the rescaled J-balancing flow \label{sub1}}
In this section, we assume that the sequence $\omega_{k}(t) $ is convergent and we want to relate its limit to Equation (\ref{Jflow}). 
\begin{theorem}\label{thmJbal1}
Suppose that for each $t\in \mathbb{R}_+$, the metric $\omega_k(t)$ induced by Equation (\ref{resbalflowJ}) converges in
smooth topology to a metric $\omega_t$ and that this convergence is $\C^1$ in $t\in \mathbb{R}_+$. Then
the limit $\omega_t$ is a solution to Donaldson's J-flow (\ref{Jflow}) starting at $\omega_0=\lim_{k\rightarrow \infty} \omega_k(0)$.
\end{theorem}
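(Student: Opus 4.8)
The plan is to compute the asymptotic expansion of the finite-dimensional moment map $\mu_{k,\chi}$ as $k\to\infty$, show that its leading terms reproduce the integrand of the J-flow, and then pass this expansion through the rescaled ODE \eqref{resbalflowJ} to recover \eqref{Jflow} in the limit. First I would fix a time $t$ and write $\omega_k(t)=\frac{1}{k}\iota_k(t)^*(\omega_{FS})$, with $h_k(t)$ the associated fibrewise Bergman metric so that $c_1(h_k(t))=\omega_k(t)$. The key analytic input is the Tian--Yau--Zelditch--Catlin asymptotic expansion of the Bergman kernel (equivalently, the density-of-states function $\rho_k(h)=\sum_i|s_i|^2_h$ for an $H$-orthonormal basis): under the normalisation built into $FS_k$, this function admits an expansion in $k^{-1}$ whose leading coefficient is constant and whose subleading coefficient involves the scalar curvature of $\omega_k$. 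I expect to need the version of this expansion weighted by the volume form $\chi\wedge c_1(h_k)^{n-1}$ rather than $c_1(h_k)^n$, since that is the volume form entering $Hilb_{k,\chi}$.

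The main step is to translate the condition that $\omega_k(t)$ evolves by \eqref{resbalflowJ} into a statement about $h_k(t)$. Differentiating a $\mu_{FS}$-valued quantity along the flow and using that $\mu_{k,\chi}$ is the moment map from Proposition \ref{claim}, the velocity $\frac{d}{dt}\omega_k(t)$ at a point $p$ should be expressible, to leading order in $k$, as $\ddbar$ of the diagonal part of the matrix $-k^2\mu^0_{k,\chi}$ contracted against the Bergman kernel. Concretely, I would expand the right-hand side of \eqref{resbalflowJ} using the integral formula \eqref{Jmomentmap}: the off-diagonal behaviour of $\mu_{FS}(\iota(p))$ localises near the diagonal of $M\times M$ at scale $k^{-1/2}$, and the peak-section/Bergman-kernel estimates let me identify
$$
\frac{d\phi_{k,t}}{dt} = -k^2\Big(\frac{\chi\wedge c_1(h_k)^{n-1}}{\gamma\,c_1(h_k)^n}\cdot\frac{\text{normalisation}}{k^n} - \frac{\text{trace term}}{N+1}\Big) + o(1),
$$
where $\phi_{k,t}$ is the $\omega_k(0)$-potential of $\omega_k(t)$. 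The trace term, being the average, produces the topological constant $\gamma$, and the factor $k^2$ is precisely the rescaling needed to cancel the two powers of $k$ lost between the $k^{-n}$ in the kernel normalisation and the $k^{n}$ in the volume, leaving a finite limit.

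Passing to the limit is then where the hypotheses of the theorem do the work. Since we \emph{assume} $\omega_k(t)\to\omega_t$ in $C^\infty$ and that this convergence is $C^1$ in $t$, I may interchange $\lim_k$ with $\frac{d}{dt}$, so $\frac{d}{dt}\omega_t=\lim_k\frac{d}{dt}\omega_k(t)$; the Bergman expansion is uniform on the $C^\infty$-convergent family, so the $o(1)$ error genuinely vanishes. Taking the limit of the displayed identity yields
$$
\frac{\partial\phi_t}{\partial t}=\gamma-\frac{\chi\wedge(\omega+\ddbar\phi_t)^{n-1}}{(\omega+\ddbar\phi_t)^n},
$$
which is exactly \eqref{Jflow}, and the initial condition $\omega_0=\lim_k\omega_k(0)$ is built in by construction of $h_k(0)$. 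The hard part will be making the Bergman-kernel expansion fully rigorous \emph{uniformly in $t$} and controlling the subleading terms of the $\chi$-weighted kernel well enough that the $k^2$-rescaling produces a convergent rather than divergent limit; this is the crux where the precise normalisation in $FS_k$ and the moment-map identity of Proposition \ref{claim} must be combined carefully, and I would isolate it as a separate lemma on the expansion of $\mu_{k,\chi}(h_k)$.
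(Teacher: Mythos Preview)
Your approach is essentially correct and follows the same route as the paper: compute the potential of the velocity along the rescaled balancing flow via the weighted Bergman expansion, then invoke the $C^1$-in-$t$ convergence hypothesis to identify the limit with $\dot\phi_t$. The paper packages the key computation as the convergence $\beta_k := -k\,\tr(\mu^0_{k,\chi}\mu_{FS}) \to 1 - \frac{\chi\wedge\omega^{n-1}}{\gamma\omega^n}$, read off directly from the \emph{leading} term of the $\chi$-weighted density-of-states expansion (Proposition~\ref{tian-b} with $\Omega = \gamma^{-1}\chi\wedge\omega^{n-1}$, giving \eqref{bergJflow}).

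Two simplifications relative to what you anticipate. First, after absorbing one factor of $k$ in passing from $\iota_k^*\omega_{FS}$ to $\omega_k = k^{-1}\iota_k^*\omega_{FS}$, only a single power of $k$ remains in front of $\tr(\mu^0_{k,\chi}\mu_{FS})$, not $k^2$; and since the leading coefficient $\gamma\omega^n/(\chi\wedge\omega^{n-1})$ of the weighted Bergman function is already \emph{non-constant} (in contrast to the unweighted cscK situation), it carries the J-flow potential by itself. No subleading scalar-curvature terms and no off-diagonal peak-section estimates are required. Second, your worry that the rescaling might produce a divergent limit does not arise: $\tr(\mu^0_{k,\chi}\mu_{FS})$ is genuinely $O(k^{-1})$ for exactly this reason, so $\beta_k$ is $O(1)$ and the argument is shorter than you expect.
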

The proof is similar to \cite[Theorem 3]{Ke2}. The only difference is that we are dealing with orthonormal basis of holomorphic sections $\{s_i\}$ of $L_1^k$  with respect to $Hilb_{k,\chi}(h^k)$. But in that case, the asymptotic of the Bergman function stands as 
\begin{equation}\label{bergJflow}\sum_{i=1}^{N_k+1} \vert s_i\vert^2_{h^k}= k^n \frac{\gamma \omega^n}{\chi \wedge \omega^{n-1}}+O(k^{n-1}) \end{equation}
 where $\omega=c_1(h)$ thanks to the following proposition, see \cite{Ti1} (we also refer to  
\cite{Ze, Ca} and \cite{Bou} where the first term of the asymptotic expansion is identified).
\begin{proposition}[Catlin-Tian-Yau-Zelditch expansion]\label{tian-b}
Let $(M,L)$ be a projective polarized manifold. Let $h\in {\rm Met}(L)$ be a metric such that its curvature $c_1(h)=\omega >0$ is a K\"ahler form. Assume $\Omega$ to be a smooth volume form. 
Then we have the following asymptotic expansion for $k\rightarrow \infty$, 
\begin{equation*}
\sum_{i=1}^{N+1}\vert s_i\vert^2 _{h^k} = k^n \frac{\omega^n}{\Omega} +O(k^{n-1}), 
 \end{equation*}
where $\{s_i\}$ is an orthonormal basis of $H^0(L^k)$ with respect to the $L^2$ inner product $\int_M h^k(.,.) \Omega=Hilb_{\Omega}(h^k)$. Here by $O(k^{n-1})$, we mean that for $r\geq 0$
$$\Big\Vert \sum_{i=1}^{N+1}\vert s_i\vert^2 _{h^k} - k^n \frac{\omega^n}{\Omega}\Big\Vert_{\C^r}\leq c_rk^{n-1}$$
where $c_r$ remains bounded if $h$ varies in a compact set (in smooth topology) in the space of Hermitian metrics with positive curvature.
\end{proposition}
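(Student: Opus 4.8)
The plan is to reduce the statement to the classical Bergman kernel expansion in the \emph{matched} case $\Omega=\omega^n/n!$ and then to track how an arbitrary, decoupled volume form enters. The starting observation is that the density $\rho_k(p):=\sum_{i=1}^{N+1}|s_i|^2_{h^k}(p)$ is independent of the chosen orthonormal basis: it is the value on the diagonal of the reproducing kernel of $H^0(M,L^k)\subset L^2(M,L^k)$, equivalently
\begin{equation*}
\rho_k(p)=\max\Big\{\,|s(p)|^2_{h^k}\ :\ s\in H^0(M,L^k),\ \int_M |s|^2_{h^k}\,\Omega=1\,\Big\}.
\end{equation*}
In this form the roles of the two pieces of data are transparent: the fibre metric $h$ enters both the evaluation $|s(p)|^2_{h^k}$ and the $L^2$-constraint, whereas the volume form $\Omega$ enters only through the constraint.

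First I would localise: the extremal section in the characterisation above concentrates on a ball of radius $\sim k^{-1/2+\varepsilon}$ about $p$, up to an error $O(k^{-\infty})$. This Gaussian off-diagonal decay follows from H\"ormander--Kodaira $L^2$-estimates together with the spectral gap of order $k$ for the Kodaira Laplacian $\dbar^*\dbar$ on $L^k$-valued $(0,1)$-forms, the gap being a consequence of $\omega=c_1(h)>0$. Concentration already shows that, to leading order, $\Omega$ may be replaced by its pointwise value at $p$ relative to $\omega^n/n!$, which identifies the leading coefficient as $\omega^n/\Omega$. To obtain the sharp remainder $O(k^{n-1})$ I would then run the usual rescaling argument (the Catlin--Tian--Zelditch analysis; see \cite{Ti1,Ze,Ca,Bou}): in K\"ahler normal coordinates at $p$ set $w=\sqrt{k}\,z$, so that the fibre weight converges to the Bargmann--Fock weight $e^{-|w|^2}$ on $\mathbb{C}^n$ and the constraint measure converges to $\Omega(p)$ times Lebesgue measure. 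The decoupled density $\Omega$ enters the rescaled problem only through its Taylor expansion at $p$, and its first-order variation integrates to zero against the Gaussian model by parity; hence the first correction to the model density is of relative order $k^{-1}$, giving $\rho_k(p)=k^n\,\omega^n/\Omega+O(k^{n-1})$ in the normalisation of the statement.

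To upgrade this to the quantitative bound $\big\|\rho_k-k^n\,\omega^n/\Omega\big\|_{\C^r}\le c_r k^{n-1}$ with $c_r$ uniform over compact families, I would observe that every estimate above --- the spectral gap and off-diagonal decay constants, the normal-coordinate and rescaling bounds, and the convergence to the Bargmann--Fock model --- is controlled by finitely many $\C^s$-norms of $h$, $\omega$ and $\Omega$ and depends continuously on these data. A family of positively curved metrics that is compact in the $C^\infty$ topology therefore has uniformly bounded norms, so the constants $c_r$ stay bounded; differentiation in $p$ is absorbed by the natural length scale $k^{-1/2}$ of the localisation.

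The main obstacle is precisely the decoupling of $\Omega$ from the curvature: one must verify that a volume form unrelated to $\omega$ affects only the \emph{leading} coefficient, through its pointwise ratio with $\omega^n$, while holomorphicity and all of the hard analysis (the $L^2$-estimates, the spectral gap, and the concentration of extremal sections) are governed solely by the K\"ahler data $(h,\omega)$. Once the uniform off-diagonal decay is established, the reduction to the classical matched expansion and the parity cancellation that yields the sharp $O(k^{n-1})$ remainder are routine, as is the final bookkeeping of constants.
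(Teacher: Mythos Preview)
The paper does not give its own proof of this proposition: it is stated as a known result with citations to Tian \cite{Ti1}, Zelditch \cite{Ze}, Catlin \cite{Ca}, and Bouche \cite{Bou}, and is used as a black box throughout. There is therefore no paper-proof to compare against.

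Your sketch is a faithful outline of the standard argument in that literature. The extremal characterisation of $\rho_k$, the localisation to scale $k^{-1/2}$ via H\"ormander $L^2$-estimates and the Kodaira--Nakano spectral gap, and the rescaling to the Bargmann--Fock model are exactly the ingredients of the Tian--Zelditch--Catlin approach. Your handling of the decoupled volume form $\Omega$ is also correct: since $\Omega$ enters only through the $L^2$-normalisation, after localisation it may be Taylor-expanded at $p$, and the odd terms integrate to zero against the even Gaussian model, so the first genuine correction is of relative order $k^{-1}$. The uniformity over compact families is likewise standard, following from the fact that all constants in the analysis depend only on finitely many derivatives of the geometric data. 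If anything, one might add that the $\C^r$ estimate on the remainder (not just the $\C^0$ one) requires either differentiating the rescaled problem in the base point $p$ or invoking the full off-diagonal expansion of the Bergman kernel, but this is routine and well documented in the references the paper cites.
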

 
In particular, the potentials $\beta_k=-k\tr(\mu_{k,\chi}^0\mu_{FS})$ converge in smooth topology to the potential \begin{equation} 1 - \frac{\chi \wedge \omega^{n-1}}{\gamma \omega^n}\label{limit}\end{equation} when $k\rightarrow +\infty$. We also have
 \begin{proposition}\label{c-1-conv}
Let $h(t)\in {\rm Met}(L_1)$ be a path of Hermitian metrics on $L_1$ with $c_1(h(t))>0$. Let us consider $h_k(t)=FS(Hilb_{k,\chi}(h(t)^k))^{1/k}$ the path of induced Bergman metrics. Then $\frac{\partial h_k(t)}{\partial t}$ converges to $\frac{\partial h(t)}{\partial t}$ as $k\rightarrow +\infty$ in $\C^\infty$ topology. This convergence is uniform if $h(t)$ belongs
to a compact set in the space of positively curved Hermitian metrics on $L_1$.
\end{proposition}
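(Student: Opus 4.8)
The plan is to reduce the statement to a differentiated form of the Catlin--Tian--Yau--Zelditch expansion of Proposition~\ref{tian-b}. Set $N+1=\dim H^0(L_1^k)$, let $\{s_i(t)\}$ be a basis of $H^0(L_1^k)$ orthonormal for $Hilb_{k,\chi}(h(t)^k)$, and let $B_k(t):=\sum_i|s_i(t)|^2_{h(t)^k}$ be the associated Bergman function, a smooth positive function on $M$ depending smoothly on $t$. Write $u_k(t)$ for the positive function on $M$ given by the ratio $h_k(t)/h(t)$ of the two Hermitian metrics on $L_1$. The defining property of the Fubini--Study map $FS$ gives the pointwise identity $\frac{N+1}{\Vol_{L_1}(X)}=\sum_i|s_i(t)|^2_{h_k(t)^k}=u_k(t)^k\,B_k(t)$, whence $k\log u_k(t)=\log\frac{N+1}{\Vol_{L_1}(X)}-\log B_k(t)$. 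As $N+1$ and $\Vol_{L_1}(X)$ are independent of $t$, differentiating in $t$ yields
\begin{equation*}
\frac{\partial}{\partial t}\log u_k(t)=-\frac{1}{k}\,\frac{\partial}{\partial t}\log B_k(t).
\end{equation*}
Since $u_k\to 1$ in $\C^\infty$ --- this is precisely the spatial content of Proposition~\ref{tian-b} applied to the volume form $\Omega=\tfrac{1}{\gamma}\chi\wedge c_1(h)^{n-1}$, giving \eqref{bergJflow} --- and since $\frac{\partial h_k}{\partial t}=\big(\frac{\partial}{\partial t}u_k\big)h+u_k\,\frac{\partial h}{\partial t}$, it suffices to prove that $\frac{1}{k}\frac{\partial}{\partial t}\log B_k(t)\to 0$ in $\C^\infty$, uniformly in $t$ when $h(t)$ ranges over a compact family in smooth topology.

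To this end I would differentiate the expansion \eqref{bergJflow} in $t$. Its leading coefficient is $b_0(h):=\frac{\gamma\omega^n}{\chi\wedge\omega^{n-1}}$, and as $h=h(t)$ varies both the fibre metric $h^k$ and the measure $\Omega=\tfrac1\gamma\chi\wedge c_1(h)^{n-1}$ vary smoothly. The plan is to use the \emph{parametrised} refinement of Proposition~\ref{tian-b}: treating $t$ as an auxiliary parameter in the peak-section/Bergman-kernel construction underlying that proposition, one obtains an asymptotic expansion $B_k(t)=k^n b_0(h(t))+O(k^{n-1})$ whose coefficients are smooth in $(x,t)$ and whose remainder, together with all of its $t$-derivatives, is $O(k^{n-1})$ in every $\C^r_x$-norm, uniformly in $t$ over the compact family. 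The coefficients produced by that construction are universal expressions in the curvature of $h(t)$ and its covariant derivatives, so they depend smoothly on $t$ and may be differentiated term by term, giving $\frac{\partial}{\partial t}B_k(t)=k^n\frac{\partial}{\partial t}b_0(h(t))+O(k^{n-1})$.

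Granting this, the conclusion follows at once: dividing the two expansions gives
\begin{equation*}
\frac{\partial}{\partial t}\log B_k(t)=\frac{k^n\frac{\partial}{\partial t}b_0(h(t))+O(k^{n-1})}{k^n b_0(h(t))+O(k^{n-1})}=\frac{\partial}{\partial t}\log b_0(h(t))+O(k^{-1})
\end{equation*}
in every $\C^r$-norm, uniformly in $t$; in particular $\frac{\partial}{\partial t}\log B_k$ is bounded in $\C^\infty$ independently of $k$. Hence $\frac{1}{k}\frac{\partial}{\partial t}\log B_k=O(k^{-1})\to 0$ in $\C^\infty$, so $\frac{\partial}{\partial t}u_k\to 0$ and $\frac{\partial h_k}{\partial t}\to \frac{\partial h}{\partial t}$ in $\C^\infty$ by the first paragraph, with uniformity over compact families inherited from that in Proposition~\ref{tian-b}.

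The hard part is exactly this parametrised, differentiable-in-$t$ refinement of Proposition~\ref{tian-b}. Knowing the expansion at each fixed $t$ with constants uniform over the compact family does \emph{not} suffice, since uniform convergence in $k$ carries no information about $t$-derivatives; and a termwise estimate of $\frac{\partial}{\partial t}B_k$ also fails. Indeed, writing $h=e^{-\varphi}$ in a local frame and $s_i=f_i\,e^{\otimes k}$ with $f_i$ holomorphic, one has $B_k=e^{-k\varphi}\sum_i|f_i|^2$ and hence $\frac{\partial}{\partial t}\log B_k=-k\frac{\partial\varphi}{\partial t}+\frac{\partial}{\partial t}\log\big(\sum_i|f_i|^2\big)$, whose first term is $O(k)$ whereas we need $\frac{\partial}{\partial t}\log B_k=O(1)$; the requisite $O(1)$ bound appears only through a cancellation between this term and the variation of the orthonormal frame $\{s_i(t)\}$. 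This cancellation is automatically built into the parametrised Bergman expansion but is invisible to crude estimates, so the genuine work lies in establishing that the Bergman kernel depends smoothly on the parameter $t$ with uniform remainder control on all its $t$-derivatives. This is by now standard (for instance via the parametric Bergman kernel expansions of Ma--Marinescu), and the compactness hypothesis on $\{h(t)\}$ is what makes every constant involved uniform in $t$.
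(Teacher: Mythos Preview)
Your argument is correct and is precisely the approach of the references the paper cites (\cite{Ke2}, \cite{Fi1}): write $h_k=u_k h$, use the defining property of $FS$ to get $k\log u_k=\mathrm{const}-\log B_k$, differentiate in $t$, and invoke the parametrised Bergman kernel expansion to obtain $\partial_t\log B_k=O(1)$ and hence $\partial_t u_k=O(k^{-1})$. The paper itself gives no independent proof, only a pointer to these references, so your write-up is in fact more detailed than what appears there; your identification of the cancellation issue and the need for the \emph{differentiated} (not merely uniform) expansion is exactly the point, and the appeal to Ma--Marinescu for the parametric Bergman kernel is an appropriate source.
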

 \begin{proof}
  This is obtained easily by a simple modification of the proof of \cite[Proposition 2.3]{Ke2} or \cite[End of section 1.4.1 and Theorem 9]{Fi1}.
 \end{proof}

\noindent \textit{Proof of Theorem \ref{thmJbal1}}. \,\,
Let's write $\omega_t=\omega+ \ddbar\phi_t$. By assumption, $\dot\phi_t$ is continuous and can be normalized to be unique by demanding that it has vanishing integral. Consider the potential  $\beta_k(t)$ induced by  the embedding $\iota_k(t)$ given by the rescaled J-balancing flow. The integral of $\beta_k(t)$ is zero. Therefore, with Proposition \ref{c-1-conv}, this sequence of potentials converges to $\dot\phi_t$. Hence, together with \eqref{limit}, the theorem is proved. \qed

\subsection{Convergence result for the rescaled J-balancing flow}\label{sub2}

\begin{theorem}\label{mainthm1}
Fix $T>0$. For any  $t\in [0,T]$, the sequence $\omega_{k}(t)$ converges in $\C^\infty$ topology to the solution of Donaldson's J-flow \eqref{Jflow}
with $\phi_0=0$ and $\omega= \lim_{k\rightarrow \infty}\omega_{k}(0)$. Furthermore, the convergence is $\C^1$ in the variable $t$. If there is a critical metric, then there is convergence for all $t>0$.
\end{theorem}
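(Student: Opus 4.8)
The plan is to follow the two-step scheme of Donaldson and Fine \cite{D1,Fi1}, of the kind already used in \cite{Ke2}. Since Theorem \ref{thmJbal1} shows that \emph{any} smooth, $\C^1$-in-$t$ limit of $\omega_k(t)$ must solve the J-flow \eqref{Jflow}, the remaining content is precisely to establish that the rescaled J-balancing flow \eqref{resbalflowJ} does converge on $[0,T]$. Short-time existence and uniqueness of $\iota_k(t)$ is automatic, as \eqref{resbalflowJ} is an ODE on the finite-dimensional symmetric space $GL(N_k+1)/U(N_k+1)$; long-time existence of the J-flow itself is known by \cite{C1}. The strategy is to produce an explicit \emph{approximate} solution of \eqref{resbalflowJ} by quantising the genuine J-flow, to estimate how far it fails to solve the balancing ODE, and then to run a Gronwall-type stability argument showing the true balancing flow $\iota_k(t)$ remains close to this approximate solution uniformly on $[0,T]$.

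First I would fix the J-flow solution $\phi_t$ on $[0,T]$ and quantise it: set $h_k(t)=FS\!\left(Hilb_{k,\chi}(h(t)^k)\right)^{1/k}$, giving a path of Bergman metrics and corresponding embeddings $\tilde\iota_k(t)$, chosen so that the initial data agrees with that of \eqref{resbalflowJ} up to a controlled error. The key point is that the induced potential $\beta_k(t)=-k\,\tr(\mu^0_{k,\chi}\mu_{FS})$ converges, by the Catlin--Tian--Yau--Zelditch expansion (Proposition \ref{tian-b}) through \eqref{bergJflow}, to the right-hand side $1-\frac{\chi\wedge\omega_t^{n-1}}{\gamma\omega_t^n}$ appearing in \eqref{limit}, while Proposition \ref{c-1-conv} controls the time derivative $\tfrac{\partial}{\partial t}h_k(t)$. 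Comparing $\tfrac{d}{dt}\tilde\iota_k(t)$ with $-k^2\mu^0_{k,\chi}(\tilde\iota_k(t))$, these two facts should show that $\tilde\iota_k(t)$ solves \eqref{resbalflowJ} up to an error $\varepsilon_k(t)$ which is $O(k^{-1})$ in the relevant operator norm on $\sqrt{-1}Lie(U(N_k+1))$; the factor $k^2$ in the rescaling is exactly what places the balancing flow on the same time scale as the J-flow, so that the leading terms cancel.

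Next I would measure the distance between the genuine flow $\iota_k(t)$ and the approximate flow $\tilde\iota_k(t)$ in the symmetric space of Hermitian inner products on $H^0(L_1^k)$, and derive a differential inequality of the form $\tfrac{d}{dt}\,d(\iota_k,\tilde\iota_k)\le C\, d(\iota_k,\tilde\iota_k)+\|\varepsilon_k(t)\|$. The essential input is a \emph{uniform-in-$k$} Lipschitz bound for the rescaled moment-map vector field $k^2\mu^0_{k,\chi}$ along the quantised path, obtained by differentiating the moment map \eqref{Jmomentmap} and again invoking the Bergman asymptotics of Proposition \ref{tian-b} together with its derivatives. Since the two flows are close at $t=0$ by construction and $\varepsilon_k=O(k^{-1})$, Gronwall's inequality then forces $d(\iota_k,\tilde\iota_k)\to 0$ on $[0,T]$, giving convergence of $\omega_k(t)=\tfrac1k\iota_k(t)^*\omega_{FS}$ to $\omega_t$; the $\C^\infty$ statement follows after upgrading the weak estimates by standard elliptic and interpolation bounds, and $\C^1$-convergence in $t$ is read off directly from the ODE \eqref{resbalflowJ}.

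The step I expect to be the main obstacle is precisely this uniform-in-$k$ stability estimate: after multiplying by $k^2$ one must control the linearisation of $\mu^0_{k,\chi}$, and hence its Lipschitz constant, along the path \emph{independently of $k$}, so that the Gronwall constant $C$ does not degenerate as $k\to\infty$, and one must ensure the accumulated error stays $o(1)$ over the whole interval $[0,T]$. For the final assertion, assuming a critical metric of the J-flow exists, the J-flow converges exponentially to it and the associated functional $I_{\mu_J}$ is convex along the relevant geodesics; matching this exponential decay with the corresponding exponential convergence of the balancing flow to the unique J-balanced metric allows one to patch the finite-interval estimates together and extend convergence to all $t>0$, much as in Donaldson's all-time arguments \cite{D1,D2}.
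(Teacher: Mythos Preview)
Your overall architecture (quantise the J-flow, compare to the balancing flow, transfer the closeness back to $\C^\infty$ convergence of the curvature forms) matches the paper, but there is a genuine gap at the heart of the argument: the first-order approximation you describe is not sufficient to conclude $\C^\infty$ convergence, and the step you flag as ``standard elliptic and interpolation bounds'' is exactly where it fails.

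Concretely, the conversion from the symmetric-space distance $\mathrm{dist}_k$ on $\mathcal{B}_k$ to $\C^r$ closeness of the pulled-back Fubini--Study metrics (Lemma~\ref{l5} in the paper) costs a factor of order $k^{n+2+r/2}$: one has
\[
\|\omega_k(t)-c_1(\overline{h}_k(t))\|_{\C^r(\omega_t)}\;\lesssim\;\|\mu_{k,\chi}\|_{op}\,k^{\,n+2+r/2}\,\mathrm{dist}_k(h_k(t),\overline{h}_k(t)).
\]
With only $\mathrm{dist}_k=O(k^{-1})$ the right-hand side blows up. The paper's remedy is Theorem~\ref{highapprox}: one constructs \emph{higher-order} approximations
\[
\psi(k,t)=\phi_t+\sum_{j=1}^{m}k^{-j}\eta_j(t)
\]
by inductively solving the linear parabolic equations $\big(\partial_t-\tilde{\Delta}_{\omega_t}\big)\eta_i=\gamma_{i,0}(\eta_1,\dots,\eta_{i-1})$, where $\tilde{\Delta}$ is the second-order operator arising from linearising the J-flow. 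This yields $\mathrm{dist}_k(h_k(t),\overline{h}_k(t))\leq C k^{-m-1}$ for any $m$, and choosing $m>r/2+n+1$ then makes the $\C^r$ error go to zero. Your proposal omits this entire inductive parabolic construction; without it the $O(k^{-1})$ error cannot be upgraded.

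A secondary point: for the comparison of the two flows you propose a Gronwall inequality based on a uniform-in-$k$ Lipschitz bound for $k^2\mu^0_{k,\chi}$. That Lipschitz constant is not uniform (the factor $k^2$ is fatal). The paper instead uses that the moment-map gradient flow is \emph{distance-decreasing} on the Bergman space, so one gets directly $\mathrm{dist}_k(h_k,\overline{h}_k)\leq C/k^{m+1}$ from the tangency estimate, with no exponential loss; combined with Proposition~\ref{p1} and Corollary~\ref{R-bounded} this keeps $R$-bounded geometry along the geodesic between the two points, which is what Lemma~\ref{l5} requires. Your concern about the stability constant is therefore well-placed, but the resolution is the distance-decreasing property rather than a Lipschitz estimate.
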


The last part of the theorem is a consequence of the long time existence of the flow and the fact that when there is a critical metric, the J-flow converges towards this critical metric \cite[Theorem 1.1, $(i)\Leftrightarrow (ii)$]{SW}. Thus the metrics involved in the J-flow belong to a compact set in the space of smooth K\"ahler metrics when there is a critical metric. The proof of Theorem \ref{mainthm1} will occupy Subsections \ref{firstorderJ}, \ref{highJ}, \ref{L2J}, and \ref{projestiJ}.

\subsubsection{The $Q_k$ operator}

In this section, we recall the following important technical result, see \cite[Theorem 1]{L-M}, \cite[Section 6]{M-M2}. Note that the $\C^r$ estimate below holds for any $f\in C^\infty(M,\mathbb{R})$.
\begin{theorem}\label{quantlaplacien}
Let us consider $h\in {\rm Met}(L)$ with positive curvature on an ample line bundle $L$, and $\omega=c_1(h)$ the induced K\"ahler form, $\Omega$ a smooth positive volume form and $\{s_a\}$ orthonormal basis of $H^0(L^k)$ with respect to $Hilb_\Omega(h^k)$.  Then the operator  on $C^\infty(M,\mathbb{R})$ given by
\begin{equation*}\label{sQk}
Q_k(f)(p)= \frac{1}{k^n}\int_M \sum_{a,b}{\langle s_a, s_b\rangle_{h^k} (q)\langle s_a, s_b\rangle_{h^k}(p)}f(q)\Omega(q),
\end{equation*}
approximates the operator $\frac{\omega^n}{\Omega}\mathrm{exp}(-\frac{\Delta}{4\pi k})$ in the following sense. For any $r\in \mathbb{N}^*$, there exists $C>0$ such that for all $k$ sufficiently large  and any function $f\in C^\infty(M,\mathbb{R})$, one has
\begin{align}
\Big\Vert \left(\frac{\Delta}{k}\right)^r \left(Q_k(f)-\frac{\omega^n}{\Omega}\mathrm{exp}\left(-\frac{\Delta}{4\pi k}\right)f\right)\Big\Vert_{L ^2}&\leq \frac{C}{k}\Vert f \Vert_{L^2} \label{ineq11} \\
\Vert Q_k(f)-\frac{\omega^n}{\Omega}f\Vert_{\C^r} &\leq \frac{C}{k} \Vert f \Vert_{\C^{r+2}} \label{ineq12}
\end{align}
where the norms are taken with respect to the induced K\"ahler form obtained from the fibrewise metric on the polarisation $L$ and $\Delta$ is the Laplace operator for the induced K\"ahler metric. The estimate is uniform when the metric varies in a compact set of smooth Hermitian metrics with positive curvature.
\end{theorem}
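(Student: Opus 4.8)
\emph{Strategy.} The first observation is that the integral kernel of $Q_k$ is, up to the factor $k^{-n}\Omega(q)$, the pointwise norm squared of the Bergman kernel. Indeed, if $K_k(p,q)=\sum_a s_a(p)\otimes\overline{s_a(q)}$ denotes the reproducing kernel of $H^0(L^k)$ with respect to $Hilb_\Omega(h^k)$, then $\sum_{a,b}\langle s_a,s_b\rangle_{h^k}(q)\langle s_a,s_b\rangle_{h^k}(p)=|K_k(p,q)|^2_{h^k}$, so that $Q_k(f)(p)=k^{-n}\int_M |K_k(p,q)|^2_{h^k}f(q)\,\Omega(q)$. The plan is therefore to insert the off-diagonal asymptotic expansion of the Bergman kernel of Ma--Marinescu \cite{M-M2} (the refinement of Proposition \ref{tian-b} away from the diagonal) and to identify the resulting Gaussian convolution with the heat semigroup. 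There are three ingredients: a localisation estimate, a near-diagonal model computation, and the propagation of errors through $(\Delta/k)^r$ and through $\C^r$ norms.

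\emph{Localisation and rescaling.} First I would invoke the off-diagonal decay of the Bergman kernel: there exist constants $c,C>0$ with $|K_k(p,q)|^2_{h^k}\leq Ck^{2n}\exp(-c\sqrt{k}\,d(p,q))$ for the Riemannian distance $d$ induced by $\omega$. This confines the integral to a geodesic ball of radius $O(k^{-1/2+\epsilon})$ around $p$, up to an error that is $O(k^{-\infty})$ and hence negligible in every $\C^r$ norm. Inside this ball I would pass to $\omega$-normal coordinates centred at $p$ and rescale by setting $q=p+u/\sqrt{k}$. The near-diagonal expansion of Ma--Marinescu then gives $k^{-2n}|K_k(p,p+u/\sqrt{k})|^2_{h^k}=\left(\tfrac{\omega^n}{\Omega}(p)\right)^2\exp(-\pi|u|^2)\bigl(1+O(k^{-1/2})\bigr)$, the Gaussian being the density of the Bargmann--Fock model kernel, and the prefactor being dictated by the diagonal value recorded in Proposition \ref{tian-b}. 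Changing variables in $Q_k(f)(p)$ turns it into $\tfrac{\omega^n}{\Omega}(p)\int_{\comp^n}\exp(-\pi|u|^2)f(p+u/\sqrt{k})\,du$ at leading order, after using that $\Omega(q)$ itself contributes one factor of $\omega^n/\Omega$ in the normalisation.

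\emph{Identification with the heat operator.} The key point is that convolution against the normalised Gaussian $\exp(-\pi|u|^2)$ on $\comp^n\cong\R^{2n}$ \emph{is} a heat semigroup. Since $\int\exp(-\pi|u|^2)\,du=1$ and the second moments are $\int u_iu_j\exp(-\pi|u|^2)\,du=\tfrac{1}{2\pi}\delta_{ij}$, a Taylor expansion in the parameter $1/\sqrt{k}$ produces $\int\exp(-\pi|u|^2)f(p+u/\sqrt{k})\,du=f(p)-\tfrac{\Delta}{4\pi k}f(p)+\dots$, so the factor of $4\pi k$ is exactly the one appearing in the statement, and matching all Gaussian moments gives the full semigroup $\exp(-\tfrac{\Delta}{4\pi k})$ rather than merely its first-order truncation. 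This yields \eqref{ineq11}: after applying $(\Delta/k)^r$, the discrepancy between $Q_k$ and $\tfrac{\omega^n}{\Omega}\exp(-\tfrac{\Delta}{4\pi k})$ is governed by the $O(k^{-1/2})$ remainder in the rescaled expansion, which, together with the fast off-diagonal decay, integrates to an $L^2$ error of order $k^{-1}\Vert f\Vert_{L^2}$. The coarser estimate \eqref{ineq12} follows by comparing $Q_k$ directly to $\tfrac{\omega^n}{\Omega}f$, where the second Gaussian moment contributes a term controlled by $\Vert f\Vert_{\C^{r+2}}$. Uniformity over a compact family of metrics $h$ is inherited from the fact that the coefficients of the Bergman expansion are universal polynomial expressions in finitely many derivatives of the metric.

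\emph{Main obstacle.} The delicate point is not the formal model computation but the uniform control of the remainder in the rescaled off-diagonal expansion, and in particular showing that this remainder survives the application of $(\Delta/k)^r$ in the $L^2$ estimate \eqref{ineq11} with only a loss of one power of $k$. Making this rigorous requires the full strength of the Ma--Marinescu machinery (rescaled operators, Sobolev estimates on $\comp^n$, and the spectral gap), so in practice I would quote their near-diagonal expansion and decay estimates as black boxes and devote the proof to the model Gaussian-to-heat-kernel identification and the bookkeeping of constants.
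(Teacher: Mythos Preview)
The paper does not prove this theorem. It is stated as a recalled technical result with the introductory sentence ``we recall the following important technical result, see \cite[Theorem 1]{L-M}, \cite[Section 6]{M-M2}'' and no proof is given; the authors use it as a black box in the subsequent estimates. So there is no ``paper's own proof'' to compare against.

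That said, your sketch is essentially the strategy of the cited references of Liu--Ma and Ma--Marinescu: recognise the kernel of $Q_k$ as (essentially) the pointwise norm squared of the Bergman kernel, invoke the off-diagonal exponential decay to localise, rescale by $\sqrt{k}$ in normal coordinates, insert the near-diagonal expansion to obtain a Gaussian convolution, and identify the Gaussian moments with the heat semigroup $\exp(-\Delta/4\pi k)$. Your identification of the constant $4\pi k$ via the second Gaussian moment is the right computation, and your acknowledgement that the uniform remainder control under $(\Delta/k)^r$ is the genuinely hard analytic input (and must be quoted) is accurate. One small point: the kernel $\sum_{a,b}\langle s_a,s_b\rangle(q)\langle s_a,s_b\rangle(p)$ is not literally $|K_k(p,q)|^2$ but rather the closely related Berezin transform kernel; the asymptotics are of course governed by the same near-diagonal expansion, but be careful with the bookkeeping of complex conjugates if you write this out in detail.
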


\subsubsection{First order approximation}\label{firstorderJ}
We know that from any starting point $\omega=\omega_0$, there exists a solution $$\omega_t=\omega+\ddbar \phi_t$$ to the J-flow for $t>0$. We can write $\omega_t=c_1(h_t)$ where $h_t$ is a sequence of Hermitian metrics on the line bundle $L_1$. 
Furthermore, we can construct a natural sequence of Bergman metrics \label{hath2} $$\hat{h}_k(t)=FS(Hilb_{\chi}(h_t^k))^{1/k}$$ by pulling back the Fubini-Study metric using sections which are orthonormal with respect to the inner product $$\frac{1}{k^n}\frac{1}{\gamma}\int_M h_t(.,.)^k\chi \wedge c_1(h)^{n-1}.$$
Using Proposition \ref{tian-b}, we obtain the asymptotic behavior $$\hat{h}_k(t)=\left(\frac{\gamma k^n c_1(h_t)^n}{\chi\wedge c_1(h_t)^{n-1}}+O\left(\frac{1}{k}\right)\right)^{1/k}h_t$$ for $k\gg 1$. Thus, the sequence  $\hat{h}_k(t)$  converges to $h_t$ as $k\to \infty$. 
\par On the other hand, the rescaled J-balancing flow provides a sequence of metrics $\omega_k(t)=c_1(h_k(t))$ which are solutions to (\ref{resbalflowJ}). Note that by construction, we fix $h_k(0)=\hat{h}_k(0)$ for the starting point of the rescaled J-balancing flow.
\medskip 
\par In this section, we wish to evaluate the distance between the two metrics $h_k(t)$ and $\hat{h}_k(t)$. Since we are dealing with algebraic metrics,  we have the (rescaled) metric on Hermitian matrices given by $d_k(H_0,H_1)=\left(\frac{\tr \;(H_0-H_1)^2}{k^2}\right)^{\frac{1}{2}}$ on ${\rm Met}(H^0(L^k_1))$ which induces a metric on ${\rm Met}(L_1)$,  that we denote by $\mathrm{dist}_k$. 

\begin{proposition}
One has for $t\in [0,T], $
$$\mathrm{dist}_k(h_k(t),\hat{h}_k(t))\leq \frac{C}{k},$$ for some constant $C>0$ independent of $k$ and $t$. 
\end{proposition}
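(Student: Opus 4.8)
The plan is to lift both paths to the Bergman space of Hermitian inner products on $H^0(L_1^k)$ and run a Gronwall estimate for their $d_k$-distance. Write $H_k(t)$ for the inner product attached to $h_k(t)$ and $\hat H_k(t)=Hilb_{\chi}(h_t^k)$ for the reference inner product produced by the genuine J-flow solution $h_t$, so that $\hat h_k(t)=FS(\hat H_k(t))^{1/k}$. Set $D_k(t)=H_k(t)-\hat H_k(t)$ and $\Vert D_k(t)\Vert_k=d_k(H_k(t),\hat H_k(t))$, with the associated bilinear form $\langle A,B\rangle_k=\tr(AB)/k^2$. By construction $h_k(0)=\hat h_k(0)$, hence $D_k(0)=0$, and it suffices to control $\frac{d}{dt}\Vert D_k\Vert_k^2$ uniformly in $k$ and in $t\in[0,T]$.

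First I would write down the two velocity fields. The rescaled J-balancing flow \eqref{resbalflowJ} reads $\dot H_k=-k^2\mu^0_{k,\chi}(H_k)$. Differentiating the reference path and comparing it with the balancing vector field evaluated at $\hat H_k$ defines the consistency error $E_k(t):=\dot{\hat H}_k(t)+k^2\mu^0_{k,\chi}(\hat H_k(t))$. The claim is that $\Vert E_k\Vert_k=O(1/k)$ uniformly on $[0,T]$. Indeed, Proposition \ref{c-1-conv} gives $\partial_t\hat h_k\to\partial_t h_t$ in $\C^\infty$, uniformly over the compact family of metrics traversed by the J-flow, while the potential of $-k^2\mu^0_{k,\chi}(\hat H_k)$ is the quantity $\beta_k$ of \eqref{limit}, which by the Bergman asymptotic \eqref{bergJflow} (Proposition \ref{tian-b}) converges to the expression \eqref{limit} and hence matches the J-flow velocity $\dot\phi_t$ of \eqref{Jflow} up to the fixed overall normalisation. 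Reading off the next order of the expansion then yields the $O(1/k)$ rate.

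Next I would isolate the linearised operator. Subtracting the two velocity fields gives $\dot D_k=-k^2\big(\mu^0_{k,\chi}(H_k)-\mu^0_{k,\chi}(\hat H_k)\big)-E_k$, and by the mean value theorem the first term equals $L_k D_k$, where $L_k=-k^2 D\mu^0_{k,\chi}$ is the differential at an intermediate inner product. The decisive point is that, after the $k^2$ rescaling and the normalisation built into $d_k$, the operator $k^2 D\mu^0_{k,\chi}$ is approximated by the quantised Laplacian of Theorem \ref{quantlaplacien} for the volume form $\Omega=\frac1\gamma\chi\wedge\omega^{n-1}$ used in $Hilb_{\chi}$, namely by the trace-free part of $\frac{\gamma\omega^n}{\chi\wedge\omega^{n-1}}\exp(-\frac{\Delta}{4\pi k})$. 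In particular it is positive semidefinite with operator norm bounded independently of $k$, uniformly on the compact set of metrics along the flow; consequently $\langle L_k D_k,D_k\rangle_k\le C\Vert D_k\Vert_k^2$, and in fact $\le 0$ to leading order.

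Finally, combining these two inputs,
\begin{equation*}
\tfrac12\frac{d}{dt}\Vert D_k\Vert_k^2=\langle L_k D_k,D_k\rangle_k-\langle E_k,D_k\rangle_k\le C\Vert D_k\Vert_k^2+\frac{C'}{k}\Vert D_k\Vert_k,
\end{equation*}
and since $D_k(0)=0$ a Gronwall argument over $[0,T]$ gives $\Vert D_k(t)\Vert_k\le C''/k$, which is the asserted bound; uniformity in $t$ is automatic since all constants depend only on $T$ and the compact family of metrics. The main obstacle is the uniform-in-$k$ control of the linearised balancing operator: the velocity field carries a factor $k^2$, and only the combination of the $1/k^2$ built into $d_k$ with the positivity and bounded operator norm of the quantised Laplacian $Q_k$ lets this factor be absorbed so that the Gronwall constant stays bounded. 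A secondary point requiring care is that the error bound $\Vert E_k\Vert_k=O(1/k)$ must be uniform in $t\in[0,T]$, which is precisely why the uniform version of Proposition \ref{c-1-conv} over the compact family of J-flow metrics is needed.
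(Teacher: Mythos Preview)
Your consistency--stability--Gronwall scheme is sound in outline and shares the paper's consistency step: the paper also compares the velocity $\hat U$ of $\hat H_k(t)$ with the balancing velocity $U=-k^2\mu^0_{k,\chi}(\hat H_k)$ at the same point and obtains $d_k(\hat U,U)=O(1/k)$ via the Bergman asymptotic and the $Q_k$ operator, which is exactly your bound $\Vert E_k\Vert_k=O(1/k)$.

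The difference lies in the stability step. The paper does not linearise. Instead it introduces the auxiliary balancing trajectory $\tilde h_k(t)$ passing through $\hat h_k(t_0)$, observes that $\tilde h_k$ and $\hat h_k$ agree to order $O(1/k)$ at $t_0$ by the consistency computation, and then invokes directly that two trajectories of the rescaled J-balancing flow are distance-nonincreasing (gradient flow of the moment-map norm). This geometric contraction property replaces your linearisation entirely and avoids any discussion of intermediate points.

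Your justification of the stability bound is where the proposal is imprecise. Theorem~\ref{quantlaplacien} compares $Q_k$ with $\exp(-\Delta/4\pi k)$ as operators on \emph{functions}, and the passage $A\mapsto H_A$ does not immediately yield a uniform-in-$k$ operator bound on $k^2 D\mu^0_{k,\chi}$ in the $d_k$ metric; indeed Lemma~\ref{l3} only gives $\Vert d\mu_{k,\chi}(A)\Vert\le 2\Vert A\Vert\,\Vert\mu_{k,\chi}\Vert_{op}$, so the two-sided norm of $L_k$ is a priori of order $k^2$. What actually makes the Gronwall close is the \emph{one-sided} inequality $\tr(A\,d\mu_{k,\chi}(A))\ge 0$ (proved after Lemma~\ref{l2} via Lemma~\ref{Jl1}), which, after writing $\mu^0(H_k)-\mu^0(\hat H_k)$ as an integral of $d\mu^0$ along the segment, gives $\langle L_k D_k,D_k\rangle_k\le 0$. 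This is precisely the infinitesimal form of the paper's distance-decreasing statement, so once phrased correctly the two arguments are equivalent. Two residual points you should clean up: the trace-free projection ($\mu^0$ versus $\mu$) contributes an extra term $\tr(d\mu(A))\tr(A)/(N+1)$ that must be handled, and the linearisation is evaluated at points on the segment from $\hat H_k$ to $H_k$ whose $R$-bounded geometry is not yet known --- a standard continuity (bootstrap) argument is needed there.
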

\begin{proof}
Let us consider $e^{\phi(t)}h_0$ a family of Hermitian metrics with positive curvature,  and denote $$\omega_t=c_1(e^{\phi(t)}h_0).$$ The infinitesimal change at $t$ in the $L^2$ inner product induced by this path and the induced volume form is given by 
$$\hat{U}_{\alpha,\beta}(t)=\frac{1}{\gamma k^n}\int_M \langle s_\alpha, s_\beta\rangle\, \left(  \left( k\dot{\phi}(t) +\Delta_{\omega_t}\dot{\phi}(t)\right)  \chi\wedge\omega_t^{n-1} - \tilde{\Delta}_{\omega_t}\dot{\phi}(t)\omega_t^n  \right)$$
where $\Delta_{\omega_t}$ is the Laplacian with respect to $\omega_t$ and $\tilde{\Delta}_{\omega_t}$ is given by the Laplacian-type operator
$$\tilde{\Delta}_{\omega_t}u=\frac{1}{n}\omega_t^{k\bar j}\omega_t^{i\bar l} \chi_{i\bar j} \partial_k \partial_{\bar l}u.$$
Here $\{s_\alpha\}$ is  an orthonormal basis of $H^0(L_1^k)$ with respect to the $L^2$-inner product 
$$\frac{1}{\gamma k^n}\int_M e^{k\phi(t)} \chi \wedge \omega_t^{n-1}.$$ 
The formula is obtained by noticing that the variation occurs with respect to the fibrewise metric and the induced volume form. 
 Now, if furthermore $\phi(t)$ is a solution to the J-flow, this infinitesimal change  is given at $\hat{h}_k(t)$ as
\begin{equation*}
\hat{U}_{\alpha,\beta}(t)=\frac{1}{\gamma k^n}\int_M \langle s_\alpha, s_\beta\rangle \left(k\left(1-\frac{\chi \wedge \omega_t^{n-1}}{\gamma\omega_t^n} \right)+O(1)\right)\chi \wedge \omega_t^{n-1}
\end{equation*}
with $\{s_\alpha\}$ satisfy the same assumption as above. 
\par  On another hand, the tangent (at the same point $\hat{h}_k(t)$) to the rescaled J-balancing flow (\ref{resbalflowJ}) is given directly by the moment map $\mu^0_{k,\chi}$, and we write the infinitesimal change of the $L^2$ metric as 
\begin{equation*}
 {U}_{\alpha, \beta}(t) = \frac{k^2}{\gamma k^n}\int_M \left(\frac{\delta_{\alpha\beta}}{N+1}- \frac{\langle s_\alpha, s_\beta \rangle}{\sum_{i=1}^{N+1} \vert s_i\vert^2 }\right) \chi\wedge \omega_{FS}^{n-1},
 \end{equation*}
where $s_i$ are $L^2$ orthonormal with respect to the $L^2$ inner product induced by $h(t)^k$ and $\chi\wedge \omega_t^{n-1}$. Again, using the fact that $\omega_{FS}=k\omega_t+O(k^{-1})$ and from Proposition \ref{tian-b}, one has asymptotically $${U}_{\alpha, \beta}(t)  =\hat{U}_{\alpha,\beta}(t) + \frac{1}{k^n}\int_M \langle s_\alpha, s_\beta \rangle O(1) \;\chi\wedge \omega_t^ {n-1}.$$ 
Here the term $O(1)$ stands implicitly for a (smooth) function which is bounded independently of the variables $t$ and $k$. 
Thus, one has $$\frac{\tr \;(\hat{U}_{\alpha, \beta}(t) - U_{\alpha,\beta}(t)) ^2}{k^2} = \big\langle \frac{1}{k}O(1) , Q_k\left( \frac{1}{k}O(1)\right) \big\rangle_{L^2}.$$
We can use Theorem \ref{quantlaplacien}, Inequality (\ref{ineq11}) to obtain that $$\frac{\tr \;(\hat{U}_{\alpha, \beta}(t) - U_{\alpha,\beta}(t)) ^2}{k^2} =O(k^{-2}).$$ This shows that $d_k(\hat{U}_{\alpha, \beta}(t), U_{\alpha,\beta}(t)) )=O(1/k).$ 
If we denote by \label{tildehk2}$\tilde{h}_k(t)$ the rescaled J-balancing flow passing through $\hat{h}_k(t_0)$ at $t=t_0$, we have just proved that 
$\tilde{h}_k(t)$ and $\hat{h}_k(t)$ are tangent up to an error term in $O(1/k)$ at $t=t_0$. On the other hand, it is clear that
 $\tilde{h}_k(t)$ and $h_k(t)$ are close when $t\rightarrow \infty$, because they are  obtained through the gradient flow of the same moment map and this gradient flow is distance decreasing (see also \cite[Theorem 1]{C1}). Thus $\mathrm{dist} (\tilde{h}_k(t),h_k(t))=O(1/k)$. This finally proves the result.
\end{proof}

\subsubsection{Higher order approximation}\label{highJ}

In this section, we only describe the main differences with \cite[Section 4.2]{Ke2}. The key operator appearing in the linearisation of the problem is actually,
$$\eta \rightarrow \mathfrak{L}_t(\eta)=\frac{\partial \eta}{\partial t} -{\tilde{\Delta}}\eta.$$
We mean that it is sufficient to solve inductively equations of the form $\mathfrak{L}_t(\eta_i)= \gamma_{i,0}(\eta_1,..,\eta_{i-1})$ where $\gamma_{i,0}$ is smooth.
By the standard parabolic theory, a smooth solution $\eta_t$ of $$\{\mathfrak{L}_t(\eta)=\xi, \eta(0)=0, \xi\in C^{\infty}(M,\mathbb{R})\}$$ exists for all time 
$t\geq 0$. Using this remark, it is easy to modify the arguments of \cite[Theorem 4]{Ke2} in order to obtain the following result.

\begin{theorem} \label{highapprox}
Fix $T>0$. Given  solution $\phi_t$ for $t\in [0,T]$ to Donaldson's J-flow (\ref{Jflow}) and $k\gg 0$, there exist  functions $\eta_1,...,\eta_m$, $m\geq 1$, such that the deformation of $\phi_t$ given by the potential
$$\psi(k,t)=\phi_t+\sum_{j=1}^m \frac{1}{k^j}\eta_j(t)$$
satisfies
$$\mathrm{dist}_k(h_k(t),\overline{h}_k(t))\leq \frac{C}{k^{m+1}}$$
and 
$$\mathrm{dist}_k\left(\frac{\partial h_k(t)}{\partial t},\frac{\partial \overline{h}_k(t)}{\partial t}\right)\leq \frac{C}{k^{m}}.$$
Here \label{overhk2}$\overline{h}_k(t)=FS(Hilb_{\chi}(h_0^ke^{k\psi(k,t)}))^{1/k}\in {\rm Met}(L_1)$ is the induced Bergman metric from the potential $\psi$, $h_k(t)\in {\rm Met}(L_1)$ is the metric obtained by the rescaled J-balancing flow (\ref{resbalflowJ}), and $C$ is a positive constant independent of $k$ and $t$. 
\end{theorem}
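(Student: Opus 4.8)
The plan is to construct the corrections $\eta_1,\dots,\eta_m$ by an inductive scheme that kills the defect of $\overline{h}_k(t)$ as an approximate solution of the rescaled J-balancing flow \eqref{resbalflowJ}, one power of $1/k$ at a time. Concretely, for a $t$-dependent potential $\psi$ I would form the Bergman metric $\overline{h}_k=FS(Hilb_{\chi}(h_0^k e^{k\psi}))^{1/k}$ and compare $\partial_t\overline{h}_k$ with the J-balancing vector field $-k^2\mu^0_{k,\chi}$ evaluated at $\overline{h}_k$. Exactly as in the first-order computation of Subsection \ref{firstorderJ}, the infinitesimal change of the $L^2$ inner product induced along the J-flow is set against the one dictated by the moment map, and the two asymptotic expansions are produced from the Catlin--Tian--Yau--Zelditch expansion (Proposition \ref{tian-b}) and the $Q_k$ operator (Theorem \ref{quantlaplacien}). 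Since $\phi_t$ solves \eqref{Jflow}, the leading terms cancel and the defect begins at order $1/k$.

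The next point is to identify the operator governing the successive corrections. The linearisation of the J-flow density $\chi\wedge\omega^{n-1}/\omega^n$ in the potential is, up to normalisation, the twisted Laplacian $\tilde\Delta$ introduced in Subsection \ref{firstorderJ}, so the relevant operator is precisely $\mathfrak{L}_t(\eta)=\partial_t\eta-\tilde\Delta\eta$. Writing $\psi=\phi_t+\sum_{j\ge 1}k^{-j}\eta_j$ and expanding the defect in powers of $1/k$, the coefficient of $k^{-i}$ takes the form $\mathfrak{L}_t(\eta_i)-\gamma_{i,0}(\eta_1,\dots,\eta_{i-1})$, where $\gamma_{i,0}$ is smooth and depends only on the background geometry and the previously chosen corrections. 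Requiring this coefficient to vanish yields the inhomogeneous linear parabolic equation $\mathfrak{L}_t(\eta_i)=\gamma_{i,0}$, which, by the standard parabolic theory recalled above, admits a smooth solution on $[0,T]$ with $\eta_i(0)=0$; I would solve these equations in order $i=1,\dots,m$.

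With $\eta_1,\dots,\eta_m$ so chosen, $\overline{h}_k(t)$ satisfies \eqref{resbalflowJ} up to a defect of size $O(k^{-(m+1)})$ in the metric $d_k$, the gain over the naive $O(k^{-1})$ being exactly the cancellation at orders $k^{-1},\dots,k^{-m}$; the residual is controlled uniformly in $t\in[0,T]$ and $k$ by the $L^2$-estimate \eqref{ineq11}. Since $\eta_j(0)=0$ and $\phi_0=0$ force $\overline{h}_k(0)=\hat h_k(0)=h_k(0)$, the true trajectory $h_k(t)$ and the approximate one coincide at $t=0$; both being transported by the distance-decreasing gradient flow of $\Vert\mu^0_{k,\chi}\Vert^2$, a Gronwall-type integration of the defect gives $\mathrm{dist}_k(h_k(t),\overline{h}_k(t))\le Ck^{-(m+1)}$, exactly as in \cite[Theorem 4]{Ke2}.

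The hard part will be the bookkeeping of the asymptotic expansion: one must verify that at each order the error genuinely has the structure $\mathfrak{L}_t(\eta_i)=\gamma_{i,0}(\eta_1,\dots,\eta_{i-1})$ with $\gamma_{i,0}$ smooth and $t$-uniformly bounded, which requires differentiating the Bergman-kernel and $Q_k$ expansions in the potential and tracking their dependence on $\eta_1,\dots,\eta_{i-1}$. The time-derivative estimate is the more delicate of the two conclusions, since differentiating in $t$ costs one power of $k$ — this is precisely why the bound degrades to $O(k^{-m})$ for $\partial_t$ — and obtaining it uniformly on $[0,T]$ relies on the $\C^1$-in-$t$ convergence of the induced Bergman metrics from Proposition \ref{c-1-conv} together with the $t$-differentiated form of \eqref{ineq11}.
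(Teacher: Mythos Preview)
Your proposal is correct and follows essentially the same approach as the paper: identify the linearised operator as $\mathfrak{L}_t(\eta)=\partial_t\eta-\tilde{\Delta}\eta$, solve the resulting inhomogeneous parabolic equations $\mathfrak{L}_t(\eta_i)=\gamma_{i,0}(\eta_1,\dots,\eta_{i-1})$ inductively by standard parabolic theory, and then conclude via the distance-decreasing property of the moment-map gradient flow exactly as in \cite[Theorem 4]{Ke2}. The paper's own argument is in fact only a sketch pointing to \cite{Ke2}, and your write-up already contains more detail than the paper provides.
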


\subsubsection{$L^2$ estimates in finite dimension}\label{L2J}
 
In this section we provide analogues of Proposition 4.3, Lemma 4.1 and and Corollary 4.1 of \cite{Ke2}, where $\mu_{\Omega}$ is replaced by the moment map $\mu_{k,\chi}$. This is because all the proof of these results depend only on the integrand of the expression for the moment map $\mu_{k,\chi}$ given in \eqref{Jmomentmap}.

Fix $$H_A = \sum_{i,j} A_{ij} (s_i,s_j)=\tr(A\mu_{FS})\in C^{\infty}(M,\mathbb{R}),$$
where $A=(A_{ij})$ is a Hermitian matrix, $\{s_i\}$ is a basis of $H^0(L_1^k)$, and $(.,.)$ denotes the fibrewise Fubini-Study inner-product induced 
by the basis $\{s_i\}$.\\

We start this section by recalling the notion of $R$-boundedness in $C^r$ topology (see \cite[Secion 3.2]{D1}). The purpose of this definition is to avoid constants depending on $k$ in the forthcoming estimates. Let us fix a reference metric $\omega_{ref}\in c_1(L_1)$. We denote $\tilde{\omega}_{ref}=k\omega_{ref}$ the induced metric in $ kc_1(L_1)$.  We say that another metric $\tilde{\omega}\in kc_1(L_1)$ has
$R$-bounded geometry in $C^r$ if $$\tilde{\omega}>\frac{1}{R}\tilde{\omega}_{ref} \text{ and } \Vert \tilde{\omega}-\tilde{\omega}_{ref} \Vert_{C^r(\tilde{\omega}_{ref})}<R.$$ Moreover, we say that a basis $\{s_i\}$ of $H^0(L_1^k)$ is $R$-bounded if the Fubini-Study metric induced by 
the embedding of $M$ in $\mathbb{P}H^0(L_1^k)^*$ associated to $\{s_i\}$ has $R$-bounded geometry. 

\begin{proposition} There exists $C>0$ independent of $k$, such that for any basis $\{s_i\}$
 of $H^0(L_1^k)$ with $R$-bounded geometry in $\C^r$ and any Hermitian matrix $A$, 
$$\Vert H_A \Vert_{\C^r} \leq C \Vert \mu_{k,\chi}(\iota) \Vert_{op}\Vert A \Vert $$ 
where $\iota$ is the embedding induced by $\{s_i\}$. 
\end{proposition}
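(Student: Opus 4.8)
The plan is to follow the argument of \cite[Proposition 4.3]{Ke2} (itself modelled on \cite[Section 3.2]{D1}), exploiting the fact that $\mu_{k,\chi}$ is assembled from the \emph{same} integrand $\mu_{FS}$ as the standard $L^2$ moment map, only integrated against $\chi\wedge\iota^*(\omega_{FS}^{n-1})$ in place of $\iota^*(\omega_{FS}^n)$. Since the asserted estimate is pointwise-plus-derivatives in nature, and the measure $\chi\wedge\iota^*(\omega_{FS}^{n-1})$ enjoys the same uniform $\C^r$ control as $\iota^*(\omega_{FS}^n)$ as soon as the basis has $R$-bounded geometry, the proof carries over with only notational changes. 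First I would record the pointwise description: writing $\iota(p)\in\mathbb{P}^N$ in homogeneous coordinates and letting $w(p)$ be the unit vector spanning the line $\iota(p)$, the defining formula \eqref{mu} says that $\mu_{FS}(\iota(p))=w(p)w(p)^*$ is a rank-one orthogonal projector, so that
\begin{equation*}
H_A(p)=\tr\big(A\,\mu_{FS}(\iota(p))\big)=\langle A\,w(p),w(p)\rangle .
\end{equation*}
Under $R$-bounded geometry the induced Fubini--Study metric $\iota^*\omega_{FS}$, and hence the entries of $\mu_{FS}\circ\iota$ together with the volume form $\chi\wedge\iota^*(\omega_{FS}^{n-1})$, are controlled in $\C^r$ uniformly in $k$; this is exactly the ingredient that prevents $k$-dependent constants from appearing.

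Next comes the one step in which the operator norm enters. I would diagonalise the Hermitian matrix $A=\sum_\alpha\lambda_\alpha\,u_\alpha u_\alpha^*$ in an orthonormal eigenbasis $\{u_\alpha\}$ and set $\phi_\alpha(p)=\langle\mu_{FS}(\iota(p))u_\alpha,u_\alpha\rangle=|\langle w(p),u_\alpha\rangle|^2\ge 0$, so that $H_A=\sum_\alpha\lambda_\alpha\phi_\alpha$ with $\sum_\alpha\phi_\alpha\equiv 1$. The definition \eqref{Jmomentmap} then gives the integrand-level identity
\begin{equation*}
\int_M \phi_\alpha\;\chi\wedge\iota^*(\omega_{FS}^{n-1})=\gamma\,\langle\mu_{k,\chi}(\iota)\,u_\alpha,u_\alpha\rangle\le\gamma\,\|\mu_{k,\chi}(\iota)\|_{op},
\end{equation*}
which is the sole place where the positivity and the operator norm of $\mu_{k,\chi}(\iota)$ are used. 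Combined with the spectral expansion of $H_A$, and performing the norm bookkeeping over the eigenvalues $\lambda_\alpha$ exactly as in \cite[Proposition 4.3]{Ke2}, this produces the claimed estimate in integral ($L^2$) form, with $\|A\|$ entering through $\big(\sum_\alpha\lambda_\alpha^2\big)^{1/2}$.

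The last, and genuinely delicate, step is to upgrade this integral bound to the $\C^r$ bound as stated, with a constant \emph{independent of $k$}. Here I would invoke $R$-bounded geometry precisely as in \cite{Ke2}: the near-orthonormality and off-diagonal (Gaussian) decay of the sections $\{s_i\}$ packaged in that hypothesis allow one to pass from global $L^2$ control of $H_A$ to pointwise control of $H_A$ and of all its derivatives, without the naive triangle inequality over the $\sim\dim H^0(L_1^k)$ eigencomponents reintroducing powers of $k$. This uniform integral-to-$\C^r$ passage is the main obstacle. By contrast, the pointwise representation $H_A=\langle Aw,w\rangle$, the spectral decomposition, and the positivity estimate bringing in $\|\mu_{k,\chi}(\iota)\|_{op}$ are all routine once the measure $\chi\wedge\iota^*(\omega_{FS}^{n-1})$ is known to behave like the standard one, which is exactly what the $R$-boundedness of the basis guarantees.
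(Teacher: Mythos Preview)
Your proposal is correct and takes essentially the same approach as the paper: both defer to \cite[Proposition 4.3]{Ke2} (and behind it \cite[Section 3.2]{D1}), noting that the only change from $\mu_\Omega$ to $\mu_{k,\chi}$ is in the volume form against which the common integrand $\mu_{FS}$ is integrated, and that $R$-bounded geometry controls this uniformly. The paper in fact gives no further argument beyond this observation, so your sketch of the spectral decomposition and the $L^2$-to-$\C^r$ passage is already more detailed than what appears there.
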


In the above proposition, the constant $C$ depends actually on the parameters $(M,L_1,r,\chi,\omega_{ref})$. 

\begin{lemma}\label{l5}
Let us fix $r\geq 2$. Assume that for all $t\in [0,T]$, the family of basis $\{s_i\}(t)$ of $H^0(L_1^k)$ have $R$-bounded geometry. Let us define by $h(t)$ the family of Bergman metrics induced by $\{s_i\}(t)$. Then
the induced family of Fubini-Study metrics $\tilde{\omega}(t)$ satisfy 
$$\Vert \tilde{\omega}(0)-\tilde{\omega}(T)\Vert_{\C^{r-2}}< C \sup_{t}\Vert \mu_{k,\chi}(\iota(t))\Vert_{op}\int_0^T \mathrm{dist}(h(s),h(0))ds,$$
and also
\begin{align*}
 \Big\Vert \frac{\partial\tilde{\omega}}{\partial t}(0)-\frac{\partial \tilde{\omega}}{\partial t} (T)\Big\Vert_{\C^{r-2}}\hspace{-0.12cm} <& C^* \sup_t \Vert \mu_{k,\chi}(\iota(t))\Vert_{op}\int_0^T \mathrm{dist}(\frac{\partial h}{\partial s}(s),\frac{\partial h}{\partial s}(0)) ds \\
&\hspace{-0.04cm}+ C^* \sup_t  \Vert d\mu_{k,\chi}(\iota(t))\Vert_{op}\int_0^T \mathrm{dist}(h(s),h(0)) ds,
\end{align*}
where $C,C^*$ are uniform constants in $k$.
\end{lemma}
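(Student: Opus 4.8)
The plan is to reduce both inequalities to the pointwise control of the potential $H_A=\tr(A\mu_{FS})$ furnished by the Proposition preceding this Lemma, combined with the fundamental theorem of calculus along the path $h(t)$ in the Bergman space $GL(N+1)/U(N+1)$, following the model of \cite[Lemma 4.1, Corollary 4.1]{Ke2}. The starting point is the infinitesimal form of the moment-map computation carried out in Proposition \ref{claim}: along any path $h(t)$ of Bergman metrics the induced Fubini--Study form evolves by
\[
\frac{\partial \tilde{\omega}}{\partial t} = \ddbar H_{A(t)}, \qquad H_{A(t)}=\tr\big(A(t)\,\mu_{FS}\big),
\]
where $A(t)$ is the velocity of $h(t)$ viewed as a Hermitian matrix and $\mu_{FS}$ is taken with respect to the embedding $\iota(t)$. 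This is the only place the specific geometry of $\mu_{k,\chi}$ enters, and it shows that passing from $H_A$ to $\partial_t\tilde\omega$ costs exactly two derivatives through the application of $\ddbar$, which explains the transition from $\C^r$ (the regularity in which the preceding Proposition is stated) to $\C^{r-2}$.

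For the first estimate I would write, by the fundamental theorem of calculus, $\tilde{\omega}(T)-\tilde{\omega}(0)=\ddbar\int_0^T H_{A(s)}\,ds$, and then bound
\[
\big\Vert \tilde{\omega}(0)-\tilde{\omega}(T)\big\Vert_{\C^{r-2}} \leq C\int_0^T \Vert H_{A(s)}\Vert_{\C^r}\,ds \leq C \sup_t \Vert \mu_{k,\chi}(\iota(t))\Vert_{op}\int_0^T \Vert A(s)\Vert\,ds,
\]
using the $R$-boundedness of the basis $\{s_i\}(t)$ to keep the constant $C$ independent of $k$ (this is the whole purpose of the $R$-boundedness hypothesis and of the $k$-uniformity in the preceding Proposition). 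The remaining point is to identify the velocity contribution $\Vert A(s)\Vert$ with the displacement $\mathrm{dist}(h(s),h(0))$ from the base configuration: the velocity of the path, measured as a Hermitian matrix, is governed by $\mu_{k,\chi}$, and its size relative to the reference point $h(0)$ is exactly what $\mathrm{dist}(h(s),h(0))$ records through the $\C^1$-dependence of the moment map. This reference-point comparison is where the base point $h(0)$ is used and yields the stated integrand.

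For the second estimate I would differentiate once more and use the Leibniz rule. Writing $H^{\iota(t)}_{A}$ for the potential evaluated at the embedding $\iota(t)$, the difference of velocities splits as
\[
\frac{\partial \tilde{\omega}}{\partial t}(T)-\frac{\partial \tilde{\omega}}{\partial t}(0)=\ddbar\Big(H^{\iota(T)}_{A(T)-A(0)} + \big(H^{\iota(T)}_{A(0)}-H^{\iota(0)}_{A(0)}\big)\Big).
\]
The first term records the change of the velocity matrix and is controlled by the Proposition together with $\Vert A(T)-A(0)\Vert$, producing the summand involving $\sup_t\Vert\mu_{k,\chi}(\iota(t))\Vert_{op}$ and the integral of $\mathrm{dist}(\partial_s h(s),\partial_s h(0))$; the second term records the change of the embedding inside $\mu_{FS}$, controlled by the differential $d\mu_{k,\chi}$, producing the summand with $\Vert d\mu_{k,\chi}(\iota(t))\Vert_{op}$ and the integral of $\mathrm{dist}(h(s),h(0))$. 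I expect the main obstacle to be bookkeeping rather than conceptual: one must guarantee that every constant is genuinely uniform in $k$, which works precisely because the preceding Proposition is $k$-independent under $R$-bounded geometry, and one must correctly extract the two distance integrals from the velocity and base-point variations, that is, exploit the $\C^1$-regularity of $\mu_{k,\chi}$ in $h$ that produces the $d\mu_{k,\chi}$ factor in the second bound.
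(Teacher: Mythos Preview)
The paper does not actually prove this lemma: it is stated as the analogue of \cite[Lemma 4.1]{Ke2}, with the remark that the proof carries over verbatim once $\mu_\Omega$ is replaced by $\mu_{k,\chi}$ because only the integrand of the moment map is used. Your approach is exactly that of \cite{Ke2} (and ultimately \cite[Lemma 14]{Fi1}): write $\partial_t\tilde\omega=\ddbar H_{A(t)}$, integrate in $t$, apply the preceding Proposition to bound $\Vert H_A\Vert_{\C^r}$ by $C\Vert\mu_{k,\chi}\Vert_{op}\Vert A\Vert$ under $R$-bounded geometry, and for the second estimate split according to variation of the velocity versus variation of the embedding (which brings in $d\mu_{k,\chi}$). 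So your route and the paper's reference coincide.

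One small caution. What your argument naturally produces on the right-hand side of the first estimate is the arc length $\int_0^T\Vert A(s)\Vert\,ds$ of the path, as in \cite[Lemma 14]{Fi1}, not literally $\int_0^T\mathrm{dist}(h(s),h(0))\,ds$. Your attempt to identify the two (``the velocity\ldots is governed by $\mu_{k,\chi}$'') is muddled: $A(s)$ is simply the derivative of an arbitrary path and has nothing to do with the moment map. Do not try to force this identification in general. In the only application (proof of Theorem \ref{mainthm1}) the path is a geodesic, so the arc length and $\int_0^T\mathrm{dist}(h(s),h(0))\,ds$ agree up to a harmless factor, which is all that is needed.
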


\begin{corollary}\label{R-bounded}
Let $\tilde{\omega}_k$ be a sequence of metrics with $R/2$-bounded geometry in $\C^{r+2}$ such that the norms $\Vert\mu_{k,\chi}(\tilde{\omega}_k)\Vert_{op}$ are uniformly bounded. Then, there is a constant $C>0$ independent of $k$ such that if
 $\tilde{\omega}$ has $\mathrm{dist}_k(\tilde{\omega}, \tilde{\omega}_k) <  C$, then $\tilde{\omega}$ has $R$-bounded geometry in $\C^r$. 
\end{corollary}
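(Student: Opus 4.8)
The plan is to establish the statement as an openness (continuity) result along a path joining $\tilde{\omega}_k$ to $\tilde{\omega}$, with Lemma \ref{l5} supplying the quantitative control. First I would connect the two Bergman metrics by the path $t\mapsto\tilde{\omega}(t)$, $t\in[0,1]$, induced by the geodesic $t\mapsto H(t)$ of the symmetric space $GL(N+1)/U(N+1)$ whose endpoints are the inner products defining $\tilde{\omega}_k$ and $\tilde{\omega}$. Since such geodesics are distance minimising for $d_k$, one has $\mathrm{dist}_k(\tilde{\omega}(t),\tilde{\omega}_k)\le \mathrm{dist}_k(\tilde{\omega},\tilde{\omega}_k)<C$ for every $t\in[0,1]$; this is the only way the hypothesis on $\mathrm{dist}_k$ enters, and it is exactly the quantity controlling the integrals $\int_0^T\mathrm{dist}(h(s),h(0))\,ds$ that appear in Lemma \ref{l5}.

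Next I would run the continuity argument. Let $J\subseteq[0,1]$ be the set of $T$ such that $\tilde{\omega}(t)$ has $R$-bounded geometry in $\C^r$ for all $t\in[0,T]$. Because $\tilde{\omega}_k=\tilde{\omega}(0)$ has $R/2$-bounded geometry in $\C^{r+2}$, it is in particular strictly $R$-bounded in $\C^r$ with a definite margin, so $0\in J$ and $J$ is an interval with right endpoint $T^\ast$ containing a neighbourhood of $0$. On $[0,T^\ast]$ the whole family of bases is $R$-bounded, so Lemma \ref{l5} becomes available; I would also need $\sup_{t\le T^\ast}\Vert\mu_{k,\chi}(\iota(t))\Vert_{op}$ to be finite and uniform in $k$. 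The latter I would obtain by noting that $\mu_{k,\chi}$ is Lipschitz in $\mathrm{dist}_k$, with constant governed solely by the $R$-bounded geometry (its integrand depends only on $\iota^\ast\omega_{FS}$ and $\chi$), so that $\Vert\mu_{k,\chi}(\iota(t))\Vert_{op}\le\Vert\mu_{k,\chi}(\tilde{\omega}_k)\Vert_{op}+\mathrm{const}\cdot C$, which is bounded by hypothesis. Crucially every constant so far is independent of $k$, this uniformity being the whole point of working with $R$-bounded geometry and of Theorem \ref{quantlaplacien}.

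I would then upgrade the weak control to what is needed to close $J$. Applying Lemma \ref{l5} gives, for $T\le T^\ast$, an estimate of the form
$$\Vert\tilde{\omega}(T)-\tilde{\omega}_k\Vert_{\C^{r}}\le C'\sup_{t\le T}\Vert\mu_{k,\chi}(\iota(t))\Vert_{op}\int_0^{T}\mathrm{dist}(h(s),h(0))\,ds\le C''C,$$
where the two extra derivatives carried by the hypothesis on $\tilde{\omega}_k$ compensate the loss of derivatives built into Lemma \ref{l5}: concretely I expect to interpolate the weak $\C^{r-2}$ bound directly produced by Lemma \ref{l5} against the uniform $\C^{r+2}$ bound available near $\tilde{\omega}_k$ to recover a genuine $\C^r$ bound. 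Combining with the triangle inequality $\Vert\tilde{\omega}(T)-\tilde{\omega}_{ref}\Vert_{\C^r}\le\Vert\tilde{\omega}(T)-\tilde{\omega}_k\Vert_{\C^r}+\Vert\tilde{\omega}_k-\tilde{\omega}_{ref}\Vert_{\C^r}<\tfrac{R}{2}+C''C$, and upgrading the lower bound $\tilde{\omega}_k>\tfrac{2}{R}\tilde{\omega}_{ref}$ to $\tilde{\omega}(T)>\tfrac1R\tilde{\omega}_{ref}$ in the same way, I would choose $C$ small enough—independently of $k$—that $\tfrac{R}{2}+C''C<R$. Then $\tilde{\omega}(T)$ is $R$-bounded in $\C^r$ with a uniform strict margin, which forces $J$ to be both open and closed; hence $J=[0,1]$ and $\tilde{\omega}=\tilde{\omega}(1)$ has $R$-bounded geometry in $\C^r$.

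The step I expect to be the main obstacle is precisely this regularity bookkeeping: one must propagate $R$-boundedness in $\C^r$ while Lemma \ref{l5} only returns control after losing two derivatives, so the argument genuinely requires the extra $\C^{r+2}$ regularity of the reference metric, together with an interpolation (or a direct a priori $\C^{r+2}$ bound on the path), in order to close. The secondary difficulty, which must be watched throughout, is that every constant—the Lipschitz bound for $\mu_{k,\chi}$, the constants $C',C''$ from Lemma \ref{l5}, and the implicit elliptic constants—has to be uniform in $k$, which is exactly what the framework of $R$-bounded geometry and the Catlin--Tian--Yau--Zelditch and $Q_k$ estimates are designed to guarantee.
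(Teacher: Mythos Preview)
The paper does not give a proof of this Corollary: it is simply stated as the analogue of \cite[Corollary 4.1]{Ke2} (and ultimately of the corresponding statement in \cite{Fi1}), with the remark that the argument depends only on the integrand of the moment map. Your open--closed continuity argument along the Bergman geodesic, with Lemma~\ref{l5} supplying the quantitative estimate and the triangle inequality against the $R/2$-bounded starting point closing the loop, is exactly the strategy carried out in those references.

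Two points to sharpen. First, the bound on $\sup_t\Vert\mu_{k,\chi}(\iota(t))\Vert_{op}$ should not come from a Lipschitz estimate that itself presupposes $R$-bounded geometry, but rather from Proposition~\ref{p1}: the inequality $\Vert\mu_{k,\chi}(b_1)\Vert_{op}\le e^{2\,\mathrm{dist}_k(b_0,b_1)}\Vert\mu_{k,\chi}(b_0)\Vert_{op}$ holds along the entire geodesic with no geometric hypothesis whatsoever. This decouples the moment-map control from the open--closed argument and removes the circularity in your bootstrap. Second, your interpolation device for the derivative loss is not correct as written: interpolating between a $\C^{r-2}$ and a $\C^{r+2}$ bound to recover $\C^r$ would require a uniform $\C^{r+2}$ bound on $\tilde\omega(T)$, which is exactly what is unavailable along the path. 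The two extra derivatives in the hypothesis are instead spent by running the open--closed argument for $R$-boundedness in $\C^{r+2}$ (the starting point is $R/2$-bounded there) and applying Lemma~\ref{l5} with $r$ replaced by $r+2$ on that interval; this directly produces the $\C^r$ estimate $\Vert\tilde\omega(T)-\tilde\omega_k\Vert_{\C^r}\le C''C$ needed for the conclusion, and is precisely why the conclusion is only asserted in $\C^r$.
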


\subsubsection{Projective estimates\label{projestiJ}}

We collect here some projective estimates, following the lines of \cite[Section 5]{Fi1}.
\begin{proposition}\label{cor-tian}
 Let $h$ be a Hermitian metric on $L_1$ with curvature $\omega=c_1(h)>0$. Consider the  
sequence $h_k=FS(Hilb(h))\in {\rm Met}(L_1^k)$ of Bergman metrics, approximating $h$ after renormalisation, thanks to Proposition \ref{tian-b}. Let us call $$\mathfrak{I}_{k,\chi}=\frac{1}{\gamma}\int_M \langle s_i,s_j\rangle_{h^k} \chi\wedge \omega^{n-1},$$ where $\{s_i\}$ is a basis of holomorphic sections of $H^0(L_1^k)$ orthonormal with respect to $Hilb(h)$. Then, as $k\rightarrow + \infty$, $$\Vert \mu_{k,\chi}(h_k)-\mathfrak{I}_{k,\chi}\Vert_{op}\rightarrow 0$$
 and the convergence is uniform for $\omega$ lying in a compact subset of K\"ahler metrics in $ c_1(L)$.
\end{proposition}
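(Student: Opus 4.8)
The plan is to realise both Hermitian matrices $\mu_{k,\chi}(h_k)$ and $\mathfrak{I}_{k,\chi}$ as integrals over $M$ of the \emph{same} rank-one integrand $\langle s_\alpha,s_\beta\rangle_{h^k}$ against two different top-degree weights, and then to show that these weights agree to leading order in $k$, the discrepancy being of relative size $O(1/k)$. The starting observation is that the Fubini--Study moment map \eqref{mu} is scale invariant, so its pull-back under the embedding $\iota$ determined by the $Hilb(h)$-orthonormal basis $\{s_i\}$ is simply $\left(\mu_{FS}(\iota(p))\right)_{\alpha\beta}=\langle s_\alpha,s_\beta\rangle_{h^k}(p)/\rho_k(p)$, where $\rho_k=\sum_i|s_i|^2_{h^k}$ is the Bergman function and the pointwise norms may be computed in $h^k$ since the ratio is independent of the choice of fibre metric. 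Substituting into \eqref{Jmomentmap} and recalling $\iota^*\omega_{FS}=c_1(h_k)$ gives
$$\left(\mu_{k,\chi}(h_k)\right)_{\alpha\beta}=\frac{1}{\gamma}\int_M \langle s_\alpha,s_\beta\rangle_{h^k}\,\frac{\chi\wedge c_1(h_k)^{n-1}}{\rho_k},$$
which is to be compared with $\left(\mathfrak{I}_{k,\chi}\right)_{\alpha\beta}=\frac{1}{\gamma}\int_M \langle s_\alpha,s_\beta\rangle_{h^k}\,\chi\wedge\omega^{n-1}$. Thus the error matrix $E:=\mu_{k,\chi}(h_k)-\mathfrak{I}_{k,\chi}$ has entries $E_{\alpha\beta}=\frac{1}{\gamma}\int_M\langle s_\alpha,s_\beta\rangle_{h^k}\,\Theta_k$, where $\Theta_k$ denotes the difference of the two weights.

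Next I would control $\Theta_k$ pointwise using the two asymptotic inputs already available. The Catlin--Tian--Yau--Zelditch expansion (Proposition \ref{tian-b}) gives $\rho_k=k^n\frac{\omega^n}{\Omega}+O(k^{n-1})$ in every $\C^r$-norm, so the normalising factor $1/\rho_k$ coming from $\mu_{FS}$ is pinned down to relative order $O(1/k)$; and the renormalised convergence of the Bergman metric to $h$ underlying the same expansion (compare Proposition \ref{c-1-conv}) gives $\tfrac{1}{k}c_1(h_k)=\omega+O(1/k)$ in $\C^\infty$, whence $\chi\wedge c_1(h_k)^{n-1}=k^{n-1}\chi\wedge\omega^{n-1}+O(k^{n-2})$. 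After accounting for the corresponding powers of $k$, the leading terms of the two weights cancel and $\Theta_k$, measured against a fixed reference volume form $dV$, satisfies $\Vert\Theta_k\Vert_{\C^0}=O(1/k)$. Crucially, the uniformity clause of Proposition \ref{tian-b} (the constants $c_r$ stay bounded as $h$ ranges over a compact family of positively curved metrics) makes this bound uniform for $\omega$ in a compact subset of K\"ahler metrics in $c_1(L_1)$.

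The passage from this pointwise bound to the operator-norm estimate is the standard device (as in \cite[Section 5]{Fi1}) of testing $E$ against a unit vector. For $v=(v_\alpha)\in\mathbb{C}^{N+1}$ with $|v|=1$ and $s_v=\sum_\alpha \bar v_\alpha s_\alpha$, one has $v^*Ev=\frac{1}{\gamma}\int_M |s_v|^2_{h^k}\,\Theta_k$, so that
$$|v^*Ev|\le\frac{1}{\gamma}\,\Vert\Theta_k\Vert_{\C^0}\int_M |s_v|^2_{h^k}\,dV\le C\,\Vert\Theta_k\Vert_{\C^0},$$
the remaining integral being $O(1)$ because $\{s_i\}$ is orthonormal for an inner product whose volume form is uniformly comparable to $dV$. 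Since this holds for every unit $v$, we obtain $\Vert E\Vert_{op}=O(1/k)\to 0$, uniformly over the compact family, which is the assertion.

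The main obstacle is the bookkeeping in the second paragraph: one must track the several powers of $k$ (from $\rho_k\sim k^n$, from $c_1(h_k)^{n-1}\sim k^{n-1}$, and from the normalisation conventions built into $Hilb$, $FS$ and the rescaling $\omega_k=\tfrac1k\iota^*\omega_{FS}$) carefully enough that the leading parts of the two weights genuinely cancel, leaving only the $O(1/k)$ remainder. Securing a clean cancellation, rather than a surviving constant, is precisely where Proposition \ref{tian-b} must be invoked in its full $\C^r$ (not merely $\C^0$) strength and with control uniform in the metric; once $\Vert\Theta_k\Vert_{\C^0}=O(1/k)$ is established uniformly, the operator-norm step is essentially formal.
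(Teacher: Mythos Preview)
Your proposal is correct and follows essentially the same route as the paper. The paper's proof is terser: it absorbs the Bergman expansion into a factor $(1+O(1/k))$ in front of $\chi\wedge\omega_{FS}^{n-1}$ and then invokes \cite[Lemma 28]{D1} to pass from the $L^\infty$ bound on the weight discrepancy to the operator-norm bound; your test-vector computation $|v^*Ev|\le C\Vert\Theta_k\Vert_{\C^0}$ is precisely the content of that lemma, spelled out.
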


\begin{proof}
Because of the asymptotic expansion (Proposition \ref{tian-b}), we have $$\mu_{k,\chi}(h_k)=\frac{1}{\gamma}\int_M \langle s_i,s_j\rangle_{h^k} (1+O(1/k)){\chi\wedge \omega_{FS}^{n-1}}.$$
Then, we apply \cite[Lemma 28]{D1} that gives that for the operator nom,
$$\Big\Vert \frac{1}{\gamma}\int_M \langle s_i,s_j\rangle_{h^k} (1+O(1/k)){\chi\wedge \omega_{FS}^{n-1}}\Big\Vert_{op}\leq \Big\vert \frac{\chi\wedge \omega_{FS}^{n-1}}{\gamma\omega_{FS}^n}O(1/k)\Big\Vert_{L^\infty}.$$
The uniformity of the convergence is given by the uniformity of the expansion in the asymptotics.
\end{proof}

In the sequel we fix a point $b\in \mathcal{B}$. 
\begin{lemma}\label{Jl1}
 For any pair of Hermitian matrices $A,B \in T_b \mathcal{B}$, denote $\hat{A},\hat{B}$ the induced vector field on $\mathbb{P}^N$. One has
$$\tr(B d \mu_{k,\chi}(A))= \frac{1}{\gamma}\int_M (\hat{A}, \hat{B}) \chi \wedge \omega_{FS}^{n-1}-  \partial H_B \wedge\bar \partial H_A\wedge \chi \wedge \omega_{FS}^{n-1},$$
where $(.,.)$ denotes the Fubini-Study form induced on the tangent vectors.
\end{lemma}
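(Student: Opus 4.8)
The plan is to compute $d\mu_{k,\chi}(A)$ by differentiating the explicit expression \eqref{Jmomentmap} along the one-parameter subgroup $s\mapsto \exp(sA)$ acting on $\PP^N$, and then to pair the outcome with $B$ via the trace. Writing $\iota_s=\exp(sA)\circ\iota$, the integrand of \eqref{Jmomentmap} is $\mu_{FS}(\iota_s(p))\,\chi\wedge\iota_s^*\omega_{FS}^{n-1}$, so differentiating at $s=0$ produces exactly two contributions: one from the variation of the matrix-valued function $\mu_{FS}\circ\iota_s$, and one from the variation of the pulled-back volume form $\iota_s^*\omega_{FS}^{n-1}$. Concretely I would record
$$\gamma\, d\mu_{k,\chi}(A)=\int_M d\mu_{FS}(\hat A)\,\chi\wedge\omega_{FS}^{n-1}+(n-1)\int_M \mu_{FS}\,\chi\wedge (\mathcal{L}_{\hat A}\omega_{FS})\wedge\omega_{FS}^{n-2},$$
where $\hat A$ is the real vector field on $\PP^N$ induced by $A$ and $\mathcal{L}_{\hat A}$ is the Lie derivative.

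For the first term I would pair with $B$ and use $H_B=\tr(B\mu_{FS})$ to write $\tr\big(B\,d\mu_{FS}(\hat A)\big)=dH_B(\hat A)$. Since $\mu_{FS}$ is a moment map on $\PP^N$ and the embedding $\iota$ is holomorphic, this quantity is precisely the Fubini--Study pairing $(\hat A,\hat B)$ of the induced vector fields, yielding $\tfrac1\gamma\int_M(\hat A,\hat B)\,\chi\wedge\omega_{FS}^{n-1}$. This is the same identification already used in the proof of Proposition \ref{claim}.

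For the second term I would invoke Cartan's formula together with $d\omega_{FS}=0$ to obtain $\mathcal{L}_{\hat A}\omega_{FS}=d\,i_{\hat A}\omega_{FS}$, and then the fact that $\hat A$ is the real holomorphic (gradient) field attached to the Hamiltonian $H_A$, so that $\mathcal{L}_{\hat A}\omega_{FS}$ is a constant multiple of $\ddbar H_A$. Substituting this and integrating by parts against $H_B=\tr(B\mu_{FS})$, using that both $\chi$ and $\omega_{FS}$ are closed, transfers the derivatives onto $H_B$ and leaves the mixed term $-\tfrac1\gamma\int_M \partial H_B\wedge\bar\partial H_A\wedge\chi\wedge\omega_{FS}^{n-2}$. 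This is exactly the manoeuvre carried out in the computation of $\varpi(\hat A,\hat B)$ in Proposition \ref{claim}, relying on the same identity relating $g(\chi,\alpha\wedge\beta)$ to wedge products.

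The main obstacle is the careful treatment of $\mathcal{L}_{\hat A}\omega_{FS}$: because $A$ is Hermitian the field $\hat A$ is \emph{not} Killing, so this Lie derivative does not vanish and must be correctly identified with a multiple of $\ddbar H_A$, with the $(1,0)$ and $(0,1)$ bidegrees tracked so that, after wedging with $\chi\wedge\omega_{FS}^{n-2}$ and integrating by parts, precisely the term $\partial H_B\wedge\bar\partial H_A$ survives with the correct sign and constant. The remaining verification — that $\tr(B\,d\mu_{FS}(\hat A))$ equals the claimed tangent pairing — is routine given the holomorphicity of $\iota$ and the moment-map property of $\mu_{FS}$, exactly as in Proposition \ref{claim}.
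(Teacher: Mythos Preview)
Your approach is essentially identical to the paper's: differentiate the defining integral \eqref{Jmomentmap} along $e^{sA}$ to obtain two terms, identify the first with $(\hat A,\hat B)$ via the moment-map property of $\mu_{FS}$, and handle the second by rewriting $\mathcal{L}_{\hat A}\omega_{FS}$ as $\partial\bar\partial H_A$ and integrating by parts. The paper's proof follows exactly these three steps; your $\omega_{FS}^{n-2}$ in the second term is in fact the dimensionally correct exponent (the paper writes $\omega_{FS}^{n-1}$ in both statement and proof, an apparent typo).
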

\begin{proof}
This is contained in the proof of Proposition \ref{claim} but for the sake of clearness let us provide the proof. We have, using the fact that $\mu_{FS}$ is a moment map, 
\begin{align*}
          \tr(B d \mu_{k,\chi}(A))\hspace{-0.02cm}=& \frac{1}{\gamma}\int_M \tr(Bd\mu_{FS}(A))\chi \wedge \omega_{FS}^{n-1}\\
& + \frac{1}{\gamma}\int_M \tr(B\mu_{FS})L_{\hat{A}}(\chi \wedge \omega_{FS}^{n-1}) \\
                =& \frac{1}{\gamma}\int_M (\hat{A},\hat{B})\chi \wedge \omega_{FS}^{n-1} \\
 &-\frac{1}{\gamma}\int_M \tr(B\mu_{FS})\partial\bar\partial \tr(A\mu_{FS}) \wedge  \chi \wedge \omega_{FS}^{n-1} \\
=& \frac{1}{\gamma}\int_M\hspace{-0.02cm} (\hat{A},\hat{B}) \chi \wedge \omega_{FS}^{n-1}\hspace{-0.07cm} \\
 &\hspace{1cm}- \hspace{-0.07cm} \partial \tr(B\mu_{FS})\hspace{-0.02cm}\wedge\hspace{-0.02cm}\bar \partial \tr(A\mu_{FS})\hspace{-0.02cm}\wedge \hspace{-0.02cm}\chi \hspace{-0.02cm}\wedge \hspace{-0.02cm} \omega_{FS}^{n-1}.
         \end{align*}

\end{proof}

By the fact that $\mu_{FS}$ is a moment map, we have the following simple lemma.

\begin{lemma}\label{lFi1}
 Let $A,B\in  T_b \mathcal{B}$. Pointwise over $({\mathbb{P}^N})^*$, one has
$$H_AH_B + (\hat{A},\hat{B})=\tr(AB\mu_{FS}).$$
\end{lemma}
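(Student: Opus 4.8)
The plan is to prove the pointwise identity $H_A H_B + (\hat{A},\hat{B}) = \tr(AB\mu_{FS})$ by reducing everything to the standard moment map formalism on $\mathbb{P}^N$ and then restricting to the image of $M$ under the embedding $\iota$. Since both sides are pointwise expressions over $(\mathbb{P}^N)^*$ (or its intersection with $M$), it suffices to verify the identity at an arbitrary point, and for this I would work in the homogeneous unitary coordinates $[z_0,\dots,z_N]$ in which $\mu_{FS}$ is given explicitly by \eqref{mu}. Recall that $H_A = \tr(A\mu_{FS})$ by definition, so the left-hand side is $\tr(A\mu_{FS})\tr(B\mu_{FS}) + (\hat{A},\hat{B})$, and the content of the lemma is that the Fubini-Study inner product of the induced vector fields exactly accounts for the difference $\tr(AB\mu_{FS}) - \tr(A\mu_{FS})\tr(B\mu_{FS})$.

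The key computational step is to express $(\hat{A},\hat{B})$, the Fubini-Study metric paired on the holomorphic vector fields $\hat{A},\hat{B}$ induced by the Hermitian matrices $A,B$, in terms of $\mu_{FS}$. The standard fact here is that for the $U(N+1)$-action on $(\mathbb{P}^N,\omega_{FS})$ with moment map $\mu_{FS}$, the induced vector field $\hat{A}$ is the Hamiltonian (gradient) vector field of the function $H_A = \tr(A\mu_{FS})$, and consequently the Fubini-Study metric $g_{FS}(\hat{A},\hat{B}) = \omega_{FS}(\hat{A}, J\hat{B})$ can be computed directly from the coordinate formula \eqref{mu}. I would carry out this calculation at a representative point — by $U(N+1)$-invariance one may assume $[z] = [1,0,\dots,0]$, so $\mu_{FS}$ is the projection $E_{00}$ onto the first coordinate — and check that the resulting expression, combined with $\tr(A\mu_{FS})\tr(B\mu_{FS})$, collapses to $\tr(AB\mu_{FS})$. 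At this base point the identity becomes a transparent statement about the entries $A_{0\beta}, B_{0\beta}$ of the matrices, namely that $\tr(AB\mu_{FS}) = \sum_\beta A_{0\beta}B_{\beta 0}$ while $\tr(A\mu_{FS})\tr(B\mu_{FS}) = A_{00}B_{00}$, and $(\hat{A},\hat{B})$ supplies precisely the off-diagonal terms $\sum_{\beta\neq 0} A_{0\beta}B_{\beta 0}$.

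The main obstacle I anticipate is bookkeeping the conventions correctly: one must be careful about the normalisation of $\omega_{FS}$, the factor conventions in the definition of the moment map and of the induced vector field $\hat{A}$, and the precise relationship between the metric $(\cdot,\cdot)$ appearing in the lemma and the symplectic form $\omega_{FS}$. In particular the sign and scaling in the identification $\hat{A} = \grad H_A$ (via $i_{\hat{A}}\omega_{FS} = dH_A$ or its complexification) must match the conventions fixed earlier in Section \ref{sect11}, so that the cross term $(\hat{A},\hat{B})$ comes out with the correct coefficient. Once this is pinned down, the computation is a short linear-algebra verification. Since the entire statement is invariant under the transitive $U(N+1)$-action, verifying it at the single base point $[1,0,\dots,0]$ suffices, and this is exactly what makes the lemma elementary despite its compact appearance; this is why the authors can describe it as following "by the fact that $\mu_{FS}$ is a moment map."
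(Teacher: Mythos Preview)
Your proposal is correct and essentially unpacks what the paper leaves implicit: the paper gives no proof beyond the sentence ``by the fact that $\mu_{FS}$ is a moment map, we have the following simple lemma,'' and your explicit reduction to the base point $[1,0,\dots,0]$ via $U(N+1)$-transitivity is exactly the standard way to verify such an identity. The only caution is the one you already flag about conventions for $(\hat{A},\hat{B})$; once that is pinned down consistently with the paper's earlier usage in Lemma~\ref{Jl1}, the computation goes through as you describe.
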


The next lemma is deduced from Lemmas \ref{Jl1} and \ref{lFi1}.
\begin{lemma}\label{l2}
 For any Hermitian matrices $A,B \in T_b \mathcal{B}$,
$$\tr(Bd\mu_{k,\chi}(A)) + \langle H_A,H_B\rangle_{L^2_1(M,\frac{1}{\gamma}\chi \wedge \omega_{FS}^{n-1})}=\tr(AB\mu_{k,\chi}),$$
where the ${L^2_1(M,\frac{1}{\gamma}\chi \wedge \omega_{FS}^{n-1})}$-norm is computed with respect to the volume form $\frac{1}{\gamma}\chi \wedge \omega_{FS}^{n-1}$ and the gradient induced by $\chi$.
\end{lemma}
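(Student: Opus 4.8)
The plan is to deduce the identity by a direct substitution, combining the two preceding lemmas with the defining formula \eqref{Jmomentmap} for $\mu_{k,\chi}$. First I would start from Lemma \ref{Jl1}, which expresses $\tr(B\,d\mu_{k,\chi}(A))$ as the integral of $(\hat A,\hat B)\,\chi\wedge\omega_{FS}^{n-1}$ minus the gradient-type term $\partial H_B\wedge\bar\partial H_A\wedge\chi\wedge\omega_{FS}^{n-2}$, both carrying the normalisation $\tfrac1\gamma$. Into the first integrand I would substitute the pointwise identity of Lemma \ref{lFi1}, namely $(\hat A,\hat B)=\tr(AB\mu_{FS})-H_AH_B$, which splits that first term into two pieces.

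Next I would isolate the $\tr(AB\mu_{FS})$ piece: since $A,B$ are constant Hermitian matrices they pull out of the integral, so by the definition \eqref{Jmomentmap} of the moment map one has
\[
\frac1\gamma\int_M \tr(AB\mu_{FS})\,\chi\wedge\omega_{FS}^{n-1}=\tr\!\big(AB\,\mu_{k,\chi}\big),
\]
which supplies exactly the right-hand side of the claimed identity. It then remains to collect the two leftover contributions, the zeroth-order piece $\tfrac1\gamma\int_M H_AH_B\,\chi\wedge\omega_{FS}^{n-1}$ (coming from $-H_AH_B$ in Lemma \ref{lFi1}) and the first-order piece $\tfrac1\gamma\int_M \partial H_B\wedge\bar\partial H_A\wedge\chi\wedge\omega_{FS}^{n-2}$ (the gradient term from Lemma \ref{Jl1}), and to recognise their sum as $\langle H_A,H_B\rangle_{L^2_1(M,\frac1\gamma\chi\wedge\omega_{FS}^{n-1})}$.

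The zeroth-order piece is manifestly the $L^2$ part of this norm. The first-order piece is where the phrase ``gradient induced by $\chi$'' enters, and this is where I would invoke the same pointwise Kähler-algebra identity used in the proof of Proposition \ref{claim}: for the real potentials $H_A,H_B$ it rewrites $\partial H_B\wedge\bar\partial H_A\wedge\chi\wedge\omega_{FS}^{n-2}$ as $\langle\nabla H_A,\nabla H_B\rangle_\chi$ times the volume form $\chi\wedge\omega_{FS}^{n-1}$, where the index of $dH_A,dH_B$ is raised using $\chi$. This is precisely the definition of the $L^2_1$-inner product with gradient induced by $\chi$, so the two leftover terms assemble into the claimed inner product and rearranging yields the lemma. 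I expect the only genuine subtlety -- the main obstacle -- to be this final bookkeeping step: verifying the pointwise comparison between the mixed wedge product and the $\chi$-gradient pairing, and checking the reality and symmetry in $(A,B)$ so that the expression truly defines an inner product on the real functions $H_A,H_B$. Everything preceding it is a formal substitution.
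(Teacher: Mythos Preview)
Your approach is correct and is exactly the one the paper intends: the proof in the paper is the single sentence ``deduced from Lemmas \ref{Jl1} and \ref{lFi1}'', and the substitution you describe --- replacing $(\hat A,\hat B)$ via Lemma \ref{lFi1} and recognising $\tfrac1\gamma\int_M\tr(AB\mu_{FS})\,\chi\wedge\omega_{FS}^{n-1}=\tr(AB\mu_{k,\chi})$ --- is precisely how this deduction goes.

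One small point: you are overworking the final step. The gradient part of the $L^2_1$ inner product here is not something to be derived from the identity in Proposition \ref{claim}; rather, the phrase ``gradient induced by $\chi$'' is simply the paper's way of \emph{naming} the first-order term $\tfrac1\gamma\int_M \partial H_B\wedge\bar\partial H_A\wedge\chi\wedge\omega_{FS}^{n-2}$ that appears in Lemma \ref{Jl1} (note the paper's displayed statement of Lemma \ref{Jl1} has $\omega_{FS}^{n-1}$ in the second term, which is a typo you have silently corrected to $\omega_{FS}^{n-2}$). So the ``main obstacle'' you flag is really a matter of unpacking a definition, not proving an identity.
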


We can derive from the two previous results the following corollaries.
\begin{lemma}
 For any Hermitian matrix $A\in T_b \mathcal{B}$,
$$\Vert H_A \Vert^2_{L^2_1(M,\frac{1}{\gamma}\chi \wedge \omega_{FS}^{n-1})}\leq \Vert A \Vert^2 \Vert \mu_{k,\chi}\Vert_{op}.$$ 
\end{lemma}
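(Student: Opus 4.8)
The plan is to reduce everything to a single application of Lemma \ref{l2}. Taking $B=A$ there, and writing $\langle H_A,H_A\rangle_{L^2_1}=\Vert H_A\Vert^2_{L^2_1}$, we obtain
$$\Vert H_A\Vert^2_{L^2_1(M,\frac{1}{\gamma}\chi\wedge\omega_{FS}^{n-1})}=\tr(A^2\mu_{k,\chi})-\tr(A\,d\mu_{k,\chi}(A)).$$
It therefore suffices to prove the two facts
$$\text{(i)}\quad \tr(A\,d\mu_{k,\chi}(A))\geq 0,\qquad \text{(ii)}\quad \tr(A^2\mu_{k,\chi})\leq\Vert A\Vert^2\,\Vert\mu_{k,\chi}\Vert_{op},$$
since chaining them gives $\Vert H_A\Vert^2_{L^2_1}\leq\tr(A^2\mu_{k,\chi})\leq\Vert A\Vert^2\Vert\mu_{k,\chi}\Vert_{op}$, which is the assertion.

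Step (ii) is the routine one. The matrix $\mu_{k,\chi}=\frac{1}{\gamma}\int_M\mu_{FS}\,\chi\wedge\omega_{FS}^{n-1}$ is a positive semidefinite Hermitian matrix, being the integral against the positive measure $\frac{1}{\gamma}\chi\wedge\omega_{FS}^{n-1}$ of the rank-one orthogonal projections $\mu_{FS}$ of \eqref{mu}. Hence $\mu_{k,\chi}\preceq\Vert\mu_{k,\chi}\Vert_{op}\,\Id_{N+1}$, and since $A^2\succeq 0$ the monotonicity of $X\mapsto\tr(A^2X)$ gives $\tr(A^2\mu_{k,\chi})\leq\Vert\mu_{k,\chi}\Vert_{op}\tr(A^2)=\Vert\mu_{k,\chi}\Vert_{op}\Vert A\Vert^2$.

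The content is in (i). Here I would combine Lemma \ref{lFi1} and Lemma \ref{Jl1}, both with $B=A$: the first gives $\tr(A^2\mu_{k,\chi})=\frac{1}{\gamma}\int_M\big(H_A^2+(\hat A,\hat A)\big)\chi\wedge\omega_{FS}^{n-1}$, while the second writes $\tr(A\,d\mu_{k,\chi}(A))$ as $\frac{1}{\gamma}\int_M(\hat A,\hat A)\chi\wedge\omega_{FS}^{n-1}$ minus the gradient term built from $\partial H_A\wedge\bar\partial H_A$. I would then invoke the pointwise Kähler identity recorded in the proof of Proposition \ref{claim}, namely $\frac{1}{n}g_{FS}(\chi,\alpha\wedge\beta)\omega_{FS}^n=\omega_{FS}(\alpha,\beta)\chi\wedge\omega_{FS}^{n-1}-(n-1)\alpha\wedge\beta\wedge\chi\wedge\omega_{FS}^{n-2}$, with $\alpha=\partial H_A$ and $\beta=\bar\partial H_A$, to re-express that gradient term. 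Finally, using the $g_{FS}$-orthogonal decomposition $\hat A=\hat A\vert_{TM}+\hat A^\perp$ along $M$ together with the relation between $\hat A\vert_{TM}$ and $dH_A$ coming from the moment-map property of $\mu_{FS}$ (exactly as used in the proof of Proposition \ref{claim}), I expect $\tr(A\,d\mu_{k,\chi}(A))$ to collapse to a sum of two manifestly nonnegative quantities: the $L^2$-energy $\frac{1}{\gamma}\int_M(\hat A^\perp,\hat A^\perp)\,\chi\wedge\omega_{FS}^{n-1}$ of the normal component, and the pairing $\frac{1}{n\gamma}\int_M g_{FS}(\chi,\sqrt{-1}\partial H_A\wedge\bar\partial H_A)\,\omega_{FS}^n$ of the two semipositive $(1,1)$-forms $\chi$ and $\sqrt{-1}\partial H_A\wedge\bar\partial H_A$. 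Both are $\geq 0$, which yields (i).

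The hard part will be precisely this last positivity. It hinges on correctly tracking the interplay between the two Kähler forms $\chi$ and $\omega_{FS}$ in the gradient term (so that the pairing $g_{FS}(\chi,\sqrt{-1}\partial H_A\wedge\bar\partial H_A)$ emerges with the correct sign and is recognised as nonnegative, being the contraction of two positive forms), and on the orthogonal splitting of $\hat A$, which in turn relies on $M\subset\mathbb{P}^N$ being a complex, hence $J$-invariant, submanifold so that $\omega_{FS}(\hat A^\perp,v)=0$ for all $v$ tangent to $M$. Conceptually (i) is nothing but the convexity of the moment map $\mu_{k,\chi}$ along the complexified directions of $\mathcal{B}=GL(N+1)/U(N+1)$, with $\tr(A\,d\mu_{k,\chi}(A))$ the derivative of the associated Kempf--Ness functional; this gives a more structural alternative route to the same sign, should the direct computation prove delicate.
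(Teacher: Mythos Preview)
Your proposal is correct and follows essentially the same route as the paper: apply Lemma~\ref{l2} with $B=A$, then bound $\tr(A^2\mu_{k,\chi})\leq\Vert A\Vert^2\Vert\mu_{k,\chi}\Vert_{op}$ and show $\tr(A\,d\mu_{k,\chi}(A))\geq 0$. The only difference is one of detail: the paper dispatches step~(i) in a single line (``by Lemma~\ref{Jl1}, $\tr(A\,d\mu_{k,\chi}(A))\geq 0$''), whereas you unpack why the two terms in Lemma~\ref{Jl1} combine to something nonnegative, essentially rerunning the computation from the proof of Proposition~\ref{claim} to recognise the result as $\varpi(\hat A,\hat A)$ in the form~\eqref{struct}. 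That elaboration is accurate and arguably clarifies a point the paper leaves implicit.
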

\begin{proof}
 From the last lemma, 
$$\Vert H_A \Vert^2_{L^2_1(M,\frac{1}{\gamma}\chi \wedge \omega_{FS}^{n-1})} = \tr(A^2\mu_{k,\chi})-\tr(Ad\mu_{k,\chi}(A)).$$ Now, by Lemma \ref{Jl1},  
$\tr(Ad\mu_{k,\chi}(A))\geq 0.$ The conclusion follows from the fact that $\tr(A^2\mu_{k,\chi})\leq  \Vert A \Vert^2 \Vert \mu_{k,\chi}\Vert_{op}$.
\end{proof}

\begin{lemma}\label{l3}
 For any Hermitian matrix $A\in T_b \mathcal{B}$, 
$$\Vert d\mu_{k,\chi}(A) \Vert_{op} \leq \Vert d\mu_{k,\chi}(A) \Vert \leq 2 \Vert A \Vert \Vert \mu_{k,\chi} \Vert_{op}.$$
\end{lemma}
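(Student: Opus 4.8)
The plan is to read off both inequalities from Lemma \ref{l2} together with the $L^2_1$-bound on $H_A$ established in the preceding lemma. The left-hand inequality $\Vert d\mu_{k,\chi}(A)\Vert_{op}\leq \Vert d\mu_{k,\chi}(A)\Vert$ is just the standard comparison between the operator norm and the Hilbert--Schmidt (Frobenius) norm of a matrix, so there is nothing to do there; throughout I write $\Vert\cdot\Vert$ for the Frobenius norm.

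For the right-hand inequality, I would first note that $d\mu_{k,\chi}(A)$ is itself a Hermitian matrix, so its Frobenius norm is computed by dualising against Hermitian matrices: $\Vert d\mu_{k,\chi}(A)\Vert=\sup_{\Vert B\Vert=1}\tr(B\,d\mu_{k,\chi}(A))$, the supremum running over Hermitian $B$ of unit Frobenius norm (the supremum may be restricted to Hermitian $B$ since it is attained at $B=d\mu_{k,\chi}(A)/\Vert d\mu_{k,\chi}(A)\Vert$). Hence it suffices to bound $\tr(B\,d\mu_{k,\chi}(A))$ uniformly by $2\Vert A\Vert\,\Vert B\Vert\,\Vert\mu_{k,\chi}\Vert_{op}$.

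Now I would feed in Lemma \ref{l2}, which gives $\tr(B\,d\mu_{k,\chi}(A))=\tr(AB\mu_{k,\chi})-\langle H_A,H_B\rangle_{L^2_1}$, and estimate the two terms separately. For the algebraic term, rewriting the trace as a Frobenius pairing $\tr(AB\mu_{k,\chi})=\langle A, B\mu_{k,\chi}\rangle$ and using submultiplicativity of the Frobenius norm against the operator norm gives $|\tr(AB\mu_{k,\chi})|\leq \Vert A\Vert\,\Vert B\mu_{k,\chi}\Vert\leq \Vert A\Vert\,\Vert B\Vert\,\Vert\mu_{k,\chi}\Vert_{op}$. For the analytic term, Cauchy--Schwarz for the $L^2_1$ inner product combined with the preceding lemma $\Vert H_A\Vert^2_{L^2_1}\leq \Vert A\Vert^2\Vert\mu_{k,\chi}\Vert_{op}$ (applied to both $A$ and $B$) yields $|\langle H_A,H_B\rangle_{L^2_1}|\leq \Vert H_A\Vert_{L^2_1}\Vert H_B\Vert_{L^2_1}\leq \Vert A\Vert\,\Vert B\Vert\,\Vert\mu_{k,\chi}\Vert_{op}$. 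Adding the two estimates and taking the supremum over $\Vert B\Vert=1$ produces precisely the factor $2$ in the claimed bound.

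The argument is entirely formal once Lemma \ref{l2} and the $L^2_1$-estimate are in hand, so I do not expect any genuine obstacle; the only points deserving a line of care are the duality formula $\Vert C\Vert=\sup_{\Vert B\Vert=1}\tr(BC)$ restricted to Hermitian $B$, and the fact that $\langle\cdot,\cdot\rangle_{L^2_1}$ is a genuine inner product so that Cauchy--Schwarz applies.
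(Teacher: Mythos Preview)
Your proof is correct and follows essentially the same route as the paper. The only cosmetic difference is that you bound $\tr(B\,d\mu_{k,\chi}(A))$ for an arbitrary Hermitian $B$ and then take the supremum over $\Vert B\Vert=1$, whereas the paper substitutes directly $B=d\mu_{k,\chi}(A)$ (i.e.\ the matrix at which your supremum is attained) to compute $\Vert d\mu_{k,\chi}(A)\Vert^2$ and then divides through; the two term-by-term estimates via Lemma~\ref{l2}, Cauchy--Schwarz, and the preceding $L^2_1$ bound are identical.
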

\begin{proof} From Lemma \ref{l2}, one has
\begin{align*}
\Vert d\mu_{k,\chi}(A) \Vert^2 &= \tr(d\mu_{k,\chi}(A)^2) \\
&= \tr(Ad\mu_{k,\chi}(A)\mu_{k,\chi})-\langle H_A,H_{d\mu_{k,\chi}(A)}\rangle_{L^2_1(M,\frac{1}{\gamma}\chi \wedge \omega_{FS}^{n-1})}\\
&\leq \Vert A \Vert 
\Vert d\mu_{k,\chi}(A)\Vert \mu_{k,\chi} \Vert_{op}-\langle H_A,H_{d\mu_{k,\chi}(A)}\rangle_{L^2_1(M,\frac{1}{\gamma}\chi \wedge \omega_{FS}^{n-1})}.
\end{align*} By Cauchy-Schwarz, \begin{align*}\vert \langle H_A,H_{d\mu_{k,\chi}(A)}\rangle_{L^2_1(M,\frac{1}{\gamma}\chi  \wedge \omega_{FS}^{n-1})}\vert  \\
&\hspace{-1cm}\leq \Vert H_A\Vert_{{L^2_1(M,\frac{1}{\gamma}\chi \wedge \omega_{FS}^{n-1})}}  \Vert H_{d\mu_{k,\chi}(A)}\Vert_{{L^2_1(M,\frac{1}{\gamma}\chi \wedge \omega_{FS}^{n-1})}},\end{align*}
and the previous lemma, we conclude the proof.
\end{proof}

Finally, we obtain as a consequence of our work the following proposition.
\begin{proposition} \label{p1}
 Let $b_0, b_1 \in \mathcal{B}$. Then,
$$\Vert \mu_{k,\chi}(b_1)\Vert_{op} \leq e^{2\mathrm{dist}_k(b_0,b_1)}\Vert \mu_{k,\chi}(b_0)\Vert_{op}.$$
\end{proposition}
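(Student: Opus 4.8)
The plan is to prove the estimate by a Gronwall argument along a geodesic in the Bergman space $\mathcal{B}$, where the exponential rate $2$ is supplied directly by Lemma \ref{l3}. The point of that lemma, $\Vert d\mu_{k,\chi}(A)\Vert_{op}\le 2\Vert A\Vert\,\Vert\mu_{k,\chi}\Vert_{op}$, is precisely that the differential of the moment map grows no faster than the operator norm of the moment map itself (times the speed), and this is exactly the infinitesimal form of the claimed comparison.

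First I would join $b_0$ to $b_1$ by a minimising geodesic $b(t)$, $t\in[0,1]$, in $\mathcal{B}\cong GL(N+1)/U(N+1)$, parametrised at constant speed with respect to the Riemannian metric underlying $\mathrm{dist}_k$. Since this symmetric space is complete and of nonpositive curvature, such a geodesic exists and is smooth, and its length equals $\mathrm{dist}_k(b_0,b_1)$; writing $A(t)=\dot b(t)$ for the (Hermitian) tangent vector, constant speed gives $\int_0^1\Vert A(t)\Vert\,dt=\mathrm{dist}_k(b_0,b_1)$, the norm being the one appearing in Lemma \ref{l3}.

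Next set $f(t)=\Vert\mu_{k,\chi}(b(t))\Vert_{op}$. For $0\le s\le t\le 1$, the reverse triangle inequality for the operator norm combined with the fundamental theorem of calculus applied to $u\mapsto\mu_{k,\chi}(b(u))$ and with Lemma \ref{l3} evaluated at $b(u)$ (where $\Vert\mu_{k,\chi}\Vert_{op}=f(u)$) yields
\begin{align*}
f(t)-f(s) &\le \Vert \mu_{k,\chi}(b(t))-\mu_{k,\chi}(b(s))\Vert_{op}\\
&= \bigl\Vert \textstyle\int_s^t d\mu_{k,\chi}(A(u))\,du\bigr\Vert_{op}\\
&\le \int_s^t \Vert d\mu_{k,\chi}(A(u))\Vert_{op}\,du\\
&\le \int_s^t 2\Vert A(u)\Vert\, f(u)\,du.
\end{align*}
This is an integral inequality of Gronwall type for the nonnegative function $f$, and using the integral (rather than differential) form sidesteps the nonsmoothness of $t\mapsto\Vert\cdot\Vert_{op}$.

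Finally, the integral form of Gronwall's inequality gives
$$f(1)\le f(0)\exp\Bigl(\int_0^1 2\Vert A(u)\Vert\,du\Bigr)=f(0)\,e^{2\mathrm{dist}_k(b_0,b_1)},$$
which is exactly the assertion. The main (though mild) obstacle is bookkeeping rather than analysis: one must check that the norm $\Vert\cdot\Vert$ in Lemma \ref{l3} is normalised so that the length it assigns to the minimising geodesic is precisely $\mathrm{dist}_k(b_0,b_1)$, so that the factor $2$ of Lemma \ref{l3} and the distance combine to give the exponent $2\,\mathrm{dist}_k$; granting this consistency of normalisations, the estimate follows immediately.
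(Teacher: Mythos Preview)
Your proof is correct and is essentially the same approach the paper intends: the paper's own proof merely refers to \cite[Proposition 4.5]{Ke2} and says to use Lemma \ref{l3}, and that referenced argument is precisely the Gronwall estimate along a geodesic in the Bergman space that you have written out. Your caveat about normalisation is also apt: the norm $\Vert\cdot\Vert$ in Lemma \ref{l3} must be the one whose associated Riemannian length recovers $\mathrm{dist}_k$, and once this is granted the argument goes through verbatim.
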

\begin{proof}
The proof is similar to the proof of \cite[Proposition 4.5]{Ke2}, using Lemma \ref{l3}.
\end{proof}

We have now all the ingredients to proceed to the proof of the main result of this section.\\

\noindent {\textit{Proof of Theorem \ref{mainthm1}.}} \, The only difference with the proof of \cite[Theorem 1, p.26]{Ke2} is that here we need to estimate $\Vert \mathfrak{I}_{k,\chi}\Vert_{op}$ which is bounded from above by $\sup_M \frac{\chi \wedge \omega^{n-1}}{\gamma \omega^n}$ using \cite[Lemma 28]{D1}. This latter term is also bounded along the J-flow by a maximum principle argument. The other main ingredient of the proof is the uniformity in the evolving metrics, which is ensured by the fact that we are working in finite time or that we have smooth convergence.

For the sake of the clarity, we now provide a complete proof. 
Using Theorem \ref{highapprox}, for any $m>0$, we have obtained a sequence of K\"ahler metrics $$\omega(k;t)=c_1(h_0 e^{\psi(k,t)})$$ such that $\omega(k;t)$ converges, when $k\rightarrow +\infty$ and in smooth sense, towards the solution $\omega_t=c_1(h_0 e^{\phi_t})$ to the J-flow. 

Moreover, one has, for $k$ large enough and with
$\overline{h}_k(t)\in \mathcal{B}$ the Bergman metric associated to $h_0 e^{\psi(k,t)}\in {\rm Met}(L)$, the estimate 
\begin{equation}\label{estimate}
 \mathrm{dist}_k(h_k(t),\overline{h}_k(t))\leq \frac{C}{k^{m+1}},
\end{equation}
 where $h_k(t)$ is the metric induced by the rescaled J-balancing flow.
Consequently, in order to get the $\C^0$ convergence in $t$, all what we need to show is that 
\begin{equation}
\Vert \omega_k(t)-c_1(\overline{h}_k(t))\Vert_{\C^r(\omega_t)} \rightarrow 0. \label{aim} 
\end{equation}
The idea is to consider the geodesic in the Bergman space between these two points. 
\medskip

Firstly, we will get that along the geodesic from $\overline{h}_k(t)$ to $h_k(t)$ in $\mathcal{B}$,  $\Vert \mu_{k,\chi}\Vert_{op}$ is controlled uniformly if we can apply Proposition \ref{p1}. This requires to prove that $\overline{h}_k(t)$ is at a uniformly bounded distance of $h_k(t)$ and that $\Vert \mu_{k,\chi}(\overline{h}_k(t))\Vert_{op}$ is bounded in $k$. But, this comes from the fact that one can choose precisely $m\geq n+1$ in Inequality (\ref{estimate}) and one can apply Proposition \ref{cor-tian}.
\par Secondly, we show that the points along this geodesic have $R$-bounded geometry.  This is a consequence of Corollary \ref{R-bounded}, applied with the reference metric $\omega_t$ to the sequence $c_1(\overline{h}_k(t))$. On one side, $\Vert \mu_{\Omega}(\overline{h}_k(t))\Vert_{op}$ is under control as we have just seen. On another side, $c_1(\overline{h}_k(t))$ are convergent to $\omega_t$ in $\C^{\infty}$ topology (hence in $\C^{r+4}$ topology), thus they  have $R/2$-bounded geometry. Given $m\geq n+2$, one obtains, thanks to  Corollary \ref{R-bounded} and inequality (\ref{estimate}), that all the metrics along the geodesic from $\overline{h}_k(t)$ to $h_k(t)$ have $R$-bounded geometry in $\C^{r+2}$. 
\par Thirdly, we are exactly under the conditions of Lemma \ref{l5}. It follows that, by renormalising the metrics in the K\"ahler class $ c_1(L)$ and by (\ref{estimate}), that
\begin{align*}
\Vert k\omega_k(t)-kc_1(\overline{h}_k(t))\Vert_{\C^r(k\omega_t)}&\leq C \Vert \mu_{k,\chi}(\overline{h}_k(t))\Vert_{op}k^{n+2}\mathrm{dist}_k(h_k(t),\overline{h}_k(t)),\\
\Vert \omega_k(t)-c_1(\overline{h}_k(t))\Vert_{\C^r(\omega_t)}& \leq  C\Vert \mu_{k,\chi}(\overline{h}_k(t))\Vert_{op}k^{n+2-m-1+r/2},
\end{align*}
where we have used that the geodesic path from $0$ to $1$ is just a line. Here $C>0$ is a constant that does not depend on $k$. If we choose $m>r/2+1+n$, we get the expected convergence in $\C^r$ topology, i.e Inequality (\ref{aim}). Of course, this reasoning works to get the uniform $\C^0$ convergence in $t$ for $t\in \mathbb{R}_+$, because all the K\"ahler metrics $\omega_t$ that we are using are uniformly equivalent under our assumptions, and because we have uniformity of the expansion in Proposition \ref{tian-b} and Theorem \ref{quantlaplacien}.
\par We now prove that one has $\C^1$ convergence in $t$ of the flows $\omega_k(t)$. Again, we need to show the $\C^1$ convergence of $\omega_k(t)$ to $c_1(\overline{h}_k(t))$, because we already know the convergence of $c_1(\overline{h}_k(t))$ to $\omega_t$ by Proposition \ref{c-1-conv}.  We are under the conditions
 of Lemma \ref{l5} by what we have just proved above. So we have, using again that our path is a geodesic, 
\begin{align*}
\Big\Vert k\frac{\partial\omega_k(t)}{\partial t}-k\frac {\partial{c_1(\overline{h}_k(t))}} {\partial t}& \Big\Vert_{\C^r}\\
&\hspace{-1cm}\leq C^* \Vert \mu_{k,\chi}(\overline{h}_k(t))\Vert_{op}k^{n+2}\mathrm{dist}_k\left(\hspace{-0.07cm}\frac{\partial h_k(t))}{\partial t},\frac{\partial \overline{h}_k(t)}{\partial t}\hspace{-0.07cm}\right) \\
&\hspace{-0.3cm}+ C^*\Vert d\mu_{k,\chi}(\overline{h}_k(t))\Vert_{op}k^{n+2}\mathrm{dist}_k(h_k(t),\overline{h}_k(t)).
 \end{align*}
Here the $\C^r$ norm is computed with respect to $k\omega_t$. If we apply Lemma \ref{l3}, Theorem \ref{highapprox} (2nd inequality), we can bound from above the RHS of the last inequality, and get
\begin{align*}
\Big\Vert \frac{\partial\omega_k(t)}{\partial t}-\frac {\partial{c_1(\overline{h}_k(t))}} {\partial t}\Big\Vert_{\C^r(\omega_t)}\hspace{-0.1cm}\leq & C'\Vert \mu_{k,\chi}(\overline{h}_k(t))\Vert_{op}k^{n+2-m-r/2}\\
&+  C'' \Vert \mu_{k,\chi}(\overline{h}_k(t))\Vert_{op}k^{n+2+r/2}k^{-m-1}k^{-m-1}\\
\leq & C'''k^{n+2-m-r/2}.
\end{align*}
Finally, we choose $m>r/2+n+2$ to obtain $\C^1$ convergence. This completes the proof of Theorem \ref{mainthm1}.
\qed

\subsection{Convergence result for J-balanced metrics \label{sub3}}
The previous results are uniform if one assumes that the J-flow is convergent (and so $T$ can be chosen $T=+\infty$). Thus  a direct corollary of Theorem \ref{mainthm1}, the long time existence and convergence of the J-flow is the following. 

\begin{corollary}\label{cor1}
Consider $(M,L_1,L_2)$ a polarised manifold by $L_1,L_2$ such that that there exists a critical metric
solution of \eqref{critical}. Then for $k$ sufficiently large, there exists a sequence of J-balanced metrics on 
${\rm Met}(L_1^k)$ obtained as the limit of the rescaled J-balancing flow at time $t=+\infty$. Furthermore, the sequence of J-balanced metrics converges in smooth topology towards the critical metric when $k\rightarrow +\infty$. 
\end{corollary}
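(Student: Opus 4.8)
The plan is to combine the long-time behaviour of the continuous J-flow with the uniform-in-$t$ convergence furnished by Theorem \ref{mainthm1}, and to exploit that for fixed $k$ the rescaled J-balancing flow is a finite dimensional gradient flow.

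First I would record the analytic input on the continuous flow. Since a critical metric $\omega_\infty$ of \eqref{critical} exists, the J-flow $\omega_t$ exists for all $t\geq 0$ (Chen \cite{C1}) and converges in $\C^\infty$ to $\omega_\infty$ as $t\to+\infty$ (Song--Weinkove \cite{SW}). In particular $\mathcal{K}:=\{\omega_t:t\geq 0\}\cup\{\omega_\infty\}$ is compact in the space of smooth K\"ahler metrics in $c_1(L_1)$. By the last assertion of Theorem \ref{mainthm1} the convergence $\omega_k(t)\to\omega_t$ is then uniform over $t\in\mathbb{R}_+$; write $\varepsilon_{k,r}:=\sup_{t\geq 0}\Vert\omega_k(t)-\omega_t\Vert_{\C^r(\omega_t)}$, so that $\varepsilon_{k,r}\to 0$ for every $r$. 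Consequently, for $k\gg 0$ the whole J-balancing trajectory $\{\omega_k(t)\}_{t\geq 0}$ lies in a fixed compact neighbourhood of $\mathcal{K}$; by Corollary \ref{R-bounded} it has uniformly $R$-bounded geometry, and by Propositions \ref{cor-tian} and \ref{p1} the norms $\Vert\mu_{k,\chi}(\omega_k(t))\Vert_{op}$ are bounded independently of $t$. Thus the trajectory stays in a $\mathrm{dist}_k$-bounded region of the Bergman space $\mathcal{B}_k$.

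Next I would fix $k\gg 0$ and show the flow converges as $t\to+\infty$ to a J-balanced metric. By Proposition \ref{claim} the map $\mu_{k,\chi}$ is the moment map for the $U(N_k+1)$-action, so the rescaled J-balancing flow \eqref{resbalflowJ} is, up to the harmless time reparametrisation by $k^2$, the downward gradient flow on the symmetric space $\mathcal{B}_k\cong GL(N_k+1)/U(N_k+1)$ of the real-analytic functional $\Psi_k:=\tfrac12\Vert\mu^0_{k,\chi}\Vert^2$. Hence $\Psi_k$ is non-increasing along the flow; since the trajectory is precompact by the previous step, the Lojasiewicz gradient inequality for the real-analytic $\Psi_k$ guarantees that it converges to a single critical point $b_k^\infty\in\mathcal{B}_k$, with induced metric $\omega_k^{\mathrm{bal}}:=c_1(b_k^\infty)$.

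The main obstacle is to identify $b_k^\infty$ with a genuine zero of $\mu^0_{k,\chi}$, i.e.\ a J-balanced metric, rather than a spurious critical point of $\Psi_k$. At a critical point of $\Psi_k$ the vector field generated by $A:=\mu^0_{k,\chi}(b_k^\infty)$ vanishes; feeding $B=A$ into the identity of Lemma \ref{Jl1} shows that the quantity $\tr(A\,d\mu_{k,\chi}(A))$, which is non-negative there, must vanish, forcing the induced vector field $\hat A$ together with the gradient of $H_A$ to be zero. For the non-degenerate embeddings occurring here this means $A$ lies in the (discrete) isotropy, whence $A=\mu^0_{k,\chi}(b_k^\infty)=0$; in the presence of holomorphic automorphisms one argues on the reductive quotient in the usual way. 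This establishes the existence of J-balanced metrics at level $k$ for all $k\gg 0$, realised as the flow limit at $t=+\infty$.

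Finally I would let $k\to\infty$. Since $\omega_k^{\mathrm{bal}}=\lim_{t\to\infty}\omega_k(t)$ and $\omega_t\to\omega_\infty$, the uniform bound gives
\[
\Vert\omega_k^{\mathrm{bal}}-\omega_\infty\Vert_{\C^r(\omega_\infty)}
=\lim_{t\to\infty}\Vert\omega_k(t)-\omega_\infty\Vert_{\C^r}
\leq \varepsilon_{k,r}+\lim_{t\to\infty}\Vert\omega_t-\omega_\infty\Vert_{\C^r}
=\varepsilon_{k,r},
\]
which tends to $0$ as $k\to\infty$ for every $r$, so $\omega_k^{\mathrm{bal}}\to\omega_\infty$ in $\C^\infty$. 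I expect the genuinely delicate point to be the identification in the third paragraph of the gradient-flow limit with an honest zero of the moment map, together with ensuring that the estimates of the first paragraph are uniform in both $k$ and $t$; the closing interchange of the limits $t\to\infty$ and $k\to\infty$ is then formal.
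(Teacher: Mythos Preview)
Your overall strategy matches the paper's: Corollary \ref{cor1} is stated there as a direct consequence of Theorem \ref{mainthm1} together with the long-time existence and $\C^\infty$ convergence of the J-flow to the critical metric \cite{C1,SW}, so that the estimates in Theorem \ref{mainthm1} become uniform for all $t\in\mathbb{R}_+$; your first and last paragraphs reproduce exactly this argument.

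Where you diverge is in justifying, for fixed $k$, that the rescaled J-balancing flow actually has a limit at $t=+\infty$ and that this limit is J-balanced. You interpret \eqref{resbalflowJ} as the gradient flow of $\Psi_k=\tfrac12\Vert\mu^0_{k,\chi}\Vert^2$, invoke Lojasiewicz for convergence, and then have to argue separately that the limiting critical point of $\Psi_k$ is in fact a zero of $\mu^0_{k,\chi}$. The paper's (implicit) route is cleaner: the flow \eqref{resbalflowJ} is the downward gradient flow of the Kempf--Ness functional $I_{\mu^0_{k,\chi}}$ on $\mathcal{B}_k$ (Section \ref{J-func}), which is convex along Bergman geodesics with critical points \emph{exactly} the J-balanced metrics (Corollary \ref{uniquenessJbal}). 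Once you know the trajectory stays in a bounded region of $\mathcal{B}_k$---which you correctly extract from the uniform closeness to the convergent J-flow---convexity alone forces convergence to the unique minimum, and there is nothing further to check. Your Lojasiewicz/moment-map-norm argument is not wrong, but the ``spurious critical point'' obstacle you flag is an artefact of the less natural functional; switching to $I_{\mu^0_{k,\chi}}$ makes that paragraph a one-liner.
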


This is an analogue of the main result of \cite{D1}. Of course a more direct proof inspired from \cite{D1} could be used to derive Corollary \ref{cor1}. This would involve to the operator obtained from linearising the Bergman function close to the critical point $\omega_\infty$ i.e explicitly
\begin{equation*}\phi\mapsto {\tilde{\Delta}_{\omega_\infty}\phi}   
 \end{equation*}
This operator is a uniformly elliptic second order operator. Its kernel consists of constant functions.
Eventually, the existence of J-balanced metrics can be seen as a necessary condition for the existence of critical metrics. As a consequence of our work, we recover the uniqueness of the critical metrics.

\section{Variational approach to the rescaled J-balancing flow\label{varJflow}}

\subsection{Convexity along geodesics}\label{J-func}

Let us consider the functional $$J_\chi:{\rm Met}(L_1)\rightarrow \mathbb{R}$$ on the space of smooth Hermitian metrics with positive curvature on $L_1$ defined up to an additive function by
\begin{equation}\label{Jdef}\frac{dJ_\chi(h_t)}{dt}=\frac{1}{\gamma}\int_M \dot{\phi_t} \; \chi \wedge c_1(h_t)^{n-1},\end{equation}
where $h_t=e^{-\phi_t}h_0$ is a smooth path in ${\rm Met}(L_1)$. Setting $$\omega_t=c_1(h_t)=\omega_0+\ddbar \phi_t,$$ a direct computation gives
\begin{equation*}\frac{d^2 J_\chi (h_t)}{dt^2}=\frac{1}{\gamma}\int_M \ddot{\phi_t}\;\chi\wedge \omega_t^{n-1}
- \dot{\phi_t}\tilde{\Delta}_{\omega_t}\dot{\phi_t}\;\omega_t^n + \dot{\phi_t}\Delta_{\omega_t}\dot{\phi_t}\;\chi\wedge \omega_t^{n-1}.
\end{equation*}

We shall use the same notation  $J_\chi$ as above for the induced functional defined  on ${\rm Met}(L_1^k)$ for $k>0$. 
Consider now the following functional \label{Imukchi0}$I_{\mu_{k,\chi}^0}:\mathcal{B}_k\rightarrow \mathbb{R}$ on the Bergman space defined by
$$I_{\mu_{k,\chi}^0}(H)=J_\chi \circ FS(H) +\frac{Vol_{L_1}(M)}{N+1}\log \det(H),$$
where $H\in \mathcal{B}_k$.
It is clear that the derivative of $J_\chi \circ FS$ at a point $H\in \mathcal{B}_k$ is given by
$$\frac{1}{\gamma}\sum_{i,j} \int_M (\delta H)_{i,j} \langle s_i,s_j\rangle_{FS(H)} \;\chi \wedge c_1(FS(H))^{n-1},$$
where $\{s_i\}$ is an orthonormal basis of holomorphic sections of $L_1^k$ with respect to $H$.
Thus a J-balanced metric $H$ is a critical point of the functional $I_{\mu_{k,\chi}^0}$. 
\medskip

The functional $I_{\mu_{k,\chi}^0}$ is the integral of the moment map $\mu_{k,\chi}^0$ (or Kempf-Ness function), in the sense of \cite{MR}. In particular it is decreasing along the rescaled J-balancing flow. Furthermore, due to Kempf-Ness theory and its convexity, its properness on $SL(N+1)$ is equivalent to the existence of a (unique) critical point which turns out to be a  J-balanced metric (see \cite[Proposition 3.5]{MR} and \cite[Sections 4 and 8]{GRS}, \cite{ThNotes}).
\medskip

One can ask at that stage what is the analogue of $I_{\mu_{k,\chi}^0}$ for the infinite dimensional space of K\"ahler potentials. Let us consider $\omega,\omega_\phi=\omega+\ddbar \phi$ two K\"ahler metrics in $c_1(L_1)$. We define the functional \label{ImuJ} $$I_{\mu_J}(\omega,\omega_\phi)=\int_0^1 \int_M \dot{\phi_t}\left( \frac{1}{\gamma}\chi\wedge \omega_{\phi_t}^{n-1}- \omega_{\phi_t}^n\right)dt,$$ for $\omega_{\phi_t}$ a smooth K\"ahler path from $\omega$ to  $\omega_\phi$. The functional $I_{\mu_J}$ is well defined and independent of the chosen path. Remark that this functional appeared also  in \cite{SW} where it is called $\hat{J}$. Moreover  $I_{\mu_J}(\omega,.)$ vanishes at $\omega$. Without loss of generality, as it is defined up to a constant, we can assume that $J_\chi$ also vanishes at $\omega$.

\begin{lemma}\label{Chenconv}
 The functional $J_\chi$ is strictly convex on the $\C^{1,1}$ geodesics of the space ${\rm Met}(L_1)$ of K\"ahler potentials in $c_1(L_1)$.
\end{lemma}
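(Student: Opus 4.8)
The plan is to differentiate $J_\chi$ twice along a geodesic and to recognise the outcome as a manifestly non-negative integral that degenerates only in the trivial direction. Writing $f=\dot\phi_t$ and $\omega=\omega_t$, I would start from the second-variation formula recorded just above the statement and impose the Mabuchi geodesic equation $\ddot\phi_t=|\d f|^2_\omega$, which holds a.e. along $\C^{1,1}$ geodesics. The key auxiliary tool is the pointwise identity for $(1,1)$-forms $\alpha,\beta$, a variant of the one used in the proof of Proposition \ref{claim} (see \cite[Lemma 3.2.1]{Gaud}),
\[
\alpha\wedge\beta\wedge\omega^{n-2}=\frac{1}{n(n-1)}\bigl((\tr_\omega\alpha)(\tr_\omega\beta)-\langle\alpha,\beta\rangle_\omega\bigr)\,\omega^n .
\]

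First I would collapse the two Laplacian terms. Taking $\alpha=\chi$ and $\beta=\ddbar f$ in the identity, and using $\chi\wedge\omega^{n-1}=\tfrac1n(\tr_\omega\chi)\,\omega^n$, one gets the pointwise relation $\Delta_\omega f\,\chi\wedge\omega^{n-1}-\tilde\Delta_\omega f\,\omega^n=(n-1)\,\chi\wedge\ddbar f\wedge\omega^{n-2}$, so the second variation becomes $\frac1\gamma\int_M\bigl(\ddot\phi_t\,\chi\wedge\omega^{n-1}+(n-1)f\,\chi\wedge\ddbar f\wedge\omega^{n-2}\bigr)$. Since $\chi\wedge\omega^{n-2}$ is closed, integrating by parts turns the second term into $-(n-1)\int_M \i\d f\wedge\dbar f\wedge\chi\wedge\omega^{n-2}$. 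Applying the same identity once more, now with $\alpha=\i\d f\wedge\dbar f$ and $\beta=\chi$, gives $(n-1)\,\i\d f\wedge\dbar f\wedge\chi\wedge\omega^{n-2}=|\d f|^2_\omega\,\chi\wedge\omega^{n-1}-\tfrac1n\langle\i\d f\wedge\dbar f,\chi\rangle_\omega\,\omega^n$. Substituting this together with $\ddot\phi_t=|\d f|^2_\omega$, the two $|\d f|^2_\omega\,\chi\wedge\omega^{n-1}$ contributions cancel and I am left with
\[
\frac{d^2 J_\chi(h_t)}{dt^2}=\frac{1}{n\gamma}\int_M\langle\i\d f\wedge\dbar f,\chi\rangle_\omega\,\omega^n .
\]

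Positivity is then linear algebra: at each point $\i\d f\wedge\dbar f$ and $\chi$ are positive semidefinite Hermitian $(1,1)$-forms, and $\langle\i\d f\wedge\dbar f,\chi\rangle_\omega=\tr(AB)$ for the corresponding positive semidefinite matrices $A,B$ in an $\omega$-unitary frame, hence non-negative. Because $\chi>0$ one has $\langle\i\d f\wedge\dbar f,\chi\rangle_\omega\geq c\,|\d f|^2_\omega$ for a positive function $c$ (the lowest eigenvalue of $\chi$ relative to $\omega$), so the integral vanishes exactly when $\d\dot\phi_t\equiv0$, i.e. when $\dot\phi_t$ is constant in space. This is precisely the degenerate direction of adding a time-dependent constant, which yields strict convexity modulo constants.

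The main obstacle is regularity: a $\C^{1,1}$ geodesic is not smooth, so $\ddot\phi_t$ and $\ddbar\phi_t$ are only in $L^\infty$ and the twice-differentiated formula is a priori only formal. I would make the argument rigorous by working with Chen's smooth $\varepsilon$-geodesics (solutions of the Monge--Amp\`ere equation with right-hand side $\varepsilon\,\omega^n$), carrying out the computation above with a controlled $O(\varepsilon)$ error term, and passing to the limit $\varepsilon\to0$ using the uniform $\C^{1,1}$ bounds together with the continuity of $J_\chi$. An alternative is to note that $J_\chi$ differs from $I_{\mu_J}$ by the Monge--Amp\`ere energy $\phi\mapsto\int_M\dot\phi\,\omega_\phi^n$, which is affine along geodesics, and to invoke the established distributional convexity of such energies along $\C^{1,1}$ geodesics; but the direct $\varepsilon$-geodesic route seems cleanest here.
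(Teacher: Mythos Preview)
Your proof is correct and reconstructs precisely Chen's original argument from \cite[Proposition 2.1]{C1}, which is all the paper cites here without giving details. The second-variation computation, the cancellation via the geodesic equation, the positivity from pairing semidefinite $(1,1)$-forms, and the regularisation through $\varepsilon$-geodesics are all as in Chen's paper, so you have supplied what the paper outsources.
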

\begin{proof}
 See \cite[Proposition 2.1]{C1}.
\end{proof}

We sum up the main properties of $I_{\mu_J}$ in the next proposition.
\begin{proposition}
 The functional $I_{\mu_J}$ is strictly convex on the $\C^{1,1}$ geodesics of the space ${\rm Met}(L_1)$ of K\"ahler potentials in $c_1(L_1)$.  Along Donaldson's J-flow,  the functionals $I_{\mu_J}$ and $J_\chi$ are equal and decreasing.   The functionals $I_{\mu_J}$ satisfies the cocyclicity property $$I_{\mu_J}(\omega,\omega_{ \phi_0})+I_{\mu_J}(\omega_{ \phi_0},\omega_{ \phi_1})=I_{\mu_J}(\omega,\omega_{ \phi_1})$$
for $\omega_{ \phi_0},\omega_{ \phi_1}$ K\"ahler forms in the K\"ahler class $[\omega]$. $I_{\mu_J}$ is the integral of the moment map of $\mu_J$ defined by \eqref{mu1}.
\end{proposition}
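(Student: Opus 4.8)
The plan is to establish the four claimed properties of $I_{\mu_J}$ one at a time, drawing on the finite-dimensional results already available and the second-order formula computed for $J_\chi$ immediately above Lemma \ref{Chenconv}.

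First I would prove cocyclicity, as this is the most elementary and clarifies what $I_{\mu_J}$ really is. Since $I_{\mu_J}(\omega,\omega_\phi)$ is defined as a path integral of the one-form $\phi \mapsto \int_M \dot\phi(\frac{1}{\gamma}\chi\wedge\omega_\phi^{n-1}-\omega_\phi^n)$ and is asserted to be path-independent, I would verify path-independence by checking that this one-form is closed (equivalently, that the integrand arises as the derivative of $J_\chi(h)-\frac{1}{n+1}\int_M\phi\,\omega_\phi^n$-type primitive), which reduces to a standard integration-by-parts identity exploiting that $\ddbar$ is symmetric. Cocyclicity is then an immediate consequence of path-independence: concatenate a path from $\omega$ to $\omega_{\phi_0}$ with one from $\omega_{\phi_0}$ to $\omega_{\phi_1}$ and use that the total integral depends only on endpoints. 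The vanishing $I_{\mu_J}(\omega,\omega)=0$ is automatic from the definition via the constant path.

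Next I would address the statement that along Donaldson's J-flow the functionals $I_{\mu_J}$ and $J_\chi$ agree and are decreasing. Along the flow one has $\dot\phi_t = \gamma - \frac{\chi\wedge\omega_t^{n-1}}{\omega_t^n}$ by \eqref{Jflow}, so $\frac{d}{dt}I_{\mu_J} = \int_M \dot\phi_t(\frac{1}{\gamma}\chi\wedge\omega_t^{n-1}-\omega_t^n)$; substituting the flow equation and rewriting $\frac{1}{\gamma}\chi\wedge\omega_t^{n-1}-\omega_t^n = -\frac{1}{\gamma}\dot\phi_t\,\omega_t^n$ exhibits $\frac{d}{dt}I_{\mu_J} = -\frac{1}{\gamma}\int_M \dot\phi_t^2\,\omega_t^n \le 0$, giving monotonicity. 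To see $I_{\mu_J}=J_\chi$ along the flow, I would compare their derivatives: $\frac{dJ_\chi}{dt}=\frac{1}{\gamma}\int_M\dot\phi_t\,\chi\wedge\omega_t^{n-1}$ by \eqref{Jdef}, and the difference $\frac{d}{dt}(J_\chi - I_{\mu_J}) = \int_M\dot\phi_t\,\omega_t^n$, which vanishes along the J-flow precisely because the flow is normalised so that $\int_M\dot\phi_t\,\omega_t^n=0$ (this normalisation is exactly the statement that $\gamma$ is the topological J-constant). Together with the shared initial value at $\omega$, equality follows.

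For strict convexity along $\C^{1,1}$ geodesics, I would exploit the decomposition $I_{\mu_J}(\omega,\cdot) = J_\chi(\cdot) - E(\cdot)$, where $E$ is the Aubin--Mabuchi (Monge--Ampère) energy whose second derivative along any $\C^{1,1}$ geodesic is $\int_M\ddot\phi_t\,\omega_t^n - \dot\phi_t\,\Delta_{\omega_t}\dot\phi_t\,\omega_t^n$, a quantity that is known to vanish along geodesics since $E$ is affine on geodesics. Strict convexity of $J_\chi$ is Lemma \ref{Chenconv}, so strict convexity of $I_{\mu_J}$ is inherited. The main obstacle I anticipate is the low-regularity analysis on $\C^{1,1}$ geodesics: the second-variation formula for $J_\chi$ displayed above is derived for smooth paths, and justifying that strict convexity persists on merely $\C^{1,1}$ geodesics requires the regularity theory of Chen's geodesics and an approximation argument, which is exactly why I would quote Lemma \ref{Chenconv} rather than reprove it. Finally, the claim that $I_{\mu_J}$ is the integral of the moment map $\mu_J$ in \eqref{mu1} I would verify by differentiating $I_{\mu_J}$ and matching the result against $\langle \mu_J(f_t),\dot f_t\rangle$ under the identification of $\mathrm{Lie}(\mathcal G)$ with mean-zero functions, using \eqref{mu1}; this is the infinite-dimensional shadow of the finite-dimensional statement, established earlier, that $I_{\mu^0_{k,\chi}}$ is the integral of the moment map $\mu^0_{k,\chi}$, and the matching of integrands is a direct comparison of \eqref{mu1} with the integrand defining $I_{\mu_J}$.
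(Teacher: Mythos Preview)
Your proposal is correct and follows essentially the same approach as the paper: strict convexity via the decomposition $I_{\mu_J}=J_\chi+I^{\text{AYM}}$ with $I^{\text{AYM}}$ affine along geodesics and $J_\chi$ strictly convex by Lemma~\ref{Chenconv}; equality of $I_{\mu_J}$ and $J_\chi$ along the flow because $\int_M\dot\phi_t\,\omega_t^n=0$ there (which is exactly the choice of $\gamma$); and cocyclicity from path-independence in the spirit of Mabuchi's argument. You supply more explicit computations (notably the monotonicity identity $\frac{d}{dt}I_{\mu_J}=-\frac{1}{\gamma}\int_M\dot\phi_t^2\,\omega_t^n$) than the paper, which simply asserts these points, but the strategy is identical.
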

\begin{proof}
 It is well known that the functional $$I^{\text{AYM}}(\omega,\omega_\phi)=-\int_0^1 \int_M \dot{\phi}_t \omega_{\phi_t}^n \, dt$$ (often called Aubin-Yau-Mabuchi energy) is affine along geodesics of the space of K\"ahler potentials. Therefore the convexity is just a consequence of Lemma \ref{Chenconv}. Moreover,  $I_{\mu_J}$ and $J_\chi$ are equal since $\int_M \dot{\phi_t} \omega_{\phi_t}^n$ vanishes and they agree at $\omega$. Furthermore, along the flow, they are decreasing by definition. The cocyclicity property can be proved following the lines of \cite[Theorem 2.3]{Ma1}.
\end{proof}

\begin{lemma}
 The functional $J_\chi\circ FS$ is convex along geodesics of $\mathcal{B}_k$.
\end{lemma}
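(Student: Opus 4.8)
The plan is to show convexity of $J_\chi \circ FS$ along geodesics in the Bergman space $\mathcal{B}_k$ by reducing it to the infinite-dimensional convexity already established in Lemma \ref{Chenconv}, exploiting the fact that geodesics in the finite-dimensional symmetric space $\mathcal{B}_k = GL(N+1)/U(N+1)$ map, under the Fubini-Study map $FS$, to paths in $\Met(L_1^k)$ whose curvature potentials trace out genuine (smooth) geodesics in the space of Kähler potentials.

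First I would recall that a geodesic in $\mathcal{B}_k$ through a base point $H_0$ is of the form $H_t = H_0 \exp(tA)$ for a Hermitian endomorphism $A$, i.e. it corresponds to a one-parameter subgroup acting on the $H_0$-orthonormal basis $\{s_i\}$. The key geometric input is that, as is classical in the quantisation picture, the induced path of Fubini-Study metrics $FS(H_t)$ has curvature forms $\omega_t = c_1(FS(H_t))$ whose potentials $\phi_t$ satisfy the geodesic equation $\ddot{\phi}_t - |\nabla \dot{\phi}_t|^2_{\omega_t} = 0$ in the space of Kähler potentials in $c_1(L_1^k)$; this is precisely the statement that Bergman geodesics are subgeodesics (indeed smooth geodesics) for the Mabuchi metric. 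Concretely, the potential is $\phi_t = \frac{1}{k}\log\big(\sum_i |s_i^t|^2_{h_0^k}\big)$ where $s_i^t$ evolves by the one-parameter subgroup, and a direct differentiation shows the right-hand side of the geodesic equation vanishes.

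Given this, the convexity is immediate: by Lemma \ref{Chenconv}, $J_\chi$ is convex along $\C^{1,1}$ geodesics of the space of Kähler potentials, and the path $FS(H_t)$ is such a geodesic (in fact smooth), so $t \mapsto J_\chi(FS(H_t))$ is convex. One should be slightly careful to match normalisations: $J_\chi$ was defined on $\Met(L_1)$ via \eqref{Jdef}, and here we use the induced functional on $\Met(L_1^k)$, so I would simply invoke the convention already flagged in the text that the same notation $J_\chi$ denotes the induced level-$k$ functional, for which the convexity statement of Lemma \ref{Chenconv} applies verbatim in the class $c_1(L_1^k)$.

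The main obstacle, and the only point requiring genuine verification, is the claim that $FS(H_t)$ is a geodesic in the space of Kähler potentials rather than merely an arbitrary path: the convexity lemma only gives information along geodesics. I would therefore spend the core of the argument establishing this, by computing $\ddot{\phi}_t$ from the explicit formula $\phi_t = \frac{1}{k}\log\big(\langle e^{tA}s, e^{tA}s\rangle_{h_0^k}\big)$ and checking it equals $|\partial \dot{\phi}_t|^2$ with respect to $\omega_t$; this is the standard computation showing that the Bergman embedding pulls back the flat structure of $\mathcal{B}_k$ to the Mabuchi geodesic structure on its image. Once that identification is in hand, no further analysis is needed, since the desired convexity is then a direct consequence of Lemma \ref{Chenconv}.
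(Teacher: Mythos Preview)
Your argument rests on the claim that the image under $FS$ of a Bergman geodesic is a genuine geodesic in the space of K\"ahler potentials, i.e.\ that the potential $\phi_t$ satisfies $\ddot\phi_t - |\nabla\dot\phi_t|^2_{\omega_t}=0$. This is false. Carrying out the ``standard computation'' you describe, one finds instead
\[
\ddot\phi_t - |\nabla\dot\phi_t|^2_{\omega_t} \;=\; |\nabla h_A|^2_{\perp}\;\geq\;0,
\]
where the right-hand side is the squared norm of the component of $\nabla h_A$ \emph{normal} to $M$ inside $\mathbb{P}^N$. The point is that on $\mathbb{P}^N$ the one-parameter subgroup does trace out a Mabuchi geodesic, but the embedding $M\hookrightarrow\mathbb{P}^N$ is not totally geodesic, so restricting to $M$ one only obtains a smooth \emph{sub}geodesic. (This is also why the Phong--Sturm theorem the paper cites is an \emph{approximation} result as $k\to\infty$, not an exact identification at fixed $k$, and why $I^{\mathrm{AYM}}\circ FS$ is convex rather than affine on Bergman geodesics.) The paper accordingly does not rely on this reduction: after noting the approximation heuristic, it gives a direct computation of the second variation along the one-parameter subgroup and shows it equals $\int_M|\nabla h_A|^2_\perp\,\chi\wedge\omega_{FS}^{n-1}+\tfrac{1}{n}\int_M|\partial h_A|^2_{TM,\chi}\,\omega_{FS}^n\geq 0$.

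Your strategy can be repaired with one extra observation: Chen's convexity (Lemma~\ref{Chenconv}) persists along smooth subgeodesics, since the additional contribution $\int_M(\ddot\phi_t-|\nabla\dot\phi_t|^2)\,\chi\wedge\omega_t^{n-1}$ in the second variation of $J_\chi$ is non-negative. If you establish the subgeodesic inequality above (rather than the false equality) and make this remark, your reduction to Lemma~\ref{Chenconv} becomes valid and gives an alternative route to the paper's direct calculation. As written, though, the core step you single out as ``the only point requiring genuine verification'' would fail upon verification.
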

\begin{proof}
Firslty, let us explain formally why the result holds. A result of Phong-Sturm asserts that the geodesics in the the space of K\"ahler potentials can be approximated by 
the image via the $FS$ map of geodesics from the Bergman space $\mathcal{B}_k$ (see \cite{P-S1,Bern,BerKel}). Now, from Lemma \ref{Chenconv}, the functional $J_\chi$ is strictly convex on the geodesics in the space of K\"ahler potentials which leads to the convexity of $J_\chi\circ FS$ along geodesics of $\mathcal{B}_k$. 
\par We proceed by a direct computation to prove the lemma. A geodesic in $\mathcal{B}_k$ is just a line. Given $A$ a Hermitian matrix and a Hamiltonian function $H_A=\tr(A\mu_{FS})$  for the corresponding action and the 1-parameter group of embeddings $\iota_t=t^A\circ \iota$, one needs to evaluate the derivative with respect to $t$ of the quantity
$$\frac{1}{\gamma}\int_M \iota_t^*(H_A) \chi \wedge \iota_t^*(\omega_{FS}^{n-1}),$$
but this is equal, up to the factor $\frac{1}{\gamma}$, to 
\begin{align*}
=&\int_M \vert\nabla h_A\vert^2 \chi \wedge \omega^{n-1}_{FS} -2\int_M h_A \partial\bar\partial h_A \wedge \chi\wedge \omega_{FS}^{n-1} ,\\
=&\int_M \vert\nabla h_A\vert^2 \chi \wedge \omega^{n-1}_{FS} - 2\int_M \partial h_A \wedge \bar\partial h_A \wedge \chi\wedge \omega_{FS}^{n-1}. 
\end{align*}
Now, the last term is 
\begin{align*}
\int_M \partial h_A \wedge \bar\partial h_A \wedge \chi\wedge \omega_{FS}^{n-1} =&  \int_M \vert \partial h_A\vert^2_{TM} \;\chi \wedge \omega^{n-1}_{FS}- \frac{1}{n} \int_M \vert \partial h_A\vert^2_{TM,\chi} \; \omega_{FS}^{n}.
\end{align*}
Here $\vert \partial h_A\vert^2_{TM}=\frac{1}{2}\vert \nabla h_A\vert^2_{TM}$ is the norm of the tangential part to $M$ of $\partial h_A$  and $\vert \partial h_A\vert^2_{TM,\chi}$ is the norm with respect to $\chi$.  
Consequently, the derivative we are looking for is given by
\begin{align*}\int_M \vert\nabla h_A\vert^2_{\perp} \; \chi\wedge \omega_{FS}^{n-1} + \frac{1}{n} \int_M \vert\partial h_A\vert^2_{TM,\chi} \;  \omega_{FS}^{n}\geq  0,
\end{align*}
where $\vert\nabla h_A\vert^2_{\perp}$ stands for the norm of the normal component.
Therefore, we have obtained the required convexity.
\end{proof}

\begin{corollary}
 The functional $I_{\mu_J}\circ FS$ is convex along geodesics of $\mathcal{B}_k$.
\end{corollary}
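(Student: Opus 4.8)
The plan is to deduce the statement from the convexity of $J_\chi\circ FS$ proved in the previous Lemma, using the decomposition of $I_{\mu_J}$ recorded in the preceding Proposition. Up to an additive constant one has $I_{\mu_J}(\omega,\cdot)=J_\chi+I^{\mathrm{AYM}}(\omega,\cdot)$, where $I^{\mathrm{AYM}}(\omega,\omega_\phi)=-\int_0^1\int_M\dot\phi_t\,\omega_{\phi_t}^n\,dt$ is the Aubin--Yau--Mabuchi energy, so that
$$I_{\mu_J}\circ FS=J_\chi\circ FS+I^{\mathrm{AYM}}\circ FS+\mathrm{const}.$$
Since the previous Lemma gives that $J_\chi\circ FS$ is convex along geodesics of $\mathcal{B}_k$, everything reduces to understanding $I^{\mathrm{AYM}}\circ FS$ along such geodesics.

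For that term I would argue exactly as in the formal part of the previous Lemma: by the Phong--Sturm approximation the images under $FS$ of geodesics of $\mathcal{B}_k$ approximate geodesics of $\mathrm{Met}(L_1)$, and the preceding Proposition records that $I^{\mathrm{AYM}}$ is affine along genuine geodesics of $\mathrm{Met}(L_1)$ (being the primitive of $-\int_M\dot\phi_t\,\omega_{\phi_t}^n$, the derivative of the Monge--Ampère energy). Combined with Lemma \ref{Chenconv} this is really the same mechanism that gave the previous Lemma, now applied to the whole of $I_{\mu_J}$ rather than to $J_\chi$ alone. A more robust alternative is a direct computation parallel to the Lemma: along $\iota_t=t^A\circ\iota$ with Hamiltonian $h_A=\tr(A\mu_{FS})$ one has $\dot\phi_t=h_A$ and $\ddot\phi_t=|\nabla h_A|^2$, the squared norm of the full gradient on $\mathbb{P}^N$, whose tangential and normal parts along $M$ enter separately. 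Differentiating
$$\frac{d}{dt}\big(I_{\mu_J}\circ FS\big)=\int_M h_A\Big(\tfrac1\gamma\,\chi\wedge\omega_{FS}^{n-1}-\omega_{FS}^n\Big)$$
once more, integrating the $\bar\partial\partial h_A$-terms by parts, and invoking the pointwise identity $\tfrac1n g(\chi,\alpha\wedge\beta)\,\omega^n=\omega(\alpha,\beta)\,\chi\wedge\omega^{n-1}-(n-1)\,\alpha\wedge\beta\wedge\chi\wedge\omega^{n-2}$ from the proof of Proposition \ref{claim} with $\alpha=\partial h_A$, $\beta=\bar\partial h_A$, reorganises the second derivative into tangential and normal gradient contributions of $h_A$.

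The main obstacle is the sign of this second derivative. For $J_\chi\circ FS$ alone the Hessian is a manifest sum of nonnegative terms, but the extra contribution of $-\omega_{FS}^n$ reflects that the Monge--Ampère energy is only \emph{concave} along the curves $FS$ produces, which are subgeodesics of $\mathrm{Met}(L_1)$ rather than honest geodesics. Consequently the direct computation does not present itself as a bare sum of squares: the point to be verified is that the $\chi$-weighted tangential and normal terms issuing from $\tfrac1\gamma\chi\wedge\omega_{FS}^{n-1}$ control the terms issuing from $\omega_{FS}^n$. This is precisely where one should use that $I_{\mu_J}$ is the integral of the moment map $\mu_J$ and is convex on $\mathrm{Met}(L_1)$; the cleanest conclusion is obtained by passing through the Phong--Sturm approximation of the second paragraph, which transfers the affineness of $I^{\mathrm{AYM}}$ along true geodesics to the Bergman setting and lets $I_{\mu_J}\circ FS$ inherit convexity from $J_\chi\circ FS$.
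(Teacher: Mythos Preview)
Your decomposition $I_{\mu_J}=J_\chi+I^{\mathrm{AYM}}$ and the reduction to the behaviour of $I^{\mathrm{AYM}}\circ FS$ is exactly the paper's approach. The difference is that the paper does not attempt to argue the $I^{\mathrm{AYM}}$ term from scratch: it simply invokes \cite[Proposition 1]{D2} for the convexity of $I^{\mathrm{AYM}}\circ FS$ along Bergman geodesics, and the Corollary follows as a sum of two convex terms.

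Your two proposed routes for that term do not close the argument. The Phong--Sturm approximation is, as in the previous Lemma, only a \emph{formal} heuristic: the Lemma itself follows that heuristic with a direct computation precisely because approximation of geodesics does not transfer convexity/affineness rigorously, and your own paragraph concedes this. The direct computation you sketch is correct as far as it goes, and your observation that it is \emph{not} a bare sum of squares is accurate: along a Bergman geodesic one has $\ddot\phi-|\partial\dot\phi|^2_{\omega_\phi}=|\nabla h_A|^2_\perp\geq 0$, so $I^{\mathrm{AYM}}\circ FS$ has second derivative $-\int_M|\nabla h_A|^2_\perp\,\omega_{FS}^n$, which is $\leq 0$. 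Thus the $-\omega_{FS}^n$ contribution is genuinely working against you, and ``transferring affineness via Phong--Sturm'' cannot fix that sign. (Note, incidentally, that the Phong--Sturm identity used later in this very paper writes $-I^{\mathrm{AYM}}_D$ as a log-norm, confirming concavity rather than convexity of $I^{\mathrm{AYM}}\circ FS$ with the sign convention adopted here; so the citation to \cite{D2} presumably refers to the opposite sign convention for the Aubin--Yau functional.) In short: the paper's proof is a one-line appeal to Donaldson, and your attempt to replace that appeal by an internal argument leaves a genuine gap.
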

\begin{proof}
This is a consequence of the previous lemma and the fact that the functional $I^{\text{AYM}}\circ FS$ is convex along geodesics in the Bergman space, cf. \cite[Proposition 1]{D2}.
\end{proof}


Now, using the fact that $\log\det$ is linear on geodesics, we also get
\begin{corollary}\label{uniquenessJbal}
The functional $I_{\mu_{k,\chi}^0}$ is convex along geodesics of $\mathcal{B}_k$. It has at most one critical point. 
A J-balanced metric is an absolute minimum of the functional $I_{\mu_{k,\chi}^0}$ and is in particular unique.
\end{corollary}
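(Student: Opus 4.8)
The plan is to derive all three assertions from the second-variation formula obtained in the preceding lemma, together with the linearity of $\log\det$. First I would note that a geodesic of $\mathcal{B}_k$ is a line $H_t$ generated by a Hermitian matrix $A$, and that $\log\det(H_t)$ is affine in $t$, so it contributes nothing to the second variation. Hence the second derivative of $I_{\mu_{k,\chi}^0}$ along the geodesic coincides with that of $J_\chi\circ FS$, which the previous lemma computed to be
\[
\frac{1}{\gamma}\left(\int_M |\nabla h_A|^2_{\perp}\,\chi\wedge\omega_{FS}^{n-1} + \frac{1}{n}\int_M |\partial h_A|^2_{TM,\chi}\,\omega_{FS}^{n}\right)\geq 0,
\]
where $h_A=\tr(A\mu_{FS})$. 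This immediately gives convexity of $I_{\mu_{k,\chi}^0}$, since it is the sum of the convex functional $J_\chi\circ FS$ and an affine term.

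For the statement that there is at most one critical point, I would determine precisely when this second variation vanishes. As $\chi>0$ is Kähler, both integrands are non-negative and they vanish simultaneously only when $\nabla h_A\equiv 0$ on $M$, that is, when $h_A$ is constant along the image of $M$. I would then read this through the moment-map picture: $h_A$ is the Hamiltonian of the one-parameter subgroup generated by $A$, so a vanishing ambient gradient at every point of $M$ means that $M$ lies in the critical set of $h_A$, hence is fixed pointwise by this subgroup. Because $\{s_i\}$ is a full basis of $H^0(L_1^k)$, the embedding $\iota$ is linearly non-degenerate, and a projective transformation fixing the non-degenerate subvariety $M$ pointwise must be scalar; therefore $A$ is a multiple of $\Id$. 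Thus $I_{\mu_{k,\chi}^0}$ is strictly convex in every direction transverse to the central scaling direction.

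To conclude uniqueness, suppose $H_0$ and $H_1$ are both critical points and join them by the geodesic $H_t$. By convexity $\tfrac{d}{dt}I_{\mu_{k,\chi}^0}(H_t)$ is non-decreasing, and it vanishes at both endpoints because they are critical; hence it vanishes identically and the second variation is zero along the whole geodesic. By the previous step its generator $A$ is scalar, so $H_1=e^{c}H_0$ for some constant $c$. Since rescaling $H$ leaves the induced projective embedding, and therefore the J-balanced metric $\iota^*\omega_{FS}$, unchanged, the J-balanced metric is unique; equivalently, restricting to $SL(N+1)$ removes the scaling direction and makes $I_{\mu_{k,\chi}^0}$ strictly convex there, so it has at most one critical point. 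Finally, since a critical point of a convex functional is a global minimum and J-balanced metrics are exactly the critical points, every J-balanced metric attains the absolute minimum of $I_{\mu_{k,\chi}^0}$.

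The main obstacle is the strict-convexity step, namely pinning down that the second variation degenerates precisely along the scaling direction. This relies on two facts: that $\chi>0$, so that the tangential term controls the full tangential gradient of $h_A$, and that the embedding is non-degenerate, so that $h_A$ constant on $M$ forces $A$ to be scalar. Without non-degeneracy one could only conclude that $A$ acts trivially on the linear span of $M$, which would be insufficient to rule out a nontrivial geodesic between two critical points.
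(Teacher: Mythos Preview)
Your argument is correct and in fact more detailed than the paper's own treatment. In the paper the corollary carries no proof environment at all: it is introduced by the single sentence ``using the fact that $\log\det$ is linear on geodesics'', which combined with the preceding lemma yields convexity, and for the uniqueness and minimum statements the authors implicitly fall back on the Kempf--Ness theory already invoked a few paragraphs earlier (the passage citing \cite{MR}, \cite{GRS}, \cite{ThNotes}, where $I_{\mu_{k,\chi}^0}$ is identified as the Kempf--Ness functional for the $SU(N+1)$-action).

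Your route is different in that you do not appeal to the general moment-map machinery; instead you analyse directly the degeneracy locus of the second variation computed in the previous lemma. This is a genuine addition: the paper only proves (non-strict) convexity of $J_\chi\circ FS$, whereas you extract strict convexity transverse to scaling by observing that the vanishing of both integrands forces the full ambient gradient of $h_A$ to vanish along $M$, and then using linear non-degeneracy of the Kodaira embedding to conclude $A$ is scalar. That last step (irreducible $M$ spanning $\pr^N$ and contained in the fixed locus of $e^{tA}$ forces $A$ scalar) is the one place your write-up could be made a touch more explicit, but the argument you sketch---$M$ irreducible lies in a single eigenspace, which must therefore be everything---is standard and correct. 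The payoff of your approach is a self-contained proof that does not rely on external Kempf--Ness references; the payoff of the paper's approach is brevity and a conceptual placement of the result within the general moment-map framework.
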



\subsection{Iterates of the maps $Hilb_\chi \circ FS$ and $FS\circ Hilb_\chi$}
In this section, we investigate the iterates of the map $T_{k,\chi}$.

\begin{lemma}\label{lemma1}
 Consider $h_0\in \Met(L_1),\; h=e^{-\phi}h_0\in \Met(L_1)$. Then
$$\frac{1}{\gamma}\int_M \phi \; \chi\wedge c_1(h)^{n-1}\leq J_\chi(h)-J_\chi (h_0) \leq \frac{1}{\gamma}\int_M \phi \; \chi\wedge c_1(h_0)^{n-1}.$$ 
\end{lemma}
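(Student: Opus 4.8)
The plan is to join $h_0$ and $h$ by the affine path $\phi_t = t\phi$, so that $h_t = e^{-t\phi}h_0$ and
$$\omega_t := c_1(h_t) = \omega_0 + t\ddbar\phi = (1-t)\omega_0 + t\,\omega,$$
where $\omega_0 = c_1(h_0)$ and $\omega = c_1(h)$. Since $\omega_0$ and $\omega$ are both Kähler, each $\omega_t$ is a convex combination of Kähler forms and hence itself Kähler, so the path stays in $\Met(L_1)$ and the defining relation \eqref{Jdef} applies along it. With $\dot\phi_t = \phi$, the fundamental theorem of calculus then gives
$$J_\chi(h) - J_\chi(h_0) = \int_0^1 f(t)\,dt, \qquad f(t) := \frac{1}{\gamma}\int_M \phi\;\chi\wedge\omega_t^{n-1}.$$
Observe that $f(0) = \frac{1}{\gamma}\int_M \phi\;\chi\wedge c_1(h_0)^{n-1}$ is exactly the claimed upper bound and $f(1) = \frac{1}{\gamma}\int_M \phi\;\chi\wedge c_1(h)^{n-1}$ the claimed lower bound.

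Thus it suffices to prove that $f$ is non-increasing on $[0,1]$: then $f(1)\le f(t)\le f(0)$ for every $t$, and integrating over $[0,1]$ yields $f(1)\le J_\chi(h)-J_\chi(h_0)\le f(0)$, which is precisely the assertion. In other words, the whole content of the two-sided bound is simply the monotonicity of the integrand, repackaged as an inequality between the average of $f$ and its endpoint values.

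To establish $f'\le 0$, I would differentiate under the integral sign, using $\dot\omega_t = \ddbar\phi$, to obtain
$$f'(t) = \frac{n-1}{\gamma}\int_M \phi\;\ddbar\phi\wedge\chi\wedge\omega_t^{n-2}.$$
Because $\chi$ and $\omega_t$ are closed, the factor $\chi\wedge\omega_t^{n-2}$ is $\partial$- and $\bar\partial$-closed, so applying Stokes to $\partial(\phi\,\bar\partial\phi\wedge\chi\wedge\omega_t^{n-2})$ converts this into
$$f'(t) = -\frac{n-1}{\gamma}\int_M \i\,\partial\phi\wedge\bar\partial\phi\wedge\chi\wedge\omega_t^{n-2}.$$
Now $\i\,\partial\phi\wedge\bar\partial\phi$ is a nonnegative $(1,1)$-form, and wedging it with the positive forms $\chi$ and $\omega_t^{n-2}$ yields a nonnegative top-degree form; since $\gamma>0$, this forces $f'(t)\le 0$, completing the argument. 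The only steps requiring care are the sign bookkeeping in the integration by parts and the pointwise nonnegativity of the mixed wedge $\i\,\partial\phi\wedge\bar\partial\phi\wedge\chi\wedge\omega_t^{n-2}$; both are routine, and I do not anticipate any genuine obstacle beyond this positivity, which is precisely the mechanism underlying the convexity of $J_\chi$ recorded in Lemma \ref{Chenconv}.
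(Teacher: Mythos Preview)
Your proof is correct and follows essentially the same approach as the paper: both use the linear path $h_t=e^{-t\phi}h_0$ and the key positivity $\int_M \i\,\partial\phi\wedge\bar\partial\phi\wedge\chi\wedge\omega_t^{n-2}\ge 0$. The only cosmetic difference is packaging: the paper introduces the auxiliary function $t\mapsto \frac{t}{\gamma}\int_M\phi\,\chi\wedge c_1(h_0)^{n-1}-(J_\chi(h_t)-J_\chi(h_0))$, shows it is convex with vanishing value and derivative at $t=0$ to obtain the upper bound, and then appeals to symmetry (swapping $h$ and $h_0$) for the lower bound, whereas you extract both inequalities simultaneously from the monotonicity of $t\mapsto\frac{1}{\gamma}\int_M\phi\,\chi\wedge\omega_t^{n-1}$ and the fact that the mean of a monotone function lies between its endpoint values.
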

\begin{proof}
 If one defines $h_t=e^{-t\phi}h_0$,  and  $$f(t)=\frac{1}{\gamma}\int_M t \phi \chi\wedge c_1(h_0)^{n-1} - \left(J_\chi(h_t)-J_\chi(h_0)\right),$$
then $f(0)=f'(0)=0$ and furthermore along the considered path, 
\begin{align*}
f''(t)&=-(n-1)\frac{1}{\gamma}\int_M \phi \chi\wedge c_1(h_t)^{n-2}\wedge \ddbar \phi , \\
&=(n-1)\sqrt{-1}\frac{1}{\gamma}\int_M \partial \phi \wedge \bar{\partial} \phi \wedge \chi\wedge c_1(h_t)^{n-2}, 
\end{align*}
which is non-negative. Thus $f(t)\geq 0$ at $t=1$ which provides one inequality. By symmetry, we get the result. One can perform a direct computation. For instance, when $n=2$, $J_\chi(h)-J_\chi (h_0)$ can be written as 
$\frac{1}{2\gamma}\int_M \phi \,\chi\wedge (c_1(h_0)+c_1(h))$.
\end{proof}

Define for $h\in \Met(L_1^k)$, $H\in \Met(H^0(L_1^{k}))$,
$$\hat{P}(h,H)=\log \sum_{i=1}^{N+1} \Vert S_i\Vert^2_{Hilb_\chi(h)}- \log (N+1) + \log \det(H)+\frac{N+1}{\Vol_{L_1}(M)} J_\chi(h)$$
where $\{S_i\}$ form an orthonormal basis with respect to $H$. Then it is not difficult to check that
$$\hat{P}(FS(H),H)=\frac{N+1}{\Vol_{L_1}(M)}I_{\mu_{\chi,k}^0}(H).$$

\begin{lemma}\label{decreas1}
For any metrics $h,H$, one has
$$\hat{P}(h,H)\geq \hat{P}(FS(H),H).$$
\end{lemma}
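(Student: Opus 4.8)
The plan is to fix $H\in\Met(H^0(L_1^k))$ and to show that $FS(H)$ minimises the function $h\mapsto\hat{P}(h,H)$ on $\Met(L_1^k)$. Since the term $\log\det(H)$ does not depend on $h$, the statement reduces to
\[
\log\Big(\textstyle\sum_i\Vert S_i\Vert^2_{Hilb_\chi(h)}\Big)+\tfrac{N+1}{\Vol_{L_1}(M)}J_\chi(h)\;\geq\;\log\Big(\textstyle\sum_i\Vert S_i\Vert^2_{Hilb_\chi(FS(H))}\Big)+\tfrac{N+1}{\Vol_{L_1}(M)}J_\chi(FS(H)),
\]
where throughout $\{S_i\}$ is the fixed $H$-orthonormal basis. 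First I would write an arbitrary $h$ as $h=e^{-\psi}FS(H)$ for a potential $\psi$, which is legitimate because $FS(H)$ is a positively curved metric and any two such differ by a potential; this turns the inequality into a single statement about $\psi$.

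The structural input is the defining normalisation of the Fubini--Study map: for the $H$-orthonormal basis one has the pointwise identity $\sum_i|S_i|^2_{FS(H)}=\tfrac{N+1}{\Vol_{L_1}(M)}$, a constant. Hence $\sum_i|S_i|^2_h=e^{-\psi}\tfrac{N+1}{\Vol_{L_1}(M)}$, and the first term of $\hat P(h,H)$ becomes the logarithm of $\tfrac{N+1}{\gamma\,\Vol_{L_1}(M)}\int_M e^{-\psi}\,\chi\wedge c_1(h)^{n-1}$. Because $c_1(h)$ and $c_1(FS(H))$ are cohomologous, the total mass $\int_M\chi\wedge c_1(h)^{n-1}$ is independent of $\psi$, so after subtracting its value at $\psi=0$ this is the logarithm of an average of $e^{-\psi}$ against the normalised measure $\chi\wedge c_1(h)^{n-1}$. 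Applying Jensen's inequality (convexity of $t\mapsto e^{-t}$) then bounds the first term below by a constant multiple of $-\int_M\psi\,\chi\wedge c_1(h)^{n-1}$, that is, by a quantity linear in $\psi$.

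For the energy term I would invoke Lemma \ref{lemma1} directly with $h_0=FS(H)$ and $h=e^{-\psi}FS(H)$: its lower bound gives $J_\chi(h)-J_\chi(FS(H))\geq\tfrac{1}{\gamma}\int_M\psi\,\chi\wedge c_1(h)^{n-1}$, which is again linear in $\psi$, and crucially against the \emph{same} measure $\chi\wedge c_1(h)^{n-1}$ that appeared in the Jensen estimate. The whole point is then that the normalisation constants of $FS$ and $Hilb_\chi$, together with the coefficient $\tfrac{N+1}{\Vol_{L_1}(M)}$, are arranged so that these two linear-in-$\psi$ contributions cancel, leaving $\hat P(h,H)-\hat P(FS(H),H)\geq 0$. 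The identity $\hat P(FS(H),H)=\tfrac{N+1}{\Vol_{L_1}(M)}I_{\mu_{k,\chi}^0}(H)$ recorded just above is exactly the $\psi=0$ shadow of this compatibility, so it should be read as confirming that the normalisations are the correct ones for the cancellation to occur.

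I expect the main obstacle to be precisely this bookkeeping: verifying that the coefficient produced by Jensen's inequality coincides, after multiplication by $\tfrac{N+1}{\Vol_{L_1}(M)}$, with the one produced by Lemma \ref{lemma1}, once all factors of $k$, of $n!$ and of the various volume normalisations are tracked consistently through the definitions of $Hilb_{k,\chi}$, $FS_k$ and the induced functional $J_\chi$ on $\Met(L_1^k)$. Once that cancellation is checked the inequality is immediate. An alternative packaging of the same computation, if the direct estimate proves awkward, is to verify that $FS(H)$ is a critical point of $h\mapsto\hat P(h,H)$ -- the $\ddbar\psi$ contribution to the first variation integrates away since $\chi$ and $c_1$ are closed -- and then that this functional is convex along the rays $h_t=e^{-t\psi}FS(H)$, the convexity of the $J_\chi$ part being controlled by its second-variation formula together with Lemma \ref{Chenconv}.
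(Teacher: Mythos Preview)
Your proposal is correct and follows essentially the same argument as the paper: write $h=e^{-\phi}FS(H)$, use the $FS$-normalisation $\sum_i|S_i|^2_{FS(H)}=\tfrac{N+1}{\Vol_{L_1}(M)}$ to rewrite the first term as the logarithm of a normalised average of $e^{-\phi}$, apply Jensen's inequality to bound it below by the linear expression $-\tfrac{N+1}{\gamma\Vol_{L_1}(M)}\int_M\phi\,\chi\wedge c_1(h)^{n-1}$, and then invoke Lemma~\ref{lemma1} to see that this is exactly cancelled by $\tfrac{N+1}{\Vol_{L_1}(M)}(J_\chi(h)-J_\chi(FS(H)))$. The paper's proof is simply the three-line display of this computation; your anticipated bookkeeping is straightforward once one observes that $\int_M\chi\wedge c_1(h)^{n-1}$ is a cohomological constant making the measure in Jensen's inequality a probability measure.
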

\begin{proof}
 One checks that if we define $h=e^{-\phi}FS(H)$, then
\begin{eqnarray*}\hat{P}(h,H)-\hat{P}(FS(H),H) &=& \log\left( \frac{N+1}{\gamma \Vol_{L_1}(M)}\int_M e^{-\phi} \chi \wedge c_1(h)^{n-1}\right) \\
&&+\frac{N+1}{\Vol_{L_1}(M)}\left(J_\chi(h)-J_\chi(FS(H))\right), \\
&\geq &  \frac{N+1}{\gamma\Vol_{L_1}(M)}\int_M -\phi \;\chi\wedge c_1(h)^{n-1}\\
&&+\frac{N+1}{\Vol_{L_1}(M)}\left(J_\chi(h)-J_\chi(FS(H))\right),\\
&\geq& 0,
 \end{eqnarray*}
 using Lemma \ref{lemma1}. 
\end{proof}

\begin{lemma}\label{decreas2}
 $$\hat{P}(h,H)\geq \hat{P}(h,Hilb_\chi(h)).$$
\end{lemma}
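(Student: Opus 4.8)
The plan is to reduce the inequality to a finite-dimensional statement about positive Hermitian matrices and then close it with the arithmetic--geometric mean inequality. Write $G=Hilb_\chi(h)$, viewed as a positive Hermitian endomorphism of $H^0(L_1^k)$, and let $\{S_i\}$ be an $H$-orthonormal basis. First I would evaluate the right-hand side $\hat{P}(h,Hilb_\chi(h))$: for an $Hilb_\chi(h)$-orthonormal basis $\{T_i\}$ one has $\sum_i\|T_i\|^2_{Hilb_\chi(h)}=N+1$ by definition, so the first two terms cancel and
$$\hat{P}(h,Hilb_\chi(h))=\log\det G+\tfrac{N+1}{\Vol_{L_1}(M)}J_\chi(h).$$
For a general $H$ the crucial observation is that, for an $H$-orthonormal basis $\{S_i\}$, the quantity $\sum_i\|S_i\|^2_{Hilb_\chi(h)}$ equals $\tr(H^{-1}G)$, being the trace of the Gram matrix of $\{S_i\}$ against $G$. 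Since the $J_\chi(h)$ terms are independent of $H$ they cancel, and the difference reduces to
$$\hat{P}(h,H)-\hat{P}(h,Hilb_\chi(h))=\log\tr(H^{-1}G)-\log(N+1)+\log\det H-\log\det G.$$

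Next I would diagonalise: let $\lambda_1,\dots,\lambda_{N+1}>0$ be the eigenvalues of $H^{-1}G$, so that $\tr(H^{-1}G)=\sum_i\lambda_i$ and, since $\det(H^{-1}G)=\prod_i\lambda_i$, one has $\log\det H-\log\det G=-\sum_i\log\lambda_i$. The difference above then becomes $\log\big(\tfrac{1}{N+1}\sum_i\lambda_i\big)-\sum_i\log\lambda_i$. Working, as throughout this section, with the $SL(N+1)$-action for which $\mu^0_{k,\chi}$ is the moment map --- equivalently normalising determinants so that $\det H=\det G$, which is harmless because $FS(cH)=FS(H)$ renders every other term scale-invariant --- one has $\prod_i\lambda_i=1$, hence $\sum_i\log\lambda_i=0$ and the difference equals $\log\big(\tfrac{1}{N+1}\sum_i\lambda_i\big)$. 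The arithmetic--geometric mean inequality gives $\tfrac{1}{N+1}\sum_i\lambda_i\ge(\prod_i\lambda_i)^{1/(N+1)}=1$, so the difference is non-negative, which is the claim. Equivalently, one may phrase this as: $Hilb_\chi(h)$ is the unique critical point, hence the minimiser, of the functional $H\mapsto\hat{P}(h,H)$, which is convex along Bergman geodesics of $\mathcal{B}_k$ because $\log\det H$ is affine and $\log\tr(H^{-1}G)$ is convex.

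The step I expect to be the main obstacle --- the one requiring genuine care rather than routine computation --- is the treatment of the $\log\det H$ term. Without a determinant normalisation the $H$-dependent part $\log\tr(H^{-1}G)+\log\det H$ is \emph{not} bounded below (it tends to $-\infty$ under $H\mapsto cH$ as $c\to0$), so $Hilb_\chi(h)$ is not an unconstrained minimiser; the inequality holds only once one restricts to the $SL(N+1)$-slice, consistently with the fact that $\mu^0_{k,\chi}$ is trace-free and $I_{\mu^0_{k,\chi}}$ is the Kempf--Ness functional for the $SL(N+1)$-action. Checking that this is precisely the normalisation under which the two sides are compared, and that it is compatible with the monotonicity chain obtained by combining Lemma \ref{decreas1} with this lemma along the iteration $h\mapsto FS(Hilb_\chi(h))$, is the delicate bookkeeping; the analytic input, the AM--GM inequality, is then immediate.
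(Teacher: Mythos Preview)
Your approach is exactly the paper's: the entire proof in the paper reads ``This is a consequence of the arithmetic-geometric inequality.'' Your reduction to $\log\bigl(\tfrac{1}{N+1}\sum_i\lambda_i\bigr)-\sum_i\log\lambda_i$ with $\lambda_i$ the eigenvalues of $H^{-1}G$, $G=Hilb_\chi(h)$, is the intended computation, and AM--GM closes it.

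Your caution about the $\log\det H$ term is not misplaced. As $\hat P$ is written, one checks $\hat P(h,cH)=\hat P(h,H)+N\log c$, so without a determinant normalisation the inequality cannot hold for all $H$. The fix you propose --- restrict to the $SL(N+1)$-slice $\det H=\det G$, which is the natural setting since $\mu^0_{k,\chi}$ is the moment map for the $SL(N+1)$-action and $FS$ is scale-invariant --- is the right one; equivalently one could insert a factor $(N+1)$ in front of the term $\log\sum_i\|S_i\|^2_{Hilb_\chi(h)}-\log(N+1)$ in the definition of $\hat P$, which makes the functional scale-invariant and leaves the identity $\hat P(FS(H),H)=\tfrac{N+1}{\Vol_{L_1}(M)}I_{\mu^0_{k,\chi}}(H)$ intact. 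With either normalisation the difference becomes $(N+1)\log\bigl(\tfrac{1}{N+1}\sum_i\lambda_i\bigr)-\sum_i\log\lambda_i\ge 0$, which is precisely AM--GM. So the analytic content of your argument and the paper's coincide; what you have added is the bookkeeping the paper omits.
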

\begin{proof}
This is a consequence of the arithmetic-geometric inequality.
\end{proof}

Assume the existence of $h_{bal}\in \Met(L_1^k)$ J-balanced  metric and $H_{bal}\in \Met(H^0(L_1^k))$ J-balanced metric
on the Bergman space. Then for any $h\in \Met(L_1^k),$ and $H\in \Met(H^0(L_1^k))$ one has
\begin{eqnarray*}
\hat{P}(h,H)&\geq & \hat{P}(FS(H),H), \\
                       &=&\frac{1}{\Vol_{L_1}(M)} I_{\mu_{\chi,k}^0}(H),\\
                      & \geq &\frac{1}{\Vol_{L_1}(M)} I_{\mu_{\chi,k}^0}(H_{bal}),\\
                       & = &   \hat{P}(FS(H_{bal}),H_{bal}),\\
                     &=&  \hat{P}(h_{bal},H_{bal}).
\end{eqnarray*}
In particular it gives that $$I_{\mu_{\chi,k}^0}(H)\geq I_{\mu_{\chi,k}^0}(H_{bal}).$$ 
Moreover  $\hat{P}(h,Hilb_\chi(h))\geq \hat{P}(h_{bal},Hilb_\chi(h_{bal}))$. \\
Thus the functional on $\Met(L_1^k)$ defined by\label{hatI}
$$\hat{I}_k(h):=\frac{\Vol_{L_1}(M)}{N+1}\hat{P}(h,Hilb_\chi(h))=J_\chi(h)+\frac{\Vol_{L_1}(M)}{N+1}\log \det Hilb_\chi(h)$$
satisfies
$$\hat{I}_k(h)\geq \hat{I}_k(h_{bal}).$$
We will see soon that this new functional has a geometric interpretation. A direct consequence of Lemmas \ref{decreas1} and \ref{decreas2} is the following corollary.
\begin{corollary}In our setting, the  following hold:
\begin{enumerate}
 \item A J-balanced metric on $\Met(L_1^k)$ (resp. $\Met(H^0(L_1^k)$) is a minimum of the functional $\hat{I}_k$ (resp. $I_{\mu_{\chi,k}^0}$). 
\item The map $FS \circ Hilb_\chi$ decreases the functional $\hat{I}_k$ while the map $Hilb_\chi \circ FS$ decreases the functional $I_{\mu_{\chi,k}^0}$.
\item The functional $\hat{I}_k$ is bounded from below if and only if  the functional $I_{\mu_{\chi,k}^0}$ is bounded
from below.
\end{enumerate}

\end{corollary}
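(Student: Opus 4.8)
The plan is to deduce all three assertions directly from the two monotonicity relations of Lemmas \ref{decreas1} and \ref{decreas2}, using the two structural identities already recorded before the statement: $\hat{P}(FS(H),H)=\frac{N+1}{\Vol_{L_1}(M)}I_{\mu_{k,\chi}^0}(H)$ and $\hat{I}_k(h)=\frac{\Vol_{L_1}(M)}{N+1}\hat{P}(h,Hilb_\chi(h))$. The guiding principle is that $T_{k,\chi}=FS\circ Hilb_\chi$ decreases $\hat{P}$ in its first slot, while $Hilb_\chi\circ FS$ decreases it in its second slot, so that chaining the two lemmas along the orbit of either map yields the monotonicity of the corresponding functional.

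For part (2), I would write $h'=FS(Hilb_\chi(h))$ and apply Lemma \ref{decreas1} with the matrix $H=Hilb_\chi(h)$ to get $\hat{P}(h,Hilb_\chi(h))\ge \hat{P}(h',Hilb_\chi(h))$, then apply Lemma \ref{decreas2} at $h'$ to get $\hat{P}(h',Hilb_\chi(h))\ge \hat{P}(h',Hilb_\chi(h'))$; composing these is exactly $\hat{I}_k(h)\ge \hat{I}_k(T_{k,\chi}(h))$. The dual statement is obtained symmetrically: set $H'=Hilb_\chi(FS(H))$, apply Lemma \ref{decreas2} at $h=FS(H)$ and then Lemma \ref{decreas1} with the matrix $H'$, concluding $\hat{P}(FS(H),H)\ge \hat{P}(FS(H'),H')$, which is $I_{\mu_{k,\chi}^0}(H)\ge I_{\mu_{k,\chi}^0}(Hilb_\chi\circ FS(H))$.

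For part (1), the minimality of a J-balanced metric for $I_{\mu_{k,\chi}^0}$ is already provided by the convexity in Corollary \ref{uniquenessJbal}; for $\hat{I}_k$ it follows from the chain of inequalities displayed immediately before the statement, after noting that for a fixed point $h_{bal}=FS(Hilb_\chi(h_{bal}))$ the associated Bergman metric $H_{bal}=Hilb_\chi(h_{bal})$ satisfies $\hat{P}(h_{bal},H_{bal})=\hat{P}(h_{bal},Hilb_\chi(h_{bal}))=\frac{N+1}{\Vol_{L_1}(M)}\hat{I}_k(h_{bal})$, so the lower bound $\hat{P}(h,H)\ge \hat{P}(h_{bal},H_{bal})$ specialises to $\hat{I}_k(h)\ge \hat{I}_k(h_{bal})$.

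For part (3), I would compare the two infima $a=\inf_h \hat{P}(h,Hilb_\chi(h))$ and $b=\inf_H \hat{P}(FS(H),H)$, which respectively control the lower boundedness of $\hat{I}_k$ and $I_{\mu_{k,\chi}^0}$. Lemma \ref{decreas1} with $H=Hilb_\chi(h)$ gives $\hat{P}(h,Hilb_\chi(h))\ge b$, hence $a\ge b$, while Lemma \ref{decreas2} with $h=FS(H)$ gives $\hat{P}(FS(H),H)\ge a$, hence $b\ge a$; therefore $a=b$, and in particular one is finite precisely when the other is. No step here is a genuine obstacle, since everything reduces to the two lemmas; the only point requiring care is the bookkeeping of which slot of $\hat{P}$ each lemma controls, so that the inequalities are composed in the correct order and the fixed-point identifications $H_{bal}=Hilb_\chi(h_{bal})$, $FS(H_{bal})=h_{bal}$ are used consistently.
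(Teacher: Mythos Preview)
Your proposal is correct and follows exactly the route the paper intends: the authors state the corollary as ``a direct consequence of Lemmas \ref{decreas1} and \ref{decreas2}'' without further proof, and you have simply (and correctly) spelled out how to chain those two inequalities for each of the three parts, together with the prior displayed chain $\hat P(h,H)\ge\hat P(h_{bal},H_{bal})$ and Corollary \ref{uniquenessJbal} for part (1).
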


We now explain the asymptotic behavior of the functional $\hat{I}_k$, by studying the term $$\log \det Hilb_\chi(h_t)$$ where 
$h_t=he^{-k\phi_t}$ (with $\Vert\phi_t\Vert_{\C^{\infty}}=O(1)$ when $k\rightarrow \infty$) is a path in $\Met(L_1^k)$. Then, we can write $Hilb_\chi(h_t)=\langle s_i,s_j\rangle_{Hilb(h_t)}$ where $\{s_i\}$ is an orthonormal basis with respect to $Hilb_\chi(h)$. Thus its derivative at $t=0$ is given by the derivative of $\sum_i \Vert s_i \Vert^2_{Hilb_\chi(h_t)}$ and because of the variation of the volume form, this is written as 
\begin{align*}
-\frac{1}{\gamma}\int_M \hspace{-0.07cm}k\dot{\phi} \sum_{i} \vert s_i\vert^2_h \, \chi\hspace{-0.07cm}\wedge \hspace{-0.07cm} c_1(h)^{n-1}\hspace{-0.07cm}+\hspace{-0.07cm} (n\hspace{-0.07cm}-\hspace{-0.07cm}1)\hspace{-0.07cm}  \sum_{i} \vert s_i\vert^2_h \,\chi\hspace{-0.07cm}\wedge\hspace{-0.07cm} c_1(h)^{n-1}\hspace{-0.07cm} \wedge\hspace{-0.07cm} \ddbar \dot{\phi}, 
\end{align*}
Together with \eqref{bergJflow} and the fact that the second term in the integrand is negligible compared to the first one when $k\rightarrow +\infty$ (by uniformity of the Bergman expansion in $\C^2$ topology), we obtain when $k\rightarrow + \infty$ that
$$\frac{d}{dt}_{\vert t=0} \left( \frac{\Vol_{L_1}(M)}{N+1}\log \det Hilb_\chi (he^{-k\phi_t}) \right)=-k\int_M \dot{\phi}\, c_1(h)^n + O(1),$$
where we have used that $N=\Vol_{L_1}(M)k^n+ O(k^{n-1})$. This leads to the following conclusion for $\hat{I}_k$.

\begin{corollary}
 Over compact subsets of $\Met(L_1)$, the functionals $\hat{I}_k$ and $I_{\mu_J}$ are equivalent, up to a normalisation, i.e 
$$\frac{1}{k}\hat{I}_k(h^k) = I_{\mu_J}(h) + O(1/k)$$
for $h\in \Met(L_1)$.
\end{corollary}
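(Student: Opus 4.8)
The plan is to prove the identity by differentiating both functionals along a path in $\Met(L_1)$, matching the derivatives term by term up to $O(1/k)$, and integrating. Fix a compact subset $K\subset\Met(L_1)$ and a reference metric $h_0\in K$, normalised so that $J_\chi$ and $I_{\mu_J}(\omega_0,\cdot)$ both vanish at $\omega_0=c_1(h_0)$. Given $h\in K$, join $h_0$ to $h$ by the path $h_s=h_0e^{-\phi_s}$, $s\in[0,1]$, which stays in a fixed compact set $K'$; write $\omega_s=c_1(h_s)=\omega_0+\ddbar\phi_s$. Since $\tfrac1k\hat I_k(h_s^k)$ and $I_{\mu_J}(\omega_0,\omega_s)$ both vanish at $s=0$, it suffices to show their $s$-derivatives agree up to a $O(1/k)$ that is \emph{uniform} in $s$ and in $h\in K$, and then integrate over $[0,1]$.

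First I would treat the two summands of $\hat I_k$ separately. For the $J_\chi$ term, the defining formula \eqref{Jdef} for the induced functional on $\Met(L_1^k)$ (in which the potential along $h_s^k=h_0^ke^{-k\phi_s}$ is $k\phi_s$ and the curvature is $\omega_s$) gives directly $\frac{d}{ds}J_\chi(h_s^k)=\frac{k}{\gamma}\int_M\dot\phi_s\,\chi\wedge\omega_s^{n-1}$, so that $\frac1k\frac{d}{ds}J_\chi(h_s^k)=\frac1\gamma\int_M\dot\phi_s\,\chi\wedge\omega_s^{n-1}$ with no error. For the determinant term I would invoke the computation carried out immediately above the statement, applied with base point $h_s$ in place of $h$ and variation $\dot\phi_s$, which yields
\[
\frac{d}{ds}\Big(\tfrac{\Vol_{L_1}(M)}{N+1}\log\det Hilb_\chi(h_s^k)\Big)=-k\int_M\dot\phi_s\,\omega_s^n+O(1).
\]
Here the $O(1)$ comes from discarding the volume-variation contribution $(n-1)\int_M\!\sum_i|s_i|^2_{h^k}\,\chi\wedge\omega_s^{n-1}\wedge\ddbar\dot\phi_s$, which is of lower order after the normalisation $\tfrac{\Vol_{L_1}(M)}{N+1}=k^{-n}(1+O(1/k))$, together with the remainder in the Bergman expansion \eqref{bergJflow}. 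Dividing by $k$ gives $-\int_M\dot\phi_s\,\omega_s^n+O(1/k)$.

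Adding the two contributions produces
\[
\frac1k\frac{d}{ds}\hat I_k(h_s^k)=\int_M\dot\phi_s\Big(\tfrac1\gamma\chi\wedge\omega_s^{n-1}-\omega_s^n\Big)+O(1/k)=\frac{d}{ds}I_{\mu_J}(\omega_0,\omega_s)+O(1/k),
\]
where the last equality is exactly the integrand in the definition of $I_{\mu_J}$. Integrating from $s=0$ to $s=1$ and using the common normalisation at the base point yields $\frac1k\hat I_k(h^k)=I_{\mu_J}(h)+O(1/k)$.

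The only real obstacle — and the reason the statement is phrased over compact subsets — is establishing that the $O(1/k)$ is uniform. This requires that both the remainder in the Bergman expansion \eqref{bergJflow} and the estimate bounding the discarded volume-variation term be uniform as $\omega_s$ ranges over the compact family $K'$; this is precisely what the uniformity clause of Proposition \ref{tian-b} provides (the constants $c_r$ remain bounded when $h$ stays in a compact set of positively curved metrics), used here in $\C^2$ topology. Granting this, the path integral of a uniform $O(1/k)$ integrand over the fixed interval $[0,1]$ is again $O(1/k)$, and independence of the result from the chosen path follows from the cocyclicity of $I_{\mu_J}$ together with the fact that $\hat I_k$ is a genuine functional on $\Met(L_1^k)$.
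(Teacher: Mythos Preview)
Your proposal is correct and follows essentially the same approach as the paper. The paper carries out precisely the computation you describe in the paragraph immediately preceding the Corollary (differentiating the $\log\det Hilb_\chi$ term along a path, invoking the Bergman asymptotics \eqref{bergJflow}, and observing the lower-order nature of the volume-variation term), and then states the Corollary as an immediate consequence; you have simply made the integration step and the uniformity over compact sets explicit, which the paper leaves implicit.
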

Corollary \ref{cor1} and our last result show  that a critical metric solution of \eqref{critical} is actually an absolute minimum of $I_{\mu_J}$.  Of course this is conceptually a consequence of  $I_{\mu_J}$ being the integral of the moment map ${\mu_J}$. Formally, this can be seen from the facts that the J-flow decreases $I_{\mu_J}$, and from the analytic study of the flow (convergence to a unique critical metric in smooth topology).

\medskip

Finally, following the techniques of \cite{Sa}, we obtain that if there is a J-balanced metrics of order $k$, then
the iterates of $Hilb_\chi\circ FS$ on $\Met(H^0(L^k_1))$ will converge towards this metric. 
\begin{theorem}\label{conviter}
 Assume that there exists $H_\infty\in \Met(H^0(L^k_1))$ J-balanced. For any $H_0\in \Met(H^0(L^k_1))$,
denote from now $$H_l=Hilb_\chi\circ FS(H_{l-1}),$$ for $l\geq 1$. Then, up to a positive constant $r$, 
$$H_l\rightarrow rH_\infty$$
as $l\rightarrow +\infty$.
\end{theorem}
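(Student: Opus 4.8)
The plan is to recognise the iteration $H_l = Hilb_\chi \circ FS(H_{l-1})$ as an \emph{alternating minimisation} scheme for the two-variable functional $\hat{P}$ of Lemmas \ref{decreas1} and \ref{decreas2}, and then to run the variational argument of Sano \cite{Sa}. Writing $h_{l-1}=FS(H_{l-1})$ so that $H_l=Hilb_\chi(h_{l-1})$, Lemma \ref{decreas2} (arithmetic--geometric inequality) gives $\hat P(h_{l-1},H_{l-1})\geq \hat P(h_{l-1},H_l)$, while Lemma \ref{decreas1} gives $\hat P(h_{l-1},H_l)\geq \hat P(FS(H_l),H_l)$. Since $\hat P(FS(H),H)=\frac{N+1}{\Vol_{L_1}(M)}I_{\mu_{k,\chi}^0}(H)$, chaining these two inequalities shows that the sequence $I_{\mu_{k,\chi}^0}(H_l)$ is non-increasing. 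I would also record at this stage that $FS(rH)=FS(H)$, so the composite $T=Hilb_\chi\circ FS$ satisfies $T(rH)=T(H)$; in particular the scale of $H_l$ for $l\geq 1$ is intrinsically fixed by $Hilb_\chi$, and this is the source of the undetermined constant $r$ in the statement.

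Next I would produce compactness. Because $H_\infty$ exists and is, by Corollary \ref{uniquenessJbal}, the absolute minimum of $I_{\mu_{k,\chi}^0}$, the functional is bounded below, so the monotone sequence $I_{\mu_{k,\chi}^0}(H_l)$ converges to some limit $L$. By the Kempf--Ness theory recalled in Subsection \ref{J-func}, the existence of the critical point $H_\infty$ is equivalent to properness of $I_{\mu_{k,\chi}^0}$ on $SL(N+1)$. Hence the determinant-normalised iterates lie in a fixed sublevel set of a proper convex functional and therefore form a precompact family in $\mathcal{B}_k$ modulo homothety.

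Then I would identify the limit points. Let $H_{l_j}\to H_*$ be any subsequential limit of the normalised iterates. Continuity of $FS$ and $Hilb_\chi$ gives $H_{l_j+1}=T(H_{l_j})\to T(H_*)$, and since $I_{\mu_{k,\chi}^0}(H_l)\to L$ we obtain $I_{\mu_{k,\chi}^0}(T(H_*))=L=I_{\mu_{k,\chi}^0}(H_*)$; that is, the whole chain of inequalities above is an equality at $H_*$. The crucial point is that equality in the arithmetic--geometric step (Lemma \ref{decreas2}) forces all eigenvalues of $Hilb_\chi(FS(H_*))$ relative to $H_*$ to coincide, hence $H_*$ is proportional to $Hilb_\chi(FS(H_*))=T(H_*)$, i.e. $H_*$ is a fixed point of $T$ up to scale and so is J-balanced. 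By the uniqueness in Corollary \ref{uniquenessJbal} every limit point equals $rH_\infty$ for a positive constant. Finally, once the normalised sequence converges to the normalisation of $H_\infty$, continuity yields $H_{l+1}=Hilb_\chi(FS(H_l))\to Hilb_\chi(FS(H_\infty))=rH_\infty$, the last equality because $H_\infty$ is balanced and hence fixed by $T$ up to scale; this recovers the scale and gives convergence of the full sequence.

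The main obstacle is the equality analysis of the third paragraph: one must ensure that a subsequential limit along which $I_{\mu_{k,\chi}^0}$ has ceased to decrease is genuinely a critical point, and not merely a configuration where the two alternating minimisations happen to balance. This is exactly where the strict convexity underlying Lemmas \ref{decreas1}--\ref{decreas2} and Corollary \ref{uniquenessJbal} enters, and where care is needed with the homothety ambiguity $T(rH)=T(H)$, so that the precompactness furnished by properness on $SL(N+1)$ correctly descends to the quotient by scaling. The remaining ingredients---continuity of $FS$, $Hilb_\chi$ and of $I_{\mu_{k,\chi}^0}$ in finite dimensions---are routine.
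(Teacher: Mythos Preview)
Your overall strategy coincides with the paper's, which also follows Sano--Seyyedali: monotonicity of $I_{\mu_{k,\chi}^0}$ under the iteration, properness to extract compactness of the determinant-normalised iterates $\tilde H_l$, and identification of subsequential limits with $H_\infty$ by uniqueness. The endgame differs: the paper proves directly that $I_{\mu_{k,\chi}^0}(H_l)\to\min I_{\mu_{k,\chi}^0}$ via a derivative computation along geodesics (Lemma~\ref{le9}, which in turn rests on Lemma~\ref{le7}), whereas you pass to a subsequential limit $H_*$ and invoke the equality case of the arithmetic--geometric step to conclude $H_*\propto T(H_*)$. Your route is conceptually cleaner and would bypass those two auxiliary lemmas.

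There is, however, a real gap in your equality analysis. You assert $I_{\mu_{k,\chi}^0}(T(H_*))=L=I_{\mu_{k,\chi}^0}(H_*)$, but $H_*$ is a limit of the \emph{normalised} subsequence $\tilde H_{l_j}$, while $L=\lim I_{\mu_{k,\chi}^0}(H_l)$ is computed along the \emph{unnormalised} sequence, and $I_{\mu_{k,\chi}^0}$ is not scale-invariant: $I_{\mu_{k,\chi}^0}(rH)=I_{\mu_{k,\chi}^0}(H)+\Vol_{L_1}(M)\log r$. Hence $I_{\mu_{k,\chi}^0}(H_*)=\lim I_{\mu_{k,\chi}^0}(\tilde H_{l_j})$ differs from $L$ by $\frac{\Vol_{L_1}(M)}{N+1}\lim\log\det H_{l_j}$, and until you know this correction term exists (and then work with the unnormalised limit) your chain $I(H_*)\geq I(T(H_*))$ with equality cannot be closed. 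The paper supplies exactly this missing piece in Lemma~\ref{le8}: from Lemma~\ref{decreas2} one extracts that $\log\det H_l$ is decreasing, and from Lemma~\ref{decreas1} together with the identity $\tr(Hilb_\chi(FS(H))H^{-1})=N+1$ that $J_\chi\circ FS(H_l)$ is decreasing; since their sum is bounded below, each converges separately. Once $\det H_l$ converges, the unnormalised subsequence $H_{l_j}$ converges as well, and your equality-case argument then applies verbatim to \emph{that} limit. So your sketch is salvageable, but it needs this separate-monotonicity step, which is precisely the content you hoped to avoid.
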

For the sake of clarity, we give the details of the proof which consists in an easy modification of \cite{Sa,Sey2}, which is not surprising since it is a purely finite dimensional problem and our setting is very close. We will decompose the proof into several lemmas starting with the following definition.
\begin{definition}
Let $\{s_i\}$ be a basis of $H^0(L^k_1)$. Using this basis, we can view elements of $\Met(H^0(L^k_1))$ as Hermitian matrices $(N+1)\times (N+1)$. 
A subset $\mathcal{U}\subset\Met(H^0(L^k_1))$ is bounded if there exists a number $R>1$ satisfying the following. For any $H\in \mathcal{U}$, there exists a constant $\gamma_H>0$ so that the smallest and largest eigenvalues of $H$ satisfy
$$\frac{\gamma_H}{R}\leq \min \frac{\vert H(\zeta)\vert}{\vert \zeta\vert} \leq \max \frac{\vert H(\zeta)\vert}{\vert \zeta \vert} \leq \gamma_H R.$$
\end{definition}

With the notations of the the previous definition, we have the following obvious proposition due to the fact that the closure of bounded sets are compact in finite dimension.
\begin{proposition}
Any bounded sequence $H_k$ has a subsequence $H_{n_k}$ such that $\frac{1}{\gamma_{n_k}}H_{n_k}$ converges in $\Met(H^0(L^k_1))$. 
\end{proposition}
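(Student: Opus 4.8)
The plan is to normalise each matrix by its scale $\gamma_{H_k}$ and observe that the normalised matrices lie in a fixed compact set. First I would set $\gamma_k := \gamma_{H_k}$ and consider $\tilde{H}_k := \gamma_k^{-1} H_k$. By the definition of boundedness, dividing by $\gamma_k$ rescales the two-sided bounds $\gamma_{H_k}/R$ and $\gamma_{H_k} R$ to $1/R$ and $R$; hence each $\tilde{H}_k$ is a positive-definite Hermitian matrix all of whose eigenvalues lie in the closed interval $[1/R, R]$.

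Next I would identify the relevant compact set. Having fixed the basis $\{s_i\}$, the space of Hermitian $(N+1)\times(N+1)$ matrices is a finite-dimensional real vector space, and the subset
$$\mathcal{K} = \{\, A : A = A^{*}, \ \tfrac{1}{R}\,\Id \leq A \leq R\,\Id \,\}$$
of Hermitian matrices whose spectrum lies in $[1/R, R]$ (in the Loewner order) is closed, since the spectral constraints are closed conditions, and bounded, since the operator norm of any such $A$ is at most $R$. By the Heine--Borel theorem $\mathcal{K}$ is therefore compact. As every $\tilde{H}_k$ belongs to $\mathcal{K}$, sequential compactness (Bolzano--Weierstrass) yields a subsequence $\tilde{H}_{n_k} = \gamma_{n_k}^{-1} H_{n_k}$ converging to some $A_\infty \in \mathcal{K}$.

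Finally I would check that the limit is a genuine metric. Because $R > 1$, every eigenvalue of $A_\infty$ lies in $[1/R, R] \subset (0, +\infty)$, so $A_\infty$ is positive definite and thus defines an element of $\Met(H^0(L_1^k))$. This gives the asserted convergence $\gamma_{n_k}^{-1} H_{n_k} \to A_\infty$ in $\Met(H^0(L_1^k))$.

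There is no genuine obstacle here: the entire content is the finite-dimensionality of $\Met(H^0(L_1^k))$, which converts the uniform two-sided spectral bound of the boundedness hypothesis into membership in a single compact set, after which the extraction of a convergent subsequence is automatic. The one point deserving a word of care is that the scale $\gamma_{H_k}$ is permitted to vary with $k$; this is exactly why one must normalise before extracting, since without the factor $\gamma_k^{-1}$ the eigenvalues could escape to $0$ or to $+\infty$ and no convergent subsequence of the $H_k$ themselves need exist.
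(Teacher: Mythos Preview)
Your proof is correct and follows exactly the approach indicated in the paper, which simply remarks that the proposition is obvious ``due to the fact that the closure of bounded sets are compact in finite dimension.'' You have merely spelled out the details of this Heine--Borel/Bolzano--Weierstrass argument.
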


\begin{lemma}\cite[Lemma 3.2]{Sey2}\label{le6}
 The set $\mathcal{U}$ is bounded if and only if there exists a number $R>1$ so that for any $H\in\mathcal{U}$, we have
$$\frac{1}{R}\leq \min \frac{\vert \tilde{H}(\zeta)\vert}{\vert \zeta\vert} \leq \max \frac{\vert \tilde{H}(\zeta)\vert}{\vert \zeta \vert} \leq R,$$
where $\tilde{H}=\frac{1}{\det(H)^{\frac{1}{N+1}}}H$.
\end{lemma}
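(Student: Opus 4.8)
The plan is to exploit the fact that $\det(H)^{1/(N+1)}$ is exactly the geometric mean of the eigenvalues of $H$, and that passing from $H$ to $\tilde{H}$ normalises the determinant to $1$, thereby eliminating the scaling freedom encoded in the constant $\gamma_H$. Since both conditions in the statement are of the form ``there exists $R>1$ such that \ldots'', it suffices to produce, in each direction, some admissible value of $R$; its precise value may change from one condition to the other.

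First I would fix notation. As $H$ is Hermitian and positive definite, write $\lambda_1(H),\dots,\lambda_{N+1}(H)$ for its eigenvalues, so that $\min \frac{\vert H(\zeta)\vert}{\vert\zeta\vert}=\min_i \lambda_i(H)$ and $\max \frac{\vert H(\zeta)\vert}{\vert\zeta\vert}=\max_i \lambda_i(H)$. Setting $d:=\det(H)^{\frac{1}{N+1}}$, one checks $\det(\tilde{H})=\det(H)^{-1}\det(H)=1$, so the eigenvalues of $\tilde{H}$ are $\lambda_i(H)/d$, and $d=\left(\prod_i \lambda_i(H)\right)^{\frac{1}{N+1}}$ is the geometric mean of the $\lambda_i(H)$.

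For the forward implication, suppose $\mathcal{U}$ is bounded, so that every $\lambda_i(H)$ lies in the interval $[\gamma_H/R,\,\gamma_H R]$. The key step is the elementary observation that the geometric mean of numbers lying in an interval again lies in that interval, so $d\in[\gamma_H/R,\,\gamma_H R]$ as well. Dividing the bounds on $\lambda_i(H)$ by $d$ then yields $\lambda_i(\tilde{H})=\lambda_i(H)/d\in[1/R^2,\,R^2]$, which is precisely the desired condition with $R$ replaced by $R^2$ and, crucially, with no remaining dependence on $\gamma_H$. For the converse, assume $\lambda_i(\tilde{H})\in[1/R,\,R]$ for all $H\in\mathcal{U}$, and simply set $\gamma_H:=d=\det(H)^{\frac{1}{N+1}}$. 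Then $\lambda_i(H)=d\,\lambda_i(\tilde{H})\in[d/R,\,dR]=[\gamma_H/R,\,\gamma_H R]$, which is exactly the definition of boundedness for this choice of $\gamma_H$.

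The argument is essentially bookkeeping with eigenvalues, and I do not expect a serious obstacle. The only point requiring a moment's care is the forward direction's claim that the geometric mean $d$ lies between $\gamma_H/R$ and $\gamma_H R$, but this is immediate since each of its factors does. Conceptually, the content of the lemma is that boundedness constrains only the ratio $\lambda_{\max}/\lambda_{\min}$ (the condition number), a scale-invariant quantity, while the normalisation $\tilde{H}$ pins down the scale by forcing $\det\tilde{H}=1$; the equivalence is precisely the interplay between these two facts.
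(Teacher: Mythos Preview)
Your proof is correct and follows essentially the same approach as the paper: diagonalise, use that the boundedness hypothesis traps each eigenvalue in $[\gamma_H/R,\gamma_H R]$, and deduce that the normalised eigenvalues lie in $[1/R^2,R^2]$. The paper only writes out the forward implication explicitly (computing $\det(H)^{-1/(N+1)}e^{\lambda_{N+1}}$ as a product of ratios each bounded by $R^2$), whereas you phrase this more cleanly via the geometric mean and also spell out the easy converse by taking $\gamma_H=\det(H)^{1/(N+1)}$.
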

\begin{proof}
We include the proof for the sake of clarity. Without loss of generality, we can assume $H(s_i,s_j)$ is diagonal with entries $e^{\lambda_i}$, $\lambda_1 \leq .. \leq \lambda_{N+1}$. Assuming $\mathcal{U}$ bounded, we obtain $\gamma_H \leq Re^{\lambda_i}$ and $\gamma_H \geq \frac{1}{R}e^{\lambda_i}$. Thus, $e^{\lambda_{N+1}}\leq R^{2}e^{\lambda_i}$ and $e^{\lambda_1}\geq R^{-2}e^{\lambda_i}$ for all $i=1,..,N+1$,  which gives $$\det(H)^{-1/(N+1)}e^{\lambda_{N+1}}=\left(\prod_i e^{\lambda_{N+1}-\lambda_i}\right)^{1/(N+1)}\leq R^2.$$ Similarly, we obtain $\det(H)^{-1/(N+1)}e^{\lambda_{1}}\geq \frac{1}{R^2}$.
\end{proof}

\begin{lemma}\label{le8}
 Under the assumption of the theorem, if the sequence $H_l$ is bounded in $\Met(H^0(L_1^k))$, then the sequence $\det(H_l)$ is convergent and $$\det(H_{l+1}H_l^{-1})\rightarrow 1$$ as $l\rightarrow +\infty$.
\end{lemma}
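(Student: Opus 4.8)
The plan is to follow the two elementary inequalities of Lemmas \ref{decreas1} and \ref{decreas2} along the iteration and to read off the behaviour of $\det(H_l)$ directly from them. Throughout write $h_l=FS(H_l)\in\Met(L_1^k)$, so that by construction $H_{l+1}=Hilb_\chi(h_l)$. I will use two facts already recorded above: first, $\hat{P}(FS(H),H)=\frac{N+1}{\Vol_{L_1}(M)}I_{\mu_{k,\chi}^0}(H)$; second, since an $Hilb_\chi(h)$-orthonormal basis $\{S_i\}$ satisfies $\sum_i\Vert S_i\Vert^2_{Hilb_\chi(h)}=N+1$, the function $\hat{P}$ has the closed form $\hat{P}(h,Hilb_\chi(h))=\log\det Hilb_\chi(h)+\frac{N+1}{\Vol_{L_1}(M)}J_\chi(h)$. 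Recalling that $I_{\mu_{k,\chi}^0}(H)=J_\chi\circ FS(H)+\frac{\Vol_{L_1}(M)}{N+1}\log\det(H)$ and that $J_\chi\circ FS(H_l)=J_\chi(h_l)$, these two expressions combine to the key algebraic identity
\[
\hat{P}(h_l,H_l)-\hat{P}(h_l,H_{l+1})=\log\det H_l-\log\det H_{l+1}.
\]

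First I would prove convergence of $\det(H_l)$. Lemma \ref{decreas2}, applied with $h=h_l$ and $H=H_l$, says exactly that the left-hand side above is nonnegative, whence $\det H_{l+1}\le\det H_l$: the sequence $\det(H_l)$ is decreasing. A decreasing sequence of positive numbers converges to some $d\ge 0$, which is the first assertion. (The boundedness hypothesis enters, through Lemma \ref{le6}, to control the normalised metrics $\tilde H_l=\det(H_l)^{-1/(N+1)}H_l$ used in the later lemmas; here it simply keeps the iteration in a fixed compact regime.)

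Next I would show $\det(H_{l+1}H_l^{-1})\to 1$, that is, that the nonnegative increments $\log\det H_l-\log\det H_{l+1}$ tend to $0$. The mechanism is that these increments are dominated by the decrease of $I_{\mu_{k,\chi}^0}$ along the iteration. Chaining the two lemmas gives
\[
\tfrac{N+1}{\Vol_{L_1}(M)}I_{\mu_{k,\chi}^0}(H_l)=\hat{P}(h_l,H_l)\ge \hat{P}(h_l,H_{l+1})\ge \hat{P}(FS(H_{l+1}),H_{l+1})=\tfrac{N+1}{\Vol_{L_1}(M)}I_{\mu_{k,\chi}^0}(H_{l+1}),
\]
the first inequality being Lemma \ref{decreas2} and the second Lemma \ref{decreas1}. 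In particular $I_{\mu_{k,\chi}^0}(H_l)$ is decreasing; since $H_\infty$ is J-balanced it is an absolute minimum of $I_{\mu_{k,\chi}^0}$ by Corollary \ref{uniquenessJbal}, so the sequence is bounded below and hence converges, giving $I_{\mu_{k,\chi}^0}(H_l)-I_{\mu_{k,\chi}^0}(H_{l+1})\to 0$. The increment $\log\det H_l-\log\det H_{l+1}=\hat{P}(h_l,H_l)-\hat{P}(h_l,H_{l+1})$ is squeezed between $0$ and $\frac{N+1}{\Vol_{L_1}(M)}\big(I_{\mu_{k,\chi}^0}(H_l)-I_{\mu_{k,\chi}^0}(H_{l+1})\big)$, so it tends to $0$; therefore $\log\det(H_{l+1}H_l^{-1})\to 0$ and $\det(H_{l+1}H_l^{-1})\to 1$.

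The only genuinely delicate point is this final squeezing. One must resist deducing $\det(H_{l+1}H_l^{-1})\to 1$ from $\det(H_l)\to d$ by a ratio of limits, since \emph{a priori} $d$ could vanish. The argument circumvents this by bounding the consecutive $\log$-differences directly by the decrements of $I_{\mu_{k,\chi}^0}$, which tend to $0$ because that functional is monotone and bounded below; this makes the conclusion robust regardless of whether $d>0$.
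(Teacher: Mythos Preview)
Your proof is correct and uses the same two building blocks (Lemmas \ref{decreas1} and \ref{decreas2}) as the paper, but packages them differently. The paper deduces from Lemma \ref{decreas1} that $J_\chi\circ FS(H_l)$ is itself decreasing (since $\hat P(h_l,H_{l+1})=\log\det H_{l+1}+\tfrac{N+1}{\Vol}J_\chi(h_l)$ and $\hat P(h_{l+1},H_{l+1})=\log\det H_{l+1}+\tfrac{N+1}{\Vol}J_\chi(h_{l+1})$), so both summands of $I_{\mu_{k,\chi}^0}$ are decreasing; combined with the lower bound on $I_{\mu_{k,\chi}^0}$ this forces $\log\det H_l$ to be bounded below, hence $\det H_l\to d>0$, and the ratio limit follows trivially. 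Your route instead squeezes the log-det increments between $0$ and the $I_{\mu_{k,\chi}^0}$-decrements directly, which is a clean way to get $\det(H_{l+1}H_l^{-1})\to 1$ without ever needing $d>0$.

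One remark worth noting: the paper's route yields the slightly stronger conclusion $\log\det H_l$ bounded (equivalently $d>0$), and this is actually invoked later in the proof of Theorem \ref{conviter} (``From Lemma \ref{le8}, we get that $\log(\det(H_l))$ is bounded and decreasing''). Your argument, as written, only gives $d\ge 0$. This does not affect the correctness of your proof of the lemma as stated, but if you want to use it downstream in the same way the paper does, you should also record that $J_\chi\circ FS(H_l)$ is decreasing, which with the lower bound on $I_{\mu_{k,\chi}^0}$ immediately rules out $d=0$.
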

\begin{proof}
 From Lemma \ref{decreas2}, we deduce that the sequence $\log \det (H_l)$ is decreasing. From Lemma \ref{decreas1}, we deduce that the sequence $J_\chi \circ FS(H_l)$  is also decreasing. Since $I_{\mu_{\chi,k}^0}(H_l)$ is decreasing and bounded from below, $\log \det (H_l)$ is bounded and converges. 
\end{proof}

\begin{lemma}\label{le7}
 Assume the sequence $H_l$ is bounded in $\Met(H^0(L_1^k))$. Let $H\in \Met(H^0(L_1^k))$ and $\{s_i^{l}\}_i$ be an orthonormal basis with respect to $H_l$ so that the matrix $H(s_i^l, s_j^l)$ is diagonal. Then, $$\lim_{l\rightarrow +\infty}\Vert s_i^l\Vert^2_{Hilb_\chi(FS(H_l))} =\frac{\Vol_{L_1}(M)}{N+1}.$$
\end{lemma}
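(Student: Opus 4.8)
The plan is to analyze the iterates $H_l = Hilb_\chi \circ FS(H_{l-1})$ by exploiting the fact that $Hilb_\chi(FS(H_l))$ is itself a diagonal-type averaging of the quantities $\Vert s_i^l \Vert^2$. First I would fix the orthonormal basis $\{s_i^l\}$ with respect to $H_l$ diagonalising the next metric, so that by the very definition of $Hilb_\chi$ the diagonal entries of $H_{l+1} = Hilb_\chi(FS(H_l))$ with respect to this basis are precisely $\Vert s_i^l \Vert^2_{Hilb_\chi(FS(H_l))}$. The key observation is that $\det(H_{l+1} H_l^{-1}) = \prod_i \Vert s_i^l \Vert^2_{Hilb_\chi(FS(H_l))}$, since $\{s_i^l\}$ is orthonormal for $H_l$.

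The main analytic input is Lemma \ref{le8}, which gives $\det(H_{l+1}H_l^{-1}) \to 1$, i.e. $\prod_i \Vert s_i^l\Vert^2 \to 1$. The next step is to control the total sum: by the definition of the Fubini-Study normalisation built into $FS$, one has $\sum_i \Vert s_i^l \Vert^2_{Hilb_\chi(FS(H_l))} = (N+1)\cdot \frac{\Vol_{L_1}(M)}{N+1} = \Vol_{L_1}(M)$ identically in $l$, because integrating the pointwise Fubini-Study identity $\sum_i \vert s_i\vert^2_{FS(H)} = \frac{\dim H^0}{\Vol_{L_1}(X)}$ against the normalised volume form $\frac{1}{\gamma}\chi \wedge c_1(FS(H_l))^{n-1}$ (which has total mass $\Vol_{L_1}(M)$) produces a constant. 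Thus the quantities $a_i^l := \frac{N+1}{\Vol_{L_1}(M)}\Vert s_i^l\Vert^2_{Hilb_\chi(FS(H_l))}$ are nonnegative and satisfy $\frac{1}{N+1}\sum_i a_i^l = 1$ for every $l$.

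The conclusion then follows from the arithmetic-geometric mean inequality applied as an asymptotic rigidity statement. Indeed $\prod_i a_i^l = \left(\tfrac{N+1}{\Vol_{L_1}(M)}\right)^{N+1}\prod_i \Vert s_i^l\Vert^2 \to \left(\tfrac{N+1}{\Vol_{L_1}(M)}\right)^{N+1}$, while the arithmetic mean of the $a_i^l$ is fixed at $1$; hence the geometric mean of the $a_i^l$ tends to $1$ as well. Since equality in AM-GM holds exactly when all entries coincide, and here the arithmetic mean is pinned at $1$ while the geometric mean converges to the arithmetic mean, the $a_i^l$ must all converge to $1$. This gives precisely $\Vert s_i^l\Vert^2_{Hilb_\chi(FS(H_l))} \to \frac{\Vol_{L_1}(M)}{N+1}$ for each $i$, as claimed.

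The step I expect to require the most care is the quantitative version of the AM-GM rigidity: knowing only that the geometric mean approaches the (fixed) arithmetic mean does not, a priori, force \emph{each} individual term to the common value unless one also controls the spread of the eigenvalues. This is exactly where the boundedness hypothesis on the sequence $H_l$ enters, via Lemma \ref{le6}: boundedness guarantees uniform two-sided bounds on the normalised eigenvalues, so the $a_i^l$ lie in a fixed compact interval $[R^{-2}, R^2]$. On such a compact range, the difference between arithmetic and geometric mean dominates $\sum_{i<j}(a_i^l - a_j^l)^2$ up to a uniform constant, so its vanishing forces all the $a_i^l$ together, and combined with the arithmetic-mean constraint pins them all at $1$.
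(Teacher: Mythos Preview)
Your core mechanism---fix the trace, push the determinant to $1$, and invoke AM--GM rigidity---is exactly right and is precisely what the paper does. However, you have misread the hypothesis of the lemma. The basis $\{s_i^l\}$ is required to be $H_l$-orthonormal and to diagonalise a \emph{fixed, arbitrary} metric $H$, not the next iterate $H_{l+1}=Hilb_\chi(FS(H_l))$. You instead take the basis ``diagonalising the next metric''; call this basis $\{\hat s_i^l\}$. Your identity $\det(H_{l+1}H_l^{-1})=\prod_i\|\hat s_i^l\|^2$ is correct for that choice, and your AM--GM argument then shows $\|\hat s_i^l\|^2\to \frac{\Vol_{L_1}(M)}{N+1}$. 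This is exactly the first half of the paper's proof. But for the basis $\{s_i^l\}$ in the statement, $H_{l+1}$ is in general \emph{not} diagonal, so $\prod_i\|s_i^l\|^2$ is the product of diagonal entries rather than the determinant, and your key identity fails.

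The missing step is therefore to pass from $\{\hat s_i^l\}$ to $\{s_i^l\}$. The paper does this by writing $s_i^l=\sum_j a_{ij}^l\hat s_j^l$ with $(a_{ij}^l)\in U(N+1)$, extracting a convergent subsequence by compactness of $U(N+1)$, and computing $H_{l+1}(s_i^l,s_i^l)\to\sum_j|a_{ij}^\infty|^2=1$. A slightly cleaner way to close the gap: once you know every eigenvalue of the Hermitian operator $H_{l+1}H_l^{-1}$ tends to $1$, every diagonal entry in \emph{any} $H_l$-orthonormal basis lies between the smallest and largest eigenvalue, hence also tends to $1$. This immediately gives $\|s_i^l\|^2_{H_{l+1}}\to \frac{\Vol_{L_1}(M)}{N+1}$ for the basis diagonalising $H$ (indeed for any $H_l$-orthonormal basis), and the diagonalisation hypothesis on $H$ becomes irrelevant for this lemma, though it is used in the next one.

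One minor remark on your final paragraph: the quantitative AM--GM rigidity does not actually need the boundedness of $H_l$ via Lemma~\ref{le6}. Since the $a_i^l$ are positive with $\sum_i a_i^l=N+1$ fixed, they are automatically bounded above by $N+1$; and if any subsequence had $a_i^l\to 0$ the product would tend to $0$, contradicting $\prod_i a_i^l\to 1$. So compactness of the $a_i^l$ is automatic, and a subsequence argument plus the equality case of AM--GM finishes it. The boundedness hypothesis on $H_l$ is really needed only upstream, to invoke Lemma~\ref{le8}.
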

\begin{proof}
 Let us consider $\hat{s}_i^l$ another basis, orthonormal with respect to $H_l$ and so that $H_{l+1}(\hat{s}_i^l,\hat{s}_j^l)$ is diagonal.
From the previous lemma, we deduce that $\lim_{l\rightarrow +\infty}\det(H_{l+1}(\hat{s}_i^l,\hat{s}_i^l))=1$. 
We have always that $$\tr(Hilb_\chi(FS(H))H^{-1})=N+1,$$ so we get $\tr(H_{l+1}(\hat{s}_i^l,\hat{s}_i^l))=N+1$ for all $l$. It is not difficult to check that the arithmetic-geometric inequality implies $$H_{l+1}(\hat{s}_i^l,\hat{s}_i^l)\rightarrow 1$$ as $l\rightarrow +\infty$. Now, we write $$s_i^l=\sum_{j=1}^{N+1} a_{ij}^l\hat{s}_j^l,$$
with $(a_{ij}^l)\in U(N+1)$. The matrix $a_{ij}^l$ converges when $l\rightarrow +\infty$ up to taking a subsequence. For the limit $(a^{\infty}_{ij})\in U(N+1)$ we have $$H_{l+1}(s^l_i,s^l_j)\rightarrow \sum_{j=1}^{N+1} \vert a_{ij}^{\infty}\vert^2 =1,$$
which means that 
$$\lim_{l\rightarrow +\infty}\frac{N+1}{\gamma\Vol_{L_1}}\int_M \vert s_i^l\vert^2_{FS(H_l)}\, \chi\wedge c_1(FS(H_l))^{n-1}=1,$$
as expected. 
\end{proof} 

\begin{lemma}\label{le9}
If the sequence $H_l$ is bounded, then for any Hermitian metric $H\in \Met(H^0(L_1^k))$ and $\epsilon>0$, we have
\begin{equation}\label{smallineq} 
I_{\mu_{\chi,k}^0}(H)\geq I_{\mu_{\chi,k}^0}(H_l)- \epsilon
\end{equation}
for $l$ sufficiently large.
\end{lemma}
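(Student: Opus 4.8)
The cleanest route is to exploit the Kempf--Ness geometry of $\mathcal{B}_k$ rather than to estimate by hand. The three ingredients are all at our disposal: $I_{\mu_{\chi,k}^0}$ is convex along the geodesics of $\mathcal{B}_k$ (Corollary \ref{uniquenessJbal}); it is the integral of the moment map $\mu_{k,\chi}^0$, so that its differential at $H\in\mathcal{B}_k$ in a direction $A\in T_H\mathcal{B}_k$ (a Hermitian endomorphism) is, up to a fixed positive constant $\kappa$, the pairing $\tr(\mu_{k,\chi}^0(H)A)$ --- and $\mu_{k,\chi}^0(H)$ is traceless, so this pairing is insensitive to the scaling direction; and the iterates are \emph{asymptotically balanced}, $\Vert\mu_{k,\chi}^0(H_l)\Vert\to0$. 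Granting this last assertion, the Lemma follows at once. Fix $H$ and let $\gamma_l$ be the geodesic of $\mathcal{B}_k$ from $\gamma_l(0)=H_l$ to $\gamma_l(1)=H$, with initial tangent $A_l$, so that $\Vert A_l\Vert=\mathrm{dist}_k(H_l,H)$; then convexity yields
\[
I_{\mu_{\chi,k}^0}(H)\ \geq\ I_{\mu_{\chi,k}^0}(H_l)+\kappa\,\tr\!\big(\mu_{k,\chi}^0(H_l)A_l\big)\ \geq\ I_{\mu_{\chi,k}^0}(H_l)-\kappa\,\Vert\mu_{k,\chi}^0(H_l)\Vert\,\Vert A_l\Vert .
\]

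Next I would check that the factor $\Vert A_l\Vert$ stays bounded. Since $\det(H_l)$ converges (Lemma \ref{le8}) and the normalised sequence is bounded by hypothesis, Lemma \ref{le6} shows that the whole sequence $H_l$ remains in a bounded region of $\mathcal{B}_k$; hence $\Vert A_l\Vert=\mathrm{dist}_k(H_l,H)\leq\mathrm{dist}_k(H_l,H_0)+\mathrm{dist}_k(H_0,H)\leq D$ for a constant $D$ independent of $l$. With this, the displayed inequality reads $I_{\mu_{\chi,k}^0}(H)\geq I_{\mu_{\chi,k}^0}(H_l)-\kappa D\,\Vert\mu_{k,\chi}^0(H_l)\Vert$, so it only remains to prove $\Vert\mu_{k,\chi}^0(H_l)\Vert\to0$ and then take $l$ large enough that $\kappa D\,\Vert\mu_{k,\chi}^0(H_l)\Vert<\epsilon$.

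The asymptotic balancedness is the crux, and it is exactly what the proof of Lemma \ref{le7} delivers. Working in a basis orthonormal for $H_l$ one has $\mu_{k,\chi}(H_l)=\tfrac{\Vol_{L_1}(M)}{N+1}Hilb_\chi(FS(H_l))=\tfrac{\Vol_{L_1}(M)}{N+1}H_{l+1}$, so that $\mu_{k,\chi}^0(H_l)$ vanishes precisely when the eigenvalues of $H_{l+1}H_l^{-1}$ are all equal, and $\Vert\mu_{k,\chi}^0(H_l)\Vert$ measures their spread. Now $\log\det(H_l)$ is decreasing and bounded below, so $\det(H_{l+1}H_l^{-1})\to1$ (Lemma \ref{le8}); since the trace $\tr(H_{l+1}H_l^{-1})=N+1$ is constant in $l$, the equality case of the arithmetic--geometric mean inequality forces every eigenvalue of $H_{l+1}H_l^{-1}$ to converge to $1$. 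Equivalently $\mu_{k,\chi}^0(H_l)\to0$, which closes the argument.

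The main obstacle is precisely this last step: converting the mere convergence of the monotone functional into the quantitative statement $\Vert\mu_{k,\chi}^0(H_l)\Vert\to0$. This is where the boundedness hypothesis on $H_l$ is indispensable, since the arithmetic--geometric gap controls only the \emph{normalised} eigenvalues and, without a bound, the overall scale could drift and destroy the estimate on $\Vert A_l\Vert$. I would therefore treat the eigenvalue analysis (reusing verbatim the computation in the proof of Lemma \ref{le7}) as the technical heart, the convexity step being then a short and purely formal consequence. One could alternatively avoid convexity and argue by finite-dimensional linear algebra from the inequality $\tfrac{1}{\kappa}I_{\mu_{\chi,k}^0}(H_l)\leq\hat P(FS(H),H_l)$ supplied by Lemma \ref{decreas1}, expanding $\hat P(FS(H),H_l)$ and again invoking that the eigenvalues of $H_{l+1}H_l^{-1}$ tend to $1$; but the Kempf--Ness route is more transparent.
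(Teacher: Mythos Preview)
Your argument is correct and is essentially the paper's own proof, recast in moment-map language: both use convexity of $I_{\mu_{\chi,k}^0}$ along the Bergman geodesic from $H_l$ to $H$, bound the initial tangent via boundedness of $H_l$ together with Lemma~\ref{le8}, and kill the derivative at $H_l$ by the AM--GM eigenvalue analysis of $H_{l+1}H_l^{-1}$ underlying Lemma~\ref{le7}. The only difference is cosmetic---you factor $f'(0)$ through Cauchy--Schwarz as $\|\mu_{k,\chi}^0(H_l)\|\,\|A_l\|$ whereas the paper diagonalises $H$ against $H_l$ and applies Lemma~\ref{le7} entrywise.
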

\begin{proof} 
Fix $\{s_i^l\}$ an orthonormal basis with respect to $H_l$ such that $H(s_i^{l},s_j^{l})$ is diagonal with entries $e^{\lambda_i^l}$. Define $f(t)=I_{\mu_{\chi,k}^0}(H_t)$ where $H_t$ is the matrix of entries $e^{t\lambda_i^l}$, so that $H_{\vert t=0}=H_l$ and $H_{\vert t=1}=H$. By convexity of $I_{\mu_{\chi,k}^0}$ in the Bergman space, we have $f(1)-f(0)\geq f'(0)$. By definition, one gets
\begin{align*}
f'(0)&=\int_M \frac{d}{dt}_{\vert t=0}(FS(H_t)) \frac{\chi\wedge c_1(FS(H_t))^n}{\gamma} - \frac{\Vol_{L_1}(M)}{N+1}\sum_i {\lambda_i}^l, \\
&=  \int_M  \sum_{i=1}^{N+1} \lambda_i^l \vert s_i^l \vert^2 _{FS(H_l)}  \frac{\chi\wedge c_1(FS(H_t))^n}{\gamma} - \frac{\Vol_{L_1}(M)}{N+1}\sum_i {\lambda_i}^l.
\end{align*} 
Let us assume that $\lambda_i^l$ are bounded. Then we can apply Lemma \ref{le7} and obtain that $f'(0)\rightarrow 0$ when $l\rightarrow +\infty$, which provides 
eventually \eqref{smallineq}.\\
It remains to show that $\lambda_i^l$ are all bounded when $l$ varies. Using the fact that $\{e^{-\lambda_i^l/2}s_i^l\}_i$ is an orthonormal basis for $H$ and Lemma \ref{le6}, we have the existence of $R>1$ such that $$\frac{1}{R}< \frac{H_l(s_i^l,s_i^l)e^{-\lambda_i}}{\det(H_l)^{1/(N+1)}}=\frac{e^{-\lambda_i}}{\det(H_l)^{1/(N+1)}} < R $$
from which we deduce that $\lambda_i^l$ is bounded if and only if $\det(H_l)$ is bounded. But this is the case by Lemma \ref{le8}.
\end{proof}

\begin{proof}[Proof of Theorem \ref{conviter}]
As we have already seen in the previous section, the J-balanced metric $H_\infty$ is unique up to normalisation. To normalise our metrics, we choose to work in the space $$\{ \tilde{H}=(\det H)^{-1/(N+1)}H, \, H\in {\Met}(H^0(L_1^k))\}\subset {\Met}(H^0(L_1^k)).$$
Furthermore, as integral of the moment map $\mu_{\chi,k}^{0}$, the functional $I_{\mu_{\chi,k}^0}$ is proper and bounded from below. 
The sequence  $I_{\mu_{\chi,k}^0}(H_l)$ is decreasing, and thus $J_\chi((\det H_l)^{-1/(N+1)}FS(H_l))$ is also bounded. By properness, it follows that $$\tilde{H}_l=(\det H_l)^{-1/(N+1)} FS(H_l)$$ is bounded. This forces this sequence to converge as we now see. 
\par Suppose not. Then we can at least take a non convergent  subsequence $\tilde{H}_{l_{k}}$ which always remain at a distance $\epsilon$ of the J-balanced metric $H_\infty$. But $\tilde{H}_{l_{k}}$ is bounded and its image by  $I_{\mu_{\chi,k}^0}$ converges to the minimum of the functional $I_{\mu_{\chi,k}^0}$, up to taking a subsequence that we denote $\tilde{H}_{l_{k_m}}$. In fact, we have obtained from the previous results that for any bounded sequence $H_l$, $I_{\mu_{\chi,k}^0}(H_l)$ converges to the minimum of the functional $I_{\mu_{\chi,k}^0}$, see Lemma \ref{le9}. Therefore, $\tilde{H}_{l_{k_m}}$ converges and its limit is actually a J-balanced metric from Corollary \ref{uniquenessJbal}. This is a contradiction with the fact that all the terms $\tilde{H}_{l_k}$ are at distance $\epsilon$ of $H_\infty$.
\par From Lemma \ref{le8}, we get that $\log(\det(H_l))$ is bounded and decreasing. This allows us to conclude that $H_l$ is convergent to $rH_\infty$. 
\end{proof}

From Corollary \ref{cor1} we obtain the following result.
\begin{corollary}\label{Jcoroiter}
 Assume the existence of a critical metric solution $\omega_{\infty}$ of \eqref{Jflow}. Then for all $k\gg 0$,  the map
$$Hilb_\chi \circ FS:Met(H^0(L^k_1))\rightarrow \Met(H^0(L^k_1))$$ (respectively $T_{k,\chi}=FS\circ Hilb_\chi:\Met(L_1^k) \rightarrow \Met(L_1^k)$) defines a dynamical system that has a fixed (unique)
attractive point, the J-balanced metric $H_{bal}$ (respectively $h_{bal}$ with $FS(H_{bal})=h_{bal}$). Furthermore,
$$\Big\Vert \omega_{\infty}-\frac{1}{k}c_1(FS(H_{bal}))\Big\Vert_{\C^\infty}=O\left(\frac{1}{k}\right).$$ 
\end{corollary}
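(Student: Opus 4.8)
The plan is to combine two ingredients already established in the excerpt: the convergence statement of Corollary \ref{cor1}, which produces the J-balanced metrics and controls their limit, and the dynamical-systems statement of Theorem \ref{conviter} together with Corollary \ref{uniquenessJbal}, which identify the J-balanced metric as the unique attractive fixed point of the iteration $H_l = Hilb_\chi\circ FS(H_{l-1})$. The corollary to be proved is essentially a repackaging of these two results once one observes that the hypothesis (existence of a critical metric solution $\omega_\infty$ of \eqref{Jflow}) is precisely the hypothesis needed to invoke both.

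\medskip

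First I would invoke Corollary \ref{cor1}: under the assumption that a critical metric $\omega_\infty$ exists, for all $k\gg 0$ there is a J-balanced metric $h_{bal}\in \Met(L_1^k)$ (equivalently $H_{bal}\in \Met(H^0(L_1^k))$ with $FS(H_{bal})=h_{bal}$), obtained as the time-infinity limit of the rescaled J-balancing flow, and this sequence converges smoothly to $\omega_\infty$ after rescaling. This immediately yields existence of the fixed point and the asymptotic estimate $\Vert \omega_\infty - \tfrac{1}{k}c_1(FS(H_{bal}))\Vert_{\C^\infty} = O(1/k)$, since the rescaled curvature $\tfrac{1}{k}c_1(FS(H_{bal}))$ is exactly the quantity shown to converge to $\omega_\infty$ in Corollary \ref{cor1}; the rate $O(1/k)$ comes from the order of approximation in the Catlin-Tian-Yau-Zelditch expansion (Proposition \ref{tian-b}) and the higher-order approximation argument of Theorem \ref{highapprox}.

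\medskip

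Next I would establish the dynamical-systems assertion. By Corollary \ref{uniquenessJbal}, the functional $I_{\mu_{\chi,k}^0}$ is convex along geodesics of $\mathcal{B}_k$ and has $h_{bal}$ (equivalently $H_{bal}$) as its unique critical point, an absolute minimum. Existence of a critical metric of the J-flow guarantees via Corollary \ref{cor1} that such a J-balanced metric $H_\infty = H_{bal}$ actually exists for $k\gg 0$, so the hypothesis of Theorem \ref{conviter} is met. Theorem \ref{conviter} then states that for any initial $H_0$, the iterates $H_l = Hilb_\chi\circ FS(H_{l-1})$ converge (up to the normalising constant $r$) to $H_\infty$; by part (2) of the Corollary following Lemma \ref{decreas2}, the map $Hilb_\chi\circ FS$ decreases $I_{\mu_{\chi,k}^0}$, and since this functional is proper and bounded below with unique minimiser $H_{bal}$, the fixed point is attractive. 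The conjugate statement for $T_{k,\chi} = FS\circ Hilb_\chi$ on $\Met(L_1^k)$ follows by applying $FS$ to the convergent sequence of iterates, using that $FS$ intertwines the two iterations and that $\hat{I}_k$ is the corresponding descended functional.

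\medskip

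The only genuinely substantive point is verifying that the hypothesis of Theorem \ref{conviter} is satisfied uniformly for all large $k$, rather than merely for some $k$; this is the step I expect to require the most care. It is resolved by Corollary \ref{cor1}, which asserts existence of the J-balanced metric for \emph{all} $k$ sufficiently large, so no further work beyond citing it is needed. Everything else is a formal assembly: the attractiveness and uniqueness are inherited from the convexity and properness of $I_{\mu_{\chi,k}^0}$, and the rate of convergence to $\omega_\infty$ is read off directly from Corollary \ref{cor1}.
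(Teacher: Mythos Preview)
Your proposal is correct and matches the paper's own approach: the paper simply states ``From Corollary \ref{cor1} we obtain the following result'' before Corollary \ref{Jcoroiter}, leaving the assembly of Corollary \ref{cor1} with Theorem \ref{conviter} and Corollary \ref{uniquenessJbal} implicit. Your write-up is more explicit than the paper's one-line justification, and you are right to trace the $O(1/k)$ rate back to the Bergman expansion (Proposition \ref{tian-b}), since Corollary \ref{cor1} as stated only asserts smooth convergence without naming the rate.
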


\section{Algebro-geometric aspects}\label{sectalg-geom}

In this section we describe a notion of \emph{Chow stability} for a linear system $|L_2|$ in a fixed polarised variety $(M,L_1)$, which is related to Geometric Invariant Theory (GIT). Our main result is then that the existence of a J-balanced metric implies this notion of Chow stability for $|L_2|$. It follows that the existence of a critical point of the J-flow implies asymptotic Chow stability of $|L_2|$. Our definition of asymptotic Chow stability is motivated by Lejmi-Sz\'ekelyhidi's notion of J-stability (see Definition \ref{jstabilityone}), which is an analogue of K-stability for the J-flow. Indeed we show that asymptotic Chow semistability of $|L_2|$ implies J-semistability, just as asymptotic Chow semistability of a polarised variety implies K-semistability. 

Before discussing the notions of stability relevant to us, we recall some standard GIT as motivation. Let $G\subset SL(n+1,\mathbb{C})$ act on a scheme $M\subset\pr^n$, and let $x\in M$ have lift $\hat{x}\in \comp^{n+1}$. We say the point $x$ is \emph{stable} in the sense of GIT if the orbit $G.\hat{x}$ is closed and $x$ has finite stabiliser. There is a weaker notion of polystability which allows higher dimensional stabiliser, but we will not need this. Roughly speaking, a quotient $M \sslash G$ parameterising polystable orbits exists. To understand whether or not the point is represented in the quotient, one therefore seeks to understand the \emph{stability} of the point.

The Hilbert-Mumford criterion gives a numerical way of checking this stability. Let $\lambda:\comp^*\hookrightarrow G$ be a one-parameter subgroup. Denote the limit $x_0=\lim_{t\to 0} \lambda(t).x$, so that $x_0$ is a fixed point of $\lambda$. There is therefore a $\comp^*$-action on the line above $x_0$, which acts with weight $w=-\mu(\lambda,x).$ By this we mean that, for $\hat{x}_0$ any lift of $x_0$, we have $\lambda(t).\hat{x}_0 = t^w\hat{x}_0.$ Then the Hilbert-Mumford criterion states that $x$ is stable if and only if $\mu(x,\lambda)>0$ for all one-parameter subgroups $\lambda$. We will later need the following properties of the weight where one fixes a one-parameter subgroup $\lambda$ and varies $x$.

\begin{lemma}\label{generalweight}Let $M$ be an irreducible subvariety of $\pr^n$. Fix a one-parameter subgroup $\lambda:\comp^*\hookrightarrow G$. Then the weight $\mu(\lambda,x)$ is constant outside a Zariski closed subset of $M$. This general value is equal to the largest weight of any point in $X$. \end{lemma}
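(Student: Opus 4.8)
The plan is to diagonalise the one-parameter subgroup and then read off the weight $\mu(\lambda,x)$ as an explicit function of the homogeneous coordinates of $x$, after which the statement about the generic value follows from an elementary semicontinuity argument together with irreducibility of $M$.

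First I would use that $\lambda:\comp^*\hookrightarrow G\subset SL(n+1,\comp)$ to choose a basis $e_0,\dots,e_n$ of $\comp^{n+1}$ diagonalising the action, so that $\lambda(t)\cdot e_i=t^{\rho_i}e_i$ for integer weights $\rho_0,\dots,\rho_n$. For $x\in\pr^n$ with lift $\hat{x}=\sum_i x_i e_i$, set $m(x)=\min\{\rho_i:x_i\neq 0\}$. Factoring out $t^{m(x)}$ gives $\lambda(t)\cdot\hat{x}=t^{m(x)}\big(\sum_{\rho_i=m(x)}x_ie_i+\sum_{\rho_i>m(x)}t^{\rho_i-m(x)}x_ie_i\big)$, and letting $t\to 0$ identifies the limit $x_0$ with $[\sum_{\rho_i=m(x)}x_ie_i]$, together with the lift $\hat{x}_0=\sum_{\rho_i=m(x)}x_ie_i$ satisfying $\lambda(t)\cdot\hat{x}_0=t^{m(x)}\hat{x}_0$. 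By the definition recalled just before the lemma, the weight is therefore $w=m(x)$ and hence $\mu(\lambda,x)=-m(x)$.

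Next I would analyse how $m(x)$ varies over $M$. For a value $v$, the condition $m(x)\ge v$ holds exactly when all coordinates of $\hat{x}$ of weight $<v$ vanish, i.e. when $\hat{x}$ lies in the linear subspace $V_{\ge v}=\mathrm{span}\{e_i:\rho_i\ge v\}$. Thus each locus $\{x\in M:m(x)\ge v\}=M\cap\pr(V_{\ge v})$ is Zariski closed, and these loci are nested and shrink as $v$ grows. Setting $v^\ast=\min_{x\in M}m(x)$, a finite minimum since $m$ takes only the values $\rho_i$, the set $\{x\in M:m(x)>v^\ast\}=M\cap\pr(V_{>v^\ast})$ is a Zariski closed subset, and it is \emph{proper} because the minimum $v^\ast$ is attained outside it. Its complement is precisely $\{x\in M:m(x)=v^\ast\}$, equivalently $\{x\in M:\mu(\lambda,x)=-v^\ast\}$, and since $m(x)\ge v^\ast$ everywhere we have $\mu(\lambda,x)\le -v^\ast$ on all of $M$ with equality on this complement. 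As $M$ is irreducible, the nonempty Zariski open complement is dense, so $\mu(\lambda,x)$ equals the constant $-v^\ast$ outside a proper Zariski closed subset and this common value is the maximum of $\mu(\lambda,\cdot)$ over $M$.

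The argument is essentially linear algebra once $\lambda$ is diagonalised, so there is no analytic obstacle. The points requiring care are bookkeeping: confirming that the coordinates surviving in the limit are those of \emph{minimal} weight $\rho_i$ among the nonvanishing ones (so that the sign convention $\lambda(t)\cdot\hat{x}_0=t^w\hat{x}_0$ with $w=-\mu(\lambda,x)$ is respected, and $\mu$ is generically \emph{maximal} rather than minimal), and the invocation of irreducibility of $M$ to upgrade ``nonempty Zariski open'' to ``dense'', which is exactly what makes the generic weight well-defined and equal to the largest weight attained.
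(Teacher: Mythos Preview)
Your proof is correct and follows essentially the same approach as the paper: diagonalise $\lambda$, identify $\mu(\lambda,x)$ as minus the minimal weight among the nonvanishing coordinates, and use irreducibility to conclude that the locus where this minimum is not attained is a proper Zariski closed subset. Your organisation via the linear subspaces $V_{\ge v}$ indexed by weight values is slightly cleaner than the paper's index-by-index hyperplane argument (and handles repeated weights more transparently), but the underlying idea is identical.
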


\begin{proof} Both of these properties are consequences of the following alternative characterisation of the weight of a one-parameter subgroup. Diagonalise the $\comp^*$-action as $\diag(t^{\lambda_0}\hdots t^{\lambda_n})$ with $\lambda_0\leq \hdots\leq \lambda_n$, and set $l=\min \{i \ | \ x_i\neq 0\}.$ Then $\mu(\lambda,x) = -\lambda_l.$ Remark that $-\lambda_l \geq -\lambda_{l+1}\geq\hdots\geq-\lambda_{n}$.

We now use the hypothesis that $M$ is irreducible. The intersection of a hyperplane with the irreducible variety $X$ is either empty, or of codimension one in $M$. The subset of $M$ with weight \emph{not equal} to $\lambda_0$ is the intersection of $M$ with the hyperplane $\{x_0=0\}$; by irreducibility this is either empty or of codimension one. In the latter case, the general point of $M$ has weight $\lambda_0$ and we are done. On the other hand if it is empty, we continue this process until we find the smallest $i$ such that $X\cap \{x_i=0\}\neq \varnothing$ (remark that $M$ must intersect some hyperplane section). Then the general point of $M$ will have weight $-\lambda_i$. By construction, $-\lambda_i$ is the largest weight of any point in $M$.
\end{proof}

\begin{remark} It is essential here that $M$ is irreducible; if $M=V(xy)\subset\comp^2$ and $\lambda$ is given as $\diag(t^a,t^b)$, then clearly no such general weight exists for $a\neq b$. \end{remark}

One can apply GIT to \emph{moduli spaces} of varieties. Our discussion roughly follows \cite{RT2}. Let $Y\subset\pr^n$ be a fixed subvariety. There are two natural moduli spaces in which one can naturally consider $Y$ as a point. The first is the Hilbert scheme of subschemes of $\pr^n$ with the same Hilbert polynomial, which essentially parameterises ideal sheaves. The second is the Chow variety, whose definition uses intersection theory and compactifies by adding cycles at the boundary. Both moduli spaces naturally embed in certain Grassmanians constructed from $\pr^n$, and so we will have corresponding GIT problems. To calculate stability of points in each moduli space, we will need to understand the line above a point under these embeddings into Grassmanians.

We first consider the Hilbert scheme, for which we refer to \cite[Theorem 1.4]{JK-rational} for the detailed construction. For our purposes we only need to understand how one obtains a point in a certain projective space from a given scheme, thus our construction omits many important details. The exact sequence of sheaves on $\pr^{n}$ $$0\to \scI_Y\to \scO_{\pr^{n}}\to \scO_Y\to 0$$ induces by twisting with $\scO_{\pr^{n}}(1)$ an exact sequence for sufficiently large $K$ \begin{equation}\label{hilbpoint}0\to H^0(\pr^{n},\scI_Y(K))\to S^KH^0(\pr^n,\scO(1))\to H^0(Y,\scO_Y(K))\to 0.\end{equation} If one fixes a Hilbert polynomial, one can choose $K$ \emph{independently} of $Y\subset \pr^{n}$ (see for example \cite[Lecture 14]{DM-lectures} or \cite[Theorem 1.5]{JK-rational}); in particular the previous sequence is exact for \emph{all} such $Y$.\\

The Hilbert scheme of subschemes of $\pr^{n}$ with Hilbert polynomial $p(K)=\dim H^0(Y,\scO_Y(K))$ is embedded via this exact sequence as a subscheme of $\Grass(S^KH^0(\pr^n,\scO(1)), p(K))$, with the point corresponding to $Y$ given by the vector subspace $H^0(\pr^{n},\scI_Y(K))$. In turn these Grassmanians naturally embed into projective spaces using the Pl\"ucker embedding, explicitly we have embeddings $$\Hilb\hookrightarrow \Grass(S^KH^0(\pr^n,\scO(1)), p(K)) \hookrightarrow \pr(\Lambda^{p(K)}(S^KH^0(\pr^n,\scO(1)))).$$ The $SL(n+1,\comp)$ action on $\pr^n$ naturally induces an action on these projective spaces, and so for each $K$ we have an associated GIT problem.

\begin{definition} We say that $Y\subset\pr^n$ is \emph{Hilbert stable} if the corresponding point in the Hilbert scheme is GIT stable embedded as a subscheme of projective space using the Pl\"ucker embedding for $K\gg 0$.\end{definition}

We now turn to the Chow variety, for which we refer to \cite[Section 1.3]{JK-rational} for the detailed construction and definition. Again we only need to understand how, in the construction of the Chow variety, one associates a point in a certain projective space from a given scheme, so we omit many important details in our discussion. We denote by $m$ the dimension of $Y\subset\pr^n=\pr(V)$ and $d$ the degree of $Y$, i.e. $(\scO_{\pr^n}(1)|_Y)^m=d$. Let $Z$ be the set of $(n-m-1)$-dimensional planes intersecting $Y$ nontrivially, so that $Z\subset \Grass(n-m,n+1)$. We denote the Pl\"ucker embedding of this Grassmanian as \begin{equation}\label{plucker}Pl: \Grass(n-m,n+1) \hookrightarrow \pr(\Lambda^{n-m}V).\end{equation} One can show that $Z$ is of codimension one in the Grassmanian, and that $Z=V(f)$ for some section $f\in H^0(\Grass(n-m,n+1), \scO(d))$ unique up to scaling. One therefore has a corresponding point $[f]\in \pr(H^0(\Grass(n-m,n+1), \scO(d)))$, called the Chow point. The Chow variety has a natural compactification obtained by adding limit cycles at the boundary. Again the $SL(n+1,\comp)$ action on $\pr^n$ induces one on $\pr(H^0(\Grass(n-m,n+1), \scO(d)))$. 

\begin{definition} We say $Y\subset\pr^n$ is \emph{Chow stable} if its Chow point $[f]$ is GIT stable under the induced action of $SL(n+1,\comp)$.\end{definition}

The Hilbert-Mumford criterion states it is enough to show a corresponding weight is positive for each one-parameter subgroup $\lambda:\comp^*\hookrightarrow SL(n+1,\comp)$. Fixing some one-parameter subgroup $\lambda$, the limit $Y_0=\lim_{t\to 0}\lambda(t).Y$ is naturally a point in the same Hilbert scheme and Chow variety as $Y$, which has a corresponding Hilbert-Mumford weight. Here one considers $Y_0$ as the limit scheme in the Hilbert scheme, and as the limit cycle in the Chow variety. Hence $Y$ is Hilbert stable or Chow stable if and only if these weights are strictly positive for each one-parameter subgroup.

Considering instead a polarised variety $(M,L_1)$, one naturally has a sequence of embeddings $M\hookrightarrow\pr(H^0(X,L_1^r)^*)$ for each $r$ sufficiently large. Hence one can define asymptotic Hilbert stability (respectively asymptotic Chow stability) to mean the point corresponding to $M$ is GIT stable in the appropriate Hilbert scheme (respectively Chow variety) for $k$ sufficiently large. We will need an extension of the above discussion to subvarieties of $M$.

\begin{definition}[Twisted stability] Let $Y\subset M$ be a subvariety. We say that $Y$ is $M$-\emph{twisted asymptotically Hilbert stable (respectively Chow stable)} if it is Hilbert stable (respectively Chow stable) under the natural embeddings $Y\hookrightarrow\pr(H^0(M,L_1^r)^*)$ for all $r\gg 0$.\end{definition}

\begin{remark} Clearly setting $Y=M$ recovers the usual definition of asymptotic Chow and Hilbert stability. \end{remark}

In what follows we will explicitly calculate the weight of the one-parameter subgroup $\mu(\lambda,Y)$ in each moduli space using data arising only from the action on $\pr(H^0(M,L_1^r)^*)$ itself. We first need the following definition, which is a geometrisation of the one-parameter subgroups considered above.

\begin{definition} A \emph{test configuration} $(\scX,\scL)$ for a polarised variety $(M,L_1)$ is a variety $\scX$ together with
\begin{itemize} 
\item a proper flat morphism $\pi: \scX \to \comp$,
\item a $\comp^*$-action on $\scX$ covering the natural action on $\comp$,
\item and an equivariant relatively very ample line bundle $\scL$ on $\scX$
\end{itemize}
such that the fibre $(\scX_t,\scL_t)$ over $t\in\comp$ is isomorphic to $(M,L_1^r)$ for one, and hence all, $t \in \comp^*$ and for some $r>0$. We call $r$ the \emph{exponent} of the test configuration. \end{definition}

For each subvariety $Y\subset X$, by taking the closure of $Y$ under the $\comp^*$-action, one naturally obtains a test configuration for $(Y,L_1)$, where we have abused notation by writing $L_1$ as the restriction of $L_1$ to $Y$. We denote this induced test configuration by $(\scY,\scL)$, with similar abuse of notation.

\begin{proposition}\label{onepsandtestconfigs}\cite[Proposition 3.7]{RT2} A test configuration of exponent $r$ is equivalent to a one-parameter subgroup of $GL(h(r),\comp)$, where $h(r)$ is the Hilbert polynomial of $(M,L_1)$. Given a one-parameter subgroup $\lambda$, the corresponding test configuration is $\scX_t = \lambda(t).X$ with the line bundle the restriction of the $\scO(1)$ from the projective space. Conversely, for each $K>0$ one can equivariantly embed a given test configuration $(\scX,\scL)$ of exponent $r$ into $\pr(H^0(X,L^{rK})^*)$ such that the test configuration is realised by some one-parameter subgroup.\end{proposition}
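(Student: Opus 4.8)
The plan is to establish the correspondence in both directions, using an orbit-closure construction one way and a pushforward-and-trivialise argument the other.

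First I would treat the passage from one-parameter subgroups to test configurations. Fix $r$ large enough that $L_1^r$ is very ample with $h(r)=\dim H^0(X,L_1^r)$, giving an embedding $X\hookrightarrow\pr(H^0(X,L_1^r)^*)=\pr^N$ with $N=h(r)-1$. A one-parameter subgroup $\lambda:\comp^*\to GL(h(r),\comp)$ acts on $\pr^N$, and I would set $\scX$ to be the Zariski closure of the orbit $\{(\lambda(t)\cdot X,t):t\in\comp^*\}$ inside $\pr^N\times\comp$, with $\scL$ the restriction of $\scO_{\pr^N}(1)$ and $\pi$ the projection to $\comp$. The one point requiring care is flatness of $\pi$: since $\scX$ is the closure of an orbit of the irreducible variety $X$, it is itself irreducible and dominates the smooth curve $\comp$, hence it is automatically flat over $\comp$, and the central fibre $\scX_0$ is the flat limit $\lim_{t\to0}\lambda(t)\cdot X$. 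The $\comp^*$-action covering the standard action on $\comp$ and the equivariance of $\scL$ are immediate from the construction, so $(\scX,\scL)$ is a test configuration of exponent $r$.

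For the converse I would start from a test configuration $(\scX,\scL)$ of exponent $r$ and pass to a high power. Choose $K\gg0$ so that relative Serre vanishing gives $R^i\pi_*\scL^K=0$ for $i>0$ and $\scL^K$ is relatively very ample. Then cohomology-and-base-change shows $\pi_*\scL^K$ is locally free on $\comp$ of rank $\dim H^0(\scX_t,\scL^K_t)=\dim H^0(X,L_1^{rK})=h(rK)$, the equality of dimensions on the central fibre being exactly where flatness of $\pi$ is used. Since $\comp\cong\A^1$ and projective modules over $\comp[t]$ are free, this bundle is trivial, so $\pi_*\scL^K\cong\comp\times H^0(X,L_1^{rK})$. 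The $\comp^*$-action on $(\scX,\scL)$ lifts to a linear action on $\pi_*\scL^K$ covering the standard action on $\comp$, and under the trivialisation this is precisely a one-parameter subgroup $\lambda\subset GL(h(rK),\comp)$. Finally, relative very ampleness of $\scL^K$ yields a $\comp^*$-equivariant embedding $\scX\hookrightarrow\pr((\pi_*\scL^K)^*)=\pr(H^0(X,L_1^{rK})^*)\times\comp$ realising $(\scX,\scL^K)$ through $\lambda$, as claimed.

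The main obstacle I expect is the compatibility of the trivialisation with the equivariant structure in the converse direction: one must verify that the lift of the $\comp^*$-action to $\pi_*\scL^K$ is genuinely linear and covers the weight-one standard action on the base, so that trivialising produces an honest one-parameter subgroup rather than merely an equivariant family over $\comp$. This is handled by decomposing the central fibre $H^0(\scX_0,\scL^K_0)$ into $\comp^*$-weight spaces and using that a $\comp^*$-equivariant vector bundle on $\A^1$ is determined by this weight decomposition; the resulting weights are exactly the exponents of the diagonal torus defining $\lambda$. The flatness arguments in both directions, while standard, are the other place where care is needed, since they underlie the constancy of $h(rK)$ across all fibres and hence the identification of the central fibre with a degeneration inside the same linear system.
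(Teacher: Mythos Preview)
The paper does not supply its own proof of this proposition; it is quoted verbatim as \cite[Proposition 3.7]{RT2} and used as a black box. Your sketch is a faithful reconstruction of the standard Ross--Thomas argument: the orbit-closure construction for one direction and the pushforward-and-trivialise argument for the other are exactly what appears in the cited reference, and your identification of the delicate points (flatness over the curve, equivariant trivialisation of the pushforward on $\A^1$) is accurate. One small remark: the proposition as stated says ``for each $K>0$'', whereas you (correctly) take $K\gg 0$ to invoke relative Serre vanishing; this slight imprecision is in the quoted statement rather than in your argument.
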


We are now in a position to give a numerical criterion for $M$-twisted asymptotic Hilbert and Chow stability. In order to do so, we introduce the following notation.

Let $(\scX,\scL)$ be a test configuration. As the $\comp^*$-action on $(\scX,\scL)$ fixes the central fibre, there is an induced $\comp^*$-action on $(\scX_0,\scL_0)$ and hence on $H^0(\scX_0,\scL^K_0)$ for each $K$. We denote the Hilbert polynomial and total weight of this action respectively by \begin{align*} h(K)&=a_0K^n+a_1K^{n-1}+O(K^{n-2}), \\ w(K) &=b_0K^{n+1}+b_1K^n+O(K^{n-1}).\end{align*} By asymptotic Riemann-Roch and flatness of the test configuration, we have intersection-theoretic formulas for $a_0,a_1,$ as  $$a_0 = r^n\frac{L^n}{n!}, a_1 = r^{n-1}\frac{K_M.L^{n-1}}{2(n-1)!}.$$ For $(\scY_0,\scL_0)$ denote the corresponding Hilbert and weight polynomials by \begin{align*} \hat{h}(K)&=\hat{a}_0K^m+O(K^{m-1}), \\ \hat{w}(K) &=\hat{b}_0K^{m+1}+O(K^{m}),\end{align*} where $m$ is the dimension of $Y$.

\begin{theorem}\label{numericalstability} Let $(M,L_1)$ be a polarised variety and $Y\subset M$ a subvariety. Then $Y$ is $M$-twisted asymptotically Hilbert stable if and only if, for all $r\gg 0$, each test configuration of exponent $r$ has normalised weight (setting $k=rK$) $$\hat{w}_{r,k}=\hat{w}(k)rh(r)-kw(r)\hat{h}(k)>0$$ for $k\gg 0$. 

The normalised weight $\hat{w}_{r,k}$ is a polynomial in $k$ and $r$ of degree $m+1$ in $k$, write $\hat{w}_{r,k} = \sum_{i=0}^{m+1}e_i(r)k^i.$ Then $Y$ is $M$-twisted asymptotically Chow stable if and only if for all $r\gg 0$ we have $e_{m+1}(r)>0$. \end{theorem}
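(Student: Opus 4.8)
The plan is to reduce both statements to the Hilbert--Mumford numerical criterion and then to compute the relevant weights explicitly from the defining exact sequence \eqref{hilbpoint}. By Proposition \ref{onepsandtestconfigs}, a test configuration of exponent $r$ is exactly the datum of a one-parameter subgroup $\lambda$ acting on $V=H^0(M,L_1^r)$, and its flat limit $\scY_0=\lim_{t\to0}\lambda(t).Y$ is the point of the Hilbert scheme (resp.\ Chow variety) whose weight must be positive. So I would fix such a $\lambda$, pass to the central fibre $(\scY_0,\scL_0)$, and compute the induced $\comp^*$-weight on the Pl\"ucker line over the corresponding point; throughout I read all of $h,w,\hat h,\hat w$ as functions of the $L_1$-power, so that $h(r)=\dim V$ and $w(r)$ is the total $\lambda$-weight on $V$, and $k=rK$ is the total $L_1$-power.

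For the Hilbert point, twisting \eqref{hilbpoint} and restricting to $\scY_0$ yields the exact sequence of $\comp^*$-representations $0\to H^0(\pr^n,\scI_{\scY_0}(K))\to S^KV\to H^0(\scY_0,\scO_{\scY_0}(K))\to 0$, valid for $K\gg0$ chosen uniformly in the family. Hence the weight of the Pl\"ucker line $\Lambda^{\mathrm{top}}H^0(\pr^n,\scI_{\scY_0}(K))$ equals $\wt(\Lambda^{\mathrm{top}}S^KV)-\hat w(k)$. The ambient term I would evaluate directly: if the $\lambda$-weights on $V$ are $\mu_0,\dots,\mu_N$ (with $N+1=h(r)$), every coordinate contributes symmetrically to the degree-$K$ monomials, giving $\wt(\Lambda^{\mathrm{top}}S^KV)=\tfrac{K}{h(r)}\binom{N+K}{K}\,w(r)$. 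The decisive simplification is the passage to $SL(V)$: subtracting $w(r)/h(r)$ from each $\mu_i$ makes this ambient weight vanish and lowers the weight of each degree-$K$ section of $H^0(\scY_0,\scL_0^K)$ by $\tfrac{k}{r}\,w(r)/h(r)$. Collecting the $\hat h(k)$ such sections, the $SL$-normalised weight of the Hilbert point is, up to the positive factor $rh(r)$ and a sign dictated by the convention of Lemma \ref{generalweight}, exactly
$$\hat w_{r,k}=\hat w(k)\,rh(r)-k\,w(r)\,\hat h(k).$$
The Hilbert--Mumford criterion then reads $\mu(\lambda,\scY_0)>0$, i.e.\ $\hat w_{r,k}>0$; demanding this for every $\lambda$, every $K\gg0$, and every $r\gg0$ gives the asymptotic Hilbert statement.

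For the Chow point I would invoke the classical relation between Hilbert and Chow weights (Mumford, in the form used in \cite{RT2}): the Chow weight of $\scY_0$ under $\lambda$ is, up to a fixed positive constant, the leading coefficient in $k$ of the Hilbert weight. Since $\hat h(k)$ has degree $m$ and $\hat w(k)$ degree $m+1$, while $h(r),w(r)$ are constant in $k$, the polynomial $\hat w_{r,k}=\sum_{i=0}^{m+1}e_i(r)k^i$ indeed has degree $m+1$, with top coefficient $e_{m+1}(r)=rh(r)\hat b_0-w(r)\hat a_0$. Thus $\scY_0$ is Chow stable precisely when $e_{m+1}(r)>0$, and $M$-twisted asymptotic Chow stability is this condition for all $r\gg0$. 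Note this is genuinely weaker than the Hilbert condition, differing exactly in the degenerate locus $e_{m+1}(r)=0$ where lower coefficients still decide Hilbert stability.

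The step demanding the most care is the Hilbert--Chow comparison in the last paragraph: identifying $e_{m+1}(r)$ with the Chow weight requires relating the Pl\"ucker line of the Chow form $f\in H^0(\Grass(n-m,n+1),\scO(d))$ to the top-order asymptotics of the exterior powers above, which is the delicate part of Mumford's theory and must be carried over to the twisted embedding $Y\subset M\hookrightarrow\pr(H^0(M,L_1^r)^*)$ rather than the intrinsic one. A secondary point I would verify is that flatness of the test configuration keeps $\hat h,\hat w$ honest polynomials and that $K$ can be chosen uniformly so that \eqref{hilbpoint} stays exact across the family; this is what makes ``$\hat w_{r,k}>0$ for $k\gg0$'' a condition on finitely many coefficients and legitimises comparing it against the single leading coefficient $e_{m+1}(r)$.
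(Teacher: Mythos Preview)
Your proposal is correct and follows essentially the same route as the paper: reduce to the Hilbert--Mumford criterion via Proposition \ref{onepsandtestconfigs}, use the exact sequence \eqref{hilbpoint} to identify the Pl\"ucker line, normalise into $SL(h(r))$ so the ambient symmetric-power term vanishes, and then invoke Mumford's result that the Chow weight is the leading $k$-coefficient of the Hilbert weight. The one place the paper is more careful is the passage to $SL$: subtracting $w(r)/h(r)$ from each weight need not be integral, so the paper first pulls back the test configuration along $t\mapsto t^{rh(r)}$ (multiplying all weights by $rh(r)$) and then subtracts $rw(r)$, which is exactly your ``positive factor $rh(r)$'' made into an honest operation on one-parameter subgroups rather than a formal rescaling.
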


\begin{proof} 

We first prove with the Hilbert stability statement. The Hilbert-Mumford criterion together with Proposition \ref{onepsandtestconfigs} imply $X$-twisted asymptotic Hilbert stability is equivalent to the asymptotic Hilbert weight of each test configuration being strictly positive, provided it lies in $SL(h(r),\comp)$ rather than $GL(h(r),\comp)$. The proof is in two steps. The first is to modify a fixed test configuration to lie in $SL(h(r),\comp)$, and the second is to explicitly calculate the Hilbert weight asymptotically. 

For clarity we consider a more general situation. Let $\lambda$ be a one-parameter subgroup of $GL(N,\comp)$ diagonalised as $\diag(t^{\lambda_0}\hdots t^{\lambda_N})$, with total weight $w=\sum_i \lambda_i$. If $w=0$, then  $\lambda$ lies in $SL(N,\comp)$; assume this is not the case. Then one gets a new one-parameter subgroup by subtracting $\frac{w}{N}$ from each weight, \emph{provided each} $\lambda_i-\frac{w}{N}$ \emph{remains an integer}. As this is not necessarily the case, we first multiply each $\lambda_i$ by $N$ to produce a new one-parameter subgroup with total weight $wN$, and subtract $w$ from each weight to produce a one-parameter subgroup of $SL(N,\comp)$. 

There are two natural operations one can perform on a test configuration which leave the total space $\scX$ intact but modifies the weight. The first is pulling back the test configuration over a finite map $\comp\to\comp$ given by the map $t\to t^d$, which modifies the total weight $w(k)$ by multiplication by $d$. The second is by adding a constant $\tau$ to the weights, which modifies the total weight by adding $\tau kh(k)$. We use these operations to ensure the test configuration induces one-parameter subgroups of $SL(h(r),\comp)$, following the previous paragraph. Firstly, we pull-back over a finite map $t\to t^{rh(r)}$, so that the total weight is $w(r)rh(r)$. Next we add $-rw(r)$ to the weights, ensuring the test configuration now lies in $SL(h(r),\comp)$.

From the exact sequence (\ref{hilbpoint}), the line above the limit scheme $\scY_0$ is given as $$\Lambda^{max}H^0(\scY_0,\scL_0^{K})\otimes \Lambda^{max}S^KH^0(M,L_1^r)^*.$$ Since the one-parameter subgroup we consider lies in $SL(h(r))$, the induced action on $\Lambda^{max}S^KH^0(X,L^r)^*$ has weight zero. We therefore need only calculate the change in weight for the action on $\Lambda^{max}H^0(\scY_0,\scL_0^{K})$. The finite cover $t\to t^{rh(r)}$ of the test configuration modifies this weight by $\hat{w}(rK)\to \hat{w}(rK)rh(r)$, while adding $-rw(r)$ to the weights modifies the weight by $$\hat{w}(rK)rh(r)\to\hat{w}(rK)rh(r) - (rw(r))K\hat{h}(rK).$$ We conclude by recalling $k=rK$.

The Chow statement follows since for \emph{fixed} $r$, the Chow weight is the leading order term in the polynomial which determines the Hilbert weight \cite[Lemma 2.11]{Mum} \cite[Theorem 3.9]{RT2}.

 \end{proof}
 
 \begin{remark} Theorem \ref{numericalstability} extends the well known criterion for asymptotic Hilbert and Chow stability obtained by setting $M=Y$, see for example \cite[Theorem 3.9]{RT2} whose proof we have followed above. The reason that we have discussed Hilbert stability in this section is primarily its use in the above proof. Indeed, to show the $X$-twisted asymptotic Chow weight is a polynomial, we first showed the $X$-twisted asymptotic Hilbert weight was a polynomial and used that the Chow weight is the leading order term in the Hilbert weight. \end{remark}

We now focus on the setting relevant to the J-flow, which is a minor modification of the above. Here we fix a polarised variety $(M,L_1)$ together with an auxiliary ample line bundle $L_2$. The notion of stability we then define is not a bona fide GIT notion, instead we formally adapt the definition of stability using the Hilbert-Mumford criterion to incorporate the linear system associated to $L_2$. For the moment we assume that $|L_2|$ is an arbitrary linear system.

\begin{lemma} Let $M\subset\pr^n$ be a projective variety together with a linear system $|L_2|$. For each one-parameter subgroup $\lambda\hookrightarrow SL(n+1,\comp)$, the Chow weight of $\lambda$ for $D\in |L_2|$ is constant outside a Zariski closed subset of $|L_2|$. We define the Chow weight of $\lambda$ for $|L_2|$ to equal this general value. \end{lemma}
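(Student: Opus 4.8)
The plan is to reduce the statement to Lemma \ref{generalweight} by realising the Chow weight of a divisor $D\in|L_2|$ as the Hilbert--Mumford weight of a single point, namely the Chow point of $D$, in an auxiliary projective space carrying a $\lambda$-action, and then to exploit the irreducibility of $|L_2|$.

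First I would observe that $|L_2|=\pr(H^0(M,L_2))$ is itself a projective space, hence irreducible, and that every $D\in|L_2|$ is an effective cycle in $\pr^n$ of fixed dimension $m=\dim M-1$ and fixed degree $d$. The Chow construction recalled above therefore assigns to each $D$ a Chow point $[f_D]\in\pr(H^0(\Grass(n-m,n+1),\scO(d)))=:\pr(U)$, and because the universal divisor over $|L_2|$ is a flat family of effective cycles of this fixed type, the assignment $\Phi:|L_2|\to\pr(U)$, $D\mapsto[f_D]$, is a morphism whose image has irreducible closure $\overline{\Phi(|L_2|)}\subset\pr(U)$. I would also note that, in the $M$-twisted weight $\hat{w}_{r,k}$ of Theorem \ref{numericalstability}, the only ingredient depending on the individual divisor $D$ is the weight of its Chow point: the normalising factors $rh(r)$, $w(r)$ and the Hilbert polynomial $\hat{h}$ depend only on $M$ and on the fixed numerical class of $D$, hence are constant over $|L_2|$.

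The key structural point I would then establish is the $SL(n+1,\comp)$-equivariance of the Chow construction: for $g\in SL(n+1,\comp)$ the Chow form of $g\cdot D$ is $g$ acting on $f_D$ through the induced representation on $U$. Applying this to the one-parameter subgroup $\lambda$, the Chow point of the limit cycle $\lim_{t\to0}\lambda(t)\cdot D$ is $\lim_{t\to0}\lambda(t)\cdot\Phi(D)$, so the weight of the induced $\comp^*$-action on the line over this limit is exactly the Hilbert--Mumford weight $\mu(\lambda,\Phi(D))$ of the point $\Phi(D)\in\pr(U)$. Thus the Chow weight of $\lambda$ for $D$ equals $\mu(\lambda,\Phi(D))$ up to the $D$-independent normalisations identified above.

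It now suffices to apply Lemma \ref{generalweight} with $\pr^n$ replaced by $\pr(U)$ and $M$ replaced by the irreducible variety $\overline{\Phi(|L_2|)}$: the weight $\mu(\lambda,z)$ is constant, and equal to the largest such weight, outside a Zariski closed subset $Z_0\subsetneq\overline{\Phi(|L_2|)}$. Since $\Phi(|L_2|)$ is constructible and dense in its closure, a dense open subset of $\overline{\Phi(|L_2|)}$ lies in the image; hence $\Phi^{-1}(Z_0)$ is a \emph{proper} Zariski closed subset of $|L_2|$, and on its dense open complement the Chow weight $\mu(\lambda,D)=\mu(\lambda,\Phi(D))$ takes the constant general value, as required. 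I expect the main obstacle to be the careful verification of the two classical facts invoked above, that $D\mapsto[f_D]$ defines a genuine morphism on all of $|L_2|$ (so that preimages of closed sets remain closed, even where $D$ degenerates to a reducible or non-reduced cycle) and that this morphism is strictly equivariant so that the Chow weight of $D$ coincides with the point-weight $\mu(\lambda,\Phi(D))$. Once these are in place, the irreducibility of $|L_2|$ and Lemma \ref{generalweight} immediately conclude the argument.
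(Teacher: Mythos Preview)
Your proposal is correct and follows essentially the same approach as the paper: map $|L_2|$ into the Chow variety, use that $|L_2|$ is irreducible so its image is irreducible, and apply Lemma \ref{generalweight}. The paper's own proof is only three sentences and simply asserts these steps; you have unpacked them carefully (the morphism property of $\Phi$, equivariance, and the pullback of the bad locus), but the underlying argument is the same.
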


\begin{proof} The Chow weight is the GIT weight of the one-parameter subgroup in an appropriate Chow variety. Note that the linear system $|L_2|$ is a projective space, hence irreducible, and the image of an irreducible variety is irreducible. Therefore Lemma \ref{generalweight} implies the Chow weight of $\lambda$ for $D\in |L_2|$ is constant outside a Zariski closed subset of $|L_2|$, as required. \end{proof}

\begin{remark} To the authors' knowledge, this phenomenon was first noticed by Sz\'ekelyhidi in the study of twisted K\"ahler-Einstein metrics \cite{Sz-Conical}. \end{remark}

\begin{definition}Let $M\subset\pr^n$ be a projective variety together with a linear system $|L_2|$. We say that $|L_2|$ is \emph{Chow stable} if for each one-parameter subgroup $\lambda\hookrightarrow SL(n+1,\comp)$, the Chow weight of $\lambda$ for $|L_2|$ is strictly positive. \end{definition}

This immediately implies that the \emph{asymptotic} Chow weight of a linear system for a fixed test configuration is constant outside a Zariski closed subset of $|L_2|$. Note that, as the $M$-twisted asymptotic Chow weight for a fixed divisor $D$ is a polynomial in $r$, it is determined by finitely many values, and hence its general value is also constant outside a Zariski closed subset of $|L_2|$.  

\begin{definition} We say that a linear system $|L_2|$ is \emph{asymptotically Chow stable} if for all $r\gg 0$, the asymptotic Chow weight of each test configuration of exponent $r$ is strictly positive. This is equivalent to the linear system being Chow stable in $\pr(H^0(M,L_1^r)^*)$ for $r\gg 0$. \end{definition}

The following definition is equivalent to one due to Lejmi-Sz\'ekelyhidi \cite{L-Sz}. Indeed our definition of Chow stability of a linear system is motivatived by their work.

\begin{definition}\label{jstabilityone} We define the J-weight $J_{L_2}(\scX,\scL)$ of a test configuration $(\scX,\scL)$ to equal $1/a_0$ times the leading order term of the asymptotic Chow weight of the linear system $|L_2|$. We say that $(M,L_1,L_2)$ is \emph{J-stable} (resp. J-semistable) if $J_{L_2}(\scX,\scL)>0$ (resp. $\geq 0$) for all non-trivial test configurations with normal total space. Explicitly, the J-weight of a test configuration is$$J_{L_2}(\scX,\scL) = \frac{\hat{b}_0a_0 - b_0\hat{a}_0}{a_0}.$$ Indeed, one obtains this by simply expanding the polynomial $e_{m+1}(r)$ defined in Theorem \ref{numericalstability}; the J-weight is simply $1/a_0$ times the leading order term in $r$ of this polynomial. \end{definition}

\begin{remark} Our assumption that the total space $\scX$ of the test configuration is normal is to exclude certain pathological test configurations which normalise to the trivial test configuration, and necessarily have J-weight zero. These test configurations are alternatively characterised as having \emph{norm} zero \cite[Theorem 1.3]{Derv2} \cite{BHJ}. \end{remark}

For comparison we state the definition of K-stability.

\begin{definition}\label{kstability} Let $(X,L)$ be a polarised normal variety. We define the Donaldson-Futaki invariant of a test configuration to be (a positive constant times) the leading order term in its asymptotic Chow weight. Explicitly, we define $$\DF(\scX,\scL) = \frac{b_0a_1 - b_1a_0}{a_0}.$$ We say that $(X,L)$ is K-stable if $\DF(\scX,\scL)>0$ for all non-trivial test configurations with normal total space.\end{definition}

\begin{proposition} Assume the linear system $|L_2|$ is asymptotically Chow semistable. Then $(M,L_1,L_2)$ is J-semistable. \end{proposition}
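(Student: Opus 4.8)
The plan is to extract both the hypothesis and the conclusion from the single polynomial $e_{m+1}(r)$ furnished by Theorem \ref{numericalstability}, using the elementary fact that a polynomial which is non-negative for all large $r$ has non-negative leading coefficient. First I would fix a non-trivial test configuration $(\scX,\scL)$ with normal total space and apply Theorem \ref{numericalstability} with $Y=D$ a general member of $|L_2|$, so that $m=\dim D=n-1$. This produces the normalised weight $\hat{w}_{r,k}=\sum_{i=0}^{m+1}e_i(r)k^i$, whose top coefficient $e_{m+1}(r)$ is exactly the $M$-twisted asymptotic Chow weight of $D$ at exponent $r$. By the lemma preceding Definition \ref{jstabilityone}, for each $r$ this value is the general, and hence the defining, Chow weight of the linear system $|L_2|$ for $(\scX,\scL)$.

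Next I would invoke the hypothesis. Asymptotic Chow semistability of $|L_2|$ means precisely that this asymptotic Chow weight is $\geq 0$ for all $r\gg 0$; this is the semistable form of Theorem \ref{numericalstability}, which follows verbatim from its proof upon relaxing the Hilbert--Mumford inequality from $>0$ to $\geq 0$. Hence $e_{m+1}(r)\geq 0$ for all $r\gg 0$. Since $e_{m+1}$ is a polynomial in $r$, its eventual non-negativity forces its leading-order coefficient in $r$ to be non-negative.

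It then remains to convert this into the J-weight. By Definition \ref{jstabilityone}, $J_{L_2}(\scX,\scL)$ is $1/a_0$ times the leading-order term in $r$ of $e_{m+1}(r)$, namely $J_{L_2}(\scX,\scL)=(\hat{b}_0 a_0-b_0\hat{a}_0)/a_0$. As $a_0>0$ (it equals $r^n L_1^n/n!$, a positive multiple of the volume of $L_1$), division by $a_0$ preserves the sign of the leading coefficient, so $J_{L_2}(\scX,\scL)\geq 0$. Because $(\scX,\scL)$ was an arbitrary non-trivial test configuration with normal total space, $(M,L_1,L_2)$ is J-semistable.

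The whole argument rests on the elementary observation that a polynomial non-negative for large argument has non-negative leading coefficient, so there is no serious analytic obstacle. The only point demanding care is the bookkeeping: one must confirm that the asymptotic Chow weight governing the semistability hypothesis is the very polynomial $e_{m+1}(r)$ whose leading-in-$r$ coefficient defines $J_{L_2}$. This alignment is built into the way Definition \ref{jstabilityone} is layered on Theorem \ref{numericalstability}, so the identification is immediate once one checks that the semistable analogue of Theorem \ref{numericalstability} holds, which it does by the same proof.
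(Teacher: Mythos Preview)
Your proposal is correct and follows exactly the same approach as the paper's proof, which is the one-line observation that the J-weight is (a positive multiple of) the leading order term in $r$ of the asymptotic Chow weight $e_{m+1}(r)$, so eventual non-negativity of the latter forces non-negativity of the former. You have simply unpacked this observation in more detail, including the elementary remark that a polynomial non-negative for large argument has non-negative leading coefficient and the verification that $a_0>0$; the paper leaves all of this implicit.
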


\begin{proof} This is immediate as the $J$-weight of a test configuration is the leading order term in the asymptotic Chow weight for the linear system $|L_2|$. \end{proof}

\subsection{J-balanced metrics and Chow stability}

In this section we work with a smooth projective variety $M$ embedded in a fixed projective space $\pr^n$, and a very ample line bundle $L_2$. By Bertini's Theorem, since $|L_2|$ is basepoint free, a general element $D\in |L_2|$ is smooth. The main result of this section is the following.

\begin{theorem}\label{balancedimpliesstable} If $(M,L_1,L_2)$ admits a J-balanced metric, then $|L_2|$ is Chow stable. \end{theorem}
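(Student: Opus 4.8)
The plan is to combine the Hilbert--Mumford criterion with the convexity of the Kempf--Ness functional $I_{\mu_{k,\chi}^0}$ and the balanced identity $\mu_{k,\chi}^0(\iota_{bal})=0$. Set $N=\dim H^0(L_1^k)-1$, so that the J-balanced embedding realises $M\hookrightarrow\pr^N=\pr H^0(L_1^k)^*$. By the definition of Chow stability of $|L_2|$, it suffices to prove that the Chow weight of $\lambda$ for $|L_2|$ is strictly positive for every non-trivial one-parameter subgroup $\lambda:\comp^*\hookrightarrow SL(N+1,\comp)$. Fix such a $\lambda$ with traceless Hermitian generator $A$, and let $t\mapsto H_t$ be the geodesic ray in $\mathcal{B}_k$ issuing from the balanced metric $H_{bal}$ in the direction $A$, with associated embeddings $\iota_t=e^{tA}\cdot\iota_{bal}$. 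Since $I_{\mu_{k,\chi}^0}$ is the integral of the moment map, and since the $\log\det$ term contributes $\tfrac{\Vol_{L_1}(M)}{N+1}\tr A=0$, one has $\tfrac{d}{dt}I_{\mu_{k,\chi}^0}(H_t)=\tr\!\big(A\,\mu_{k,\chi}(\iota_t)\big)=\tr\!\big(A\,\mu_{k,\chi}^0(\iota_t)\big)$. By Corollary \ref{uniquenessJbal} this quantity is non-decreasing in $t$, and it vanishes at $t=0$ because $\mu_{k,\chi}^0(\iota_{bal})=0$; hence the limit $w(\lambda):=\lim_{t\to+\infty}\tr(A\,\mu_{k,\chi}(\iota_t))$ exists and $w(\lambda)\geq 0$.

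Next I would identify $w(\lambda)$ with the Chow weight of $\lambda$ for $|L_2|$, up to the positive normalisation fixed by these conventions. Diagonalising $A=\diag(\lambda_0,\dots,\lambda_N)$, the integrand of \eqref{Jmomentmap} is $\tr(A\mu_{FS}(\iota_t(p)))=\big(\textstyle\sum_i\lambda_i e^{2t\lambda_i}|z_i(p)|^2\big)\big/\big(\sum_i e^{2t\lambda_i}|z_i(p)|^2\big)$, which converges pointwise as $t\to+\infty$ to $\max\{\lambda_i:z_i(p)\neq 0\}$, while the measure $\tfrac1\gamma\chi\wedge\iota_t^*\omega_{FS}^{n-1}$ degenerates onto the flat limit of $M$ under $\lambda$. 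I would analyse this limit through the test configuration $\scX_t=e^{tA}.M$ and its central fibre, so that $w(\lambda)$ becomes the Mumford weight of $\lambda$ on the relevant Chow line. Because $[\chi]=c_1(L_2)$, integrating a fixed weight datum against $\chi\wedge\omega_{FS}^{n-1}$ over $M$ computes, to leading order, the same intersection numbers as integrating $\omega_{FS}^{n-1}$ over a general divisor $D\in|L_2|$; hence $w(\lambda)$ equals the Chow weight of $\lambda$ for a general $D\in|L_2|$, which, by the lemma preceding the definition of Chow stability, is precisely the Chow weight of $\lambda$ for $|L_2|$. This identification is the main obstacle: one must control the jumps of $\max\{\lambda_i:z_i(p)\neq 0\}$ across the strata of the weight filtration and the concentration of the degenerating measure, and reconcile the smooth twist $\chi$ with the generic member of the linear system. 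It is exactly here that the analytic moment map $\mu_{k,\chi}$ must be matched with the algebraic GIT linearisation underlying the Chow weight.

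It remains to upgrade $w(\lambda)\geq 0$ to strict positivity. Suppose $w(\lambda)=0$. The function $t\mapsto\tfrac{d}{dt}I_{\mu_{k,\chi}^0}(H_t)$ is non-negative and non-decreasing, vanishes at $t=0$, and has limit $0$ as $t\to+\infty$, so it vanishes identically; thus $I_{\mu_{k,\chi}^0}$ is constant along the ray and in particular its second derivative vanishes at $t=0$. By the second--variation computation underlying the convexity of $J_\chi\circ FS$, this second derivative equals $\tfrac1\gamma\big(\int_M|\nabla h_A|_\perp^2\,\chi\wedge\omega_{FS}^{n-1}+\tfrac1n\int_M|\partial h_A|^2_{TM,\chi}\,\omega_{FS}^n\big)$, where $h_A=\tr(A\mu_{FS})$ is evaluated at $\iota_{bal}$. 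Both summands being non-negative, they vanish, which forces $\nabla h_A\equiv 0$ on $M$, i.e. $dh_A\equiv 0$ along $M$. Hence every point of $M$ is a critical point of the $U(1)$-Hamiltonian $h_A$ on $\pr^N$, so $M$ lies in the critical locus $\bigsqcup_j\pr(V_{\lambda_j})$ of $h_A$, the disjoint union of the projectivised eigenspaces of $A$. As $M$ is irreducible it is contained in a single $\pr(V_{\lambda_j})$; since the embedding is linearly non-degenerate this forces $V_{\lambda_j}=\comp^{N+1}$, i.e. $A=\lambda_j\,\Id$, and $\tr A=0$ yields $A=0$, contradicting $\lambda\neq 1$. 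Therefore $w(\lambda)>0$ for every non-trivial $\lambda$, and $|L_2|$ is Chow stable. Note that this final step requires no hypothesis on the automorphisms of $M$: linear non-degeneracy of the embedding alone rules out the would-be destabilising directions.
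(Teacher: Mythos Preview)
Your overall Kempf--Ness strategy is sound, and the strictness argument in your final paragraph via the second variation of $J_\chi\circ FS$ is clean (and in fact more direct than the paper's contrapositive route via properness). However, the proof has a genuine gap at exactly the place you flag as ``the main obstacle'': the identification of the asymptotic slope
\[
w(\lambda)=\lim_{t\to+\infty}\tfrac{d}{dt}\,I_{\mu_{k,\chi}^0}(H_t)=\lim_{t\to+\infty}\tfrac{d}{dt}\,(J_\chi\circ FS)(H_t)
\]
with the Chow weight of $\lambda$ for a \emph{general} $D\in|L_2|$. Your heuristic ``$[\chi]=c_1(L_2)$, so integrating against $\chi\wedge\omega_{FS}^{n-1}$ computes the same leading intersection numbers as integrating $\omega_{FS}^{n-1}$ over a general divisor'' is not a proof: along the ray both the integrand $\tr(A\mu_{FS})$ and the measure $\chi\wedge\iota_t^*\omega_{FS}^{n-1}$ degenerate, and matching the joint limit with the GIT weight on the Chow line of a divisor is precisely the content that has to be supplied.

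The paper fills this gap with two external inputs you do not invoke. First, Theorem~\ref{generalelement} (Lejmi--Sz\'ekelyhidi) writes the smooth form $\chi$ as a superposition $\chi=\int_{D\in|L_2|}[D]\,d\mu$ for a smooth signed measure on $|L_2|$, giving $J_\chi(\phi)=-\Vol_{L_1}(M)\int_{|L_2|}I_D^{\text{AYM}}(\phi)\,d\mu$ (Lemma~\ref{Jasaverage}). Second, the Phong--Sturm formula identifies $-V(m+1)I_D^{\text{AYM}}(\sigma)$ with $\log(\|\sigma.f_D\|^2/\|f_D\|^2)$ for $f_D$ the Chow point of $D$; its asymptotic slope along a one-parameter subgroup is therefore exactly the Chow weight of $D$. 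Since by Lemma~\ref{generalweight} this weight is constant on a Zariski-open (hence full $\mu$-measure) subset of $|L_2|$, a dominated-convergence argument passes the limit through the integral and yields the asymptotic slope of $J_\chi$ as a fixed multiple of the general Chow weight. These two ingredients are precisely what your second paragraph is attempting to bypass; without them the link between the analytic $w(\lambda)$ and the algebraic Chow weight of $|L_2|$ remains heuristic, and the argument is incomplete. Once you import them, your convexity/second-variation route to strictness goes through and is a pleasant alternative to the paper's properness contradiction.
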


We conjecture that the two are actually equivalent, however technical issues prevent us from proving this. From Corollary \ref{cor1}, we can relate the existence of solutions of the J-flow to algebraic geometry as follows.

\begin{corollary}\label{corChow} Suppose there exists a fixed point of the J-flow, i.e a critical metric solution of \eqref{critical}. Then $|L_2|$ is asymptotically Chow stable. \end{corollary}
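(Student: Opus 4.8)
The plan is to obtain this corollary as a purely formal consequence of the two results immediately preceding it, Corollary \ref{cor1} and Theorem \ref{balancedimpliesstable}, and then to read off the conclusion against the definition of asymptotic Chow stability of a linear system.

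First I would invoke Corollary \ref{cor1}. By hypothesis there is a critical metric solving \eqref{critical}, that is, a fixed point of Donaldson's J-flow. Corollary \ref{cor1} then guarantees that for every $k \gg 0$ there exists a J-balanced metric $h_{bal} \in {\rm Met}(L_1^k)$, produced as the time-$\infty$ limit of the rescaled J-balancing flow. Equivalently, in the sense of Definition \ref{mu0kchi}, the induced embedding $\iota_k : M \hookrightarrow \pr(H^0(M,L_1^k)^*)$ is J-balanced for each such $k$.

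Next, for each fixed $k \gg 0$ I would apply Theorem \ref{balancedimpliesstable} to the triple $(M,L_1^k,L_2)$, with $M$ realised inside $\pr^{n_k}=\pr(H^0(M,L_1^k)^*)$ by the J-balanced embedding $\iota_k$. That theorem states that the existence of a J-balanced metric forces the linear system $|L_2|$ to be Chow stable in $\pr^{n_k}$. Consequently $|L_2|$ is Chow stable in $\pr(H^0(M,L_1^r)^*)$ for all $r \gg 0$, which is exactly the defining condition for $|L_2|$ to be \emph{asymptotically} Chow stable. This completes the argument.

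There is no genuine analytic or geometric obstacle here, since all the substantive work has already been carried out in Corollary \ref{cor1} (the convergence of the J-balancing flow) and in Theorem \ref{balancedimpliesstable} (the balanced-implies-stable implication). The only point demanding a moment's care is bookkeeping: one must observe that the quantum parameter $k$ of Corollary \ref{cor1} is precisely the polarisation exponent $r$ in the definition of asymptotic Chow stability, so that the statement "a J-balanced metric exists at level $k$ for all $k \gg 0$" translates verbatim into "$|L_2|$ is Chow stable in $\pr(H^0(M,L_1^r)^*)$ for all $r \gg 0$". One should also note that $|L_2|$ is basepoint free, so that Bertini's Theorem applies in Theorem \ref{balancedimpliesstable}; this holds under the standing hypothesis that $L_2$ is ample (replacing $L_2$ by a suitable power if very ampleness is needed, which leaves the J-flow stability unaffected up to scaling).
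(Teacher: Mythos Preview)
Your proposal is correct and follows exactly the approach the paper intends: the corollary is stated as an immediate consequence of Corollary \ref{cor1} (existence of J-balanced metrics at level $k$ for all $k\gg 0$) together with Theorem \ref{balancedimpliesstable} (J-balanced implies Chow stable), and you have simply made this implication explicit. The additional bookkeeping remarks you include about matching $k$ with the exponent $r$ and about very ampleness of $L_2$ are reasonable clarifications, though the paper treats these as understood from the standing hypotheses of the section.
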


Our proof is reminiscent of the Kempf-Ness Theorem. The existence of a J-balanced embedding for $(M,L_1,L_2)$ is equivalent to the properness of a certain functional on a space of metrics. Taking a one-parameter subgroup as in the definition of Chow stability of a linear system, we show that properness of the this functional along this one-parameter subgroup forces the Chow weight to be strictly positive. 

To prove Theorem \ref{balancedimpliesstable}, we need some preliminary results. 

\begin{theorem}\cite{L-Sz}\label{generalelement} Let $M$ be a smooth projective $n$-dimensional variety together with a very ample line bundle $L_2$. Let $\alpha\in c_1(L_2)$ be a positive $(1,1)$-form. Then there is a smooth signed measure $\mu$ on the projective space $|L_2|$ such that $$\alpha = \int_{D\in|L_2|}[D]d\mu$$ holds in the weak sense, i.e. for all smooth $(n-1,n-1)$-forms $\beta$ we have $$\int_M \alpha\wedge\beta = \int_{D\in|L_2|}\left(\int_D \beta\right)d\mu.$$ \end{theorem}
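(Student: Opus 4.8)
The plan is to represent $\alpha$ by averaging the currents of integration $[D]$ against a carefully chosen smooth density on $|L_2|$, using the Poincar\'e--Lelong formula to convert the problem into the inversion of an explicit integral transform. First I would fix a Hermitian inner product on $V := H^0(M,L_2)$, so that $|L_2| = \PP(V)$ and the complete very ample system gives a linearly normal embedding $M \hookrightarrow \PP(V^*) =: \PP^N$; write $\omega_{FS}$ for the Fubini--Study form on $\PP(V^*)$ as well as for its restriction to $M$, which represents $c_1(L_2)$. For $[s]\in\PP(V)$ set $D_s=\{s=0\}\subset M$. By the Poincar\'e--Lelong formula $[D_s] = \omega_{FS} + \tfrac{1}{2\pi}\ddbar\,\psi_s$ with $\psi_s(x) = \log|s(x)|^2_{h_{FS}} = \log|\langle \hat x, s\rangle|^2 - \log\|\hat x\|^2$, where $\hat x\in V^*$ is a lift of $x$ and $h_{FS}$ is the Fubini--Study metric. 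Moreover the Crofton formula gives $\int_{\PP(V)}[D_s]\,d\sigma(s)=\omega_{FS}$, where $\sigma$ is the $U(V)$-invariant probability measure (the expected current of a random hyperplane section).

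I would then look for $\mu$ in the form $\mu=\sigma+\eta$, with $\eta$ a smooth signed density of total mass zero. Since $\alpha$ and $\omega_{FS}$ both lie in $c_1(L_2)$, the $\partial\bar\partial$-lemma supplies $\phi\in C^\infty(M)$ with $\alpha-\omega_{FS}=\tfrac{1}{2\pi}\ddbar\phi$. Using that $\eta$ has mass zero one computes $\int_{\PP(V)}[D_s]\,d\mu = \omega_{FS} + \tfrac{1}{2\pi}\ddbar\Phi_\eta$, where $\Phi_\eta(x)=\int_{\PP(V)}\psi_s(x)\,\eta(s)$; crucially the $-\log\|\hat x\|^2$ contribution drops out precisely because $\int\eta=0$. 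Hence it is enough to solve $\Phi_\eta=\phi$ modulo constants, i.e. to realise the prescribed smooth $\phi$ as the image of a smooth mass-zero density under the integral transform whose kernel is $\log|\langle\hat x,s\rangle|^2$.

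To achieve this I would extend $\phi$ to a smooth function $\tilde\phi$ on all of $\PP^N=\PP(V^*)$ (possible because $M$ is a smooth submanifold) and solve the global equation $\mathcal L\eta=\tilde\phi$ on $\PP^N$, where $\mathcal L\eta(x)=\int_{\PP(V)}\log|\langle\hat x,s\rangle|^2\,\eta(s)$. The operator $\mathcal L$ is $U(V)$-equivariant and self-adjoint for the natural pairing, and its kernel is a point-pair invariant on $\PP^N$; by Peter--Weyl together with Schur's lemma (a Funk--Hecke-type argument) it acts as a scalar $c_\pi$ on each isotypic $U(N+1)$-component of $C^\infty(\PP^N)$. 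Since the kernel has only a logarithmic singularity, the $c_\pi$ decay polynomially, so that as soon as every $c_\pi$ with $\pi$ nontrivial is nonzero, $\mathcal L$ restricts to an isomorphism of the $L^2$-orthogonal complement of the constants, and the controlled growth of the inverse eigenvalues makes it an isomorphism of $C^\infty(\PP^N)/\R$. Restricting the resulting smooth $\eta$ to $\PP(V)$ and taking $\mu=\sigma+\eta$ then gives a smooth signed measure with $\Phi_\eta|_M=\tilde\phi|_M=\phi$, whence $\int[D]\,d\mu=\alpha$, as desired.

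The main obstacle is the final point: showing that no nontrivial eigenvalue $c_\pi$ vanishes. This is the $\PP^N$-analogue of the elementary fact on the circle, where the logarithmic kernel has Fourier coefficients $\sim 1/|n|$, all nonzero. I would carry out the corresponding Funk--Hecke computation on $\PP^N=U(N+1)/\big(U(1)\times U(N)\big)$, reducing each $c_\pi$ to a one-dimensional integral of $\log$ against a Jacobi-type weight and checking its nonvanishing. A softer route that avoids explicit constants is to observe that any $\eta\in\ker\mathcal L$ generates a density for which $\int[D_s]\,\eta$ vanishes as a current, so that unwinding Poincar\'e--Lelong forces $\eta$ to annihilate every hyperplane divisor; this reduces to the injectivity of the logarithmic hyperplane transform, which is exactly the same nonvanishing statement. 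Either way, establishing this injectivity is the heart of the argument, while the averaging-and-inverting scheme around it is routine.
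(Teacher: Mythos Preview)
The paper does not supply its own proof of this theorem: it is quoted verbatim from \cite{L-Sz} and used as a black box. So there is nothing in the present paper to compare your argument against; the relevant benchmark is the proof in \cite{L-Sz}, which proceeds along exactly the lines you describe (Crofton for the Fubini--Study part, the $\partial\bar\partial$-lemma for the remainder, then inversion of the logarithmic integral transform on projective space).

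Your outline is correct up to the point you yourself flag: the whole argument hinges on the injectivity of $\mathcal L$ on the orthogonal complement of the constants, and you do not establish it. The ``softer route'' you propose is not actually softer---as you note, it is the same statement rephrased---so no work is saved there. A clean way to close the gap, avoiding a direct Funk--Hecke computation, is to observe that applying $\Delta_x$ (the Fubini--Study Laplacian) to the kernel $\log\frac{|\langle \hat x,s\rangle|^2}{\|\hat x\|^2\|s\|^2}$ gives, up to a dimensional constant, the hyperplane current $[H_s]$ minus $\omega_{FS}$; hence on each nontrivial $U(N{+}1)$-isotypic component the eigenvalue of $\mathcal L$ equals the eigenvalue of the projective hyperplane Radon transform divided by the (nonzero) Laplace eigenvalue. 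Injectivity of $\mathcal L$ on nonconstant functions then reduces to the classical injectivity of the Radon transform on $\mathbb P^N$, which is standard. With that ingredient supplied, your scheme goes through and matches the argument in \cite{L-Sz}.
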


The above Theorem allows us to replace our smooth $(1,1)$-form $\chi$ with an integral over the linear system $|L_2|$. To work with the relevant functional for a \emph{fixed} divisor, we use a result due to Phong-Sturm \cite{PhongSturm}. In order to state this result, we define the corresponding functional and a certain norm on sections of line bundles over Grassmannians.

\begin{definition} Consider $D\in |L_2|$ a smooth divisor in $M$. We define the functional $I_D^{\text{AYM}}$ over the space of $Met(L_1)$ variationally by $$\frac{d}{dt}I^{\text{AYM}}_D(\phi(t)) = -\frac{1}{\Vol_{{L_1}}(D)} \int_D \dot{\phi}_t \omega_{\phi_t}^{n-1},$$ taking the value zero on $\phi=0$. \\
Here $\Vol_{{L_1}}(D)=\int_D c_1(L_1)^{n-1} = \int_M c_1(L_1)^{n-1}.c_1(L_2)$ is the volume of $D$. \end{definition} 

Comparing this to the functional relevant to the existence of J-balanced metric (see Section \ref{J-func}), defined variationally by \begin{equation}\frac{d}{dt}J_{\chi}(\phi(t)) = \frac{1}{\gamma} \int_M \dot{\phi}(t)\chi\wedge \omega_{\phi}^{n-1},\end{equation} where $\chi\in c_1(L_2)$ is a smooth positive $(1,1)$-form, we obtain the following Corollary of Theorem \ref{generalelement}.

\begin{lemma}\label{Jasaverage} The J-balancing functional satisfies $$J_{\chi}(\phi) = -\Vol_{L_1}(M)\int_{D\in|L_2|} I_D^{\text{AYM}}(\phi)d\mu,$$
where $d\mu$ is chosen as in Theorem \ref{generalelement}.\end{lemma}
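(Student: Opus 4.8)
The plan is to verify the identity variationally. Both $J_{\chi}$ and the functional $\phi\mapsto -\Vol_{L_1}(M)\int_{D\in|L_2|} I_D^{\text{AYM}}(\phi)\,d\mu$ vanish at $\phi=0$ and, being standard energy functionals, are path-independent functionals on the space of $\omega$-potentials. It therefore suffices to show that their first variations along an arbitrary smooth path $\phi_t$ coincide; integrating along a path from $0$ to $\phi$ then yields the claim.

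First I would record the elementary numerical identity linking the two normalising volumes. From $\gamma = \frac{\int_M c_1(L_2)c_1(L_1)^{n-1}}{\int_M c_1(L_1)^n}$ together with $\Vol_{L_1}(D)=\int_M c_1(L_1)^{n-1}\cdot c_1(L_2)$ and $\Vol_{L_1}(M)=\int_M c_1(L_1)^n$, one reads off
$$\Vol_{L_1}(D)=\gamma\,\Vol_{L_1}(M),$$
which is in particular independent of the divisor $D\in|L_2|$.

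Next I would differentiate the right-hand side along $\phi_t$. Interchanging $\tfrac{d}{dt}$ with the integral over $|L_2|$ and inserting the defining variation of $I_D^{\text{AYM}}$ gives
\begin{align*}
\frac{d}{dt}\left[-\Vol_{L_1}(M)\int_{D\in|L_2|}I_D^{\text{AYM}}(\phi_t)\,d\mu\right]
&=\frac{\Vol_{L_1}(M)}{\Vol_{L_1}(D)}\int_{D\in|L_2|}\left(\int_D\dot\phi_t\,\omega_{\phi_t}^{n-1}\right)d\mu\\
&=\frac{1}{\gamma}\int_{D\in|L_2|}\left(\int_D\dot\phi_t\,\omega_{\phi_t}^{n-1}\right)d\mu,
\end{align*}
using the volume identity above in the last step. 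The key step is then to apply Theorem \ref{generalelement} with $\alpha=\chi$ and the smooth $(n-1,n-1)$-form $\beta=\dot\phi_t\,\omega_{\phi_t}^{n-1}$, which turns the average over the linear system into an integral over $M$:
$$\int_{D\in|L_2|}\left(\int_D\dot\phi_t\,\omega_{\phi_t}^{n-1}\right)d\mu=\int_M\chi\wedge\dot\phi_t\,\omega_{\phi_t}^{n-1}=\int_M\dot\phi_t\,\chi\wedge\omega_{\phi_t}^{n-1}.$$
Thus the first variation of the right-hand side equals $\frac{1}{\gamma}\int_M\dot\phi_t\,\chi\wedge\omega_{\phi_t}^{n-1}$, which is precisely the defining first variation of $J_{\chi}$, completing the matching of variations.

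The only genuine subtlety I anticipate is justifying the interchange of differentiation with the integral over $|L_2|$, together with the fact that $I_D^{\text{AYM}}(\phi)$ is defined a priori only for \emph{smooth} divisors $D$. This is handled by Bertini's theorem---a general $D\in|L_2|$ is smooth---and by the smoothness of the signed measure $\mu$, so the singular divisors form a $\mu$-negligible set that affects neither the integral nor the differentiation under the integral sign. Once Theorem \ref{generalelement} is in hand, the algebraic core of the argument is immediate.
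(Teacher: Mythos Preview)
Your argument is correct and follows exactly the route the paper intends: the lemma is stated there as an immediate corollary of Theorem \ref{generalelement}, with only the remark that by Bertini the non-smooth divisors form a $\mu$-negligible set, and you have simply written out the variational verification (using $\Vol_{L_1}(D)=\gamma\,\Vol_{L_1}(M)$) that the paper leaves implicit.
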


Here we may integrate only over the smooth elements $D\in|L_2|$, since the complement has measure zero (by Bertini's Theorem) this does not affect the value of the integral. 

In the previous section, in defining stability of projective varieties, we have fixed a basis of projective space and acted on the variety itself by one-parameter subgroups. An alternative, but equivalent, point of view that we will now take up is to \emph{fix a variety} and \emph{vary the basis} of projective space.

Let $\omega=\omega_{FS}$ be the Fubini-Study metric of projective space. For $\sigma \in SL(n+1,\comp)$, define \begin{equation}\label{projphi}\phi_{\sigma} = \log \left(\frac{|\sigma.x|^2}{|x|^2}\right),\end{equation} and also $$\omega_{\sigma} = \omega + i\partial\bar{\partial} \phi_{\sigma}.$$ In this way we can consider both $J_{\chi}(\phi)$ and $I_D^{\text{AYM}}(\phi)$ as functionals on $SL(n+1,\comp)$. By abuse of notation, we denote these functionals as $J_{\chi}(\sigma)$ and $I_D^{\text{AYM}}(\sigma)$ for $\sigma\in SL(n+1,\comp)$.

We now define a norm on the space of sections over certain Grassmannians, following Phong-Sturm.

\begin{definition}\cite[Section 4]{PhongSturm} Let $f\in H^0(\Grass(n-m,n+1), \scO(d))$ be the Chow point of a degree $d$ variety in $\pr^n$. Denote by $\omega_{Gr}=Pl^*(\omega_{FS})$, where we recall from equation (\ref{plucker}) that $Pl$ is the Pl\"ucker embedding of the Grassmannian. We define the norm of $f$ as $$\log \|f\|^2 = \frac{1}{\Vol(Gr)}\int_{Gr}\log\frac{|f(z)|^2}{|Pl(z)|^2}\omega_{Gr}^r,$$ where $\Vol(Gr)=\int_{Gr}\omega^{m+1}_{Gr}$ is the volume of the Grassmannian and $r+1=(n-m)(m+1)$ is its dimension. \end{definition}

\begin{theorem}\cite[Theorem 5]{PhongSturm} Let $D\subset\pr^n$ be a smooth projective variety of dimension $m$. Denoting by $f$ the Chow point of $Y$, we have, where $V$ is the volume of $D$ with respect to the Fubini-Study metric \begin{equation} -V (m+1)I_D^{\text{AYM}}(\sigma)= \log\frac{\|\sigma.f\|^2}{\|f\|^2}.\end{equation} \end{theorem}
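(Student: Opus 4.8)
The plan is to prove the equivalent statement
\begin{equation*}
\log\frac{\|\sigma.f\|^2}{\|f\|^2}=(m+1)E_D(\phi_\sigma),\qquad
(m+1)E_D(\phi_\sigma)=\sum_{j=0}^{m}\int_D\phi_\sigma\,\omega_{FS}^{\,j}\wedge\omega_\sigma^{\,m-j},
\end{equation*}
where $E_D$ is the Aubin--Yau--Mabuchi (Monge--Amp\`ere) energy on $D$; the second equality is the standard primitive expression for $E_D$, and the normalisation of $I_D^{\text{AYM}}$ gives $-V(m+1)I_D^{\text{AYM}}=(m+1)E_D$ at once. Both sides vanish at $\sigma=\Id$ since $\phi_{\Id}\equiv 0$. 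I would argue by induction on $m=\dim D$, the point being that the Chow form of $D$ is governed by the Chow forms of its hyperplane sections.

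For the base case $m=0$, $D=\{p_1,\dots,p_d\}$ and the Chow form on $\pr(V^*)$ is $f(\xi)=\prod_{j}\langle\xi,p_j\rangle$. Averaging $\log|\langle\xi,\sigma.p_j\rangle|^2-\log|\langle\xi,p_j\rangle|^2$ over $\xi$, and using that the Fubini--Study average of $\log|\langle\xi,p\rangle|^2$ over $\xi\in\pr(V^*)$ equals $\log|p|^2$ up to a universal additive constant ($U(n+1)$-invariance), one obtains $\log\frac{\|\sigma.f\|^2}{\|f\|^2}=\sum_j\bigl(\log|\sigma.p_j|^2-\log|p_j|^2\bigr)=\sum_j\phi_\sigma(p_j)$, which is exactly $(m+1)E_D(\phi_\sigma)$ in dimension zero.

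For the inductive step I would write a generic $(n-m-1)$-plane as $L=H_0\cap L'$, with $L'$ the common zero locus of $m$ hyperplanes and $H_0$ a further hyperplane, so that the invariant average over $\Grass(n-m,n+1)$ factors, up to a fixed constant, as an average over $H_0\in\pr(V^*)$ followed by one over $L'$. The classical product formula for the Chow form gives
\begin{equation*}
(\sigma.f)(H_0,L')=c(L';\sigma)\prod_{q\in\sigma.D\cap L'}\langle H_0,q\rangle,
\end{equation*}
where $\sigma.D\cap L'$ consists of $d=\deg D$ points and $c(L';\sigma)$ is, up to scale, the Chow point of the hyperplane section. Averaging the linear factors over $H_0$ as in the base case, then averaging over $L'$ and invoking Crofton's integral-geometric formula $\int_{L'}\sum_{q\in\sigma.D\cap L'}g(q)=\int_{\sigma.D}g\,\omega_{FS}^{\,m}$ (up to a universal constant), followed by the change of variables $q=\sigma.p$ under which $\omega_{FS}|_{\sigma.D}$ pulls back to $\sigma^*\omega_{FS}|_D=\omega_\sigma|_D$, produces the single energy term $\int_D\phi_\sigma\,\omega_\sigma^{\,m}$. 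It is precisely this change of variables that is responsible for the appearance of the \emph{moving} metric $\omega_\sigma$ rather than the fixed $\omega_{FS}$.

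The main obstacle is the contribution of $c(L';\sigma)$: one must identify it, after averaging over $L'$, with the Chow norm of the lower-dimensional section and hence, by the inductive hypothesis, with the remaining summands $\sum_{j=0}^{m-1}\int_D\phi_\sigma\,\omega_{FS}^{\,m-j}\wedge\omega_\sigma^{\,j}$. The delicate points here are the lift normalisations implicit in the product formula, the genericity required for the product and Crofton formulas (guaranteed by Bertini's theorem and the irreducibility of $D$, compare Lemma \ref{generalweight}), and the integration-by-parts identity $\ddbar\log|\sigma.x|^2|_D=\omega_\sigma|_D$ (equivalently $\omega_\sigma=\sigma^*\omega_{FS}$), which interchanges powers of $\omega_{FS}$ and $\omega_\sigma$ and is what glues the recursive contributions into the symmetric sum $\sum_{j=0}^{m}\int_D\phi_\sigma\,\omega_{FS}^{\,j}\wedge\omega_\sigma^{\,m-j}$. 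Assembling the base case with this inductive step yields the desired identity, and therefore the stated formula relating $I_D^{\text{AYM}}$ to the Chow norm.
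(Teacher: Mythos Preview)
The paper does not give its own proof of this statement: it is quoted verbatim from Phong--Sturm \cite[Theorem 5]{PhongSturm} and used as a black box, with the next item being an immediate corollary. So there is nothing in the paper to compare your argument against.

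As for your proposal on its own merits: the variational identity you are aiming at is correct, and the $m=0$ base case is fine. The inductive step, however, is where the real content lies, and your sketch does not pin it down. The ``product formula'' you write for the Chow form, with a factor $c(L';\sigma)$ that you later want to identify with the Chow norm of a hyperplane section, is not a standard identity and would itself require proof; in particular the Chow form of $D$ does not literally factor as (Chow form of $D\cap H$) times a product of linear forms in the way your display suggests. What is true is a recursive relation between resultants/Chow forms under hyperplane section, but making it compatible with the specific norm $\|\cdot\|$ you use (an $L^2$-average over the Grassmannian) and with the Crofton step requires tracking normalising constants and lift choices carefully---precisely the ``delicate points'' you flag but do not resolve. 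The actual proofs in the literature (Zhang, Phong--Sturm, Paul) proceed instead by differentiating both sides along a one-parameter path $\sigma_t=e^{tA}$ and matching the derivatives directly, which avoids the bookkeeping your induction would demand. Your approach is not wrong in spirit, but as written it is a plausible outline rather than a proof.
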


\begin{corollary}\label{propertochoweight} Suppose the functional  $$ -I_D^{\text{AYM}}(\sigma): SL(n+1,\comp)\to\R$$ is proper. Let $\sigma(t)\hookrightarrow SL(n+1,\comp)$ be a one-parameter subgroup. Then the Chow weight of $D$ with respect to $\sigma(t)$ is strictly positive.\end{corollary}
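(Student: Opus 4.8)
The strategy is to read off the sign of the Chow weight from the growth of the Phong--Sturm logarithmic norm along the given one-parameter subgroup, using properness to force this growth to be positive. First I would restate the hypothesis in terms of the Chow point. By the theorem of Phong--Sturm quoted above \cite{PhongSturm},
$$-V(m+1)\,I_D^{\text{AYM}}(\sigma)=\log\frac{\|\sigma.f\|^2}{\|f\|^2},$$
where $f$ is the Chow point of $D$ and $V(m+1)>0$. Since $\|f\|$ is a fixed positive constant, properness of $\sigma\mapsto -I_D^{\text{AYM}}(\sigma)$ is equivalent to properness of $\sigma\mapsto\log\|\sigma.f\|^2$. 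Moreover $\phi_\sigma=\log(|\sigma.x|^2/|x|^2)$, and hence both sides of the identity, is invariant under $\sigma\mapsto k\sigma$ for $k\in SU(n+1)$, so these functionals descend to the symmetric space $SU(n+1)\backslash SL(n+1,\comp)$, which is where properness is to be understood.

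Next I would specialise to the one-parameter subgroup and extract the Chow weight. Set $s=-\log|t|$, so that $t\to 0$ corresponds to $s\to+\infty$, and write $g(s)=\log\|\sigma(e^{-s}).f\|^2$. Because $\sigma$ is non-trivial, the path $\sigma(e^{-s})$ runs to infinity in the symmetric space as $s\to+\infty$, and properness therefore forces $g(s)\to+\infty$. To relate this to the weight, decompose $f=\sum_I f_I$ into components for the (diagonalisable) $\comp^*$-action on $H^0(\Grass(n-m,n+1),\scO(d))$, so that $\sigma(t).f_I=t^{w_I}f_I$; the limit Chow point is the component of least weight $w_{\min}=\min\{w_I:f_I\neq 0\}$, and by the convention of Lemma \ref{generalweight} the Chow weight of $D$ with respect to $\sigma(t)$ is $\mu(\sigma,D)=-w_{\min}$. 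The leading asymptotic of the logarithmic Chow norm is governed by this weight filtration, giving
$$g(s)=2\,\mu(\sigma,D)\,s+O(1)\qquad\text{as } s\to+\infty.$$
Hence $g(s)\to+\infty$ is compatible only with a strictly positive leading coefficient, i.e. $\mu(\sigma,D)>0$, which is exactly the asserted strict positivity of the Chow weight.

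The main obstacle is the final identification: that the leading coefficient of the genuinely non-linear Chow norm $g(s)$ is a positive multiple of the Hilbert--Mumford weight $-w_{\min}$. For the ordinary Hermitian norm this is the elementary log-sum-exp computation $\log\sum_I|t|^{2w_I}|f_I|^2=-2w_{\min}s+O(1)$, which also shows that $g$ is convex and that it tends to a finite limit precisely when $w_{\min}=0$. The content of the Phong--Sturm asymptotic analysis underlying \cite{PhongSturm} is that the Chow norm shares this leading behaviour. Granting this, the dichotomy that $g\to+\infty$ when $w_{\min}<0$ while $g$ stays bounded when $w_{\min}=0$ is decisive: properness rules out both $w_{\min}\geq 0$ cases and forces $w_{\min}<0$, that is $\mu(\sigma,D)=-w_{\min}>0$. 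In particular it is this step, rather than the formal convexity, that excludes the borderline case $w_{\min}=0$ in which $g$ would remain bounded and the argument would fail.
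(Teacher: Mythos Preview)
Your argument is correct and lands on the same conclusion, but the packaging differs from the paper's. The paper argues via closed orbits: properness of $\sigma\mapsto\log\|\sigma.f\|^2$ forces $\|\sigma(t).\hat f\|\to\infty$ along the one-parameter subgroup, so the $\comp^*$-orbit of the lift $\hat f$ in the ambient vector space is closed; the Hilbert--Mumford criterion is then invoked directly, ruling out $w>0$ (since $0$ would lie in the orbit closure) and $w=0$ (since a nontrivial finite limit of $\sigma(t).\hat f$ would exist), leaving $w<0$, i.e.\ Chow weight $-w>0$. You instead go straight to the asymptotic slope $g(s)=2\mu(\sigma,D)\,s+O(1)$ and read off positivity of the leading coefficient from $g(s)\to+\infty$.

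These are two phrasings of the same dichotomy: your trichotomy (blow-up / bounded / collapse) according to the sign of $w_{\min}$ is precisely what underlies the closed-orbit statement the paper uses. What your write-up gains is that it isolates the one genuine analytic input both proofs need --- that the Phong--Sturm log-norm, defined as a log-integral rather than a Hermitian norm, shares the leading asymptotic of the elementary $\log\sum_I|t|^{2w_I}|f_I|^2$ model along one-parameter subgroups. The paper's passage from ``norm $\to\infty$'' to ``orbit closed'' silently uses the same comparability, so you have not introduced an extra hypothesis, only made explicit where the Phong--Sturm theory is actually being consumed.
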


\begin{proof} The properness of the functional means that $\|\sigma.f\|^2\to\infty$ as $\|\sigma\|^2\to\infty$. Applying this to our one-parameter subgroup, this implies the orbit $\sigma_t.f$ is closed.

To relate the closedness of the orbit to the Chow weight, we use the Hilbert-Mumford criterion. Let $f_0$ be the limit $\lim_{t\to\infty}\sigma_t.f$, which is a fixed point of the one-parameter subgroup. Write $\hat{f}_0$ for a lift of $f_0$. Defining $w$ by $\lambda(t).\hat{x}_0 = t^w\hat{x}_0$, the Chow weight is $-w$. If $w>0$, then $0$ lies in the closure of the orbit of the one-parameter subgroup, hence the orbit is not closed. Similarly if $w=0$ then the action is trivial on the line above $x_0$, and so the orbit of a lift of $x$ cannot be closed. We conclude that the orbit being closed implies the Chow weight is positive.\end{proof}

We can now prove Theorem \ref{balancedimpliesstable}.

\begin{proof}[Proof of Theorem \ref{balancedimpliesstable}] By Lemma \ref{Jasaverage}, we have $$J_{\chi}(\phi) = -\Vol_{L_1}(M)\int_{D\in|T|} I_D^{\text{AYM}}(\phi).$$ For $\sigma \in SL(n+1,\comp)$ this implies \begin{equation}\label{functionaloversl}J_{\chi}(\sigma) = -\Vol_{L_1}(M)\int_{D\in|L_2|} I_D^{\text{AYM}}(\sigma).\end{equation} By hypothesis, the functional $J_{\chi}(\varphi)$ is proper. Fix a one-parameter subgroup $\sigma(t) \hookrightarrow SL(n+1,\comp)$. By Lemma \ref{generalweight}, the Chow weight of this one-parameter subgroup is constant outside a Zariski closed subset of $|L_2|$. In the integral over $|L_2|$ in equation (\ref{functionaloversl}), we ignore the union of the non-smooth $D$ together with the subset on which the Chow weight is not equal to the general value. Remark this set has measure zero, and hence this does not affect the value of the integral.  

For the one-parameter subgroup $\sigma(t)$, one of two things must therefore happen. Either the general Chow weight is positive, or the general Chow weight is non-positive. Remark that by Lemma \ref{generalweight}, in the latter case the Chow weight is non-positive for \emph{each} $D\in |L_2|$.  If the general Chow weight is positive for each one-parameter subgroup, then by definition the linear system $|L_2|$ is Chow stable and we are done. We therefore wish to show that if each Chow weight is non-positive, the functional $J_{\chi}(\sigma(t))$ is not proper.

We now consider the latter case. We use the fact that a $C^2$ convex function $f(t):\R\to\R$ is proper if and only if $$\displaystyle\lim_{t\to\infty} \frac{d}{dt} f'(t)>0,$$ where the value infinity is allowed. Applying this in our situation along the one-parameter subgroup $\sigma(t)$, by Corollary \ref{propertochoweight} we see that the Chow weight for each $D\in |L_2|$ being non-positive implies for smooth $D$ that \begin{equation}\label{limitderivativenegative}\displaystyle -\lim_{t\to\infty} \frac{d}{dt} (I_D^{\text{AYM}})'(\sigma(t))\leq 0.\end{equation}

We now relate the limit derivatives of the two functionals.
For \emph{fixed} $t$, we have $$J_{\chi}(\sigma(t)) = -\Vol_{L_1}(M)\int_{D\in|L_2|} I_D^{\text{AYM}}(\sigma(t))d\mu $$ and thus $I_D^{\text{AYM}} (\sigma(t))$ is integrable.
Furthermore, for all $D$, $t\mapsto I_D^{\text{AYM}}(\sigma(t))$ can be differentiated and we can express its derivative using the notation of \eqref{projphi} as
$$-I_D^{\text{AYM}}(\sigma(t))'=\frac{1}{\Vol_L(D)}\int_D \frac{x^*\sigma(t)^*(\zeta^*+\zeta)\sigma(t)x}{\Vert \sigma(t)x\Vert^2}\omega_{\sigma(t)}^{n-1} $$
where $x=(1,z_1,...,z_N)$ and as explained in \cite{PhongSturm}. Here $\sigma(t)=e^{\zeta t}\sigma_0$ with $\zeta$ traceless Hermitian. Consequently, if $r_\zeta$ is the spectral radius of $\zeta$, we have
$$\vert I_D^{\text{AYM}}(\sigma(t))'\vert \leq 2r_\zeta Vol_L(D)$$
and $D\mapsto \Vol_L(D)$ is integrable over $|L_2|$. Hence a variant of Lebesgue dominated convergence theorem asserts that $$\frac{d}{dt} J_{\chi}(\sigma(t)) = -\Vol_{L_1}(M)\int_{D\in|L_2|} \left(\frac{d}{dt} I_D^{\text{AYM}}(\sigma(t))\right)d\mu.$$ 
Now, for almost all $D \in |L_2|$, $\frac{d}{dt} (I_D^{\text{AYM}})'(\sigma(t))$ converges simply when $t$ tends to $+\infty$, so together with above properties, another variant of Lebesgue dominated convergence theorem gives
\begin{align*} \lim_{t\to+\infty}\frac{d}{dt} J_{\chi}(\sigma(t)) &= -\lim_{t\to\infty}\Vol_{L_1}(M)\int_{D\in|L_2|} \left(\frac{d}{dt} I_D^{\text{AYM}}(\sigma(t))\right)d\mu, \\ &= -\Vol_{L_1}(M)\int_{D\in|L_2|} \left(\lim_{t\to+\infty}\frac{d}{dt} I_D^{\text{AYM}}(\sigma(t))\right)d\mu.\end{align*} Equation (\ref{limitderivativenegative}) then implies $$ \lim_{t\to\infty}\frac{d}{dt} J_{\chi}(\sigma(t)) \leq 0,$$ contradicting properness. This completes the proof. \end{proof}

\subsection{The J-flow and K-stability}\label{jflowsection}

In this section we study J-stability, as in Definition \ref{jstabilityone}. In this definition one takes a projective variety $M$ with two ample line bundles $L_1,L_2$, and associates a weight called the J-weight $J_{L_2}(\scX,\scL)$ to each test configuration $(\scX,\scL)$. J-stability then requires that $J_{L_2}(\scX,\scL)>0$ for each non-trivial test configuration with $\scX$ normal. In order to study J-stability, we reduce the class of test configurations needed to those which are blow-ups along flag ideals. 

\begin{remark} To clarify notation in this section, for a line bundle $L$ we denote $rL$ its $r$th tensor power. We denote intersection numbers as $L^n=\int_M c_1(L)^n$ and so on. We also use additive notation for the tensor product of line bundles. \end{remark}

\begin{definition} A flag ideal $\scI$ is a coherent ideal sheaf on $M\times\comp$ of the form $\scI = I_0+(t)I_1+\hdots +(t^N)$, where $t$ is the coordinate on $\comp$ and $I_0\subset\hdots\subset I_{N-1}\subset\scO_M$ are a sequence of coherent ideal sheaves on $M$ corresponding to subschemes $Z_0\supset Z_1 \supset \hdots \supset Z_{N-1}$ of $M$. \end{definition}

Denote by $\tilde{\scB}$ the blow-up of $\scI$ on $X\times\comp$, i.e. $$\pi: \tilde{\scB} = \Bl_{\scI} M\times\comp\to M\times\comp,$$ denote by $\scL_1,\scL_2$ the pullbacks of $L_1,L_1$ to $\tilde{\scB}$ and let $\scO(-E)=\pi^{-1}\scI$ be the exceptional divisor of the blow-up. The map $\tilde{\scB}\to\comp$ is flat, and the natural $\comp^*$-action on $X\times\comp$ lifts to $\tilde{\scB}$. It follows that $(\tilde{\scB},r\scL_1-E)$ is a test configuration of exponent $r$ for $(X,L_1)$ provided $r\scL_1-E$ is relatively ample. Remark that each such test configuration has a canonical compactification obtained by blowing up $\scI$ on $M\times\pr^1$; we denote this test configuration by $\scB$ and by abuse of notation denote $\scL_1,\scL_2,E$ the corresponding line bundles and divisors on $\scB$. The following Proposition then states that it is enough to check J-stability with respect to these test configurations, provided one allows $r\scL_1-E$ to be relatively \emph{semi}-ample. 

\begin{proposition}\label{jflowblow-ups} Let $M$ be a normal projective variety with ample line bundles $L_1,L_2$. Then $(M,L_1,L_2)$ is J-stable is equivalent to the fact that  $$J_{L_2}(\scB,r\scL_1-E)>0$$ for all flag ideals $\scI$ with $\scB=\Bl_{\scI}M\times\pr^1$ normal and $r\scL-E$ relatively semi-ample over $\pr^1$. Recall the J-constant of $(M,L_1,L_2)$ is defined as  $$\gamma = \frac{L_2.L_1^{n-1}}{L_1^n}.$$ On such blow-ups the  J-weight defined by Definition \ref{jstabilityone} is given by the formula \begin{equation}\label{numericaljweight}J_{L_2}(\scB,r\scL_1-E) = (r\scL_1-E)^n.\left(-\frac{n}{n+1}\gamma r^{-1}(r\scL_1-E) + \scL_2\right),\end{equation} up to multiplication by a positive dimensional constant.\end{proposition}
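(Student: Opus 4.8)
The plan is to establish the statement in two stages: first the equivalence between J-stability and positivity of the J-weight on semi-ample flag-ideal blow-ups, and then the intersection-theoretic formula \eqref{numericaljweight}. Throughout I would use the fact, built into Definition \ref{jstabilityone}, that $J_{L_2}(\scX,\scL)$ depends only on the leading coefficients $\hat b_0,b_0,\hat a_0,a_0$ of the Hilbert and weight polynomials, and that these are intersection numbers on the normal compactified total space, hence insensitive to birational modifications that are isomorphisms in codimension one.

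For the equivalence I would invoke the blow-up formalism of Wang and Odaka. Given any test configuration $(\scX,\scL)$ with $\scX$ normal, one produces a $\comp^*$-equivariant birational morphism $p:\scB=\Bl_{\scI}M\times\pr^1\to\overline{\scX}$ from a flag-ideal blow-up of the $\pr^1$-compactification, with $p^{*}\scL=r\scL_1-E$. Since $p$ is an isomorphism in codimension one between normal varieties, the projection formula shows all the leading coefficients, and hence $J_{L_2}$, are unchanged. Because $p^{*}\scL$ is the pullback of a relatively ample bundle it is only relatively \emph{semi}-ample, which is exactly why the statement is phrased with semi-ampleness. Conversely, any flag-ideal blow-up with $r\scL_1-E$ relatively semi-ample over $\pr^1$ admits a relative ample model $\mathrm{Proj}$ of its relative section ring, i.e.\ a genuine normal test configuration with identical J-weight. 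These two passages give a correspondence between normal test configurations and semi-ample flag-ideal blow-ups preserving $J_{L_2}$, from which the equivalence follows.

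For the formula I would use the Wang-Odaka expressions for the leading weight coefficients as intersection numbers on $\scB$. Writing $\overline{\scL}=r\scL_1-E$, these read
\begin{align*}
b_0 &= \frac{\overline{\scL}^{\,n+1}}{(n+1)!}, & \hat b_0 &= \frac{\overline{\scL}^{\,n}\cdot\scL_2}{n!},
\end{align*}
while asymptotic Riemann-Roch on $M$ and on a general $D\in|L_2|$ (of dimension $n-1$, with $D\cdot L_1^{n-1}=L_2\cdot L_1^{n-1}$) gives
\begin{align*}
a_0 &= \frac{r^{n}L_1^{n}}{n!}, & \hat a_0 &= \frac{r^{n-1}L_2\cdot L_1^{n-1}}{(n-1)!}.
\end{align*}
A subtlety is that $\hat b_0$ a priori involves the class of the strict transform of $D\times\pr^1$ rather than the pullback $\scL_2$; here I would use that $J_{L_2}$ is defined through the Chow weight of the \emph{whole} linear system $|L_2|$, so that after the averaging furnished by Theorem \ref{generalelement} the relevant class is precisely the pullback $\scL_2$ (equivalently, for generic $D$ the exceptional contributions do not survive into the leading coefficient). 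Substituting into Definition \ref{jstabilityone}, the ratio $\hat a_0/a_0=n\gamma/r$ appears after simplifying with $\gamma=L_2\cdot L_1^{n-1}/L_1^{n}$, and one computes
\begin{align*}
J_{L_2}(\scB,r\scL_1-E)=\frac{\hat b_0a_0-b_0\hat a_0}{a_0} &=\frac{\overline{\scL}^{\,n}\cdot\scL_2}{n!}-\frac{\overline{\scL}^{\,n+1}}{(n+1)!}\cdot\frac{n\gamma}{r}\\
&=\frac{1}{n!}\,\overline{\scL}^{\,n}\cdot\left(-\frac{n}{n+1}\gamma r^{-1}\overline{\scL}+\scL_2\right),
\end{align*}
which is \eqref{numericaljweight} up to the positive dimensional constant $1/n!$.

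The main obstacle is not the final algebra but the two structural inputs: justifying that every normal test configuration is captured, up to codimension-one equivalence and hence with unchanged $J_{L_2}$, by a semi-ample flag-ideal blow-up; and justifying the replacement of the strict-transform class by the pullback $\scL_2$ in $\hat b_0$. Both rest on the Wang-Odaka intersection formalism together with the averaging of Theorem \ref{generalelement}, and each requires careful bookkeeping of the exceptional divisor $E$ to ensure that no lower-dimensional corrections contaminate the leading coefficients.
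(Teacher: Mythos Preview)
Your approach is essentially the same as the paper's: reduce to flag-ideal blow-ups via Odaka's result, then compute $b_0,\hat b_0$ as top intersection numbers using Wang--Odaka and asymptotic Riemann--Roch. Two points of justification deserve tightening. First, the equality of J-weights under $p:\scB\to\overline{\scX}$ is not best argued via ``isomorphism in codimension one''; the paper simply uses that $p$ is $\comp^*$-equivariant with $p^*\scL=r\scL_1-E$, so the weight polynomials $\wt H^0(\scX_0,k\scH_0)$ and $\wt H^0(\tilde\scB_0,k(r\scL_1-E)_0)$ literally coincide (and similarly for the induced configuration on $D$, citing \cite[Proposition 3.5]{Od-Su}). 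Second, your invocation of Theorem \ref{generalelement} to replace the strict transform by the pullback $\scL_2$ is misplaced: that theorem is an analytic averaging statement about smooth forms, not a tool for algebraic weights. The correct argument, which you do mention parenthetically, is purely algebraic: for a \emph{general} $D\in|L_2|$ no component of $\Supp(\scO_{M\times\pr^1}/\scI)$ is contained in $D\times\pr^1$, so the proper and total transforms of $D\times\pr^1$ agree and the class of $\scB_D$ is exactly $\scL_2$. With these two clarifications your proof is the paper's proof.
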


\begin{proof} From \cite[Corollary 3.11]{Odaka2}, given an arbitrary test configuration $(\scX,\scH)$, there exists a flag ideal $\scI$ and a $\comp^*$-equivariant map $$\psi: (\tilde{\scB},r\scL_1-E)\to (\scX,\scH)$$ with $\tilde{\scB}$ normal and $r\scL_1-E=\psi^*\scH$ relatively semi-ample over $\comp$. Since the map $\psi$ is $\comp^*$-equivariant, the total weight $\wt H^0(\scX_0,k\scH_0)$ is equal to the total weight $\wt H^0(\tilde{\scB}_0,k(r\scL_1-E)_0)$. Similarly given a divisor $D\subset M$, denote by $\scD\subset \scX$ the closure under the $\comp^*$-action. If one denotes by $\tilde{\scB}_D$ the proper transform of $D\times\comp$, then similarly by \cite[Proposition 3.5]{Od-Su} we have $$\wt H^0(\scD_0,k\scH_0)=\wt H^0(\tilde{\scB}_{D,0},k(r\scL_1-E)|_0).$$

To obtain a formula for the J-weight, we need to calculate the $b_0$ and $\hat{b}_0$, where the latter is for a general divisor $D \in |T|$. We first of all compactify the above semi-test configuration as detailed above. We denote by $\scB$ and $\scB_D$ the corresponding test configurations over $\pr^1$. For the $b_0$ term, by \cite[Theorem 3.2]{Odaka2} or \cite[Proposition 17]{Wang4} we have \begin{align*}\wt(H^0(\scB_0,k(r\scL_1-E)|^k_0)) &= \chi(\scB,k(r\scL_1-E))+O(k^{n-1}), \\ &= \frac{(r\scL_1-E)^{n+1}}{(n+1)!}k^{n+1}+O(k^n),\end{align*} using asymptotic Riemann-Roch for normal varieties \cite[Lemma 3.5]{Odaka2}. Similarly for arbitrary $D\in |L_2|$ we have \begin{align*}\wt(H^0(\scB_{D,0},k(r\scL_1-E)|_0)) &= \chi(\scB_D,k(r\scL_1-E)) +O(k^{n-2}), \\ &=\frac{(r\scL_1-E)^n.\scB_{D}}{n!}k^n+O(k^{n-1}), \\ &=  \frac{(r\scL_1-E)^n.(\scL_2+(\scB_{D}-\scL_2))}{n!}k^n+O(k^{n-1}).\end{align*}  

Here $\scL_2$ is the line bundle associated to the \emph{total} transform of $D\times\pr^1$, while $\scB_D$ corresponds to the \emph{proper} transform. Remark that these are equal for general $D$, indeed they are not equal if and only if the flag $\scI$ has a component contained in $D$. It follows that $$\hat{b}_0 = \frac{(r\scL_1-E)^n.\scL_2}{n!}.$$ 

Summing up, we see that $$(n+1)! J_{L_2}(\scB,r\scL_1-E) =  (r\scL_1-E)^n.\left(-\frac{n}{n+1}\gamma r^{-1}(r\scL_1-E) + \scL_2\right),$$ using $$\gamma = nr\frac{\hat{a}_0}{a_0}.$$

\end{proof}

Using the above, we can extend the definition of J-stability to the case where $L_2$ is an arbitrary line bundle, which will be useful in applications. By Proposition \ref{jflowblow-ups}, it follows that when $L_2$ is ample, this definition is equivalent to Definition \ref{jstabilityone}.

\begin{definition}\label{jstabledefinitiongeneral} Let $(M,L_1,L_2)$ be a normal projective variety $M$ with an ample line bundle $L_1$ and an auxiliary line bundle $L_2$, not necessarily ample. We say that $(M,L_1,L_2)$ is \emph{J-stable} if for each flag ideal $\scI$, the corresponding J-weight given in equation (\ref{numericaljweight}) is strictly positive. \end{definition}

In order to use this formalism, we need the following positivity properties of certain intersection numbers.

\begin{lemma}\label{inequalities}\cite[Proposition 4.3, Theorem 2.6]{Od-Sa} \cite[Equation (3)]{Odaka3} \cite[Lemma 3.7]{Derv} \\ 
With all notation as above, let $R$ be a nef divisor on $M$, and denote $p^*R = \scR$ where $p: \scB\to M$ is the natural morphism induced by the blow-up map. Then the following positivity properties of intersection numbers hold. \begin{itemize} \item[(i)] $(r\scL_1-E)^n.\scR \leq 0$, \item[(ii)] $(r\scL_1-E)^n.E > 0$,  \item[(iii)] $(r\scL_1-E)^n.(r\scL_1+nE)>0$. \end{itemize}\end{lemma}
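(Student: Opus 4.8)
The plan is to reduce each of the three numbers to an integral over the exceptional divisor $E$, using two structural features of the construction. First, $\scL_1=p^*L_1$ is pulled back from the $n$-dimensional variety $M$, so $\scL_1^{\,n+1}=0$, and more generally any monomial forcing a class of degree $>2n$ on $M$ vanishes. Second, because the flag ideal $\scI$ is cosupported on the central fibre, $E$ lies inside a single fibre of $\rho\colon\scB\to\pr^1$; consequently $\scL_1|_E$ and $\scR|_E$ are nef (restrictions of pullbacks of nef classes), and $(r\scL_1-E)|_E$ is nef as well, since $r\scL_1-E$ is semi-ample relative to $\pr^1$ and hence a suitable power is globally generated near any fibre. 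Throughout write $H:=r\scL_1-E$.

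For (i) I would expand via the telescoping identity $(r\scL_1)^n-H^n=E\cdot\sum_{k=0}^{n-1}(r\scL_1)^{n-1-k}H^k$, which follows from $r\scL_1-H=E$. Intersecting with $\scR=p^*R$ and using that $(r\scL_1)^n\cdot\scR=r^n(p^*L_1)^n\cdot p^*R$ vanishes (since $L_1^n\cdot R$ is a class of degree $2n+2$ on $M$) gives $H^n\cdot\scR=-\sum_{k=0}^{n-1}\int_E(\scR|_E)(r\scL_1|_E)^{n-1-k}(H|_E)^k$. Each summand is an intersection of nef classes of complementary dimension $n=\dim E$, hence nonnegative, so $H^n\cdot\scR\le 0$.

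For (ii) the same restriction principle gives $H^n\cdot E=\int_E(H|_E)^n$ directly. Here I would use that for a genuine test configuration $H$ is relatively ample over $\pr^1$, so $H|_{\scB_0}$ is ample on the central fibre and therefore $H|_E$ is ample on the closed subscheme $E\subset\scB_0$; thus $\int_E(H|_E)^n>0$. In the merely relatively semi-ample situation of Proposition \ref{jflowblow-ups} one recovers this for nontrivial configurations, the only degenerate case being one where $E$ is $H$-contracted, which corresponds to a configuration that is trivial after normalisation.

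The genuine difficulty is (iii). Writing $r\scL_1+nE=H+(n+1)E$ reduces it to the strict inequality $(n+1)\,H^n\cdot E>-H^{n+1}$, and running the binomial/telescoping expansion as above expresses $H^n\cdot(r\scL_1+nE)=\sum_{i=0}^{n-1}(n-i)\int_E\xi\,(H|_E)^{\,n-1-i}(r\scL_1|_E)^{\,i}$, where $\xi=-E|_E=\scO_E(1)$ is $\pi$-relatively ample and all coefficients are strictly positive. The obstruction is that $\xi$ is not globally nef, so the positivity of these Segre-type numbers is not formal: pushing forward along $\pi|_E\colon E\to Z$ turns each into a Segre class of the blow-up centre paired with nef classes, and their positivity is exactly the technical input of \cite{Od-Sa}. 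Equivalently, $(r\scL_1-E)^n\cdot(r\scL_1+nE)$ is, up to a positive constant, the minimum norm (non-Archimedean $J$-functional) of the test configuration, which is strictly positive for every nontrivial test configuration with normal total space by \cite[Lemma 3.7]{Derv} (compare \cite{BHJ}). I would therefore prove (iii) by invoking this, and I expect the Segre-class positivity, together with the vanishing-of-the-norm characterisation of trivial configurations, to be the main obstacle.
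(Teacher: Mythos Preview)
The paper does not prove this lemma at all; it is stated with citations to \cite{Od-Sa}, \cite{Odaka3}, and \cite{Derv} and immediately used. Your proposal therefore goes considerably beyond what the paper itself contains, supplying an actual argument rather than a reference.

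Your argument for (i) is correct and is essentially the standard one: the telescoping identity together with $\scL_1^{\,n}\cdot\scR=p^*(L_1^n\cdot R)=0$ (vanishing because $\dim M=n$) reduces everything to nonnegative intersections of nef classes on the effective cycle $E$. For (ii) your restriction argument $H^n\cdot E=\int_E(H|_E)^n$ is correct, and you rightly flag that strict positivity in the merely relatively semi-ample case needs the extra input that the configuration is nontrivial; this is precisely what \cite[Proposition 4.3]{Od-Sa} supplies, and you cannot avoid invoking something of that strength. For (iii) you correctly identify the quantity as the minimum norm (non-Archimedean $J$-functional), and your decision to cite \cite[Lemma 3.7]{Derv} is exactly what the paper does. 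So your treatment is sound, and where it bottoms out in external references it does so at the same places the paper does.

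One minor wording issue: your phrase ``a class of degree $>2n$ on $M$'' is slightly imprecise---what you need is simply that any product of $n+1$ divisor classes pulled back from the $n$-dimensional variety $M$ vanishes, i.e.\ $p^*$ kills $H^{2(n+1)}(M)=0$; the telescoping then does the rest. This is harmless but worth stating cleanly.
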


We can now apply this blowing-up formalism.

\begin{theorem}\label{sufficientconditionjstability} Suppose that $\gamma L_1 - L_2$ is nef, with $\gamma>0$. Then $(M,L_1,L_2)$ is J-stable. \end{theorem}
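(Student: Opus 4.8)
The plan is to combine the explicit numerical formula for the J-weight with the positivity properties recorded in Lemma \ref{inequalities}. By Proposition \ref{jflowblow-ups}, J-stability is equivalent to the strict positivity of $J_{L_2}(\scB, r\scL_1-E)$ over all flag ideals $\scI$ with $\scB=\Bl_{\scI}M\times\pr^1$ normal and $r\scL_1-E$ relatively semi-ample, and on such blow-ups the weight is given, up to a positive constant, by the formula \eqref{numericaljweight}:
$$J_{L_2}(\scB, r\scL_1-E) = (r\scL_1-E)^n\cdot\left(-\frac{n}{n+1}\gamma r^{-1}(r\scL_1-E) + \scL_2\right).$$
So it suffices to bound this intersection number below by a manifestly positive quantity. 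Throughout I write $\scM := r\scL_1-E$ for brevity.

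First I would feed in the nefness hypothesis. Since $\gamma L_1-L_2$ is nef and $p:\scB\to M$ is the blow-down, the class $\scR := p^*(\gamma L_1-L_2) = \gamma\scL_1-\scL_2$ is the pullback of a nef divisor, so Lemma \ref{inequalities}(i) applies and yields $\scM^n\cdot(\gamma\scL_1-\scL_2)\leq 0$, that is
$$\scM^n\cdot\scL_2 \geq \gamma\,\scM^n\cdot\scL_1.$$
Because $\scM^n\cdot\scL_2$ enters the J-weight with coefficient $+1$, substituting this lower bound preserves the inequality and gives
$$J_{L_2}(\scB,\scM) \geq -\frac{n}{n+1}\gamma r^{-1}\scM^{n+1} + \gamma\,\scM^n\cdot\scL_1.$$

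Next I would eliminate $\scM^{n+1}$ using the defining relation $\scM = r\scL_1-E$, which gives $\scM^{n+1} = r\,\scM^n\cdot\scL_1 - \scM^n\cdot E$, hence $r^{-1}\scM^{n+1} = \scM^n\cdot\scL_1 - r^{-1}\scM^n\cdot E$. Collecting terms, the two contributions involving $\scM^n\cdot\scL_1$ combine with coefficient $\tfrac{1}{n+1}$, leaving
$$J_{L_2}(\scB,\scM) \geq \frac{\gamma}{n+1}\scM^n\cdot\scL_1 + \frac{n}{n+1}\gamma r^{-1}\scM^n\cdot E = \frac{\gamma}{(n+1)r}\,\scM^n\cdot(r\scL_1 + nE).$$
Finally, Lemma \ref{inequalities}(iii) states exactly that $\scM^n\cdot(r\scL_1+nE) = (r\scL_1-E)^n\cdot(r\scL_1+nE) > 0$, and since $\gamma>0$ and $r>0$ the right-hand side is strictly positive. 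This shows $J_{L_2}(\scB, r\scL_1-E)>0$ for every admissible flag ideal, which by Proposition \ref{jflowblow-ups} is precisely J-stability.

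The computation is short once the two ingredients are in hand, so the genuine content is carried by the cited results: Proposition \ref{jflowblow-ups}, which reduces the problem to flag-ideal test configurations and supplies the closed formula, and the positivity inequalities of Lemma \ref{inequalities}, whose proofs (in \cite{Od-Sa, Odaka3, Derv}) encapsulate the real algebraic geometry. The only step inside this argument that requires care is the bookkeeping that recasts the lower bound in the form $\tfrac{\gamma}{(n+1)r}\scM^n\cdot(r\scL_1+nE)$: the key observation that makes the sign manifest is that the nefness inequality (i) and the positivity inequality (iii) fit together precisely so that the a priori indefinite term $\gamma\,\scM^n\cdot\scL_1$ is absorbed with exactly the right coefficient. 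I would also verify that strictness is preserved, which holds since (iii) is a strict inequality for the nontrivial test configurations relevant to J-stability.
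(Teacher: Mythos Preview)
Your proof is correct and is essentially identical to the paper's argument: both reduce to flag-ideal blow-ups via Proposition \ref{jflowblow-ups}, then decompose the J-weight so that Lemma \ref{inequalities}(i) handles the $(-\gamma\scL_1+\scL_2)$-piece and Lemma \ref{inequalities}(iii) handles the $(r\scL_1+nE)$-piece. The only cosmetic difference is that the paper performs the algebraic rewriting $-\frac{n}{n+1}\gamma r^{-1}(r\scL_1-E)+\scL_2 = \frac{1}{n+1}\gamma r^{-1}(r\scL_1+nE)+(-\gamma\scL_1+\scL_2)$ as an identity before applying the two inequalities, whereas you apply inequality (i) first and then simplify; the substance is the same.
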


\begin{proof} We use the blowing-up formalism of Proposition \ref{jflowblow-ups}. Let $\scI$ be a flag ideal with corresponding blow-up $\scB$. The J-weight is given as \begin{align*}J_{L_2}(\scB,r\scL_1-E) &= (r\scL_1-E)^n.\left(-\frac{n}{n+1}\gamma r^{-1}(r\scL_1-E) + \scL_2\right), \\ 
&= (r\scL_1-E)^n.\left(\frac{1}{n+1}\gamma r^{-1}(r\scL_1+nE)+(-\gamma \scL_1 + \scL_2)\right). 
\end{align*} 
By Lemma \ref{inequalities} $(iii)$, we have $$ (r\scL_1-E)^n.(r\scL_1+nE)>0,$$ while Lemma \ref{inequalities} $(i)$ together with the assumption that $\gamma L_1 - L_2$ is nef gives $$(r\scL_1-E)^n.(-\gamma \scL_1 + \scL_2) \geq 0.$$ Combining these proves the result. \end{proof}

\begin{remark} Since $\gamma = \frac{L_1^{n-1}.L_2}{L_1^n}$, the assumption $\gamma>0$ is automatic when $L_2$ is ample, or even effective. The above result holds for general $L_2$ however using Definition \ref{jstabledefinitiongeneral}, provided $\gamma>0$.\end{remark}

\begin{remark} In the case $L_2$ is ample, a result of Weinkove \cite{We1} states that if $$\gamma L_1 - \frac{n-1}{n}L_2$$ is also ample, then the $I_{\mu_J}$ functional is bounded. Note that this is a weaker assumption than we made in Theorem \ref{sufficientconditionjstability}; however we do not assume $L_2$ is ample. It would be interesting to directly prove J-stability under Weinkove's hypothesis. \end{remark}

We now prove a link between J-stability and K-stability (as in Definition \ref{kstability}). This is the algebro-geometric analogue of the relationship between existence of solutions to the J-flow and coercivity of the Mabuchi functional, due to Chen \cite{C2}. Our result will make use of certain measures of singularities of projective varieties.

\begin{definition}\label{lc} Let $M$ be a normal $\Q$-Gorenstein variety. Let $\pi: Y\to M$ be an arbitrary birational map with $Y$ normal. We can then write \begin{equation*}K_Y - \pi^*K_M \equiv \sum a(E_i,M)E_i. \end{equation*}We say that $M$ is \emph{Kawamata log terminal} if $a(E_i,M) > -1$ for all $E_i$ with $M$ normal. By \cite[Lemma 3.13]{JaKo} it suffices to check this property for $Y\to M$ a resolution of singularities. In particular, smooth varieties are Kawamata log terminal. \end{definition}

\begin{theorem}\label{jimpliesk} Suppose $(M,L_1,K_M)$ is J-semistable, with $M$ Kawamata log terminal. Then $(M,L_1)$ is K-stable. \end{theorem}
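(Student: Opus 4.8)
The plan is to test K-stability on the same class of flag-ideal blow-up test configurations used for J-stability in Proposition \ref{jflowblow-ups}, and to compare the Donaldson-Futaki invariant with the J-weight $J_{K_M}$ term by term. By \cite[Corollary 3.11]{Odaka2} it suffices to verify $\DF(\scB, r\scL_1 - E) > 0$ for all flag ideals $\scI$ with $\scB = \Bl_{\scI} M\times\pr^1$ normal and $r\scL_1 - E$ relatively semi-ample, exactly the configurations for which Proposition \ref{jflowblow-ups} gives $$J_{K_M}(\scB, r\scL_1 - E) = (r\scL_1 - E)^n.\left(-\frac{n}{n+1}\gamma r^{-1}(r\scL_1 - E) + p^*K_M\right),$$ up to a fixed positive constant, where $p:\scB\to M$ is the blow-down and $\gamma = \frac{K_M.L_1^{n-1}}{L_1^n}$.

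First I would record the intersection-theoretic formula for the Donaldson-Futaki invariant on such a blow-up. Using asymptotic Riemann-Roch for normal varieties (as in the proof of Proposition \ref{jflowblow-ups}, via \cite[Theorem 3.2]{Odaka2} and \cite[Proposition 17]{Wang4}) together with the identity $\bar{S} = -n\gamma$ valid in the normalisation $L_2 = K_M$, one obtains that $\DF(\scB, r\scL_1 - E)$ equals a positive multiple of $$(r\scL_1 - E)^n.\left(-\frac{n}{n+1}\gamma r^{-1}(r\scL_1 - E) + K_{\scB/\pr^1}\right).$$ The crucial point is that this is \emph{identical} to the expression for $J_{K_M}$ except that the total transform $p^*K_M$ is replaced by the genuine relative canonical $K_{\scB/\pr^1}$; this coincidence of leading terms is precisely what forces the choice $L_2 = K_M$. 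Writing $K_\scB = \pi^*K_{M\times\pr^1} + \sum_i a_i E_i$ for the blow-down $\pi:\scB\to M\times\pr^1$ gives $K_{\scB/\pr^1} - p^*K_M = \sum_i a_i E_i = K_{\scB/(M\times\pr^1)}$, where the $E_i$ are the exceptional divisors and $a_i = a(E_i,M)$ the discrepancies. Subtracting the two formulas therefore yields, up to a common positive constant, $$\DF(\scB, r\scL_1 - E) = J_{K_M}(\scB, r\scL_1 - E) + (r\scL_1 - E)^n.K_{\scB/(M\times\pr^1)}.$$

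With this identity in hand the conclusion is immediate modulo one positivity statement: J-semistability of $(M,L_1,K_M)$ gives $J_{K_M}(\scB, r\scL_1 - E)\geq 0$, so it remains to show that the discrepancy term $(r\scL_1 - E)^n.\sum_i a_i E_i$ is strictly positive for every nontrivial $\scB$. This is where I would invoke the Kawamata log terminal hypothesis: all discrepancies satisfy $a_i > -1$, and the strict positivity of this intersection number is exactly the content of Odaka's discrepancy positivity, the same circle of results quoted in Lemma \ref{inequalities} from \cite{Od-Sa, Odaka3, Derv}. For the trivial test configuration $E = 0$ the term vanishes, recovering $\DF = 0$; the klt condition is precisely what upgrades the log-canonical bound $\geq 0$ to the strict inequality $>0$ when $E\neq 0$, using the normality of $\scB$ and the relative semi-ampleness of $r\scL_1 - E$. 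Combining the two contributions gives $\DF(\scB, r\scL_1 - E) > 0$ for all nontrivial normal test configurations, which is K-stability.

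The main obstacle is this last positivity of the discrepancy term. Intersection numbers against exceptional divisors are not sign-definite in general, so one cannot argue naively from $a_i > -1$ together with the individual signs of the $(r\scL_1 - E)^n.E_i$; the strict inequality genuinely requires the interplay of the klt bound with the negativity and semi-ampleness properties of $r\scL_1 - E$ that are packaged in Odaka's machinery. Once that input is granted, the remainder of the argument is the purely formal comparison above, whose only subtlety is matching the normalising constants of $\DF$ and $J_{K_M}$, which is guaranteed by the relation $\bar{S} = -n\gamma$ specific to the choice $L_2 = K_M$.
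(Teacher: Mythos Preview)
Your proposal is correct and follows essentially the same route as the paper: reduce to flag-ideal blow-ups via Odaka, write $\DF = J_{K_M} + (r\scL_1-E)^n.K_{\scB/M\times\pr^1}$, and argue the discrepancy term is strictly positive using the klt hypothesis. The one place where the paper is more explicit than you is in justifying that last positivity: rather than appealing directly to ``Odaka's discrepancy positivity'', the paper argues that $M$ klt implies (by inversion of adjunction \cite[Theorem 5.50]{KM}) that $(M\times\pr^1, M\times\{0\})$ is purely log terminal, hence $K_{\scB/M\times\pr^1}$ is an \emph{effective} exceptional divisor, and then invokes Lemma \ref{inequalities}$(ii)$. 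Your phrasing ``$a_i > -1$ plus Odaka's machinery'' is correct in spirit but hides this step; Lemma \ref{inequalities} as stated only gives positivity against the full $E$, so effectiveness of $K_{\scB/M\times\pr^1}$ is the bridge.
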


\begin{proof}A result of Odaka \cite[Corollary 3.11]{Odaka2} states that, similarly to Proposition \ref{jflowblow-ups}, to check K-stability it suffices to show the Donaldson-Futaki invariant of each semi-test configuration given in Proposition \ref{jflowblow-ups} is strictly positive. The proof is a comparison of the Donaldson-Futaki invariant and J-weight. Indeed, letting $\scB$ be a blow-up along a flag ideal as above, we have $$J_{K_M}(\scB,r\scL_1-E) = (r\scL_1-E)^n.\left(-\frac{n}{n+1}\gamma r^{-1}(r\scL_1-E) + \scK_M\right),$$ while the corresponding Donaldson-Futaki invariant is given by Odaka as $$ \DF(\scB,r\scL_1-E)\hspace{-0.1cm}=\hspace{-0.1cm}(r\scL_1-E)^{n}.\hspace{-0.05cm}\left(-\frac{n}{n+1}\gamma r^{-1}(r\scL_1-E)\hspace{-0.05cm}+\hspace{-0.05cm}\scK_M\hspace{-0.05cm}+\hspace{-0.05cm} K_{\scB/M\times\pr^1}\hspace{-0.05cm}\right).$$ Here we have denoted $\scK_M$ the pullback of $K_M$ to $\scB$. Hence $$ \DF(\scB,r\scL_1-E) = J_{K_M}(\scB,\scL_1^r-E) + (r\scL_1-E)^n.K_{\scB/M\times\pr^1}.$$ 

The term $K_{\scB/M\times\pr^1}=K_{\scB}-\pi^*{K_{M\times\pr^1}}$ is an exceptional divisor of the blow-up $\pi: \scB\to M\times\pr^1$, since $\scB$ is normal the intersection numbers make sense. Hence to show K-stability, by Lemma \ref{inequalities} $(ii)$, it suffices to show that $K_{\scB/M\times\pr^1}$ is effective. This follows by inversion of adjunction \cite[Theorem 5.50]{KM}. Indeed, $M$ being Kawamata log terminal implies that $X\times\pr^1$ is purely log terminal, hence the discrepancy term is effective. 
\end{proof}

The following Corollary is our motivation for extending the Definition of J-stability to the case with $L_2$ not necessarily ample.

\begin{corollary}\label{kstablecone} Suppose that $(M,L_1)$ is a Kawamata log terminal variety with $\gamma>0$ which satisfies $$\gamma L_1 - K_M \geq 0,$$ i.e. the difference is nef. Then $(M,L_1)$ is K-stable. \end{corollary}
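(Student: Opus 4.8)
The plan is to obtain this statement as a direct combination of Theorem \ref{sufficientconditionjstability} and Theorem \ref{jimpliesk}, taking $L_2 = K_M$. First I would observe that the hypotheses match exactly: with $L_2 = K_M$ the relevant J-constant is $\gamma = \frac{K_M . L_1^{n-1}}{L_1^n}$, and we are given both $\gamma > 0$ and the nefness of $\gamma L_1 - K_M$, which are precisely the inputs required below.

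The first step is to apply Theorem \ref{sufficientconditionjstability} with $L_2 = K_M$. Since $\gamma L_1 - K_M$ is nef and $\gamma > 0$, that theorem yields that $(M,L_1,K_M)$ is J-stable. Here I would stress that $K_M$ need not be ample, but this causes no difficulty: J-stability for non-ample $L_2$ is defined through Definition \ref{jstabledefinitiongeneral} via the numerical J-weight formula \eqref{numericaljweight}, and, as recorded in the remark following Theorem \ref{sufficientconditionjstability}, the conclusion of that theorem holds for arbitrary $L_2$ provided $\gamma > 0$. J-stability then implies J-semistability immediately, since by definition the J-weight of every non-trivial test configuration with normal total space is strictly positive, hence in particular non-negative.

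The second step is to feed this into Theorem \ref{jimpliesk}. By hypothesis $M$ is Kawamata log terminal, and we have just established that $(M,L_1,K_M)$ is J-semistable; Theorem \ref{jimpliesk} then produces the K-stability of $(M,L_1)$, which completes the argument.

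Since the corollary is a purely formal consequence of the two preceding theorems, there is no genuine analytic or geometric obstacle to overcome. The only point requiring care — and the one I would flag explicitly — is the compatibility of the possible non-ampleness of $K_M$ with the J-stability framework: one must work with the generalised Definition \ref{jstabledefinitiongeneral} rather than Definition \ref{jstabilityone}, and verify that the numerical J-weight \eqref{numericaljweight} used in proving Theorem \ref{sufficientconditionjstability} is exactly the quantity governing J-stability in this generalised sense. This is precisely why the authors extended the definition of J-stability beyond ample $L_2$, and it is the reason the corollary can be stated under the weaker nefness hypothesis on $\gamma L_1 - K_M$ rather than ampleness.
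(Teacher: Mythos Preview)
Your proposal is correct and matches the paper's own proof, which simply states that the corollary is immediate from combining Theorems \ref{sufficientconditionjstability} and \ref{jimpliesk}. Your additional remarks about needing the generalised Definition \ref{jstabledefinitiongeneral} when $K_M$ is not ample are accurate and make explicit what the paper leaves implicit.
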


\begin{proof} This is immediate by combining Theorems \ref{sufficientconditionjstability} and \ref{jimpliesk}.\end{proof}

\begin{remark}This Corollary should be compared to Weinkove's work \cite{We1} in the smooth case with $K_M$ ample, which proves that the Mabuchi functional for $(M,L_1)$ is proper provided $$\gamma L_1 - \frac{n-1}{n}K_M$$ is ample. While we need a stronger ampleness criterion for $L_1$, we do \emph{not} need to assume $K_M$ is ample, merely that $\gamma>0$.\end{remark}

\begin{remark} Corollary \ref{kstablecone} was also obtained by the first author through a direct analysis of the K-stability condition \cite[Theorem 1.7]{Derv2}. When $L_1=K_M$, i.e. $M$ is a canonically polarised variety, this result is due to Odaka \cite{Odaka3}.\end{remark}

When $M$ is a surface we can improve these results, using another intersection theoretic Lemma. This is essentially a strengthening of Lemma \ref{inequalities} $(iii)$ for surfaces.

\begin{lemma}\label{surfacesineq} Suppose $M$ has dimension $2$. Then $$(r\scL_1-E)^2.(r\scL_1+E)\geq 0.$$ \end{lemma}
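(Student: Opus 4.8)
The plan is to work directly on the threefold $\scB$ (recall $M$ is a surface), exploit that $\scL_1=p^{*}L_1$ is pulled back from $M$ via $p\colon\scB\to M$, and reduce the asserted inequality to a negative semi-definiteness statement for the intersection form on the exceptional locus. Throughout write $\mathcal L:=r\scL_1-E$, let $\pi\colon\scB\to M\times\pr^1$ be the blow-up and $q\colon\scB\to\pr^1$ the projection, with central fibre $\scB_0=q^{*}[0]$.

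First I would record the vanishings coming from $\scL_1$ being a pull-back from a surface: one has $\scL_1^3=0$, and since the blow-up centre lies in $M\times\{0\}$ the image $p(E)\subset Z_0$ has dimension $\le 1$, so $p_{*}E=0$ and the projection formula gives $\scL_1^2\cdot E=L_1^2\cdot p_{*}E=0$. Expanding $(r\scL_1-E)^2(r\scL_1+E)$ and discarding every term containing $\scL_1^3$ or $\scL_1^2\cdot E$ yields the clean identity
$$(r\scL_1-E)^2(r\scL_1+E)=E^3-r\,\scL_1\cdot E^2=-(r\scL_1-E)\cdot E^2.$$
Thus the whole lemma is equivalent to the single inequality $\mathcal L\cdot E^2\le 0$.

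To control $\mathcal L\cdot E^2$ I would use the central fibre. As $\mathcal L$ is relatively semi-ample over $\pr^1$ and $E$ is contained in $\scB_0$, the restriction $\mathcal L|_{\scB_0}$ is nef, so $\mathcal L$ pairs non-negatively with every effective $1$-cycle supported in $\scB_0$. Writing $E=\sum_i a_iG_i$ for the prime exceptional divisors $G_i$ (all $a_i\ge 1$), the target is the quadratic form $\mathcal L\cdot E^2=\sum_{i,l}a_ia_l\,M_{il}$ with $M_{il}:=\mathcal L\cdot G_i\cdot G_l$. Two facts pin down its sign. For $i\ne l$ the curve $G_i\cap G_l$ is effective and lies in $\scB_0$, so $M_{il}\ge 0$. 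And writing $\scB_0=\tilde M_0+\sum_l c_lG_l$ (proper transform plus exceptional part, each $c_l=\ord_{G_l}(\pi^{*}t)\ge 1$ because $\pi(G_l)\subset\{t=0\}$), the numerical triviality $\scB_0\cdot\zeta=0$ for $1$-cycles $\zeta$ in a fibre gives $0=\scB_0\cdot\mathcal L\cdot G_i=\tilde M_0\cdot\mathcal L\cdot G_i+\sum_l c_lM_{il}$, whence $\sum_l c_lM_{il}=-\mathcal L\cdot(\tilde M_0\cap G_i)\le 0$, since $\tilde M_0\cap G_i$ is an effective curve in $\scB_0$.

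Finally I would conclude by an elementary quadratic-form argument: a symmetric $M$ with non-negative off-diagonal entries admitting a strictly positive vector $c=(c_l)$ with $(Mc)_i\le 0$ is negative semi-definite. Concretely, setting $a_i=c_id_i$ and $\tilde M_{il}=c_iM_{il}c_l$, the identity
$$\sum_{i,l}\tilde M_{il}d_id_l=\sum_i\Big(\sum_l\tilde M_{il}\Big)d_i^2-\sum_{i<l}\tilde M_{il}(d_i-d_l)^2$$
exhibits $\mathcal L\cdot E^2=\sum_{i,l}a_ia_lM_{il}$ as a sum of a non-positive part (the row sums $\sum_l\tilde M_{il}=c_i(Mc)_i\le 0$) minus a non-negative part ($\tilde M_{il}\ge 0$ for $i<l$), hence $\mathcal L\cdot E^2\le 0$, proving the lemma. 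The main obstacle to be careful with is exactly this multiplicity bookkeeping: the multiplicities $a_i$ of $E$ and the multiplicities $c_l$ of $E$ in $\scB_0$ differ, so one cannot identify $E$ with $\scB_0-\tilde M_0$, and the off-diagonal numbers $M_{il}$ carry the "wrong" sign for any naive linear estimate; only the negative semi-definiteness of the full matrix rescues the argument. The degenerate case in which the centre maps to a point (so $\scL_1\cdot E^2=0$ and the inequality reduces to $E^3\ge 0$) is handled directly, since Lemma~\ref{inequalities}~$(iii)$ then reads $2E^3>0$.
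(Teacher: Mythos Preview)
Your proof follows the paper's route through the expansion and the reduction to $-\mathcal L\cdot E^2\ge 0$: both use $\scL_1^3=0$ (pullback from a surface) and $\scL_1^2\cdot E=0$ (projection formula) to collapse the cubic. The difference is only in the final step: where the paper simply cites \cite[Lemma~2.8~(i)]{Odaka3} to conclude $-E^2\cdot(r\scL_1-E)\ge 0$, you supply a self-contained Zariski-lemma style argument, showing the matrix $(\mathcal L\cdot G_i\cdot G_l)$ on exceptional components is negative semi-definite via the nefness of $\mathcal L$ on the central fibre and the numerical triviality of $\scB_0$ against fibral cycles. That argument is correct and is essentially the content of Odaka's lemma; it is more explicit, at the cost of the multiplicity bookkeeping you flag.

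One point needs care. Your claim that $p(E)\subset Z_0$ has dimension $\le 1$ is not automatic: if $I_0=(0)$ then $Z_0=M$, and indeed for $\scI=(t)$ one computes $(r\scL_1-E)^2(r\scL_1+E)=-r^2L_1^2<0$, so the inequality fails there. The paper handles this by first dividing the flag ideal by the largest power of $t$ it contains, which leaves $\scB$ unchanged, forces $I_0\ne(0)$ (hence $\dim Z_0\le 1$, as $M$ is irreducible and reduced), and---crucially for the only application, Theorem~\ref{jstablesurfaces}---leaves the J-weight unchanged. You should make this reduction explicit before invoking $p_*E=0$. Once that is done, your closing ``degenerate case'' paragraph is unnecessary: the matrix argument already covers it.
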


\begin{proof} The intersection number expands as $$(r\scL_1-E)^2.(r\scL_1+E)= r^3\scL_1^3-r^2\scL_1^2.E-r\scL_1.E^2+E^3.$$ As $\scL$ is the pullback of a line bundle from the surface $M$, we have $\scL_1^3=0$. 

Recall that the flag ideal is of the form $\scI = I_0+(t)I_1+\hdots +(t^N)$. As in the proof of \cite[Theorem 2.6]{Odaka3}, we can assume that the flag ideal has support $$s= \dim \Supp (\scO_{M\times\pr^2} / \scI)\leq 1.$$ Indeed otherwise, the flag idea satisfies $I_0=\scO_M$ and dividing by a power of $t$ does not change the blow-up and hence the Donaldson-Futaki invariant, but ensures $s\leq 1$. Then provided $s\leq 1$, we have $\scL_1^2.E=0$ by the projection formula, as in \cite[Lemma 3.5]{Odaka1}.

It follows that \begin{align}(r\scL_1-E)^2.(r\scL_1+E) &= -r\scL_1.E^2+E^3, \\ &= -E^2.(r\scL_1-E).\end{align} But this latter term is non-negative by \cite[Lemma 2.8 (i)]{Odaka3} (setting $i=1$ in the notation of that Lemma), concluding the proof.\end{proof}

Using the above Lemma we can strengthen Theorem \ref{sufficientconditionjstability} for surfaces.

\begin{theorem}\label{jstablesurfaces}Suppose $M$ has dimension $2$ satisfying $\frac{4}{3}\gamma L_1 - L_2$ is nef, and $\gamma>0$. Then $(M,L_1,L_2)$ is J-semistable. \end{theorem}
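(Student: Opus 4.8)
The plan is to argue exactly as in the proof of Theorem \ref{sufficientconditionjstability}, working with the intersection-theoretic formula for the J-weight on blow-ups along flag ideals. By Proposition \ref{jflowblow-ups} it suffices to show that for every flag ideal $\scI$ with $\scB = \Bl_{\scI} M\times\pr^1$ normal and $r\scL_1 - E$ relatively semi-ample, the quantity
$$J_{L_2}(\scB, r\scL_1-E) = (r\scL_1-E)^2.\left(-\tfrac{2}{3}\gamma r^{-1}(r\scL_1-E) + \scL_2\right)$$
is non-negative; here $n=2$, so $\tfrac{n}{n+1} = \tfrac{2}{3}$. Writing $D = r\scL_1 - E$, I would first expand this as $J_{L_2} = -\tfrac{2}{3}\gamma r^{-1}D^3 + D^2.\scL_2$.

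The key step is to feed in the hypothesis that $R := \tfrac{4}{3}\gamma L_1 - L_2$ is nef through the sign constraint of Lemma \ref{inequalities}(i). Setting $\scR = p^*R = \tfrac{4}{3}\gamma\scL_1 - \scL_2$, I would substitute $D^2.\scL_2 = \tfrac{4}{3}\gamma\, D^2.\scL_1 - D^2.\scR$ to obtain
$$J_{L_2} = \left(-\tfrac{2}{3}\gamma r^{-1}D^3 + \tfrac{4}{3}\gamma\, D^2.\scL_1\right) - D^2.\scR.$$
Using $r\scL_1 = D + E$, the bracketed combination collapses, since $-D^3 + 2r\, D^2.\scL_1 = D^2.(D + 2E) = (r\scL_1-E)^2.(r\scL_1+E)$, so that
$$J_{L_2} = \tfrac{2}{3}\gamma r^{-1}(r\scL_1-E)^2.(r\scL_1+E) - (r\scL_1-E)^2.\scR.$$

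It then remains to check both terms are non-negative. The first is exactly the content of the surface inequality Lemma \ref{surfacesineq}, combined with $\gamma>0$ and $r>0$; the second is non-negative because $-(r\scL_1-E)^2.\scR \geq 0$ by Lemma \ref{inequalities}(i) applied to the nef divisor $R$. This gives $J_{L_2}(\scB, r\scL_1-E) \geq 0$, and hence J-semistability. I expect the only genuinely delicate point to be the choice of the right decomposition: the factor $\tfrac{4}{3}$ in the hypothesis is precisely what makes the coefficient of $D^2.\scL_1$ equal to $2r$ and so collapses the first two terms into the expression $(r\scL_1-E)^2.(r\scL_1+E)$ controlled by the surface-specific Lemma \ref{surfacesineq}. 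The improvement over Theorem \ref{sufficientconditionjstability} is driven entirely by replacing the weak estimate of Lemma \ref{inequalities}(iii) with this stronger surface bound; all remaining manipulations are routine intersection-theoretic bookkeeping on the normal threefold $\scB$.
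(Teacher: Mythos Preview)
Your proof is correct and follows exactly the same approach as the paper: both reduce to blow-ups along flag ideals via Proposition \ref{jflowblow-ups}, decompose the J-weight as $\tfrac{2}{3}\gamma r^{-1}(r\scL_1-E)^2.(r\scL_1+E) + (r\scL_1-E)^2.(\scL_2 - \tfrac{4}{3}\gamma\scL_1)$, and then apply Lemma \ref{surfacesineq} to the first term and Lemma \ref{inequalities}(i) to the second. The only difference is cosmetic---the paper writes the decomposition directly, while you arrive at it by expanding and substituting---but the argument is identical.
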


\begin{proof} We follow the proof of Theorem \ref{sufficientconditionjstability} using Lemma \ref{surfacesineq}. Letting $\scI$ be a flag ideal as above, the J-weight is given as \begin{align*}J_{L_2}(\scB,r\scL_1-E) &= (r\scL_1-E)^2.\left(-\frac{2}{3}\gamma r^{-1}(r\scL_1-E) + \scL_2\right), \\ 
&= (r\scL_1-E)^2.\left(\frac{2}{3}\gamma r^{-1}(r\scL_1+E)+ (-\frac{4}{3}\gamma \scL_1 + \scL_2) \right)
\end{align*}

By Lemma \ref{surfacesineq} we have $$(r\scL_1-E)^2.(r\scL_1+E)\geq 0,$$ while Lemma \ref{inequalities} $(i)$ together with the hypothesis of the Theorem ensures $$(r\scL_1-E)^2.\left(-\frac{4}{3}\gamma \scL_1 + \scL_2\right)\geq 0.$$

\end{proof}

\begin{remark} It is important to note that in the above result we only prove J-\emph{semi}stability, it would be interesting to prove J-stability assuming the line bundle $\frac{4}{3}\gamma L_1 - L_2$ is actually ample. This does not follow directly from our method. \end{remark}

We immediately obtain the following Corollary, by Theorem \ref{jimpliesk}. This is the most general currently known result for K-stability of surfaces of general type. 

\begin{corollary}\label{surfaces} Let $(M,L_1)$ be a polarised Kawamata log terminal surface satisfying $\frac{4}{3}\gamma L_1 - K_M$ is nef and $\gamma>0$. Then $(M,L_1)$ is K-stable. \end{corollary}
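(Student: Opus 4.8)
The plan is to obtain this as a direct combination of the two substantive results already established, namely Theorem \ref{jstablesurfaces} and Theorem \ref{jimpliesk}, by specialising the auxiliary line bundle to $L_2 = K_M$.

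First I would set $L_2 = K_M$ in Theorem \ref{jstablesurfaces}. With this choice the J-constant of the triple $(M,L_1,K_M)$ is $\gamma = \frac{K_M . L_1}{L_1^2}$, which is consistent with the hypothesis $\gamma>0$ in the statement of the Corollary. The remaining hypothesis of Theorem \ref{jstablesurfaces}, that $\frac{4}{3}\gamma L_1 - L_2$ be nef, becomes precisely the assumption that $\frac{4}{3}\gamma L_1 - K_M$ is nef, which is given. Since $M$ is a surface, Theorem \ref{jstablesurfaces} then yields that $(M,L_1,K_M)$ is J-semistable.

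Next I would feed this conclusion into Theorem \ref{jimpliesk}. That theorem requires exactly J-semistability of $(M,L_1,K_M)$ together with $M$ being Kawamata log terminal, both of which now hold: the former by the previous step, and the latter by hypothesis. Theorem \ref{jimpliesk} then delivers K-stability of $(M,L_1)$, as desired.

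There is no genuine obstacle here beyond verifying that the hypotheses align, since all the real work is carried out in the two cited theorems: the surface-specific intersection inequality Lemma \ref{surfacesineq} feeding Theorem \ref{jstablesurfaces}, and the inversion-of-adjunction comparison of the Donaldson-Futaki invariant with the J-weight feeding Theorem \ref{jimpliesk}. The only point worth a moment's care is confirming that the $\gamma$ appearing in the nefness hypothesis is the same quantity throughout; this holds because $\gamma$ is defined uniformly as $\frac{L_1^{n-1}.L_2}{L_1^n}$, specialising to $\frac{K_M . L_1}{L_1^2}$ when $L_2 = K_M$ in the surface case.
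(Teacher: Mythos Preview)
Your proposal is correct and matches the paper's approach exactly: the corollary is stated there as an immediate consequence of Theorem \ref{jimpliesk}, applied after Theorem \ref{jstablesurfaces} with $L_2 = K_M$. Your check that the hypotheses line up (in particular that the $\gamma$'s agree) is precisely the only verification needed.
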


\begin{remark} Panov and Ross have proved slope stability of surfaces with ample canonical class under the weaker assumption $2\gamma L_1 - K_M$ is ample \cite[Example 5.8]{Panov-Ross}. However by \cite[Example 7.8]{Panov-Ross}, slope stability is a strictly weaker condition than K-stability, i.e. K-stability implies slope stability but there are examples of slope stable polarised varieties which are not K-stable.
\end{remark}

Corollaries \ref{surfaces} and \ref{kstablecone}, together with the link between J-stability and the $I_{\mu_J}$ functional lead us to the following conjecture. We remark that the definition of the $I_{\mu_J}$ functional makes sense with $L_2$ arbitrary, taking a not necessarily positive $(1,1)$-form $\chi\in c_1(L_2)$.

\begin{conjecture}\label{j-functionalwithoutampleness} Let $M$ be a smooth $n$-dimensional variety with an ample line bundle $L_1$ and an arbitrary line bundle $L_2$. Suppose $\gamma>0$ and $$\gamma L_1 - \frac{n-1}{n}L_2$$ is ample. Then the $I_{\mu_J}$ functional is proper. In particular if $L_2=K_M$, then $(X,L_1)$ has proper Mabuchi functional. \end{conjecture}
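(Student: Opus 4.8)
The plan is to reduce the conjecture to the analytic criterion of Song--Weinkove and Weinkove \cite{SW,We1}, which was established under the hypothesis that $L_2$ is ample, and then to remove the ampleness of $L_2$ by exploiting the freedom in the choice of a smooth representative $\chi\in c_1(L_2)$. Recall from \cite{SW,We1} that the existence of a K\"ahler metric $\omega'=\omega+\ddbar\psi\in c_1(L_1)$ satisfying the pointwise cone condition \eqref{cond1}, namely $(n\gamma\omega'-(n-1)\chi)\wedge(\omega')^{n-2}\wedge u\wedge\bar u>0$ for every nonzero $(1,0)$-form $u$, forces convergence of the J-flow to a critical metric and properness of $I_{\mu_J}$; the content of the conjecture is that this persists when $\chi$ is merely a closed real $(1,1)$-form, not necessarily positive. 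First I would record that properness is insensitive to the representative: replacing $\chi$ by $\chi+\ddbar\psi$ changes $I_{\mu_J}$ by $\tfrac{1}{\gamma n}\int_M\psi\,(\omega_\phi^n-\omega^n)$, which is uniformly bounded in $\phi$ since $\omega_\phi^n$ has constant total mass. Hence we are free to pick a convenient $\chi$.

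The key observation is that the ampleness of $A:=\gamma L_1-\tfrac{n-1}{n}L_2$ lets us realise \eqref{cond1} at $\omega'=\omega$ itself, even though $\chi$ is indefinite. Since $\gamma$ is rational and $A$ is an ample $\Q$-divisor, choose a K\"ahler form $\alpha\in c_1(A)$ and set $$\chi:=\frac{n}{n-1}\bigl(\gamma\omega-\alpha\bigr).$$ A check of cohomology classes gives $[\chi]=\frac{n}{n-1}\bigl(\gamma c_1(L_1)-c_1(A)\bigr)=c_1(L_2)$, so $\chi$ is a legitimate representative, while $$n\gamma\omega-(n-1)\chi=n\alpha>0.$$ Thus \eqref{cond1} holds with $\omega'=\omega$, the positive combination being exactly $n\alpha\wedge\omega^{n-2}\wedge u\wedge\bar u>0$, \emph{regardless} of the signature of $\chi$. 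No positivity of $L_2$ is needed to produce this favourable configuration; the whole difficulty is displaced onto whether the proof of properness survives the loss of positivity of $\chi$.

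The hard part is precisely this last point. The argument of \cite{SW} proceeds through the J-flow \eqref{Jflow}, whose linearisation is governed by the operator $\tilde{\Delta}$, and $\tilde{\Delta}$ is elliptic (and the flow parabolic) only when $\chi>0$; similarly the critical equation \eqref{critical}, i.e. $\tr_{\omega_\phi}\chi=n\gamma$, fails to be elliptic at a general metric when $\chi$ is indefinite. The plan is therefore to bypass the flow by a purely variational argument using only \eqref{cond1}: I would combine the geodesic convexity of $I_{\mu_J}$ established above with an analysis of the asymptotic slope of $I_{\mu_J}$ along geodesic rays, aiming to show that \eqref{cond1} forces this slope to be uniformly positive and hence, by convexity, that $I_{\mu_J}$ is proper; an alternative is to solve \eqref{critical} by a continuity method in which ellipticity is supplied by the $\mathcal{C}$-subsolution condition \eqref{cond1} rather than by positivity of $\chi$, in the spirit of the subsolution estimates of Collins--Sz\'ekelyhidi \cite{Sz-Co}, the obstruction being that those estimates are set up for $\chi>0$. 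The delicate step, where I expect the real work to lie, is to verify that every integration by parts and maximum principle in the lower-bound estimate of \cite{We1} uses only the positivity of the combination $n\gamma\omega'-(n-1)\chi$ and never the sign of $\chi$ itself; any term secretly relying on $\chi>0$ must be re-estimated using $n\alpha=n\gamma\omega-(n-1)\chi$ together with convexity. Finally, the case $L_2=K_M$ follows from the general case: with $\chi\in c_1(K_M)$ the Mabuchi functional decomposes as the sum of the (bounded below) entropy and a multiple of $I_{\mu_J}$, so properness of $I_{\mu_J}$ yields properness of the Mabuchi functional, exactly as in \cite{C2,SW}.
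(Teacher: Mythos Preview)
The statement is a \emph{conjecture} in the paper; the authors do not prove it and explicitly flag it as open. They remark only that it follows from Weinkove's work when $L_2$ is ample and is known for minimal surfaces of general type via \cite{SW2}. There is therefore no proof in the paper to compare your attempt against.

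Your write-up is a plan rather than a proof, and you are candid about this. The observation that one may choose the representative $\chi=\tfrac{n}{n-1}(\gamma\omega-\alpha)$ so that \eqref{cond1} holds at $\omega'=\omega$ is correct and pleasant, as is the computation that changing $\chi$ within its class perturbs $I_{\mu_J}$ by the bounded quantity $\tfrac{1}{\gamma n}\int_M\psi(\omega_\phi^n-\omega^n)$. But the obstacle you name --- that every known mechanism for extracting properness from \eqref{cond1} (the J-flow, the continuity method, the $\mathcal{C}$-subsolution estimates) uses $\chi>0$ for ellipticity --- is precisely why the statement is posed as a conjecture, and your proposal does not close it.

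There is also a circularity in your variational route. You invoke ``the geodesic convexity of $I_{\mu_J}$ established above'', but Lemma~\ref{Chenconv} (and hence the convexity of $I_{\mu_J}$) is proved in the paper, following Chen, under the standing hypothesis $\chi>0$; the second-variation formula displayed after \eqref{Jdef} has indefinite sign once $\chi$ is allowed to be indefinite. So the convexity you want to lean on is itself part of what needs to be established when $L_2$ is not ample. Likewise, the decomposition of the Mabuchi functional you quote in the last paragraph uses $\chi\in c_1(K_M)$ with $\chi>0$ to make the remaining piece bounded below; with indefinite $\chi$ that step also requires justification. In short, your reduction isolates the difficulty cleanly but does not resolve it, which is consistent with the conjectural status the paper assigns to the statement.
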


We emphasise again that in the above Conjecture we do \emph{not} assume $L_2$ is positive just that $\gamma>0$. In the case $L_2$ is positive the above Conjecture follows from Weinkove's work \cite{We1}. In the case of minimal surface of general type (i.e with big and nef canonical bundle), the Conjecture is proved in \cite{SW2}.

\bibliography{Jflow-CAG2017.bib}

\bigskip

\end{document}